\numberwithin{equation}{section}
\declaretheorem[numberwithin=section,name=Theorem]{thm}
\declaretheorem[name=Theorem,numbered=no]{thm*}
\declaretheorem[name=Corollary, sibling=thm]{cor}
\declaretheorem[name=Lemma, sibling=thm]{lem}
\declaretheorem[name=Proposition, sibling=thm]{prop}
\newtheorem*{prop*}{Proposition}
\theoremstyle{remark}
\newtheorem{rem}[thm]{Remark}
\newtheorem{ques}[thm]{Question}
\theoremstyle{definition}
\newtheorem{defn}[thm]{Definition}
\newtheorem*{defn*}{Definition}
\newtheorem{exmp}[thm]{Example}
\newtheorem*{exmp*}{Example}
\begin{document}

\begin{abstract}
We show that if $G$ is a  group  of type $FP_{n+1}^{\mathbb{Z}_2}$ that is coarsely separated into three essential, coarse disjoint, coarse complementary  components by a coarse $PD_n^{\mathbb{Z}_2}$ space $W,$ then $W$ is at finite Hausdorff distance from a subgroup $H$ of $G$; moreover, $G$ splits over a subgroup commensurable to a subgroup of $H$. We use this to deduce that splittings of the form $G=A*_HB$, where $G$  is of type $FP_{n+1}^{\mathbb{Z}_2}$ and $H$ is a coarse $PD_n^{\mathbb{Z}_2}$ group such that both  $|\mathrm{Comm}_A(H): H|$ and $|\mathrm{Comm}_B(H): H|$ are greater than two, are invariant under quasi-isometry.
\end{abstract}
\title[Quasi-Isometry Invariance of Splittings over Coarse $PD_n^{\mathbb{Z}_2}$ Groups]{Quasi-Isometry Invariance of Group Splittings over Coarse Poincar\'e Duality Groups}
\author[A.J. Margolis]{Alexander J. Margolis}
\address{Alexander J. Margolis, Mathematical Institute, Andrew Wiles Building, University of Oxford, Oxford OX2 6GG, UK}
\thanks{The author was supported by EPSRC Grant 1499630.}
\email{margolis@maths.ox.ac.uk}
\date{\today}
\maketitle

\section{Introduction}\label{sec:intro}
One of the aims of geometric group theory is to understand the connection between the algebraic and large-scale geometric   properties of finitely generated groups.
We say that a group $G$ \emph{splits} over a subgroup $H$ if  either 
 $G=A*_H B$ where $H$ is a proper subgroup of $A$ and $B$, or 
 $G=A*_H$.

We show that  group splittings can often be detected from the large-scale geometry of a group. The most celebrated result along these lines is the following theorem of Stallings, providing a correspondence between the number of ends of a group --- a large-scale geometric property, and whether a group splits over a finite subgroup --- an algebraic property.

\begin{thm*}[\cite{stallings1968torsionfree}, \cite{stallings1971grouptheory}]\label{thm:stallings}
A finitely generated group splits over a finite subgroup if and only if it has more than one end.
\end{thm*}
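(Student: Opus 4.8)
The plan is to prove the two implications separately; one is routine and the other carries all the weight. Suppose first that $G$ splits nontrivially over a finite subgroup $H$, and let $T$ be the associated Bass--Serre tree, on which $G$ acts with finite edge stabilizers, finitely many orbits of edges, and no global fixed vertex. Fix a $G$-equivariant map from a Cayley graph $\Gamma$ of $G$ to $T$. The preimage of the midpoint of an edge of $T$ consists of only finitely many edges of $\Gamma$ --- this is exactly where finiteness of $H$ enters --- and deleting these edges disconnects $\Gamma$ into at least two infinite pieces, since $T$ minus a midpoint has two components, each containing infinitely many vertices of $T$. Hence $e(G)\ge 2$.

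For the converse I would work with almost invariant sets. Call $A\subseteq G$ \emph{almost invariant} if $gA\,\triangle\,A$ is finite for every $g\in G$, and \emph{proper} if both $A$ and $G\smallsetminus A$ are infinite; a standard computation with the space of ends shows that $e(G)>1$ if and only if $G$ admits a proper almost invariant subset $A$. The aim is to promote $A$ to a $G$-tree $T$ with finite edge stabilizers and no global fixed vertex, after which Bass--Serre theory concludes. Granting such a tree: an edge stabilizer $K$ preserves the coboundary in $\Gamma$ of the proper almost invariant set $B$ labelling that edge --- a finite, nonempty set of edges of $\Gamma$ --- and since $G$ acts freely on itself by left translation (so freely on $\Gamma$, after subdividing to kill inversions), $K$ acts freely on this finite set and is therefore finite. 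Finally, a finitely generated group acting on a tree without a global fixed vertex splits over an edge stabilizer; the only delicate case, where $G$ fixes an end of $T$, is precisely the one that finite generation rules out (it would force a common fixed vertex) unless it already yields an ascending HNN splitting. Therefore $G$ splits over a finite subgroup.

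The heart of the argument --- and the step I expect to be the main obstacle --- is the construction of the $G$-tree from a single proper almost invariant set. This is Dunwoody's theorem: from the $G$-translates of $A$ one must extract a $G$-invariant \emph{nested} family of proper almost invariant sets (a \emph{tree set}), and then recover $T$ through the standard correspondence between tree sets and trees. The obstruction is that a translate $gA$ can \emph{cross} $A$ --- none of $gA\subseteq A$, $gA\subseteq G\smallsetminus A$, $A\subseteq gA$, $G\smallsetminus A\subseteq gA$ holding even modulo finite sets --- so one needs a combinatorial uncrossing procedure, either Dunwoody's track argument in a presentation complex or a direct analysis of the partial order on almost invariant sets modulo finite sets, arranged to remain $G$-equivariant and to terminate. (Stallings' original route instead packages the same combinatorial data as a \emph{bipolar structure} on $G$ and reads the splitting off from it directly.)
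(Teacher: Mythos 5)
The paper does not give a proof of this theorem: it is quoted as a classical result with references to Stallings' original papers, so there is no ``paper proof'' to compare against. Your sketch is essentially the standard modern treatment via almost invariant sets and a $G$-tree (Dunwoody's packaging rather than Stallings' bipolar structures), and the overall architecture is sound: easy direction from the Bass--Serre tree, reduction of the hard direction to producing a nested $G$-invariant family of almost invariant sets, extraction of a tree from that family, and verification that edge stabilizers are finite by acting on the (finite, nonempty) coboundary in a subdivided Cayley graph.

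Two points are imprecise but fixable. In the easy direction, ``the preimage of a midpoint is finite'' needs one more sentence: for each generator $s$, the edges of $\Gamma$ whose image crosses a fixed edge $e$ of $T$ are indexed by cosets of conjugates of $H$ along the finitely many edges of the geodesic $[v_0, sv_0]$, and it is precisely the finiteness of $H$ that bounds each such coset. In the hard direction, the aside about $G$ fixing an end is a bit off target: once you have the Dunwoody tree, which carries a cocompact $G$-action, finite generation plus the absence of a global fixed vertex produces a hyperbolic element (Serre's lemma on elliptic generators with non-intersecting fixed trees), and restricting to the minimal invariant subtree gives a finite graph-of-groups decomposition outright; the fixed-end case does not need to be dealt with separately, and ``it would force a common fixed vertex'' is not correct as stated (e.g.\ $\mathbb{Z}$ acting by translation on a line fixes both ends and no vertex). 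You correctly identify the uncrossing step as the genuine heart of the matter and defer it to Dunwoody; for a sketch of a theorem of this depth, that is the honest place to stop.
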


Along with a theorem of Dunwoody \cite{dunwoody1985accessibility}, Stallings' theorem allows us to decompose groups into `smaller' pieces via graph of groups decompositions. A result of Papasoglu and Whyte \cite{papasoglu2002quasi} classifies finitely presented groups up to quasi-isometry in terms of their one-ended vertex groups. Papasoglu \cite{papasoglu2005quasi} gives a geometric characterisation of splittings over two-ended groups, generalising Stallings' theorem. Using Papaoslgu's theorem, recent progress has been made by Cashen and Martin \cite{cashen2016quasi} towards a classification of one-ended groups up to quasi-isometry in terms of their JSJ decompositions (a graph of groups decomposition that encodes all splittings over two-ended groups). Generalising Stallings' theorem  and Papasolgu's theorem to splittings over more complicated groups allows us to better understand the structure of groups up to quasi-isometry.
 
Consider the Cayley graph $\Gamma$ of a group $G$  with  respect to some finite generating set. We say that $C\subseteq G$ is a \emph{coarse complementary component} of $W\subseteq G$ if for some $R\geq 0$, $C\backslash N_R(W)$ is the vertex set of a union of components of $\Gamma \backslash N_R(W)$. The motivation behind this definition is that quasi-isometries preserve unions of complementary components, but do not necessarily preserve a single complementary component. Thus the notion of  `coarse complementary components' is a natural one when working with quasi-isometries.

 A coarse complementary component is said to be \emph{deep} if it is not contained in $N_R(W)$ for any $R\geq 0$; otherwise it is said to be \emph{shallow}. A collection of deep coarse complementary components of $W$ is said to be \emph{coarse disjoint} if the intersection of any pair is shallow. We say that $W$ \emph{coarsely $n$-separates} $G$ if there exist $n$ deep, coarse disjoint, coarse complementary components of $W.$  We say that $W$ \emph{coarsely separates} $G$ if $W$ coarsely 2-separates $G$. Coarse $n$-separation is a quasi-isometry invariant. The following proposition relates coarse separation to group splittings.

\begin{prop*}[{\cite[Lemma 2.2]{papasoglu2012splittings}}]
If a finitely generated group $G$ splits over a finitely generated subgroup $H,$ then $H$ coarsely separates $G$.
\end{prop*}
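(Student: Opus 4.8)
The plan is to use Bass--Serre theory. Consider first the amalgamated-product case $G = A *_H B$; the HNN case $G = A*_H$ is entirely analogous, with the stable letter playing the role of the elements $a_0, b_0$ introduced below. Let $T$ be the Bass--Serre tree of the splitting, with vertex set $G/A \sqcup G/B$ and edge set $G/H$, and let $e = 1\cdot H$ be the edge joining $v_A := 1\cdot A$ and $v_B := 1\cdot B$, so that $\mathrm{Stab}_G(e) = H$; write $m$ for the midpoint of $e$. Fix a finite generating set of $G$ and let $\Gamma$ be the associated Cayley graph. The orbit map $\pi \colon \Gamma \to T$, $g \mapsto gm$, is $G$-equivariant and Lipschitz (each generator displaces $m$ by a uniformly bounded amount), and its nonempty fibres are exactly the cosets of $H$; in particular $\pi^{-1}(m) = H$. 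Removing the point $m$ disconnects $T$ into two subtrees, $T_A \ni v_A$ and $T_B \ni v_B$, and I would set $C_A := \{g \in G : gm \in T_A\}$ and $C_B := \{g \in G : gm \in T_B\}$, so that $G = C_A \sqcup C_B \sqcup H$. The goal is to show that $C_A$ and $C_B$ are two deep, coarse disjoint, coarse complementary components of $H$.

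Disjointness of $C_A$ and $C_B$ is immediate, so they are coarse disjoint. To see that $C_A$ is deep: since $H$ is a proper subgroup of $A$ and of $B$, pick $a_0 \in A \setminus H$ and $b_0 \in B \setminus H$; then each $(a_0 b_0)^n$ is a reduced word, and its image $(a_0 b_0)^n m$ lies in $T_A$ at distance tending to infinity from $m$. Because $\pi$ is Lipschitz and $\pi^{-1}(m) = H$, this forces $d_\Gamma\bigl((a_0 b_0)^n, H\bigr) \to \infty$, so $C_A$ is not contained in any $N_R(H)$; the symmetric argument, starting instead from $b_0 \in B \setminus H$, shows that $C_B$ is deep.

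The crux is to prove that $C_A$ and $C_B$ are coarse complementary components of $H$: I must exhibit $R \geq 0$ such that no edge of $\Gamma$ joins $C_A \setminus N_R(H)$ to $C_B \setminus N_R(H)$, for then, since $G \setminus N_R(H) = \bigl(C_A \setminus N_R(H)\bigr) \sqcup \bigl(C_B \setminus N_R(H)\bigr)$, each of $C_A \setminus N_R(H)$ and $C_B \setminus N_R(H)$ is a union of components of $\Gamma \setminus N_R(H)$. The main obstacle is that the naive expectation here---that $\pi^{-1}$ of a bounded ball about $m$ lies in a bounded neighbourhood of $H$---is false as soon as $[A : H] = \infty$ (already for $G = F_2 \times \mathbb{Z}$ split over its centre), so one has to argue more carefully and exploit the group structure. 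Suppose $g \in C_A$ and $gs \in C_B$ with $s$ a generator or inverse generator. Then $gm \in T_A$ and $gsm \in T_B$, so the geodesic $[gm, gsm]$ in $T$ passes through $m$; applying the isometry $g^{-1}$ shows that $g^{-1}m$ lies on the geodesic $[m, sm]$. As $s$ ranges over the finite generating set and its inverses, these geodesics sweep out a fixed finite subtree $K$ of $T$; hence $g^{-1}m$ is one of the finitely many edge-midpoints lying in $K$, say the midpoint of the coset $g_iH$, which forces $g \in Hg_i^{-1} \subseteq N_{|g_i|}(H)$. Taking $R$ to be the maximum of $|g_i|$ over these finitely many cosets then gives $g \in N_R(H)$, which is exactly what is needed. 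Combining the three points shows that $H$ coarsely 2-separates, hence coarsely separates, $G$.
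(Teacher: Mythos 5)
Your proof is correct. The paper cites this result from Papasoglu without reproducing a proof, but the Bass--Serre tree argument you give is the standard one. You correctly identify the main pitfall: since $\pi\colon g\mapsto gm$ is Lipschitz but in general not a coarse embedding (whenever $[A:H]=\infty$), one cannot simply pull back a bounded neighbourhood of $m$ in $T$ to get a bounded neighbourhood of $H$ in $\Gamma$. Your fix --- translating the configuration by $g^{-1}$ so that $g^{-1}m$ lands in the finite subtree $K=\bigcup_s[m,sm]$, whose edge-midpoints correspond to only finitely many cosets of $H$ --- is exactly right, and the Lipschitz bound on $\pi$ together with the normal form in the amalgam (or the infinite order of the stable letter in the HNN case) gives deepness. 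In the paper's own terminology, the sets $C_A$ and $C_B$ you construct are precisely the classical $H$-almost invariant sets attached to the edge $e$ of the Bass--Serre tree, and your finite-subtree argument is a direct geometric verification of $H$-almost invariance, which Proposition~\ref{prop:coarsecompinvecspace} shows is equivalent to being a coarse complementary component of $H$.
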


A group has more than one end if and only if it is coarsely separated by a point.  Consequently,  Stallings' theorem may be rephrased as follows:
\begin{thm*}
A  finitely generated group splits over a finite subgroup if and only if it is coarsely separated by a point. 
\end{thm*}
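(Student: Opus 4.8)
The plan is to deduce the statement directly from Stallings' theorem by checking the purely combinatorial fact that a finitely generated group $G$ is coarsely separated by a point if and only if it has more than one end. Granting this equivalence, the biconditional follows at once: $G$ splits over a finite subgroup $\iff$ $G$ has more than one end (Stallings' theorem) $\iff$ $G$ is coarsely separated by a point.

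Fix a finite generating set for $G$ and let $\Gamma$ be the corresponding Cayley graph, which is connected and locally finite; fix a vertex $w$ (the choice is immaterial, since any two vertices lie at finite distance, so the coarse complementary components of $\{w\}$ do not depend on $w$, and in any case coarse $n$-separation is a quasi-isometry invariant). Since $\Gamma$ is locally finite, each ball $N_R(w)$ is finite, so $\Gamma\setminus N_R(w)$ has only finitely many connected components (at most the number of edges leaving $N_R(w)$), of which finitely many, say $e_R$, are infinite. As the balls $N_R(w)$ are nested and exhaust $\Gamma$, a short argument shows $e_R$ is non-decreasing in $R$, and by definition $G$ has more than one end precisely when $e_R\geq 2$ for some (equivalently, all sufficiently large) $R$.

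Suppose first that $\{w\}$ coarsely $2$-separates $G$, witnessed by deep, coarse disjoint coarse complementary components $C_1,C_2$. Pick $R$ at least as large as the constants appearing in the definitions of $C_1$ and $C_2$ as coarse complementary components, and large enough that the shallow set $C_1\cap C_2$ is contained in $N_R(w)$. Then $C_1\setminus N_R(w)$ and $C_2\setminus N_R(w)$ are disjoint unions of components of $\Gamma\setminus N_R(w)$, and since $C_1$ and $C_2$ are deep, neither is contained in a ball, so each of these two unions contains at least one infinite component. Hence $e_R\geq 2$ and $G$ has more than one end. Conversely, if $G$ has more than one end, choose $R$ with $e_R\geq 2$ and let $D_1,D_2$ be distinct infinite components of $\Gamma\setminus N_R(w)$; then $C_1:=D_1$ and $C_2:=D_2$ are coarse complementary components of $\{w\}$ (witnessed by the same $R$) which are deep (being infinite) and coarse disjoint (indeed $D_1\cap D_2=\emptyset$), so $\{w\}$ coarsely $2$-separates $G$.

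There is no genuine obstacle in this argument: all of the difficulty resides in Stallings' theorem, which is cited above. The only point that needs a little care is translating between the notions of \emph{deep}, \emph{shallow} and \emph{coarse disjoint} coarse complementary components and the classical definition of the number of ends of $\Gamma$; the locally finite structure of $\Gamma$ makes this routine.
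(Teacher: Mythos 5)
Your proposal is correct and takes essentially the same route as the paper: the paper also observes that ``a group has more than one end if and only if it is coarsely separated by a point'' and then treats the theorem as a restatement of Stallings' theorem via this observation. The only difference is that the paper asserts the combinatorial equivalence without proof, while you spell it out; your verification is sound, and in particular the key point — that if $C_i\setminus N_R(w)$ were a union of only finite components of $\Gamma\setminus N_R(w)$ then $C_i$ itself would be finite and hence shallow, contradicting deepness — correctly exploits the local finiteness of $\Gamma$.
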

We prove a partial generalisation of Stallings' theorem, giving a large-scale geometric criterion that guarantees the existence of a group splitting. We use this to show that group splittings are often invariant under quasi-isometry.
Triangle groups provide examples of groups which don't split, but have finite index subgroups that do split over two-ended subgroups; hence admitting a  splitting over  a two-ended group is not invariant under  quasi-isometry. 
However,  there is a theorem by Papasoglu which partially generalises  Stallings' theorem. In the following theorem, a \emph{line} is defined to be  a coarsely embedded copy of $\mathbb{R}$.

\begin{thm*}[\cite{papasoglu2005quasi}]
Let $G$ be a finitely presented one-ended group which is not virtually a surface group. A line coarsely separates  $G$ if and only if $G$ splits over a two-ended subgroup.
\end{thm*}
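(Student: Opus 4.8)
The plan is to establish the two implications separately; the forward implication is a quick consequence of the material already quoted, and the converse --- that a line coarsely separating $G$ forces a splitting over a two-ended subgroup --- is by far the substantial part.

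For the forward direction, suppose $G=A*_HB$ or $G=A*_H$ with $H$ two-ended. Then $H$ is finitely generated, so by \cite[Lemma~2.2]{papasoglu2012splittings} $H$ coarsely separates $G$; since $H$ is quasi-isometric to $\mathbb{R}$ and coarse separation depends only on the coarse equivalence class of the separating set, a line coarsely separates $G$. (One-endedness is not needed here, but it is consistent with the hypothesis: a one-ended group cannot split over a finite subgroup, so the two-ended edge group cannot degenerate to a finite one.)

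For the converse, I would start with a line $L$ coarsely separating $G$, witnessed by two deep, coarse-disjoint, coarse-complementary components, i.e.\ a coarse decomposition $G=A\sqcup L\sqcup B$ into two ``coarse half-spaces.'' The strategy is to promote the single quasi-line $L$ to a $G$-invariant pattern of coarsely disjoint quasi-lines and then read off a splitting via Bass--Serre theory. Concretely: realize $G$ geometrically on the universal cover $\widetilde{X}$ of a finite presentation $2$-complex, approximate $L$ within finite Hausdorff distance by a Dunwoody \emph{track} $t\subseteq\widetilde{X}$ that still coarsely separates $G$, and consider the translates $\{g\cdot t\}$. These need not be disjoint, so the key technical step is a Dunwoody-style resolution and accessibility argument (using local finiteness of $\widetilde{X}$ and, crucially, finite presentability --- cf.\ \cite{dunwoody1985accessibility}) replacing $\{g\cdot t\}$ by a genuinely $G$-finite family of pairwise disjoint tracks, each of which still coarsely separates $G$. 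Such a family is dual to a simplicial tree $T$ with $G$-action (vertices the closed complementary regions, edges the tracks, edge stabilizers the track stabilizers). One shows the action has no global fixed point, using deepness of the two complementary components of $L$, which prevents one region from coarsely swallowing $G$; hence $G$ splits over a track stabilizer $G_e$. Finally, run a dichotomy on $G_e$: either $G_e$ acts cocompactly on its track $t$, forcing $G_e$ to be virtually $\mathbb{Z}$ (a group acting geometrically on a quasi-line is two-ended), which gives the desired splitting; or $G_e$ is ``small'' relative to $t$, forcing the $G$-translates of $t$ to form a filling pattern, whence a planarity/rigidity argument (the pattern yields a cocompact action of $G$ on $\mathbb{R}^2$ or $\mathbb{H}^2$, so $G$ is virtually a planar discontinuous group) shows $G$ is virtually a surface group, contrary to hypothesis.

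The main obstacle is the extraction, from the a priori chaotic set of $G$-translates of $L$, of a $G$-finite pattern of disjoint quasi-lines: there is no bound on how two translates of a coarsely separating line cross, and the coarse setting loses the clean combinatorics of embedded curves, so one needs both a ``straightening'' of $L$ to an honest track and a termination/accessibility input --- this is where finite presentability is indispensable --- to guarantee the dual object is a locally finite tree rather than an infinite-dimensional complex. The secondary obstacles, namely proving the $G$-action on $T$ has no fixed point and carrying out the ``two-ended edge group versus virtual surface group'' dichotomy (which requires a structure theorem for subgroups coarsely stabilizing a line together with a convergence-group/planarity rigidity statement), are substantial but comparatively self-contained once the pattern is in hand.
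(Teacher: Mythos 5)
The paper you are commenting on does not prove this theorem; it is quoted verbatim from \cite{papasoglu2005quasi} as motivating background, so there is no ``paper's own proof'' to measure your proposal against. Judged against Papasoglu's actual argument, your outline captures the correct global shape (work in the universal cover of a finite presentation complex; replace the coarse line by a Dunwoody track; organize the $G$-translates of the track into a pattern; extract a tree when the pattern is non-filling; and use the Tukia--Gabai--Casson--Jungreis convergence-group theorem to identify the filling case as a virtual surface group), but it misattributes the mechanism that makes the pattern tractable. You lean on ``Dunwoody-style resolution and accessibility'' to manufacture a $G$-finite family of pairwise disjoint tracks. Accessibility is not what achieves disjointness in Papasoglu's proof; what does is a \emph{minimization} argument: one takes a track of least complexity (area/length) among those at bounded Hausdorff distance from the separating line, and then proves that minimal tracks have the non-crossing property, in close analogy with least-area surfaces in $3$-manifold topology. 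Finite presentability is indispensable precisely because tracks live in a $2$-complex on which one can run such a minimization; it is not being invoked for an accessibility-style termination. Your dichotomy at the end is in the right spirit, but as stated it conflates ``the edge group acts cocompactly on a quasi-line'' with ``the edge group is two-ended'' before having established that the stabilizer of the minimal track is actually the right candidate subgroup; pinning down that the track stabilizer is commensurable to a two-ended subgroup that $G$ splits over is itself a nontrivial step, and in Papasoglu's treatment it is handled together with the non-crossing analysis rather than as an afterthought. In short: the skeleton is right, but the load-bearing lemma is least-complexity non-crossing of tracks, not accessibility, and the identification of the edge group needs more care than your sketch allows.
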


This is known to be false if one drops the condition that $G$ is finitely presented. In \cite{papasoglu2012splittings}, Papasoglu constructs a line that coarsely separates the lamplighter group --- a finitely generated group that is not finitely presented and doesn't split over a two-ended subgroup.

To construct a splitting using the geometry of a group, we first construct a subgroup which coarsely separates the group, and then show that the group splits over a subgroup  commensurable to it. Much work has already been done on the second step, for example see  \cite{dunwoody2000algebraic}. 

Before stating our results, we need to define a few terms. For a ring $R$, a group $G$ is said to be of \emph{type $FP_n^R$} if it admits a partial projective resolution $$P_n\rightarrow P_{n-1}\rightarrow \dots \rightarrow P_0\rightarrow R \rightarrow 0$$ of the trivial $RG$-module $R$, such that each $P_i$ is finitely generated as an $RG$-module. A group is  of type $F_n$ if it  has a classifying space with finite $n$-skeleton. If $G$ is of type $F_n$, it is of type $FP_n^R$ for any $R$. These are examples of \emph{finiteness properties}, generalising the notions  of  being finitely generated and finitely presented. 

We will work with groups of type $FP_n^{\mathbb{Z}_2}$. The use of $\mathbb{Z}_2$ coefficients  is fairly natural in the context of group splittings and coarse separation. For example, the number of ends can be detected using cohomology with $\mathbb{Z}_2$ coefficients \cite{scott1979topological} and  Dunwoody's accessibility theorem holds for groups of type $FP_2^{\mathbb{Z}_2}$ \cite{dunwoody1985accessibility}.  We remark that a group of type $FP_n^\mathbb{Z}$ (often simply denoted as $FP_n$) is necessarily of type $FP_n^{\mathbb{Z}_2}$. 

Stallings' theorem holds for finitely generated groups, which are necessarily of type $FP_1^{\mathbb{Z}_2}$, whereas Papasoglu's theorem only holds for finitely presented groups, which are necessarily of type $FP_2^{\mathbb{Z}_2}$. This  motivates the principle that when examining splittings over groups that are  more complicated  than finite and two-ended groups, we  assume the ambient group has higher finiteness properties.

Coarse $PD_n^{\mathbb{Z}_2}$ spaces are defined in \cite{kapovich2005coarse}. They are spaces which have the same large-scale homological properties as $\mathbb{R}^n$. For example, the universal cover of a closed aspherical $n$-dimensional manifold is a coarse $PD_n^{\mathbb{Z}_2}$ space. To construct a splitting, we make the assumption that a coarse $PD_n^{\mathbb{Z}_2}$ space coarsely separates a group of type $FP_{n+1}^{\mathbb{Z}_2}$.

When dealing with the case of a line coarsely separating a group as in Papasoglu's theorem (as well as similar results such as \cite{bowditch1998cut}),  one needs a geometric criterion for recognizing virtually surface groups.  In doing this, one uses the Tukia, Gabai and Casson-Jungreis  theorem on convergence groups acting on the circle (\cite{tukia88homeomorphic}, \cite{gabai92convergence} and \cite{casson94convergence}). Since this theorem has no analogue in higher dimensions, we cannot rule out generalisations of triangle groups; we therefore make the assumption that a coarse $PD_n^{\mathbb{Z}_2}$ space coarsely $3$-separates a group.  Unfortunately, even this assumption is not sufficient for our purposes. Therefore, rather than working with deep components, we use what we call \emph{essential} components.
 This is a generalisation of the essential components found in \cite{papasoglu2005quasi}.

\begin{figure}[ht]
\begin{floatrow}[2]
\ffigbox{\includegraphics[scale=0.61]{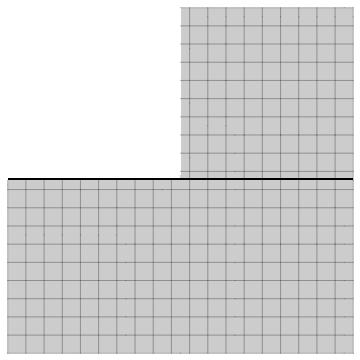}}{\caption{}\label{fig:ess1}}
\ffigbox{\includegraphics[scale=0.61]{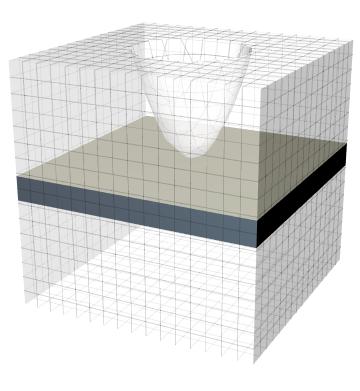}}{\caption{}\label{fig:ess2}}
\end{floatrow}
\end{figure}

Essential components are  necessarily deep and are invariant under quasi-isometry. An example is that of a coarse $PD_n^{\mathbb{Z}_2}$ space which coarsely separates a coarse $PD_{n+1}^{\mathbb{Z}_2}$ space into two `coarse $PD_{n+1}^{\mathbb{Z}_2}$ half-spaces' (see the coarse Jordan separation theorem of \cite{kapovich2005coarse}). One can think of a copy of $\mathbb{R}^n$ (or $\mathbb{H}^n$) coarsely separating $\mathbb{R}^{n+1}$ (or $\mathbb{H}^{n+1}$) into two half-spaces. Each coarse $PD_{n+1}^{\mathbb{Z}_2}$ half-space is essential. More generally, any coarse complementary component that contains such a half-space is necessarily essential. The definition of essential components is rather technical, so we do not define it here. However, we do give  some examples of essential and non-essential components.

In Figure \ref{fig:ess1}, a line coarsely separates the space into two deep components.
In Figure \ref{fig:ess2},  a plane coarsely separates the space into two deep components. In both cases, the bottom component is essential and the top component is not. These examples illustrate the only ways in which a coarse complementary component fails to be essential. In  Figure \ref{fig:ess1}, the boundary of the top component is not the entire line, but only half the line. It will be shown in Proposition \ref{prop:deepcondition} that this cannot occur for essential components. In Figure \ref{fig:ess2}, the boundary of the top component \emph{is} the entire plane, so the component fails to be essential in a more subtle way. It is not essential because it contains a non-trivial 1-cycle at infinity --- a `hole' that cannot be filled as we move away from the plane.

We are now in a position to state our main theorem; all the hypotheses of the theorem are invariant under quasi-isometry.

\begin{restatable*}{thm}{main}\label{thm:main}
Let $G$ be a group of type $FP_{n+1}^{\mathbb{Z}_2}$ and let $W\subseteq G$ be a coarse $PD_n^{\mathbb{Z}_2}$ subspace. Suppose $G$ contains three essential, coarse disjoint, coarse complementary components of $W.$ Then there exists a subgroup $H\leq G$, contained in $N_R(W)$ for some $R\geq 0$, such that $G$ splits over $H.$
\end{restatable*}

We now discuss some consequences of  Theorem \ref{thm:main} to demonstrate its applicability to situations in which its rather technical hypotheses are not a priori known to hold.

In deciding whether a group splits over a certain class of subgroups (e.g. virtually $\mathbb{Z}^n$ subgroups), it simplifies the argument if we make the natural  assumption that it does not split over a `smaller' class of subgroups (e.g. virtually $\mathbb{Z}^r$ subgroups for  $r<n$). For instance in Papasoglu's theorem, which determines if a group splits over a two-ended group, we assume that the ambient group is  one-ended; therefore, Stallings' theorem says it cannot split over a finite subgroup.

We want to find a higher dimensional analogue of a space being one-ended that rules out splittings over  certain classes of subgroups. The right generalisation of one-endedness is  acyclicity at infinity over $\mathbb{Z}_2$, which will be defined in Section \ref{sec:topatinf}. If a group is of type $FP_{n+1}^{\mathbb{Z}_2}$ and is $(n-1)$-acyclic at infinity over $\mathbb{Z}_2$, then it cannot split over a virtually $\mathbb{Z}^r$ subgroup for any $r<n$.

A finitely generated group $G$ has  one end if and only if it is $0$-acyclic at infinity over $\mathbb{Z}_2$.  If $G$ is the fundamental group of a closed aspherical $n$-manifold, or more generally is a  coarse $PD_n^{\mathbb{Z}_2}$  space, then it is $(n-2)$-acyclic at infinity over $\mathbb{Z}_2$. 

We say  $W\subseteq G$ is \emph{essentially embedded} if every deep coarse complementary component of $W$ is essential.  A group is a \emph{coarse $PD_n^{\mathbb{Z}_2}$ group} if, when equipped with the word metric with respect to some finite generating set, it is a coarse $PD_n^{\mathbb{Z}_2}$ space. We prove the following criterion which determines when a  coarse $PD_n^{\mathbb{Z}_2}$ subgroup is essentially embedded. 

\begin{restatable*}{prop}{essentialminimal}
\label{prop:essentialminimal}
Let $G$ be a group of type $FP_{n+1}^{\mathbb{Z}_2}$ that is $(n-1)$-acyclic at infinity over $\mathbb{Z}_2$ and let $H\leq G$ be a coarse $PD_n^{\mathbb{Z}_2}$ group. Then $H$ is essentially embedded if and only if no infinite index subgroup of $H$ coarsely separates $G$.
\end{restatable*}

Combining this with Theorem \ref{thm:main} and the observation that the hypotheses of Theorem \ref{thm:main} are invariant under quasi-isometry, we deduce the following:

\begin{restatable*}{thm}{qiinv}
\label{thm:qiinv}
Let $G$ be a group of type $FP_{n+1}^{\mathbb{Z}_2}$ that is  $(n-1)$-acyclic at infinity over $\mathbb{Z}_2$. Suppose $H\leq G$ is a coarse $PD_n^{\mathbb{Z}_2}$  group that coarsely $3$-separates $G$, and no infinite index subgroup of $H$ coarsely separates $G$. Then for any quasi-isometry $f:G\rightarrow G'$ there is a subgroup $H'\leq G'$, at finite Hausdorff distance from $f(H)$, such that $G'$ splits over $H'$.
\end{restatable*}

It is shown in \cite{brown2000improper} that if $G$ is the fundamental group of a finite graph of groups whose vertex and edge groups satisfy appropriate finiteness and acyclicity at infinity conditions, then so does $G$. We therefore deduce the following:

\begin{restatable*}{cor}{splittings}\label{cor:splittings}
Suppose $G=A*_HB$ (or $G=A*_H$) is a splitting, where $H$ is a coarse $PD_n^{\mathbb{Z}_2}$ group, $A$ and $B$ are of type $FP_{n+1}^{\mathbb{Z}_2}$ and are $(n-1)$-acyclic over infinity over $\mathbb{Z}_2$, $H$ coarsely $3$-separates $G$, and no infinite index subgroup of $H$ coarsely separates $G$. 
Then for any quasi-isometry $f:G\rightarrow G'$ there is a subgroup $H'\leq G'$, at finite Hausdorff distance from $f(H)$, such that $G'$ splits over $H'$.
\end{restatable*}

 This corollary is particularly useful when combined with Theorem 8.7 of \cite{vavrichek2012coarse}, which gives an algebraic characterisation of when the 3-separating hypothesis of Corollary \ref{cor:splittings} holds.

There are several examples of groups that are acyclic at infinity.
Say $G$ is the extension of $N$ by $Q$, where $N$ and $Q$ are groups of type $F_{n+1}$  that are $r$ and $s$-acyclic at infinity over $\mathbb{Z}_2$ respectively. Then Theorem  17.3.6 in \cite{geoghegan2008topological} tells us that  $G$ is $\min(n,s+r+2)$-acyclic at infinity over $\mathbb{Z}_2$. For example, if $N$ and $Q$ are finitely presented one-ended groups, then $G$ is $1$-acyclic at infinity over $\mathbb{Z}_2$. Results from \cite{brady2001connectivity} and \cite{davis2002topology}  give conditions for Coxeter and right-angled Artin groups to be acyclic at infinity. These results allow us to apply Theorem  \ref{thm:qiinv} and Corollary \ref{cor:splittings}.

Theorem \ref{thm:qiinv} can be simplified if $H$ is a virtually polycyclic group and thus necessarily a coarse $PD_n^{\mathbb{Z}_2}$ group. In this case, one can drop the condition that no infinite index subgroup of $H$ coarsely separates $G$, since it is implied by $G$ being $(n-1)$-acyclic at infinity.

\begin{restatable*}{cor}{qiinvpolycyc}
\label{cor:qiinvpolycyc}
Let $G$ be a group of type $FP_{n+1}^{\mathbb{Z}_2}$ that is  $(n-1)$-acyclic at infinity over $\mathbb{Z}_2$. Suppose $H\leq G$ is a virtually polycyclic subgroup of Hirsch length $n$ that coarsely 3-separates $G$. Then for any quasi-isometry $f:G\rightarrow G'$, there is a subgroup $H'\leq G'$, at finite Hausdorff distance from $f(H)$, such that $G'$ splits over $H'$.
\end{restatable*}

Corollary \ref{cor:qiinvpolycyc} may be coupled with the quasi-isometric rigidity of virtually $\mathbb{Z}^n$ groups. If a group $G$ is of type $FP_{n+1}^{\mathbb{Z}_2}$, is  $(n-1)$-acyclic at infinity over $\mathbb{Z}_2$ and is coarsely 3-separated by a virtually $\mathbb{Z}^n$ group, then any group quasi-isometric to $G$ splits over a virtually $\mathbb{Z}^n$ subgroup. A similar statement holds for virtually nilpotent groups.

Two subgroups $H,K\leq G$ are \emph{commensurable} if $H\cap K$ has finite index in both $H$ and $K$. The \emph{commensurator} of $H$ is the subgroup $$\mathrm{Comm}_G(H):=\{g\in G\mid H \textrm{ and } g^{-1}Hg \textrm{ are commensurable}\}.$$  We  deduce the following from Theorem \ref{thm:main}:

\begin{restatable*}{thm}{maincomm}
\label{thm:maincomm}
Let $G$ be a group of type $FP_{n+1}^{\mathbb{Z}_2}$ that is the fundamental group of a finite graph of groups $\mathcal{G}$. Suppose $\mathcal{G}$ contains an edge $e$ with associated edge monomorphisms $i_0:G_e\rightarrow G_v$ and $i_1:G_e\rightarrow G_w$ such that the following holds: $G_e$ is a coarse $PD_n^{\mathbb{Z}_2}$ group and $|\mathrm{Comm}_{G_v}(i_0(G_e)): i_0(G_e)|$ and $|\mathrm{Comm}_{G_w}(i_1(G_e)): i_1(G_e)|$ are both  greater than one and not both equal to two. If $f:G\rightarrow G'$ is a quasi-isometry, then $f(G_e)$ has finite Hausdorff distance from some subgroup $H'\leq G'$, and $G'$ splits over a subgroup commensurable to a subgroup of $H'$.
\end{restatable*}

Let $G$ be a group with Cayley graph $\Gamma$ with respect to some finite generating set. For a subgroup $H\leq G$, we let $\Gamma_H$ denote the quotient of $\Gamma$ by the
left action of $H$. We say that $H$ is a \emph{codimension one subgroup} if $\Gamma_H$ has more than one end. If $G$ splits over $H$, then $H$ is a codimension one subgroup, but the converse is not true in general. 

We prove two results which don't have a 3-separating hypothesis.   In the case where $G=\mathrm{Comm}_G(H)$  we obtain the following:

\begin{restatable*}{thm}{comm}\label{thm:comm}
Let $G$ be a group of type $FP_{n+1}^{\mathbb{Z}_2}$ and suppose $G$ splits over a coarse $PD_{n}^{\mathbb{Z}_2}$ subgroup $H$ and that $\mathrm{Comm}_G(H)=G$. Let $f:G\rightarrow G'$ be a quasi-isometry. Then either: \begin{enumerate}
\item $G'$ is  a coarse $PD_{n+1}^{\mathbb{Z}_2}$ group;
\item $f(H)$ has finite Hausdorff distance from some $H'\leq G'$, and $G'$ splits over $H'$.
\end{enumerate}
\end{restatable*}

This is a coarse geometric generalisation of a result from \cite{dunwoody1993splitting}, in which it is shown that if $G=\mathrm{Comm}_G(H)$ and $H$ is a codimension one subgroup of $G$, then $G$ splits over a subgroup commensurable to $H$.

A construction by Sageev \cite{sageev1995ends} shows that the existence of a codimension one subgroup is equivalent to an essential action   on a CAT(0) cube complex (as defined in \cite{sageev1995ends}). As a consequence, it can be shown that any group which has a codimension one subgroup cannot have property (T).

Under suitable hypotheses, we show that having a codimension one subgroup is a quasi-isometry invariant.  To state our result, we define the following:

\begin{defn*}
A group $G$ is  a \emph{coarse $n$-manifold group} if it is of type $FP_{n}^{\mathbb{Z}^2}$ and $H^n(G,\mathbb{Z}_2 G)$ has a non-zero, finite dimensional, $G$-invariant subspace.
\end{defn*}
The class of coarse $n$-manifold groups is closed under quasi-isometry and contains the fundamental group of every  $n$-manifold and every coarse $PD_n^{\mathbb{Z}_2}$ group. It is shown in \cite{kleinercohomology} that coarse $2$-manifold groups are virtually surface groups (see also  \cite{bowditch2004planar}). 
\begin{restatable*}{thm}{codimsbgp}\label{thm:codimsbgp}
Let $G$ be a group of type $FP_{n+1}^{\mathbb{Z}_2}$ and let $H\leq G$ be a coarse $PD_n^{\mathbb{Z}_2}$ group. Suppose $G$ contains two essential, coarse disjoint, coarse complementary components of $H$ and $f:G\rightarrow G'$ is a quasi-isometry. Then either: \begin{enumerate}
\item $G'$ is  a coarse $(n+1)$-manifold group;
\item $G'$ contains a codimension one subgroup.
\end{enumerate}
\end{restatable*}

We remark that the dichotomy in Theorems \ref{thm:comm} and \ref{thm:codimsbgp} loosely resembles the dichotomy in the Algebraic Torus Theorem \cite{dunwoody2000algebraic} which  states that if $H\leq G$ is  a virtually polycyclic subgroup of Hirsch length $n$ (so is necessarily coarse $PD_n^{\mathbb{Z}_2}$) and $H$ is a codimension one subgroup of $G$, then either $G$ is a coarse $PD_{n+1}^{\mathbb{Z}_2}$ group or $G$ splits over a virtually polycyclic subgroup of Hirsch length $n$.

Our work builds on results by Vavrichek \cite{vavrichek2012coarse}, Mosher--Sageev--Whyte in \cite{mosher2003quasi} and \cite{mosher2011quasiactions} and Papasoglu in \cite{papasoglu2007group}.
We remark that one of the conditions Vavrichek uses is the \emph{non-crossing} condition, which is not known to be invariant under quasi-isometry. Under our hypotheses, we obtain the non-crossing condition automatically (see Lemma \ref{lem:noncrossing}). 

Although the splittings considered in this paper are not canonical, there is a canonical object,  the JSJ tree of cylinders (\cite{guirardel2011cylinders}), which  encodes all possible splittings of the form considered in this paper. This tree of cylinders is a quasi-isometry invariant \cite{margolis2017cylinders}.

This paper is organized as follows.
In Section \ref{sec:coarsegeom}, we develop coarse geometric preliminaries. In Section \ref{sec:metriccomplex}, we review a notion of cohomology which is invariant under coarse isometries.  For finitely generated groups, this agrees with cohomology with group ring coefficients. There is a more general notion of coarse cohomology due to Roe (see \cite{roe2003lectures}). However, our approach to coarse cohomology is more amenable to the quantitative methods essential in our work. 
Our approach to coarse cohomology makes use of the theory of \emph{metric complexes} as defined in the appendix of \cite{kapovich2005coarse}. In our exposition, metric complexes can be replaced with \emph{bounded geometry CW complexes} as defined in \cite{mosher2003quasi}, with the caveat that this weakens our results from groups of type $FP_n^{\mathbb{Z}_2}$ to groups of type $F_n$.

In Section \ref{sec:coarsepdn}, we discuss the notion of coarse $PD_n^{\mathbb{Z}_2}$  spaces as defined in  \cite{kapovich2005coarse}. We also prove a lemma about coarse $PD_n^{\mathbb{Z}_2}$  spaces that is needed to deduce Theorem \ref{thm:qiinv}. In Section \ref{sec:coarsecompcomp}, we introduce coarse complementary components and describe the coarse Mayer--Vietoris sequence in coarse cohomology.

Section \ref{sec:final} is the heart of this paper. In it, we define essential components and obtain the above results. We make use of the notion of a \emph{mobility set} due to Kleiner and contained in an unpublished manuscript \cite{kleinercohomology}. We give a self-contained exposition of the parts of \cite{kleinercohomology} that we use.

I would like to thank my supervisor Panos Papasoglu for his advice and encouragement, and Bruce Kleiner for allowing me to use results contained in his unpublished manuscript.

\section{Coarse Geometric Preliminaries}\label{sec:coarsegeom}

Let $(X,d)$ be a metric space. For $x\in X$ and $\emptyset\neq A\subseteq X$, we let $d(x,A):=\inf \{d(a,x)\mid a\in A\}$.  
We define $N^X_r(A):=\{x\in X\mid d(x,A)\leq r\}$. For each $x\in X$, we let $N^X_r(x):=N_r^X(\{x\})$, which is just the closed $r$ ball around $x$. When unambiguous, we denote $N_r^X$ by $N_r$.
If  $A,B\subseteq X$ are non-empty, we define the \emph{Hausdorff distance} to be $$d_\mathrm{Haus}(A,B):=\inf\{r\geq 0\mid A\subseteq N_r(B) \textrm{ and } B\subseteq N_r(A)\}.$$
 A \emph{$t$-chain} of length $n$ from $x$ to $y$ consists of a sequence $x=x_0,x_1,\dots, x_n=y$ such that $d(x_i,x_{i+1})\leq t$. 

\subsection{Coarse Embeddings}
The material here is fairly standard, see \cite{roe2003lectures} for more details.
A function $\phi:\mathbb{R}_{\geq 0}\rightarrow \mathbb{R}_{\geq 0}$ is \emph{proper} if the inverse images of  compact sets are compact. 
\begin{defn}
Let $(X,d_X)$ and $(Y,d_Y)$ be metric spaces and $\eta,\phi:\mathbb{R}_{\geq 0}\rightarrow \mathbb{R}_{\geq 0}$ be  proper non-decreasing functions. We say a map $f:X\rightarrow Y$ is an  \emph{$(\eta,\phi)$-coarse embedding} if for all $x,y\in X$, $$\eta(d_X(x,y))\leq d_Y(f(x),f(y)) \leq \phi(d_X(x,y)).$$
We say that $f$ is a \emph{coarse embedding} if there exist proper non-decreasing functions $\eta$ and $\phi$ such that $f$ is an $(\eta,\phi)$-coarse embedding. We say that $\eta$ (resp. $\phi$)  is the \emph{lower} (resp. \emph{upper}) \emph{distortion function} of $f$.
\end{defn}
In the literature, coarse embeddings are also known as \emph{uniform embeddings} or \emph{uniformly proper embeddings}, sometimes with the additional assumption that the upper distortion function is affine.

\begin{rem}\label{rem:propinv}
For each proper non-decreasing function $\eta:\mathbb{R}_{\geq 0}\rightarrow \mathbb{R}_{\geq 0}$, we define another proper non-decreasing function $\tilde{\eta}:\mathbb{R}_{\geq 0}\rightarrow \mathbb{R}_{\geq 0}$ by $\tilde{\eta}(R):=\max(\eta^{-1}([0,R]))$. 
 We observe that  whenever $\eta(S)\leq R$, then $S\leq \tilde \eta(R)$. Conversely, if $R<\eta(S)$, then $\tilde \eta(R)\leq S$.
\end{rem}

\begin{defn}
A map $f:X\rightarrow Y$ is \emph{$B$-dense} if $N^Y_B(f(X))=Y$. We say that $f$ is \emph{coarsely surjective} if it is $B$-dense for some $B$. 
Two maps $f,g:X\rightarrow Y$ are \emph{$r$-close} if $\mathrm{sup}_{x\in X}d_Y(f(x),g(x))\leq r$. We say that $f$ and $g$ are \emph{close} if they are $r$-close for some $r$.
\end{defn}

Using Remark \ref{rem:propinv}, we verify that a coarse embedding $f:X\rightarrow Y$ is coarsely surjective if and only if there exists a coarse embedding $g:Y\rightarrow X$ such that $fg$ and $gf$ are close to $\mathrm{id}_Y$ and $\mathrm{id}_X$ respectively. 
We then say that $g$ is a \emph{coarse inverse} to $f$ and that $f$ is a \emph{coarse isometry}. We say $f:X\rightarrow Y$ is an \emph{$(\eta,\phi,B)$-coarse isometry} if it is an $(\eta,\phi)$-coarse embedding  and is $B$-dense.

If the distortion functions of a coarse embedding $f$ are affine, then we say $f$ is a  \emph{quasi-isometric embedding}. A coarsely surjective quasi-isometric embedding is known as a \emph{quasi-isometry}. 
We will see examples of coarse isometries and quasi-isometries in the context of group theory in Section \ref{sec:gpgeom}.

A metric space is said to be \emph{coarse geodesic} (resp. \emph{quasi-geodesic}) if it is coarsely isometric (resp. quasi-isometric) to a geodesic metric space. If $f:X\rightarrow Y$ is a coarse embedding and $X$ is a quasi-geodesic metric space, then the upper distortion function of $f$ can always be assumed to be affine. Consequently, a coarse isometry between quasi-geodesic metric spaces is a quasi-isometry.

We now show that if a space is coarse geodesic, one can approximate it by a simplicial complex known as the \emph{Rips complex}.
\begin{defn}
Let $(X,d)$ be a metric space. For each $r\geq 0$, we define the \emph{Rips complex} $P_r(X)$ to be the simplicial complex with vertex set  $X$, where $\{x_0,\dots, x_n\}\subseteq X$ spans an $n$-simplex if for all $i,j$ $$d(x_i,x_j)\leq r.$$ We define the \emph{Rips graph} $P^1_r(X)$ to be the $1$-skeleton of $P_r(X)$.
\end{defn}

If $P^1_r(X)$ is connected, it can be endowed with the path metric in which edges have length 1.  The following proposition shows that if $X$ is coarse geodesic and $r$ is sufficiently large, $X$ and $P^1_r(X)$ are coarsely isometric.

\begin{prop}\label{prop:coarsegeodmetr}
Let $(X,d)$ be a metric space. The following are equivalent:
\begin{enumerate}
\item\label{prop:coarsegeodmetr1} $X$ is  coarse geodesic;
\item\label{prop:coarsegeodmetr2} there exists a $t>0$ and a proper non-decreasing function $\eta$ such that for all $x,y\in X$, there is a $t$-chain from $x$ to $y$ of length at most $\eta(d(x,y));$
\item\label{prop:coarsegeodmetr3} there exists a $t>0$ such that for all $r\geq t$, the Rips graph $P^1_r(X)$ is connected and the inclusion $X\rightarrow P^1_r(X)$ is a coarse isometry.
\end{enumerate}
\end{prop}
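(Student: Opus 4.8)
The plan is to establish the cycle of implications $(1)\Rightarrow(2)\Rightarrow(3)\Rightarrow(1)$. The implication $(3)\Rightarrow(1)$ is immediate: a connected graph equipped with the path metric in which every edge has length $1$ is a geodesic metric space, so if for some $r\geq t$ the inclusion $X\hookrightarrow P^1_r(X)$ is a coarse isometry, then $X$ is by definition coarse geodesic.

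For $(1)\Rightarrow(2)$, fix a geodesic metric space $Y$ and a coarse isometry $f\colon X\to Y$ with coarse inverse $g\colon Y\to X$; let $\phi$ be an upper distortion function for $f$, let $\psi$ be an upper distortion function for $g$, and let $C\geq 0$ be such that $gf$ is $C$-close to $\mathrm{id}_X$. Given $x,y\in X$, choose a geodesic in $Y$ from $f(x)$ to $f(y)$ and sample points $p_0=f(x),p_1,\dots,p_N=f(y)$ along it at unit spacing, so that $d_Y(p_i,p_{i+1})\leq 1$ and $N\leq d_Y(f(x),f(y))+1\leq \phi(d(x,y))+1$. Then $g(p_0),\dots,g(p_N)$ is a chain in $X$ with consecutive jumps at most $\psi(1)$; since $d(x,g(p_0))=d(x,gf(x))\leq C$ and likewise $d(g(p_N),y)\leq C$, prepending $x$ and appending $y$ yields a $t$-chain from $x$ to $y$ of length $N+2\leq\eta(d(x,y))$, where $t:=\max(\psi(1),C)$ and $\eta(s):=\phi(s)+3$ is proper and non-decreasing.

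For $(2)\Rightarrow(3)$, let $t$ and $\eta_0$ witness condition $(2)$ and fix $r\geq t$. Any $t$-chain has consecutive distance at most $t\leq r$, so its vertices are successively joined by edges of $P^1_r(X)$; hence the chains provided by $(2)$ are edge-paths in $P^1_r(X)$, proving that $P^1_r(X)$ is connected and that, with the path metric $d_{P^1_r(X)}$, one has $d_{P^1_r(X)}(x,y)\leq\eta_0(d(x,y))$ for all $x,y\in X$. Conversely, an edge-path of length $m$ between vertices $x$ and $y$ is a chain in $X$ with steps at most $r$, so $d(x,y)\leq r\cdot d_{P^1_r(X)}(x,y)$; thus the inclusion $\iota\colon X\to P^1_r(X)$ has lower distortion function $s\mapsto s/r$ and upper distortion function $\eta_0$, and it is $1$-dense since every point of $P^1_r(X)$ lies within distance $1/2$ of a vertex of $X$. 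Hence $\iota$ is a coarse isometry.

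The only step requiring genuine bookkeeping is $(1)\Rightarrow(2)$, where one must propagate the distortion data of $f$, the distortion data of its coarse inverse $g$, and the closeness constant of $gf$ to the identity through the sample-and-push-forward construction, and then check that the resulting length bound is a single proper non-decreasing function of $d(x,y)$ rather than an estimate depending on the particular pair $x,y$. The remaining two implications are essentially formal.
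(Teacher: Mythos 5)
Your proof is correct and follows essentially the same route as the paper: for $(1)\Rightarrow(2)$ you sample a geodesic in the model space at unit spacing and push it through a coarse isometry, for $(2)\Rightarrow(3)$ you compare the Rips path metric with $d$ via the chain bound, and $(3)\Rightarrow(1)$ is immediate. The only cosmetic difference is that you orient the coarse isometry out of $X$ and push via the coarse inverse $g$ (giving a bound in terms of the upper distortion $\phi$ of $f$), whereas the paper orients it into $X$ and bounds the geodesic length via the auxiliary function $\tilde\eta$ of Remark~\ref{rem:propinv}; both yield a proper non-decreasing $\eta$ and the argument is otherwise the same.
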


\begin{proof}
$(\ref{prop:coarsegeodmetr1}) \implies (\ref{prop:coarsegeodmetr2})$: There exists a geodesic metric space $X'$ and an $(\eta,\phi,B)$-coarse isometry $f:X'\rightarrow X$. For all $x,y\in X$, there are $x',y'\in X$ such that $d_X(f(x'),x),d_X(f(y'),y)\leq B$. Letting $d:=d_{X'}(x',y')$, there is a geodesic $p:[0,d]\rightarrow X'$ from $x'$ to $y'$. We choose $n\in\mathbb{N}$ such that $n-1<d\leq n$. Let $x_{-1}=x$, $x_i=f(p(i))$ for $0\leq i <n$, $x_n=f(y')$ and $x_{n+1}=y$. Letting $t:=\max(B,\phi(1))$, we see $x_{-1},\dots, x_{n+1}$ is a  $t$-chain of length $n+2$ from $x$ to $y$. By Remark \ref{rem:propinv}, $$n+2\leq d_{X'}(x',y')+3\leq \tilde{\eta}(d_X(x,y)+2B)+3.$$

$(\ref{prop:coarsegeodmetr2}) \implies (\ref{prop:coarsegeodmetr3})$: Suppose there exist $\eta$ and $t$ such that $(\ref{prop:coarsegeodmetr2})$ holds, and let $r\geq t$.  Any two points of $X$ can be joined by a $t$-chain; hence  $P^1_r(X)$ is connected. Let $d_r$ be the induced path metric on  $P^1_r(X)$. Any $x,y\in X$ can be joined by a $t$-chain of length $n$ with $n\leq \eta(d(x,y))$. Such a $t$-chain corresponds to an edge path  of length $n$ in $P^1_r(X)$, so $d_r(x,y)\leq n$. As $(P_r^1(X),d_r)$ is a geodesic metric space,  $x$ and $y$ can be joined by an $r$-chain of length $d_r(x,y)$. By the triangle inequality, we see that  $d(x,y)\leq d_r(x,y) r$. This implies that $$\frac{d(x,y)}{r} \leq d_r(x,y)\leq \eta(d(x,y)),$$ verifying that the inclusion $X\rightarrow P^1_r(X)$ is a coarse isometry.

$(\ref{prop:coarsegeodmetr3}) \implies (\ref{prop:coarsegeodmetr1})$: This is clear, since $P^1_r(X)$  is a geodesic metric space.
\end{proof}

\subsection{Finitely Generated Groups as Geometric Objects}\label{sec:gpgeom}

\begin{defn}
Let $(X,d)$ be a metric space. For $x,y\in X$, a \emph{1-geodesic} between $x$ and $y$ is a $1$-chain $x=x_0,x_1,\dots, x_n=y$ such that $d(x_i,x_j)=|i-j|$ for each $i,j$. We say that $X$ is \emph{1-geodesic} if every pair of points can be joined by a $1$-geodesic.
\end{defn}

Let $G$ be a  group with finite generating set $S$. We equip $G$ with the word metric $d_S$ with respect to $S$.  This metric is unique up to quasi-isometry --- if $S'$ is another finite generating set then the identity map $id_G:(G,d_S)\rightarrow (G,d_{S'})$ is a quasi-isometry. Unless otherwise stated, we always assume that finitely generated groups are equipped with  the word metric with respect to some finite generating set. 
All finitely generated groups equipped with the word metric are  1-geodesic. 

If $H\leq G$ is a finitely generated subgroup equipped with the word metric, then the inclusion $\iota: H\rightarrow G$ is a coarse embedding,  but not necessarily a quasi-isometric embedding (see Corollary 1.19 and Remark 1.20 of \cite{roe2003lectures}).  Thus the \emph{intrinsic} geometry of $H$ ($H$ equipped with the word metric) and the \emph{extrinsic} geometry of $H$ ($H$ considered as a subspace of $G$, where $G$ is itself equipped with the word metric) are the same up to coarse isometry.

In the case where $H$ is a \emph{distorted} subgroup, i.e. $\iota$ is not a quasi-isometric embedding, then $\iota$ is a coarse isometry onto its image, but not a quasi-isometry onto its image. This explains why coarse isometries are necessary in this paper, and the notion of quasi-isometry is not sufficient.

\begin{defn}
Let $G$ be a group and $H\leq G$ be a subgroup. We say  $A\subseteq G$ is \emph{$H$-finite} if there is a finite $R\subseteq G$ such that $A\subseteq HR$. 
\end{defn}
\begin{rem}\label{rem:finindex}
If $H\leq K\leq G$, then $K$ is $H$-finite if and only if $H$ has finite index in $K$.
\end{rem}

\begin{lem}\label{lem:hfinitenbhd}
Let $G$ be a finitely generated group and $H$ be a subgroup. Then $K\subseteq G$ is $H$-finite   if and only if it is contained in $N_r(H)$ for some $r\geq 0$. In particular, if $H\leq K\leq G$, then $H$ has finite index in $K$ if and only if $K\subseteq N_r(H)$ for some $r\geq 0$.
\end{lem}
\begin{proof}
If $X\subseteq N_r(H)$, then every $x\in X$ can be written as $ht$, where $h\in H$ and $t\in G$ is a word of length at most $r$. Hence $X\subseteq H T$, where $T$ contains all the finitely many words in $G$ of length at most $r$.
Conversely, if $X\subseteq H T$ for some finite $T\subseteq G$, we let $r:=\mathrm{max}(\{\mathrm{length}(t)\mid t \in T\})$. Then for all $x\in X$, we can write $x=h t$ for some $h\in H$ and $t\in T$, showing that  $x\in N_r(H)$.
\end{proof}

\begin{lem}[See Lemma 2.2 of \cite{mosher2011quasiactions}]\label{lem:cocompactintersec}
Let $G$ be a finitely generated group and let $H,K\leq G$ be subgroups. Then for every $r,s\geq 0$,  $K\cap H$ has finite Hausdorff distance from  $N_r(K)\cap N_s(H)$.
\end{lem}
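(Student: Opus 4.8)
The plan is to verify the two inclusions that witness finite Hausdorff distance. One of them is free: since $r,s\geq 0$ we have $K\cap H\subseteq K\subseteq N_r(K)$ and $K\cap H\subseteq H\subseteq N_s(H)$, so $K\cap H\subseteq N_r(K)\cap N_s(H)$ with no enlargement needed. Hence the whole content is to produce a single constant $t=t(r,s)$ (which may depend on $G,H,K$ but not on the point) such that $N_r(K)\cap N_s(H)\subseteq N_t(K\cap H)$.

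To do this, fix a finite generating set, write $|g|:=d(1,g)$ for word length and $F_\rho:=\{g\in G : |g|\leq \rho\}$; this set is \emph{finite}, and this is the only place finite generation of $G$ enters. The key observation is the following. Given $g\in N_r(K)\cap N_s(H)$, choose $k\in K$ with $d(g,k)\leq r$ and $h\in H$ with $d(g,h)\leq s$; then $f:=h^{-1}k$ satisfies $|f|\leq d(h,k)\leq r+s$, and moreover $f\in HK$ (the product set $\{h'k':h'\in H,\,k'\in K\}$, not a subgroup) since $h^{-1}\in H$ and $k\in K$. Thus $f$ ranges over the \emph{finite} set $F:=F_{r+s}\cap HK$. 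For each $f\in F$ fix once and for all a decomposition $f=\bar h_f k_f$ with $\bar h_f\in H$ and $k_f\in K$, and set $t:=s+\max_{f\in F}|\bar h_f|$ (finite, as a maximum over a finite set; take $t=s$ if $F=\emptyset$).

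Now the element of $K\cap H$ near $g$ drops out of a short computation: with $f=h^{-1}k=\bar h_f k_f$ we get $\bar h_f^{-1}h^{-1}=\bar h_f^{-1}fk^{-1}=k_fk^{-1}\in K$, and clearly $\bar h_f^{-1}h^{-1}\in H$, so $m:=\bar h_f^{-1}h^{-1}\in K\cap H$ and hence $m^{-1}\in K\cap H$. Since $mh=\bar h_f^{-1}$, left-invariance of the word metric and $|g^{-1}|=|g|$ give $d(h,m^{-1})=|h^{-1}m^{-1}|=|mh|=|\bar h_f|$, whence $d(g,m^{-1})\leq d(g,h)+d(h,m^{-1})\leq s+|\bar h_f|\leq t$. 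Thus $g\in N_t(K\cap H)$, as desired, and the two inclusions together give $d_{\mathrm{Haus}}(K\cap H,\,N_r(K)\cap N_s(H))\leq t<\infty$.

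The only point requiring care — and essentially the only idea in the argument — is the definition of $t$: one might worry that a decomposition $f=\bar h_f k_f$ into the product set $HK$ cannot be chosen with $|\bar h_f|$ controlled by $|f|$ (indeed this fails in general, e.g.\ for distorted subgroups). But this is a non-issue here: only finitely many $f$ occur, so \emph{any} choice of decompositions has a finite maximum, and that is all ``finite Hausdorff distance'' demands. Everything else is routine word-metric bookkeeping.
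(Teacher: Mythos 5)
Your proposal is correct and takes essentially the same approach as the paper: both observe that the product $h^{-1}k$ (for $k\in K$, $h\in H$ near $g$) has word length at most $r+s$, hence ranges over a finite set, and then translate by an element of $K\cap H$ built from a fixed decomposition of that bounded element (the paper's $k_gk_{g_i}^{-1}=h_gh_{g_i}^{-1}$ is your $m^{-1}=kk_f^{-1}$). The only cosmetic difference is that the paper indexes by representative points $g_i$ and bounds the distance by a cover radius $R$, while you index by fixed decompositions $f=\bar h_fk_f$ and bound the distance by $s+\max_f|\bar h_f|$.
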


\begin{proof}
For every  $g\in N_r(K) \cap N_s(H)$, we choose  $k_g\in K$ and $h_g\in H$ such that  $g\in N_r(k_g) \cap N_s(h_g)$; therefore $d(e,k_g^{-1}h_g)\leq r+s$. As the set $\Lambda:=\{k_g^{-1}h_g\mid g\in N_r(K) \cap N_s(H)\}$ is finite, we choose  $g_1,\dots, g_n\in N_r(K) \cap N_s(H)$ such that $\Lambda:=\{k_{g_1}^{-1}h_{g_1},\dots, k_{g_n}^{-1}h_{g_n}\}$. 

We pick $R\geq 0$ large enough so that $\bigcup_{i=1}^n (N_r(k_{g_i}) \cap N_s(h_{g_i}) )\subseteq N_R(e)$. We now claim that $K\cap H\subseteq N_r(K) \cap N_s(H)=N_R(K\cap H)$. Indeed, if $g\in N_r(K) \cap N_s(H)$, then $k_g^{-1}h_g=k_{g_i}^{-1}h_{g_i}$ for some $1\leq i\leq n$. Therefore, as $k_gk_{g_i}^{-1}=h_gh_{g_i}^{-1}\in K\cap H$, we see that 
\begin{align*}
g\in N_r(k_g) \cap N_s(h_g)=k_gk_{g_i}^{-1}(N_r(k_{g_i}) \cap N_s(h_{g_i}))\subseteq N_R(k_gk_{g_i}^{-1})\subseteq N_R(K\cap H).&\qedhere \end{align*}\end{proof}

\begin{defn}
Two subgroups $K,H\leq G$ are said to be \emph{commensurable} if $K\cap H$ has finite index in both $K$ and $H$.
\end{defn}

\begin{prop}\label{prop:sbgphdist}
Let $G$ be a finitely generated group and  $H,K\leq G$ be subgroups. Then:
\begin{enumerate}
\item $H\subseteq N_r(K)$ for some $r\geq 0$ if and only if $H$ is commensurable to a subgroup of $K;$
\item $d_\mathrm{Haus}(K,H)$ is finite  if and only if  $H$ and $K$  are commensurable.

\end{enumerate}
\end{prop}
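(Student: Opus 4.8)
The plan is to derive both statements from Lemmas~\ref{lem:hfinitenbhd} and~\ref{lem:cocompactintersec}; the argument is essentially bookkeeping with metric neighbourhoods and finite indices, and I expect no serious obstacle beyond applying Lemma~\ref{lem:cocompactintersec} with the right pair of constants.

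For the ``if'' direction of (1): if $L\leq K$ is commensurable with $H$, then $H\cap L$ has finite index in $H$, so Lemma~\ref{lem:hfinitenbhd} gives some $r\geq 0$ with $H\subseteq N_r(H\cap L)\subseteq N_r(K)$. For the ``only if'' direction: suppose $H\subseteq N_r(K)$. Apply Lemma~\ref{lem:cocompactintersec} to the subgroups $H,K$ with constants $s=0$ and $r$; since $N_0(H)\cap N_r(K)=H\cap N_r(K)=H$, it follows that $H$ is at finite Hausdorff distance from $H\cap K$, so $H\subseteq N_R(H\cap K)$ for some $R\geq 0$. As $H\cap K\leq H$, the last sentence of Lemma~\ref{lem:hfinitenbhd} shows $H\cap K$ has finite index in $H$, so $H$ is commensurable to the subgroup $H\cap K$ of $K$. (Alternatively one can avoid Lemma~\ref{lem:cocompactintersec} here: $H\subseteq N_r(K)$ forces $H\subseteq KT$ for a finite set $T$, so $H$ meets only finitely many right cosets of $K$, and a short direct computation shows each nonempty $H\cap Kt$ is a single right coset of $H\cap K$ in $H$.)

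For (2): if $H$ and $K$ are commensurable, then $H\cap K$ has finite index in both, so Lemma~\ref{lem:hfinitenbhd} supplies $r,s\geq 0$ with $H\subseteq N_r(H\cap K)\subseteq N_r(K)$ and $K\subseteq N_s(H\cap K)\subseteq N_s(H)$, whence $d_\mathrm{Haus}(H,K)\leq\max(r,s)<\infty$. Conversely, if $d_\mathrm{Haus}(H,K)<\infty$, then $H\subseteq N_r(K)$ and $K\subseteq N_r(H)$ for some $r\geq 0$; applying the ``only if'' half of (1) to $H$ (via $H\subseteq N_r(K)$) and, symmetrically, to $K$ (via $K\subseteq N_r(H)$) shows $H\cap K$ has finite index in both $H$ and $K$, i.e.\ they are commensurable.

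The only step needing a little care is the forward direction of (1), which is also the engine behind the forward direction of (2); once it is in place, everything else is immediate.
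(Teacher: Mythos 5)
Your proof is correct and follows essentially the same route as the paper: derive part (1) using Lemma~\ref{lem:cocompactintersec} with $s=0$ to show $H$ and $H\cap K$ are at finite Hausdorff distance, then invoke Lemma~\ref{lem:hfinitenbhd}, and finally obtain part (2) from part (1) by symmetry. The extra details you supply — the parenthetical coset-counting alternative avoiding Lemma~\ref{lem:cocompactintersec} and the explicit derivation of (2), which the paper merely asserts ``follows from (1)'' — are accurate but not a different approach.
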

\begin{proof}
We observe that $H$ is commensurable to a subgroup of $K$ if and only if $H\cap K$ has finite index in $H$. Suppose there exists an $r\geq 0$ such that $H\subseteq N_r(K)$. By Lemma \ref{lem:cocompactintersec}, $H=H\cap N_r(K)$ has finite Hausdorff distance from $H\cap K$; hence by Lemma \ref{lem:hfinitenbhd}, $H\cap K$ has finite index in $H$. Conversely, Lemma \ref{lem:hfinitenbhd} says that if $H\cap K$ has finite index in $H$, then there exists an $r\geq 0$ such that $H\subseteq N_r(H\cap K)\subseteq N_r(K)$. This proves  (1); (2)  follows  from (1).
\end{proof}

\subsection{Coarse Uniform Acyclicity} Let $R$ be a commutative ring with unity.

\begin{defn}\label{defn:counac}
Let  $\lambda:\mathbb{R}_{\geq 0}\rightarrow \mathbb{R}_{\geq 0}$ and $\mu:\mathbb{R}_{\geq 0}\times \mathbb{R}_{\geq 0} \rightarrow \mathbb{R}_{\geq 0}$ be functions such that $\lambda(i)\geq i$ and $\mu(i,r)\geq r$ for all $i,r\in \mathbb{R}_{\geq 0}$.  We say that a metric space $X$ is \emph{$(\lambda,\mu)$-coarsely uniformly $n$-acyclic over $R$} if for every $k\leq n$, $x\in X$ and $i,r\in \mathbb{R}_{\geq 0}$, the map $$\tilde{H}_k(P_i(N_r(x));R)\rightarrow \tilde{H}_k(P_{\lambda(i)}(N_{\mu(i,r)}(x));R),$$ induced by inclusion, is zero.
We say that $X$ is  \emph{coarsely uniformly $n$-acyclic over $R$}  if it is $(\lambda,\mu)$-coarsely uniformly $n$-acyclic over $R$ for some suitable $\lambda$ and $\mu$.  If $X$ is coarsely uniformly $n$-acyclic over $R$ for every $n$, then we say it is \emph{coarsely uniformly acyclic over $R$}.
\end{defn}

\begin{exmp}\label{exmp:ripshyp}
If $G$ is a hyperbolic group, then  $P_i(G)$ is contractible for $i$ sufficiently large. Thus for all $n\in \mathbb{N}$ there exists a  $\mu$ such that for all $r\geq 0$, $\tilde{H}_k(P_i(N_r(e));R)\rightarrow \tilde{H}_k(P_i(N_{\mu(i,r)}(e));R)$ is zero for all $k\leq n$. As $G$ acts cocompactly on $P_i(G)$, we thus see that $G$ is coarsely uniformly $n$-acyclic over $R$. Hence $G$ is coarsely uniformly acyclic over $R$.
\end{exmp}

\begin{prop}\label{prop:unifacyccoarseinv}
If  $Y$ is $(\lambda,\mu)$-coarsely uniformly $n$-acyclic over $R$ and $f:X\rightarrow Y$ is an $(\eta,\phi,B)$-coarse isometry, then $X$ is $(\lambda',\mu')$-coarsely uniformly $n$-acyclic over $R$, where $\lambda'$ and $\mu'$ depend only on $\lambda,\mu, \eta,\phi$ and $B$.
\end{prop}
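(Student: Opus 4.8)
The plan is to directly transport the coarse acyclicity data along the coarse isometry, tracking all constants explicitly. Let $g:Y\rightarrow X$ be a coarse inverse to $f$, say an $(\eta',\phi',B')$-coarse isometry, where $\eta',\phi',B'$ depend only on $\eta,\phi,B$ (this is the content of Remark \ref{rem:propinv} together with the discussion of coarse inverses); we may also assume $fg$ and $gf$ are $D$-close to the respective identities with $D$ depending only on $\eta,\phi,B$. Fix $x\in X$, $k\leq n$, and $i,r\in\mathbb{R}_{\geq 0}$; we must show the map $\tilde H_k(P_i(N_r^X(x)))\rightarrow \tilde H_k(P_{\lambda'(i)}(N_{\mu'(i,r)}^X(x)))$ induced by inclusion is zero for a suitable $\lambda',\mu'$.

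First I would observe that $f$ sends a subset $P_i(N_r^X(x))$ into $P_{\phi(i)}(N_{?}^Y(f(x)))$: indeed, if $d_X(a,x)\leq r$ then $d_Y(f(a),f(x))\leq \phi(r)$, and $f$ carries an $i$-simplex to a $\phi(i)$-simplex since $d_Y(f(a),f(b))\leq\phi(d_X(a,b))\leq\phi(i)$. So $f$ induces a simplicial map $P_i(N_r^X(x))\rightarrow P_{\phi(\max(i,r))}(N_{\phi(r)}^Y(f(x)))$ (absorbing constants, and using that $\phi$ is non-decreasing). Symmetrically, $g$ induces $P_j(N_s^Y(f(x)))\rightarrow P_{\phi'(\max(j,s))}(N_{\phi'(s)}^X(gf(x)))$, and since $gf$ is $D$-close to $\mathrm{id}_X$ this lands in $P_{\phi'(\max(j,s))}(N_{\phi'(s)+D}^X(x))$ after increasing the Rips parameter by at most $2D$ to reconcile $gf(x)$ with $x$. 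The composite $g\circ f$ restricted to $P_i(N_r^X(x))$ is then $D$-close (hence contiguous, after enlarging the Rips parameter by $2D$) to the inclusion $P_i(N_r^X(x))\hookrightarrow P_{i+2D}(N_{r+?}^X(x))$; I would make precise here the standard fact that two simplicial maps into a Rips complex that are pointwise within distance $D$ agree on homology once the target Rips parameter is at least the original parameter plus $2D$ (they are contiguous).

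Now I would chain these. Apply the coarse uniform $n$-acyclicity of $Y$ to the ball $N_{\phi(r)}^Y(f(x))$ with parameter $\phi(\max(i,r))$: there are $\lambda(\phi(\max(i,r)))$ and $\mu(\phi(\max(i,r)),\phi(r))$ so that $\tilde H_k$ of the inclusion of $P_{\phi(\max(i,r))}(N_{\phi(r)}^Y(f(x)))$ into $P_{\lambda(\phi(\max(i,r)))}(N_{\mu(\phi(\max(i,r)),\phi(r))}^Y(f(x)))$ is zero. Pushing this forward by $g$ and using the contiguity observation above to identify $g\circ f$ with an inclusion of $X$-Rips complexes, we conclude that the inclusion $P_i(N_r^X(x))\hookrightarrow P_{\lambda'(i)}(N_{\mu'(i,r)}^X(x))$ is zero on $\tilde H_k$, where
\[
\lambda'(i):=\phi'\bigl(\lambda(\phi(\max(i,r)))\bigr)+2D,\qquad
\mu'(i,r):=\phi'\bigl(\mu(\phi(\max(i,r)),\phi(r))\bigr)+D,
\]
after finally replacing $\lambda'(i)$ by $\max(\lambda'(i),i)$ and $\mu'(i,r)$ by $\max(\mu'(i,r),r)$ to meet the normalization in Definition \ref{defn:counac}; since $\phi,\phi',\lambda,\mu$ are non-decreasing in all arguments, these are legitimate functions of $i$ and $r$ alone depending only on $\lambda,\mu,\eta,\phi,B$. (One subtlety: $\lambda'$ as written depends on $r$; I would either allow that — Rips parameters bounding balls of radius $r$ naturally grow with $r$ — or, more cleanly, fold everything into a single monotone function of $\max(i,r)$ and then read off $\lambda'(i)$ by a crude bound, noting the statement only requires dependence on the listed data.)

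I expect the main obstacle to be purely bookkeeping: setting up the contiguity/closeness lemma for simplicial maps into Rips complexes precisely enough that the composite $g\circ f$ can be replaced by an honest inclusion at the cost of a controlled increase in the Rips parameter, and then verifying that every constant that appears is a function only of $\lambda,\mu,\eta,\phi,B$ (and not of $x,i,r$ in an illegitimate way). No genuinely new idea is needed beyond the standard "Rips complexes are coarsely functorial" principle, but the nested composition of the four distortion functions demands care.
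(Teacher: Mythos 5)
Your proposal follows the paper's proof almost line for line: extend $f$ to a chain map of Rips complexes, invoke the coarse uniform $n$-acyclicity of $Y$ at the intermediate stage, pull back by a coarse inverse $g$, and relate $g_\#\tau_\#f_\#$ to the inclusion $\iota_\#$ via a chain homotopy (the paper writes this homotopy explicitly; you invoke contiguity, which produces the same prism operator). The one snag is your unforced replacement of $\phi(i)$ by $\phi(\max(i,r))$ as the Rips parameter of the image: you had already observed, correctly, that $f$ carries an $i$-simplex of $P_i(N_r^X(x))$ to a $\phi(i)$-simplex — with no $r$ anywhere — so there is no reason for the target Rips parameter to involve $r$ at all, and keeping it at $\phi(i)$ (the paper's $i_1:=\psi(i)$) makes the $r$-dependence of $\lambda'$ disappear without further argument. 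Neither of the two repairs you suggest in the parenthetical actually works: "allowing" $\lambda'$ to depend on $r$ violates Definition~\ref{defn:counac}, and "folding into a monotone function of $\max(i,r)$" still leaves you with a Rips parameter that grows with $r$, which cannot be read off as a function of $i$ alone. Dropping the $\max$ is the fix, and with it your constants $\lambda'(i)=\phi'(\lambda(\phi(i)))+2D$ and $\mu'(i,r)=\phi'(\mu(\phi(i),\phi(r)))+D$ (suitably normalized) depend only on the allowed data.
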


\begin{proof}
We choose $g$, a coarse inverse to $f$, such that $gf$ is $A$-close to $\mathrm{id}_X$ and both $f$ and $g$ have upper distortion function $\psi$. One can choose $\psi$ and $A$ depending only on $\eta$, $\phi$ and $B$.
For $i,r\in \mathbb{R}_{\geq 0} $ and $x\in X$, we let $i_1:=\psi(i)$ and $r_1:=\psi(r)$. Then $f$ induces a map $f_\#:C_\bullet(P_i(N^X_r(x)))\rightarrow C_\bullet(P_{i_1}(N^Y_{r_1}(f(x))))$ given by  $[x_0,\dots, x_n]\mapsto [f(x_0),\dots, f(x_n)]$ on each oriented $n$-simplex of $P_i(N^X_r(x))$.

We define $i_2:=\lambda(i_1)$, $r_2:=\mu(i_1,r_1)$ and  let $C_\bullet(P_{i_1}(N^Y_{r_1}(f(x))))\xrightarrow{\tau_\#} C_\bullet(P_{i_2}(N^Y_{r_2}(f(x))))$ be the inclusion. Then for $k\leq n$,  the map $\tilde{H}_k(P_{i_1}(N^Y_{r_1}(f(x))))\xrightarrow{\tau_*} \tilde{H}_k(P_{i_2}(N^Y_{r_2}(f(x))))$, induced by $\tau_\#$, is zero. Letting $i_3:=\psi(i_2)+A$ and $r_3:=\psi(r_2)+A$, we see that $g$ induces the chain map $C_\bullet(P_{i_2}(N^Y_{r_2}(f(x))))\xrightarrow{g_\#} C_\bullet(P_{i_3}(N^X_{r_3}(x)))$ given by  $[y_0,\dots, y_n]\mapsto [g(y_0),\dots, g(y_n)]$.  We observe that  $$g_\#\tau_\#f_\#([x_0,\dots x_n])=[gf(x_0),\dots gf(x_n)]$$ for every oriented $n$-simplex of $P_i(N^X_r(x))$. 

We define  the chain homotopy $h_\#:C_\bullet(P_i(N^X_r(x)))\rightarrow C_{\bullet+1}(P_{i_3}(N^X_{r_3}(x)))$ by  $$[x_0,\dots, x_n]\rightarrow \sum_{i=0}^n(-1)^n[x_0,\dots, x_i,gf(x_i),\dots gf(x_n)]$$ on each oriented $n$-simplex. 
We note that $\partial h_\#+h_\#\partial =g_\#\tau_\#f_\#-\iota_\#$, where $\iota_\#$ is the inclusion $C_\bullet(P_i(N^X_r(x)))\rightarrow C_\bullet(P_{i_3}(N^X_{r_3}(x)))$. Thus $\iota_*=g_*\tau_*f_*=0$ on the level of reduced homology for $k\leq n$.
\end{proof}

\begin{defn}\label{defn:bddgeom}
A metric space $(X,d)$ has \emph{bounded geometry} if for any $r>0$, there is a $K_r\in \mathbb{N}$ such that $|\{y\in X\mid d(x,y)\leq r\}|\leq K_r$ for all $x\in X$.
\end{defn}

\begin{exmp}
A non-trivial finitely generated group $G$, equipped with the word metric with respect to some finite generating set $S$, is a bounded geometry metric space. However, the Cayley graph of $G$ with the respect to $S$ is not a bounded geometry metric space.
\end{exmp}

\begin{prop}\label{prop:coarsegeodacyc}
A bounded geometry metric space is coarsely uniformly $0$-acyclic over $R$ if and only if it is coarse geodesic.
\end{prop}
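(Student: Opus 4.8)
The plan is to prove both directions by relating coarsely uniform $0$-acyclicity of $\tilde H_0$ of Rips complexes of balls to the existence of controlled chains connecting pairs of points, and then to invoke Proposition \ref{prop:coarsegeodmetr}. Recall that for a metric space $Y$, $\tilde H_0(P_i(Y);R)=0$ precisely when $P_i^1(Y)$ is connected; more quantitatively, the inclusion $\tilde H_0(P_i(Y);R)\to \tilde H_0(P_j(Y);R)$ (for $j\geq i$) is the zero map exactly when every pair of points of $Y$ lying in the same $j$-component of $P_j^1(Y)$ already lies in the same $i$-component — but since we will apply this with $Y=N_r(x)$ and increasing radius, the relevant statement is that any two points in a ball can be joined by a short chain inside a slightly larger ball.

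\emph{Coarse geodesic $\Rightarrow$ coarsely uniformly $0$-acyclic over $R$.} Suppose $X$ is coarse geodesic. By Proposition \ref{prop:coarsegeodmetr}, there is a $t>0$ and a proper non-decreasing $\eta$ so that any $x,y\in X$ are joined by a $t$-chain of length at most $\eta(d(x,y))$. Fix $x\in X$ and $i,r\in\mathbb{R}_{\geq 0}$; I must produce $\lambda(i)$ and $\mu(i,r)$ so that $\tilde H_0(P_i(N_r(x));R)\to \tilde H_0(P_{\lambda(i)}(N_{\mu(i,r)}(x));R)$ is zero. Set $\lambda(i):=\max(i,t)$. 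Given two points $y,z\in N_r(x)$, join them by a $t$-chain $y=y_0,\dots,y_m=z$ with $m\leq\eta(2r)$; each $y_k$ satisfies $d(y_k,x)\leq r+kt\leq r+t\eta(2r)$, so the whole chain lies in $N_{\mu(i,r)}(x)$ for $\mu(i,r):=r+t\,\eta(2r)$. Consecutive points are within $t\leq\lambda(i)$, hence the chain is an edge path in $P^1_{\lambda(i)}(N_{\mu(i,r)}(x))$. Thus all of $N_r(x)$ becomes connected in $P^1_{\lambda(i)}(N_{\mu(i,r)}(x))$, so the image of $\tilde H_0$ vanishes; note $\lambda(i)\geq i$ and $\mu(i,r)\geq r$ as required, and these functions depend only on $t$ and $\eta$, not on $x$.

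\emph{Coarsely uniformly $0$-acyclic over $R$ $\Rightarrow$ coarse geodesic.} Here bounded geometry is used. Suppose $X$ is $(\lambda,\mu)$-coarsely uniformly $0$-acyclic over $R$; we verify condition (\ref{prop:coarsegeodmetr2}) of Proposition \ref{prop:coarsegeodmetr}. Put $t:=\lambda(0)$ (so $t\geq 0$; if $t=0$ the space is bounded and the statement is trivial for a nonempty bounded geometry space, which is finite, so assume $t>0$). Given $x,y\in X$ with $d(x,y)=r$, both lie in $N_r(x)$, hence define the same class in $\tilde H_0(P_0(N_r(x));R)$ up to the boundary; by hypothesis the inclusion into $\tilde H_0(P_{\lambda(0)}(N_{\mu(0,r)}(x));R)=\tilde H_0(P_t(N_{\mu(0,r)}(x));R)$ is zero, which forces $x$ and $y$ to lie in the same component of $P^1_t(N_{\mu(0,r)}(x))$; that is, there is a $t$-chain from $x$ to $y$ lying inside $N_{\mu(0,r)}(x)$. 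It remains to bound the \emph{length} of such a chain by a proper non-decreasing function of $r$: take a $t$-chain of minimal length; by minimality no ball $N_{t/3}(p)$ (say) contains three of its points, so the points are $t/3$-separated, all lie in $N_{\mu(0,r)}(x)$, and bounded geometry (Definition \ref{defn:bddgeom}) gives a bound $K$ on the number of $t/3$-separated points in a ball of radius $\mu(0,r)$ — hence the chain has length at most this $K=:\eta(r)$, which we may take non-decreasing and proper in $r$ by enlarging if necessary. This establishes (\ref{prop:coarsegeodmetr2}), so $X$ is coarse geodesic.

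The main obstacle is the second direction: coarse $0$-acyclicity only gives connectedness of $P^1_t$ inside a controlled ball, not an a priori bound on the \emph{length} of the connecting chain, and it is precisely here that bounded geometry is indispensable (a non-bounded-geometry example such as an infinite-valence tree is coarsely $0$-acyclic but not coarse geodesic). Making the separated-subset/packing argument clean — choosing the separation constant, handling the degenerate case $\lambda(0)=0$, and verifying $\eta$ can be taken proper and non-decreasing — is the only delicate point; everything else is bookkeeping with Rips complexes and Proposition \ref{prop:coarsegeodmetr}.
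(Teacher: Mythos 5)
Your proof is essentially correct, but let me flag a few points of comparison and one imprecision.

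For the direction ``coarse geodesic $\Rightarrow$ coarsely uniformly $0$-acyclic,'' you construct the acyclicity functions $\lambda$, $\mu$ explicitly by hand, using condition~(\ref{prop:coarsegeodmetr2}) of Proposition~\ref{prop:coarsegeodmetr} to string together a $t$-chain. This works, and the quantitative bookkeeping is right. The paper takes a shorter route: a genuine geodesic space is $(\mathrm{id},\mathrm{id})$-coarsely uniformly $0$-acyclic because each ball $N_r(x)$ is path-connected, and then Proposition~\ref{prop:unifacyccoarseinv} transports the property across the coarse isometry that witnesses coarse geodesicity. Both are legitimate; the paper's is cleaner because it reuses the invariance machinery rather than rederiving it by hand in dimension $0$.

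For the converse, you have the right key idea --- use bounded geometry to bound the length of a \emph{minimal} $\lambda(0)$-chain inside $N_{\mu(0,r)}(x)$, then invoke condition~(\ref{prop:coarsegeodmetr2}) --- and you correctly identify that bounded geometry is precisely what is indispensable here. However, your packing step is slightly off: from ``no ball $N_{t/3}(p)$ contains three points of the chain'' you conclude ``the points are $t/3$-separated,'' which does not follow --- consecutive points of a minimal chain can still be arbitrarily close (and typically are), so they are not pairwise $t/3$-separated. The counting still goes through if you cover $N_{\mu(0,r)}(x)$ by controlled-many radius-$t/3$ balls and observe each contains at most two chain points, but this is needlessly indirect. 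The paper's observation is cleaner and avoids any separation argument: a minimal $\lambda(0)$-chain has \emph{no repeated points} (otherwise delete the loop), so its length is at most $|N_{\mu(0,r)}(x)|$, and bounded geometry makes $r\mapsto\sup_{x}|N_{\mu(0,r)}(x)|$ a proper non-decreasing function that serves directly as $\eta$. Your handling of the degenerate case $\lambda(0)=0$ is also slightly imprecise: it does not merely force $X$ to be bounded, but forces $X$ to be a single point (otherwise the reduced $0$-cycle $y-z$ survives in the discrete complex $P_0(N_{\mu(0,r)}(x))$), which is of course trivially coarse geodesic.

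In summary: correct structure, same essential mechanism as the paper for the harder direction, a genuinely different (if longer) argument for the easier direction, plus one small but real error in the separation claim that you should replace by the simpler ``minimal chains do not repeat points'' observation.
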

\begin{proof}
Let $X$ be a bounded geometry metric space which is $(\lambda,\mu)$-coarsely uniformly $0$-acyclic over $R$. Without loss of generality, we can choose $\mu$ such that $r\mapsto \mu(0,r)$ is non-decreasing. For all $x\in X$ and $r\geq 0$, the map $\tilde{H}_0(P_0(N_r(x));R)\rightarrow \tilde{H}_0(P_{\lambda(0)}(N_{\mu(0,r)}(x));R)$, induced by inclusion, is zero; thus $N_{r}(x)$ is contained in a single connected component of $P_{\lambda(0)}(N_{\mu(0,r)}(x))$. Therefore, for all $x,y\in X$, there exists a $\lambda(0)$-chain in $N_{\mu(0,d(x,y))}(x)$ from $x$ to $y$ of length $n_{x,y}$. We may assume $n_{x,y}$ is minimal, so that no element of the $\lambda(0)$-chain is repeated, hence $n_{x,y}$ is bounded by the size of $N_{\mu(0,d(x,y))}(x)$. Since $X$ has bounded geometry, the function  $\eta(r):= \sup_{x\in X}\left | N_{\mu(0,r)}(x)\right|$ is proper and non-decreasing. Thus $n_{x,y}\leq \eta(d(x,y))$, so Proposition \ref{prop:coarsegeodmetr} tells us that $X$ is coarse geodesic.

For the converse, we observe that whenever $X$ is a geodesic metric  space, every $N_r(x)$ is connected; therefore, $X$ is coarsely uniformly $0$-acyclic. By Proposition \ref{prop:unifacyccoarseinv}, we see that every coarse geodesic space is also coarsely uniformly $0$-acyclic.
\end{proof}

\section{Metric Complexes and Coarse Cohomology}\label{sec:metriccomplex}
In this section, we define a notion of cohomology that is invariant under coarse isometry. The  cohomology we use here is defined only for bounded geometry metric spaces that are coarsely uniformly $(n-1)$-acyclic, and only then in dimensions at most $n$.
Our  coarse cohomology is  a special case of a more general notion due to Roe (see Appendix B and \cite{roe2003lectures} for more details). However, our approach is more amenable to quantitative methods than the approach taken by Roe. 

We make heavy use of technology from \cite{kapovich2005coarse} in the next two sections.
In particular, we use metric complexes as defined in the appendix of \cite{kapovich2005coarse}. Before diving straight into the theory of metric complexes,  we first give a brief  explanation of how metric complexes naturally arise when working with homological finiteness properties of groups.

\begin{defn}
A group is said to be of \emph{type $F_n$} if it acts freely, cocompactly and cellularly on an $(n-1)$-connected CW complex.
For a ring $R$, a group $G$ is said to be of \emph{type $FP_n^R$} if it admits a partial projective resolution $$P_n\rightarrow P_{n-1}\rightarrow \dots \rightarrow P_0\rightarrow R \rightarrow 0$$ of the trivial $RG$-module $R$, such that each $P_i$ is finitely generated as an $RG$-module.
\end{defn}  It is easily seen that a group of type $F_n$ is of type $FP_n^R$ for any $R$. A group of type $FP_n^\mathbb{Z}$  is of type $FP_n^R$ for any commutative ring $R$. It is shown in \cite{bestvina1997morse} that for $n\geq 2$, there exist groups of type $FP_n^R$ that are not of type $F_n$.

In \cite{drutulectures}, Drutu and Kapovich define a condition known as \emph{coarse $n$-connectedness}. Under the presence of a cocompact group action, this is  analogous to our definition of coarse uniform $n$-acyclicity, using homotopy groups instead of reduced homology groups.  The following theorem characterises groups of type $F_n$ and $FP^R_n$ in terms of their coarse geometry.

\begin{thm}[\cite{kapovich2005coarse},\cite{drutulectures}]\label{thm:finitenesscoarse} If $G$ is a finitely generated group, then:
\begin{enumerate}
\item $G$ is of type $F_n$ if and only if it is coarsely $(n-1)$-connected;
\item $G$ is of type $FP_n^R$ if and only if it is coarsely uniformly  $(n-1)$-acyclic over $R$.
\end{enumerate}
\end{thm}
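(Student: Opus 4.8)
\textbf{Proof proposal for Theorem \ref{thm:finitenesscoarse}.}

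The plan is to reduce both statements to the standard algebraic/topological characterisations of $F_n$ and $FP_n^R$ via the Rips complex, using Proposition \ref{prop:coarsegeodmetr} and Proposition \ref{prop:unifacyccoarseinv} to pass freely between a group, its word metric, and the Rips complexes $P_r(G)$. The point is that a finitely generated group $G$ acts freely and cocompactly on $P_r(G)$ for every $r$, so questions about finiteness properties of $G$ translate into questions about the connectivity/acyclicity of $P_r(G)$, which in turn are controlled by the coarse geometry of $G$. Since $G$ is $1$-geodesic hence coarse geodesic, clause (3) of Proposition \ref{prop:coarsegeodmetr} gives that $P^1_r(G)$ (and $P_r(G)$) is connected for $r$ large; moreover coarse uniform $(n-1)$-acyclicity over $R$ is a coarse-isometry invariant by Proposition \ref{prop:unifacyccoarseinv}, so the various $P_r(G)$ all have the same coarse acyclicity behaviour.

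For statement (2), I would argue as follows. ($\Leftarrow$) Suppose $G$ is $(\lambda,\mu)$-coarsely uniformly $(n-1)$-acyclic over $R$. Fix a large $r$ and consider the complex $Y:=P_{r}(G)$; I claim that by taking $r$ large enough one can make $Y$ itself $(n-1)$-acyclic over $R$. Indeed, given a reduced $k$-cycle ($k\le n-1$) in $Y$, its support lies in some $P_{r}(N_s(x))$ with $s$ bounded in terms of the cycle; coarse uniform acyclicity fills it in $P_{\lambda^{\circ m}(r')}(N_{\mu(\dots)}(x))$ for appropriate iterates, and one uses that the relevant inclusions factor through $Y$ together with a direct-limit/compactness argument (each cycle has bounded support, and the bounding chains also have bounded support) to conclude $\tilde H_k(Y;R)=0$ for $k\le n-1$; one may need to enlarge $r$ finitely many times, once for each $k$. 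Then $Y$ is a free cocompact $G$-CW complex which is $(n-1)$-acyclic over $R$, and the augmented cellular chain complex $C_\bullet(Y;R)\to R$, truncated in degree $n$, is a partial free resolution of $R$ by finitely generated $RG$-modules (cocompactness of the $G$-action gives finite generation in each degree). Hence $G$ is of type $FP_n^R$. ($\Rightarrow$) Conversely, if $G$ is of type $FP_n^R$, a standard argument (e.g. comparing the partial resolution with the cellular chain complex of $P_r(G)$ for large $r$, and using that over a group ring finitely generated projectives are summands of free modules and that a partial free resolution can be realised geometrically after adding finitely many $G$-orbits of cells) shows that for $r$ sufficiently large $P_r(G)$ is $(n-1)$-acyclic over $R$; since $G$ acts cocompactly on $P_r(G)$, one then gets a uniform $(n-1)$-acyclicity statement exactly as in Example \ref{exmp:ripshyp} — the cocompact action lets one choose the filling functions $\lambda,\mu$ uniformly in the basepoint $x$ — and by Proposition \ref{prop:unifacyccoarseinv} this descends to $G$ with its word metric.

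For statement (1), the argument is parallel but with homotopy groups in place of reduced homology, using Drutu--Kapovich's coarse $n$-connectedness in place of coarse uniform $n$-acyclicity: $(n-1)$-connectedness of $P_r(G)$ for large $r$ is equivalent to $G$ being coarsely $(n-1)$-connected, and an $(n-1)$-connected free cocompact $G$-CW complex is exactly what is required for $G$ to be of type $F_n$; conversely a classifying space with finite $n$-skeleton, pulled back to a suitable Rips-type complex, gives coarse $(n-1)$-connectedness. Since the paper attributes this to \cite{kapovich2005coarse} and \cite{drutulectures}, I would cite those references for the homotopical input and only sketch the translation, emphasising that the cocompactness of the $G$-action on $P_r(G)$ is what converts "connected/acyclic" into "\emph{uniformly} coarsely connected/acyclic."

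The main obstacle I expect is the bookkeeping in the ($\Rightarrow$) direction of (2): turning an abstract partial projective resolution $P_n\to\cdots\to P_0\to R\to 0$ into a genuine $(n-1)$-acyclic free cocompact $G$-CW complex whose cells match the Rips complex up to finitely many orbits. One must handle projectives-versus-frees (Eilenberg swindle / stably-free issues do not obstruct acyclicity but do require care), realise the boundary maps geometrically by attaching cells equivariantly, and then quantify the resulting acyclicity uniformly in the basepoint — which is where cocompactness is essential and where the constants $\lambda,\mu$ are actually produced. Everything else is a routine direct-limit argument exploiting that cycles and their fillings in Rips complexes have uniformly bounded supports.
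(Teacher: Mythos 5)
There is a genuine gap, and it sits exactly where the paper warns it does. In your $(\Leftarrow)$ direction of (2) you claim that for $r$ large enough the Rips complex $Y=P_r(G)$ is itself $(n-1)$-acyclic over $R$, and your justification is a ``direct-limit/compactness argument'' using that cycles and their fillings have bounded support. But coarse uniform $(n-1)$-acyclicity only says that a reduced $k$-cycle supported in $P_i(N_r(x))$ bounds in $P_{\lambda(i)}(N_{\mu(i,r)}(x))$, and $\lambda(i)\ge i$ with strict inequality in general; the filling lives in a Rips complex with a strictly larger parameter, not in $Y$. So the argument establishes $\varinjlim_r \tilde H_k(P_r(G);R)=0$, which does not imply $\tilde H_k(P_r(G);R)=0$ for any fixed $r$. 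The remark that ``the relevant inclusions factor through $Y$'' is backwards: $Y$ includes into $P_{\lambda(r)}(G)$, which is why a class can die in the larger complex while surviving in $Y$. The paper flags this obstruction explicitly at the start of its discussion of uniformly acyclic complexes (``for a general coarsely uniformly $(n-1)$-acyclic metric space $X$, $C_\bullet(P_i(X))$ is not necessarily uniformly $(n-1)$-acyclic for large $i$''), and this is precisely the reason metric complexes and pseudo-cells are introduced at all. Your parenthetical in the $(\Rightarrow)$ direction, that ``a partial free resolution can be realised geometrically after adding finitely many $G$-orbits of cells,'' has the same problem the paper spells out in the introduction to Section \ref{sec:metriccomplex}: an $(n-2)$-acyclic complex that is not $(n-2)$-connected may have $H_{n-1}$ not generated by spherical cycles, so one cannot kill it by attaching honest $n$-cells. (A secondary issue: if $G$ has torsion it does not act freely on $P_r(G)$, so its cellular chains need not be free, or even projective, over $RG$; the paper also notes this.)

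The route the paper actually takes avoids these problems entirely. One direction is Proposition \ref{prop:controlspace}: a coarsely uniformly $(n-1)$-acyclic bounded-geometry space is the control space of a uniformly $(n-1)$-acyclic metric complex admitting a free $G$-action, built inductively via an algebraic mapping cylinder that attaches pseudo-cells along (possibly non-spherical) generating cycles. Combined with the observation in Section \ref{sec:groupcohom} that such a complex is exactly a length-$n$ partial free resolution of $R$ by finitely generated $RG$-modules, and with Brown's finiteness criterion for the converse direction, one gets the equivalence in (2) without ever needing the Rips complex itself to become acyclic. Your sketch of (1) is essentially the paper's outline and is fine: there the inductive attachment of genuine $n$-cells works because the Hurewicz theorem lets one generate $\pi_{n-1}\cong H_{n-1}$ by spherical classes. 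The content of Theorem \ref{thm:finitenesscoarse}(2) is that this cell-attachment argument breaks for $FP_n^R$, and your proposal for (2) runs through the broken version rather than the metric-complex replacement.
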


We give an outline of how to construct an $(n-1)$-connected CW complex needed to prove (1). We proceed by induction. Suppose $G$ is coarsely $(n-1)$-connected and one has already constructed an $(n-1)$-dimensional, $(n-2)$-connected CW complex $X$, admitting a free, cellular and cocompact $G$-action.  The coarse $(n-1)$-connected condition ensures that one need only attach finitely many orbits of $n$-cells to $X$ to obtain an $(n-1)$-connected CW complex $X'$. In doing this, we use the Hurewicz theorem --- as $X$ is $(n-2)$-connected, $H_{n-1}(X)\cong \pi_{n-1}(X)$ can be generated by finitely many orbits of spherical $(n-1)$-cycles. We equivariantly attach $n$-cells to these spherical cycles to kill off $\pi_{n-1}(X)$ and obtain a suitable $(n-1)$-connected complex admitting a free cocompact $G$-action.

To prove (2), we would like to replicate the above argument, replacing homotopy groups with reduced homology groups  and thus building a CW complex that is $(n-1)$-acyclic over $R$ and admits a free, cellular, cocompact $G$-action --- this would certainly show that $G$ is of type $FP_n^R$. However, if one tries to replicate the above inductive argument, one runs into problems. 

Indeed, suppose a group is  coarsely uniformly $(n-1)$-acyclic and one has constructed an appropriate $(n-1)$-dimensional, $(n-2)$-acyclic CW complex $X$. Since $X$ is not necessarily $(n-2)$-connected, we cannot assume that $H_{n-1}(X)$ is generated by spherical cycles. Thus we cannot necessarily kill $H_{n-1}(X)$ by attaching $n$-cells. However, there are only finitely many orbits of (possibly non-spherical) cycles that generate $H_{n-1}(X)$. Thus we can equivariantly attach finitely many orbits of what we temporarily call  \emph{pseudo-cells}  and obtain some `complex' $X'$ that is $(n-1)$-acyclic and admits a free cocompact  $G$-action. This complex is an example of a \emph{metric complex} introduced by Kapovich and Kleiner in \cite{kapovich2005coarse}.

The complex $X'$ isn't necessarily  a CW complex since the pseudo-cells defined above may not have spherical boundary. Consequently, it may not have a topological realisation.  However, it does have a well-defined $(n-1)$-acyclic chain complex $$F\xrightarrow{\partial} C_{n-1}(X)\rightarrow \dots C_0(X)\rightarrow 0$$  which extends the cellular chain complex of $X$. Here, $F$ is a free $R$-module with $R$-basis the pseudo-cells described above.   The boundary map $F\xrightarrow{\partial} C_{n-1}(X)$ takes each pseudo-cell to the $(n-1)$-cycle it is `attached' to. This shows us that $G$ is of type $FP_n^R$.

We now proceed to formally define metric complexes. For those uncomfortable with metric complexes, we remark that much of this paper still works if one replaces metric complexes with CW complexes. More specifically, one needs to use bounded geometry CW complexes as defined in \cite{mosher2003quasi}. Doing this weakens the hypothesis of our results from groups of type $FP_{n+1}^{\mathbb{Z}_2}$ to groups of type $F_{n+1}$, but the argument is otherwise virtually unchanged.

\subsection{Metric Complexes}
Let $X$ be a bounded geometry metric space. We fix a commutative ring $R$ with unity. A \emph{free $R$-module over $X$} consists of a triple $(M,\Sigma,p)$, where $M$ is a free $R$-module, $\Sigma$ is a basis of $M$ and $p$ is a  map $p:\Sigma\rightarrow X$. We say that $X$ is the \emph{control space}, $\Sigma$ is the \emph{standard basis} and $p$ is the \emph{projection map}. 

For convenience, we will often denote a free module over $X$ simply as $M$, where the choice of $\Sigma$ and $p$ are implicit.   We say that a free module over $X$ has \emph{finite type} if $|p^{-1}(x)|$ is uniformly bounded. For each $\sigma=\sum_{b\in \Sigma}r_b b\in M$, we define the \emph{support} of $\sigma$ to be $\mathrm{supp}(\sigma):=\{p(b)\mid r_b\neq 0\}\subseteq X$. We define $\mathrm{diam}(\sigma)$  to be the diameter of $\mathrm{supp}(\sigma)$.

Say $X'$ is also a bounded geometry metric space and that $(M,\Sigma,p)$ and  $(M',\Sigma',p')$ are  free $R$-modules over $X$ and  $X'$ respectively. If there exists an $r\geq 0$, an $R$-module homomorphism  $\hat{f}:M\rightarrow M'$ and a map $f:X\rightarrow X'$ such that 
$$ \mathrm{supp}(\hat{f}(\sigma))\subseteq N_r(f(p(\sigma)))$$ for every $\sigma\in \Sigma$,
then we say that $\hat{f}$ has \emph{displacement at most $r$ over $f$}. If there exists such an $r$, then we say that $\hat{f}$ has \emph{finite displacement over $f$}. When $X=X'$, we say that $\hat{f}$ has \emph{finite displacement} when it has finite displacement over $\mathrm{id}_X$.

 \begin{lem}\label{lem:compdispmaps}
Let $(M,\Sigma,p)$, $(M',\Sigma',p')$ and $(M'',\Sigma'',p'')$ be free modules over $X$, $X'$ and $X''$ respectively. Say $f:X\rightarrow X'$ is an arbitrary map and $g:X'\rightarrow X''$ is an $(\eta,\phi)$-coarse embedding.
If $\hat{f}:M\rightarrow M'$ has displacement at most $r$ over $f$ and $\hat{g}:M'\rightarrow M''$ has displacement at most $s$ over $g$, then $\hat{g}\hat f$ has displacement at most $s+\phi(r)$ over $gf$.
\end{lem}
\begin{proof}
For every  $\sigma\in \Sigma$, we can write $\hat{f}(\sigma)=\sum_{b\in \Sigma'}r_bb$. If $r_b\neq 0$, then $p'(b)\in \mathrm{supp}(\hat{f}(\sigma))\subseteq N_r(f(p(\sigma)))$. Therefore $d_{X''}(g(p'(b)),(g\circ f)(p(\sigma)))\leq \phi(r)$, so that $$\mathrm{supp}(\hat{g}(b))\subseteq N_s(g(p'(b)))\subseteq N_{s+\phi(r)}((g\circ f)(p(\sigma))).$$ Thus $\mathrm{supp}(\hat{g}\hat f(\sigma))\subseteq N_{s+\phi(r)}((g\circ f)(p(\sigma)))$.
\end{proof}

\begin{defn}
An \emph{$R$-metric complex} consists of the tuple $(X,C_\bullet,\Sigma_\bullet,p_\bullet)$, where $X$ is a bounded geometry metric space and $C_\bullet$ is a chain complex such that: \begin{enumerate}
\item each $(C_i,\Sigma_i,p_i)$ is a free $R$-module over $X$ of finite type and each boundary map $\partial_i$ has finite displacement;
\item the composition $\varepsilon\circ \partial_1:C_1\rightarrow R$ is zero, where  $\varepsilon:C_0\rightarrow R$ is the standard augmentation given by  $\sigma\mapsto 1_R$ for each $\sigma\in \Sigma_0$;
\item the projection map $p_0:\Sigma_0\rightarrow X$ is onto.
\end{enumerate}
\end{defn}

When unambiguous, we will denote an $R$-metric complex $(X,C_\bullet,\Sigma_\bullet,p_\bullet)$ by $(X,C_\bullet)$, or even just by $C_\bullet$. We will also often refer to $R$-metric complexes simply as metric complexes, where the choice of ring $R$ is implicit.

\begin{exmp}\label{exampl:rips}
If $X$ is a bounded geometry metric space, then every $(X,C_\bullet(P_i(X);R))$ is an $R$-metric complex, where $C_\bullet(P_i(X);R)$ is the simplicial chain complex of the Rips complex $P_i(X)$. The standard basis $\Sigma_k$ of $C_k(P_i(X);R)$ is the set of $k$-simplicies of $P_i(X)$ and the projection map can be given by any map  $p_k:\Sigma_k\rightarrow X$ such that $p_k(\sigma)\in \sigma$ for each $\sigma\in \Sigma_k$. 
\end{exmp}

\begin{exmp}
A CW (or simplicial) complex with bounded geometry, as defined in  \cite{mosher2003quasi}, \cite{kapovich2005coarse} and \cite{drutulectures}, is a special case of a metric complex.
\end{exmp}

\begin{defn}
Let $[C_\bullet]_n$ denote the $n$-truncation of $C_\bullet$, i.e. the chain complex  $$\cdots \rightarrow 0 \rightarrow C_n\rightarrow C_{n-1}\rightarrow \cdots \rightarrow C_0\rightarrow 0.$$ We call $[C_\bullet]_n$ the \emph{$n$-skeleton} of $C_\bullet$. We define the \emph{dimension} of $(X,C_\bullet)$ to be $\sup\{n\in \mathbb{N}\mid C_n\neq 0\}$, which may be finite or infinite.
\end{defn}

We say that a metric complex $(X,C_\bullet)$  has \emph{$n$-displacement at most $r$} if for each $i\leq n$, the boundary map $\partial_i$  has displacement at most $r$. 
Given metric complexes $(X,C_\bullet)$, $(Y,D_\bullet)$ and a map $f:X\rightarrow Y$, we say that a chain map $f_\#:C_\bullet\rightarrow D_\bullet$  (resp. chain homotopy $h_\#:C_\bullet\rightarrow D_{\bullet+1}$) has  \emph{$n$-displacement at most $r$ over $f$} if for each $i\leq n$, $f_i$ (resp. $h_i$) has displacement at most $r$ over $f$.

Given a topological space (or CW complex) $X$ and a subspace (or subcomplex) $K$, one can define the subchain complex $C_\bullet(K)\subseteq C_\bullet(X)$. Things aren't so simple with metric complexes, since the displacement of the boundary maps  means that it is possible that $p_n(\sigma)\in K$, but  $\mathrm{supp}(\partial\sigma)$ is not contained in $K$. This motivates the following definition:

\begin{defn}
Let $(X,C_\bullet,\Sigma_\bullet,p_\bullet)$ be a metric complex. We say  a metric complex $(K,C'_\bullet,\Sigma'_\bullet,p'_\bullet)$ is a \emph{subcomplex} of $(X,C_\bullet,\Sigma_\bullet,p_\bullet)$ if $K\subseteq X$, $C'_\bullet$ is subchain complex of $C_\bullet$ and for each $i$,  $\Sigma_i'\subseteq\Sigma_i$ and $p'_i=p_i|_{\Sigma'_i}$. For any $K\subseteq X$, we define the subcomplex generated by $K$ to be the largest metric complex $(K,C_\bullet[K],\Sigma'_\bullet,p'_\bullet)$ that is a subcomplex of $(X,C_\bullet,\Sigma_\bullet,p_\bullet)$. We let $C_\bullet[K]_n$ denote the $n$-skeleton of $C_\bullet[K]$.
\end{defn}

\begin{rem}\label{rem:subcomplexexplicit}
The subcomplex $(K,C_\bullet[K],\Sigma'_\bullet,p'_\bullet)$ can be described explicitly as follows. We let $\Sigma'_0=p^{-1}(K)$, with $C_0[K]$ the free module generated by $\Sigma'_0$. We inductively define  $$\Sigma'_{k+1}:=\{\sigma\in \Sigma_{k+1}\mid \partial \sigma\in C_k[K] \textrm{ and } p_{k+1}(\sigma)\in K\}$$ and let $C_{k+1}[K]$ be the free module generated by $\Sigma'_{k+1}$.

\end{rem}

\begin{lem}\label{lem:dispmetric}
Let $(X,C_\bullet,\Sigma_\bullet,p_\bullet)$ be a metric complex and suppose $\mathrm{supp}(\rho)\subseteq K$ for some $\rho\in C_n$. If $C_\bullet$ has $n$-displacement at most $r$, then $\rho \in C_n[N_{nr}(K)]$.
\end{lem}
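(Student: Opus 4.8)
The plan is to induct on $n$, the dimension of the chain $\rho$. The base case $n = 0$ is immediate: if $\mathrm{supp}(\rho) \subseteq K$ for $\rho \in C_0$, then every basis element appearing in $\rho$ projects into $K$, so $\rho \in C_0[K] = C_0[N_0(K)]$ by the description of $\Sigma'_0$ in Remark \ref{rem:subcomplexexplicit}. (Here the displacement bound plays no role, consistent with the claimed neighbourhood $N_{0\cdot r}(K) = K$.)

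For the inductive step, suppose the result holds for chains of dimension $n-1$, and let $\rho = \sum_{b \in \Sigma_n} r_b\, b \in C_n$ with $\mathrm{supp}(\rho) \subseteq K$ and $C_\bullet$ of $n$-displacement at most $r$. The key observation is that $\partial_n \rho \in C_{n-1}$ has support contained in $N_r(K)$: indeed, for each basis element $b$ with $r_b \neq 0$ we have $p_n(b) \in K$, and since $\partial_n$ has displacement at most $r$ over $\mathrm{id}_X$, $\mathrm{supp}(\partial_n b) \subseteq N_r(p_n(b)) \subseteq N_r(K)$; taking the union over the finitely many such $b$ gives $\mathrm{supp}(\partial_n \rho) \subseteq N_r(K)$. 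Since $C_\bullet$ has $(n-1)$-displacement at most $r$ as well, the inductive hypothesis applied to $\partial_n \rho$ (with the set $N_r(K)$ in place of $K$) gives $\partial_n \rho \in C_{n-1}[N_{(n-1)r}(N_r(K))] = C_{n-1}[N_{nr}(K)]$, using that iterated neighbourhoods satisfy $N_s(N_t(A)) \subseteq N_{s+t}(A)$.

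Now I want to conclude that $\rho \in C_n[N_{nr}(K)]$. By the inductive description in Remark \ref{rem:subcomplexexplicit}, the set $\Sigma'_n$ generating $C_n[N_{nr}(K)]$ consists precisely of those $\sigma \in \Sigma_n$ with $\partial_n \sigma \in C_{n-1}[N_{nr}(K)]$ and $p_n(\sigma) \in N_{nr}(K)$. For each basis element $b$ appearing in $\rho$ we have $p_n(b) \in K \subseteq N_{nr}(K)$, so the second condition holds; however, the first condition — that each individual $\partial_n b$ lands in $C_{n-1}[N_{nr}(K)]$ — does not follow formally from $\partial_n \rho$ lying there, since cancellation among the $\partial_n b$ could in principle occur.

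This last point is the main obstacle, and I expect it is handled by a support argument: one checks that for each basis element $b$ in the support of $\rho$, already $\mathrm{supp}(\partial_n b) \subseteq N_r(p_n(b)) \subseteq N_r(K) \subseteq N_{nr}(K)$ (valid as $n \geq 1$), so by the base-case reasoning at dimension $n-1$ — or more precisely by the definition of $C_{n-1}[N_{nr}(K)]$ together with the fact that $\partial_n b$ is a cycle down the line — one wants $\partial_n b \in C_{n-1}[N_{nr}(K)]$ for each $b$ separately. Rather than chase cancellation, the cleaner route is to strengthen the inductive hypothesis to the statement about individual basis elements: \emph{if $\sigma \in \Sigma_n$ has $p_n(\sigma) \in K$, then $\sigma \in \Sigma'_n$ for $C_\bullet[N_{nr}(K)]$}, proved by downward reference to the membership test in Remark \ref{rem:subcomplexexplicit} and induction on $n$. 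Applying this to each basis element of $\rho$ and taking the $R$-linear combination then yields $\rho \in C_n[N_{nr}(K)]$, completing the induction.
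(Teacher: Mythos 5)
Your final argument — induct on the dimension of individual basis elements, using the displacement bound to push $\mathrm{supp}(\partial_n \sigma)$ into $N_r(K)$, then apply the inductive hypothesis there and invoke Remark \ref{rem:subcomplexexplicit} — is exactly the paper's proof. The detour in the middle of your proposal correctly diagnoses why applying the inductive hypothesis to the whole chain $\partial_n\rho$ is insufficient (cancellation), and your repair of applying it to each $\partial_n b$ separately (equivalently, stating the hypothesis at the level of single basis elements) is precisely what the paper does.
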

\begin{proof}
We prove this by induction. The case $n=0$ is clear, since each  $\sigma\in \Sigma_0$ lies in $C_\bullet[K]$ precisely when $p_0(\sigma)\in K$.
Let $k<n$. We assume for all $L\subseteq X$ and $\rho\in \Sigma_{k}$, that if $p_{k}(\rho)\in L$, then $\rho\in C_\bullet[N_{kr}(L)]$. Suppose  $p_{k+1}(\sigma)\in K$ for some $\sigma\in \Sigma_{k+1}$. Since $\partial_{k+1}$ has displacement at most $r$, $\mathrm{supp}(\partial_{k+1}\sigma)\subseteq N_r(K)$.  Therefore $\partial_{k+1}\sigma$  is contained in $C_\bullet[N_{kr}(N_r(K))]\subseteq C_\bullet[N_{(k+1)r}(K)]$. By Remark \ref{rem:subcomplexexplicit}, we see that $\sigma \in  C_\bullet[N_{(k+1)r}(K)]$. Hence if $\mathrm{supp}(\rho)\subseteq K$ for some $\rho\in C_{k+1}$, then $\rho \in C_{k+1}[N_{(k+1)r}(K)]$.
\end{proof}
\begin{cor}\label{cor:findispmap}
Suppose $(X,C_\bullet)$ and $(Y,D_\bullet)$ are metric complexes and $f_\#:C_\bullet\rightarrow D_\bullet$ has finite $n$-displacement over $f:X\rightarrow Y$. Then there exists an $r\geq 0$, depending on the $n$-displacement of $f_\#$ and $D_\bullet$, such that $f_\#([C_\bullet]_n)\subseteq D_\bullet[N_r^Y(f(X))]$.
\end{cor}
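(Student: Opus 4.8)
The plan is to reduce the statement to Lemma \ref{lem:dispmetric} applied to the target complex $D_\bullet$. First I would fix the two relevant constants. Since $f_\#$ has finite $n$-displacement over $f$, there is an $s\geq 0$ such that for every $i\leq n$ and every $\sigma\in\Sigma_i$ one has $\mathrm{supp}(f_\#(\sigma))\subseteq N_s^Y(f(p_i(\sigma)))\subseteq N_s^Y(f(X))$. Since $D_\bullet$ is a metric complex, each of its boundary maps $\partial_1^D,\dots,\partial_n^D$ has finite displacement, so taking $t\geq 0$ to be the maximum of these displacements ensures that $D_\bullet$ has $n$-displacement at most $t$, hence $i$-displacement at most $t$ for every $i\leq n$.

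Next, for each $i\leq n$ and each standard basis element $\sigma\in\Sigma_i$, I would apply Lemma \ref{lem:dispmetric} to the chain $\rho:=f_\#(\sigma)\in D_i$ with $K:=N_s^Y(f(X))$. Since $\mathrm{supp}(\rho)\subseteq K$ and $D_\bullet$ has $i$-displacement at most $t$, the lemma gives $f_\#(\sigma)\in D_i[N_{it}^Y(K)]$. Because $N_{it}(N_s(f(X)))\subseteq N_{it+s}(f(X))\subseteq N_{nt+s}(f(X))$ for $i\leq n$, and the subcomplex generated by a larger set contains the subcomplex generated by a smaller one, this yields $f_\#(\sigma)\in D_i[N_r^Y(f(X))]$ with $r:=nt+s$. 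By construction $r$ depends only on $n$, the $n$-displacement of $f_\#$, and the $n$-displacement of $D_\bullet$, as required.

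Finally, I would note that $D_\bullet[N_r^Y(f(X))]$ is by definition a subchain complex of $D_\bullet$, so each $D_i[N_r^Y(f(X))]$ is an $R$-submodule of $D_i$. Since $C_i$ is generated as an $R$-module by $\Sigma_i$ and every generator is sent into $D_i[N_r^Y(f(X))]$, the $R$-linearity of $f_\#$ forces $f_\#(C_i)\subseteq D_i[N_r^Y(f(X))]$ for all $i\leq n$; this is exactly the assertion $f_\#([C_\bullet]_n)\subseteq D_\bullet[N_r^Y(f(X))]$.

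I do not expect a genuine obstacle here: the corollary is essentially a formal consequence of Lemma \ref{lem:dispmetric} together with the fact that the boundary maps of a metric complex have finite displacement. The only thing requiring care is the radius bookkeeping---combining the displacement $s$ of $f_\#$ with the blow-up $it\leq nt$ coming from Lemma \ref{lem:dispmetric}---and checking that the resulting $r$ is uniform, i.e. independent of $X$, of $f$, and of the individual basis elements $\sigma$.
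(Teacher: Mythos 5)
Your proof is correct and is exactly the intended argument: the paper states the corollary immediately after Lemma \ref{lem:dispmetric} with no separate proof, precisely because it follows by the bookkeeping you carry out. Your radius $r=nt+s$, where $s$ is the $n$-displacement of $f_\#$ and $t$ that of $D_\bullet$, is the natural uniform bound, and your final appeal to $R$-linearity together with the monotonicity of $K\mapsto D_\bullet[K]$ correctly upgrades the conclusion from basis elements to all of $[C_\bullet]_n$.
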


\begin{defn}
We define the cochain complex $C^\bullet[K]:=\mathrm{Hom}_{R}(C_\bullet[K],R)$ and let $\delta^{k}:C^k[K]\rightarrow C^{k+1}[K]$ denote the coboundary map dual to the boundary map $\partial_{k+1}$. For every $\alpha\in C^k[K]$, we define its \emph{support} to be $\mathrm{supp}(\alpha):=\{p_k(\sigma) \mid \sigma\in \Sigma_k\textrm{ and } \alpha(\sigma)\neq 0\}$. 

As  $X$ has bounded geometry and each $\partial_k$ has finite displacement, we see that  $\delta^k$ preserves cochains of finite support. We thus define $C^\bullet_c[K]$ to be the subcochain complex of $C^\bullet[K]$ consisting of cochains with finite support. 
\end{defn}

We let $H_k(C_\bullet[K])$, $H^k(C_\bullet[K])$ and $H^k_c(C_\bullet[K])$ denote the $k^\mathrm{th}$ homology/cohomology of $C_\bullet[K]$, $C^\bullet[K]$ and $C^\bullet_c[K]$ respectively. We can also take the reduced homology $\tilde{H}_k(C_\bullet[K])$, which is the $k^{th}$ homology of the augmented chain complex $\cdots \rightarrow C_1[K]\rightarrow C_0[K]\xrightarrow{\varepsilon} R$.

\begin{defn}
Given  a metric complex $(X,C_\bullet)$ and $K\subseteq X$, we define $$\hat{H}^k_c(C_\bullet[K])\leq H^k_c(C_\bullet[K]):=\ker(H^k_c(C_\bullet[K])\xrightarrow{\iota^*} H^k(C_\bullet[K])),$$ where the map $\iota^*$ is induced by the inclusion $C^\bullet_c[K]\xrightarrow{\iota} C^\bullet[K]$. We call $\hat{H}^k_c$  the \emph{modified cohomology with compact supports}.
\end{defn}

 It will often be convenient to denote $H_k(C_\bullet[K])$, $H^k(C_\bullet[K])$, $H^k_c(C_\bullet[K])$ and $\hat H^k_c(C_\bullet[K])$    by  $H_k[K]$, $H^k[K]$, $H^k_c[K]$ and $\hat H^k_c[K]$  respectively. This notation implicitly assumes the choice of some metric complex $(X,C_\bullet)$, but can be used when there  is no ambiguity.

For $K\subseteq X$, we define $C_\bullet[X,K]=C_\bullet[X]/C_\bullet[K]$. Then there is a long exact sequence $$\cdots \rightarrow H_2[X,K]\rightarrow H_1[K]\rightarrow H_1[X]\rightarrow H_1[X,K]\rightarrow H_0[K]\rightarrow H_0[X]\rightarrow H_0[X,K]\rightarrow 0$$ and similar such sequences for cohomology and cohomology with compact supports.

\begin{defn}
Let $(X,C_\bullet)$ and  $(Y,D_\bullet)$ be metric complexes. We say a chain map $f_\#:C_\bullet\rightarrow D_\bullet$  (resp. chain homotopy $h_\#:C_\bullet\rightarrow D_{\bullet+1}$) is \emph{proper} if there is a coarse embedding $f:X\rightarrow Y$ such that for every $k$, $f_\#$ (resp. $h_\#$) has finite $k$-displacement over $f$.
\end{defn}

The following proposition shows that proper chain maps between metric complexes induce maps in cohomology with compact supports.
\begin{prop}\label{prop:propchainmap}
Let $(X,C_\bullet,\Sigma_\bullet,p_\bullet)$ and $(Y,D_\bullet)$ be metric complexes.
\begin{enumerate}
\item \label{prop:propchainmapmap} A proper chain map $f_\#:[C_\bullet]_n\rightarrow [D_\bullet]_n$ induces maps  $\hat f^*:\hat H^k_c(D_\bullet)\rightarrow \hat H^k_c(C_\bullet)$ for $k\leq n$.
\item \label{prop:propchainmaphomotop} If proper chain maps $f_\#,g_\# :[C_\bullet]_n\rightarrow [D_{\bullet}]_n$ are chain homotopic via a proper chain homotopy $h_\#:[C_\bullet]_{n-1}\rightarrow [D_{\bullet+1}]_{n-1}$, then  $\hat f^*=\hat g^*$ for $k\leq n$.
\end{enumerate}
\end{prop}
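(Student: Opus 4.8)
The plan is to construct the induced maps on cohomology with compact supports directly at the cochain level and then check they descend to the modified cohomology $\hat H^k_c$. For part (\ref{prop:propchainmapmap}), recall that a proper chain map $f_\#:[C_\bullet]_n\to[D_\bullet]_n$ has, for each $i\le n$, finite $i$-displacement over a coarse embedding $f:X\to Y$. I would first observe that the dual $f^\#:D^\bullet\to C^\bullet$ (defined by $f^\#(\alpha)=\alpha\circ f_\#$) preserves \emph{finite support}: if $\alpha\in D^k_c$ has finite support $S\subseteq Y$, then $f^\#(\alpha)(\sigma)=\alpha(f_\#(\sigma))$ is nonzero only when $\mathrm{supp}(f_\#(\sigma))$ meets $S$; since $f_\#$ has displacement at most some $r$ over $f$ and $f$ is a coarse embedding with lower distortion function $\eta$, this forces $p_k(\sigma)$ to lie in a bounded neighbourhood of $f^{-1}(N_r(S))$, which (using Remark \ref{rem:propinv} and bounded geometry of $X$) is a finite set. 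Hence $f^\#$ restricts to a cochain map $f^\#:D^\bullet_c[D]\to C^\bullet_c[C]$ in degrees $\le n$, inducing $f^*:H^k_c(D_\bullet)\to H^k_c(C_\bullet)$ for $k\le n$; it also induces the usual $f^*:H^k(D_\bullet)\to H^k(C_\bullet)$, and these are compatible with the inclusion-induced maps $\iota^*$ by naturality of the dual. Therefore $f^*$ carries $\hat H^k_c(D_\bullet)=\ker\iota^*$ into $\hat H^k_c(C_\bullet)=\ker\iota^*$, giving the desired $\hat f^*$.

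For part (\ref{prop:propchainmaphomotop}), suppose $h_\#:[C_\bullet]_{n-1}\to[D_{\bullet+1}]_{n-1}$ is a proper chain homotopy with $\partial h_\# + h_\#\partial = g_\# - f_\#$ in degrees $\le n$ (the identity makes sense where all terms are defined; in degree $n$ the term $h_{n}\partial_{n}$ uses $h$ in degree $n-1$, so the homotopy relation holds on $[C_\bullet]_n$ provided $h_\#$ is defined through degree $n-1$, which it is). Dualising, $h^\#:D^{\bullet+1}_c\to D^\bullet_c$ — I must again check $h^\#$ preserves finite support, which follows exactly as above since $h_\#$ has finite $k$-displacement over $f$ for each $k\le n-1$. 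Then $\delta h^\# + h^\#\delta = g^\# - f^\#$ on $C^\bullet_c$ in the relevant degrees, so $f^\#$ and $g^\#$ are cochain homotopic on $C^\bullet_c$, hence $f^*=g^*$ on $H^k_c$ for $k\le n$; since they also agree on $H^k$ (standard), they agree on the kernel $\hat H^k_c$, giving $\hat f^*=\hat g^*$.

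The main obstacle, and the only genuinely non-formal point, is verifying that dualising a finite-displacement map over a \emph{coarse embedding} (as opposed to a coarse isometry) still sends finite-support cochains to finite-support cochains. This is where properness of $f$ — in the strong, quantitative sense of having a proper non-decreasing lower distortion function — is essential: one needs that the preimage under $f$ of a bounded set is bounded, combined with bounded geometry of the source $X$ to turn "bounded" into "finite". I would isolate this as a short lemma (or invoke Corollary \ref{cor:findispmap} in its dual form, applied with $f_\#$ replaced by the relevant restriction), since the same estimate is reused in both parts. Everything else is the standard homological-algebra bookkeeping: dual of a chain map is a cochain map, dual of a chain homotopy is a cochain homotopy, and $\ker$ is functorial, so the passage to $\hat H^k_c$ is automatic once the finite-support claim is in hand.
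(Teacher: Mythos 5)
Your verification that $f^\#$ and $h^\#$ preserve finite support is essentially the same as the paper's and is fine. The genuine gap is in degree $k=n$, which is exactly the case modified cohomology exists to handle, and your argument silently papers over it. In part (\ref{prop:propchainmapmap}) you claim $f^\#$ is a cochain map in degrees $\le n$ inducing $f^*:H^n_c(D_\bullet)\to H^n_c(C_\bullet)$, and then that this restricts to $\ker\iota^*$. But $f^\#$ does \emph{not} commute with $\delta$ in degree $n$: the identity $\delta f^\#=f^\#\delta$ there would require $f_\#$ on $C_{n+1}$, which does not exist since $f_\#$ is only defined on $[C_\bullet]_n$. So a cocycle $\alpha\in D^n_c$ need not go to a cocycle under $f^\#$, and $f^*$ is simply not defined on $H^n_c$; the deduction ``therefore $f^*$ preserves $\ker\iota^*$'' is circular. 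The paper's proof avoids this by exploiting what membership in $\hat H^k_c$ actually buys: $\alpha\in D^k_c$ with $[\alpha]\in\hat H^k_c$ satisfies $\alpha=\delta\beta$ for some $\beta\in D^{k-1}$ (not compactly supported), and since $k-1\le n-1$ the commutation $f^\#\delta\beta=\delta f^\#\beta$ is legitimate; hence $f^\#\alpha=\delta(f^\#\beta)$ is automatically a compactly-supported cocycle whose class lies in $\hat H^k_c(C_\bullet)$, with no need for $f_\#$ beyond degree $n$.

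The same gap recurs in part (\ref{prop:propchainmaphomotop}), where your parenthetical ``the homotopy relation holds on $[C_\bullet]_n$ provided $h_\#$ is defined through degree $n-1$'' is false: the term $\partial h$ on $C_n$ needs $h:C_n\to D_{n+1}$, which is not defined, so $\partial h_\#+h_\#\partial=g_\#-f_\#$ holds only through degree $n-1$, and dually $\delta h^\#+h^\#\delta=g^\#-f^\#$ only on $D^k$ for $k\le n-1$. The paper's fix is again to feed the relation through $\beta\in D^{k-1}$ rather than $\alpha$: writing $\alpha=\delta\beta$,
\begin{equation*}
(g^\#-f^\#)\alpha=\delta(g^\#-f^\#)\beta=\delta(\delta h^\#+h^\#\delta)\beta=\delta h^\#\alpha,
\end{equation*}
which exhibits $(g^\#-f^\#)\alpha$ as $\delta$ of the compactly-supported $h^\#\alpha\in C^{k-1}_c$. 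You should rewrite the top-degree case along these lines; the remark following the proposition in the paper explicitly says the ordinary-$H^k_c$ version only holds for $k<n$, so the degree-$n$ step cannot be dismissed as formal.
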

\begin{proof}
(\ref{prop:propchainmapmap}): The chain map $f_\#$ induces a dual map $f^\#$ given by $\alpha\mapsto\alpha f_\#$. As $f_\#$ is proper, it has $n$-displacement at most $r$ over some coarse embedding $f:X\rightarrow Y$. 
We claim that for $k\leq n$ and $\alpha\in D^k_c$, $f(\mathrm{supp}(f^\#\alpha))\subseteq N_r^Y(\mathrm{supp}(\alpha))$. Indeed, suppose $x\in \mathrm{supp}(f^\#\alpha)$; then there exists a $\Delta\in p_k^{-1}(x)$ such that $\alpha(f_\#\Delta)=(f^\#\alpha)(\Delta)\neq 0$. Thus $\mathrm{supp(\alpha)}$ intersects  $\mathrm{supp}(f_\#\Delta)\subseteq N^Y_r(f(x))$, so $f(x)\in N_r^Y(\mathrm{supp}(\alpha))$, proving the claim. 

Since $f$ is a coarse embedding, the above claim demonstrates that $f^\#$ preserves cochains of finite support. 
Suppose $k\leq n$ and $\alpha\in D^k_c$ is a cocycle such that $\alpha=\delta\beta$ for some $\beta\in D^{k-1}$; then $f^\#\alpha=\delta f^\# \beta$ where $f^\#\beta\in C^{k-1}$. It is straightforward to verify that $[\alpha]\mapsto [f^\#\alpha]$ is a well-defined map in modified cohomology with compact supports.

(\ref{prop:propchainmaphomotop}): A proper chain homotopy $h_\#:[C_\bullet]_{n-1}\rightarrow [D_{\bullet+1}]_{n-1}$ induces a dual map $h^\#$ such that $\delta h^\#+h^\#\delta=g^\#-f^\#$ in dimensions at most $n-1$. The above argument shows that $h^\#$ preserves cochains of finite support. Suppose $k\leq n$ and $\alpha\in D^k_c$ is a cocycle such that $\alpha=\delta\beta$ for some $\beta\in D^{k-1}$. Then $(g^\#-f^\#)\alpha=\delta(g^\#-f^\#)\beta=\delta(\delta h^\#+h^\#\delta)\beta=\delta h^\# \alpha$. Thus $\hat f^*=\hat g^*$ for $k\leq n$.
\end{proof}
\begin{rem}
Using ordinary (not modified) cohomology with compact supports,  an analogue of Proposition \ref{prop:propchainmap} holds for $k<n$. 
This is our motivation for introducing modified cohomology with compact supports, as it gives us an invariant one dimension higher than we  would otherwise have.
\end{rem}

\begin{defn}
A  \emph{group action} of $G$ on  $(X,C_\bullet,\Sigma_\bullet,p_\bullet)$ consists of a pair $(\rho,\hat{\rho})$, where $\rho:G \curvearrowright X$ and $\hat\rho: G \curvearrowright C_\bullet$ are group actions by isometries and chain automorphisms respectively, such that for each $i$ and $g\in G$, $\hat{\rho}(g)(\Sigma_i)=\Sigma_i$ and $p_i$ is $G$-equivariant. This action is \emph{free} (resp. \emph{cocompact}) if the action $\rho:G\curvearrowright X$ is free (resp. cocompact). 
\end{defn}

Whenever a metric complex $(G,C_\bullet)$ admits a $G$-action, it will always be assumed that $\rho:G \curvearrowright G$ is the action by left multiplication.

\begin{exmp}
Our conditions for a metric complex to admit a $G$-action are reasonably restrictive. For instance, if a group $G$ has torsion, then for sufficiently large $r$, $G$ cannot act on $(G,P_r(G))$ despite the fact that $G$ has a natural simplicial action on $P_r(G)$. This is because some $g\in G\backslash \{e\}$ fixes a simplex of $P_r(G)$, so the projections maps can never be $G$-equivariant. If $G$ is torsion-free, then projection maps can be chosen so that $G$ acts freely on $(G,P_r(G))$.
\end{exmp}

\subsection{Uniformly Acyclic Complexes}
\begin{defn}[cf. Definition \ref{defn:counac}]
Let $\mu:\mathbb{R}_{\geq 0}\rightarrow \mathbb{R}_{\geq 0}$ be a function such that $\mu(r)\geq r$ for each $r$. An $R$-metric complex $(X,C_\bullet)$ is said to be \emph{$\mu$-uniformly $n$-acyclic} if for every $x\in X$, $r\in \mathbb{R}_{\geq 0}$ and $k\leq n$, the map  $$\tilde{H}_k[B_r(x)]\rightarrow \tilde{H}_k[B_{\mu(r)}(x)],$$ induced by inclusion, is zero.
We say $(X,C_\bullet)$ is \emph{uniformly $n$-acyclic} if it is $\mu$-uniformly $n$-acyclic for some $\mu$.
We say that $(X,C_\bullet)$ is \emph{uniformly acyclic} if it is uniformly $n$-acyclic for all $n$.
\end{defn}

As in Example \ref{exmp:ripshyp}, if $G$ is a hyperbolic group, then  $C_\bullet(P_i(G))$ is uniformly acyclic  for $i$ sufficiently large. Unfortunately, for a general coarsely uniformly $(n-1)$-acyclic metric space $X$,  $C_\bullet(P_i(X))$  is not necessarily uniformly $(n-1)$-acyclic for large  $i$.

The following proposition allows us to construct a uniformly acyclic metric complex for any coarsely uniformly acyclic metric space, formalizing the construction discussed in the introduction to this section.

\begin{restatable}[See Lemma 5.10 and Proposition 11.4 of \cite{kapovich2005coarse}]{prop}{controlspace}\label{prop:controlspace}
Let $X$ be a bounded geometry metric space. 
\begin{enumerate}
\item\label{eqn:controlspacemain}
If $X$ is $(\lambda,\mu)$-coarsely uniformly $(n-1)$-acyclic over $R$, then it is the control space of a $\mu'$-uniformly $(n-1)$-acyclic $R$-metric complex $(X,C_\bullet)$ of $n$-displacement at most $d$, where $d$ and $\mu'$ depend only on $\lambda$ and $\mu$.
\item \label{eqn:controlspaceinf} Suppose that for each $n$, $X$ is $(\lambda_n,\mu_n)$-coarsely uniformly $(n-1)$-acyclic over $R$. Then $X$ is the control space of a uniformly acyclic $R$-metric complex $(X,C_\bullet)$ such that for each $n$,  $(X,C_\bullet)$  is $\mu'_n$-uniformly $(n-1)$-acyclic and has $n$-displacement at most $d_n$, where $d_n$ and $\mu'_n$ depend only on $\lambda_i$ and $\mu_i$ for $i\leq n$.
\item \label{eqn:controlspaceequiv} Suppose a group $G$ acts freely on $X$. Then  (\ref{eqn:controlspacemain}) and (\ref{eqn:controlspaceinf})  hold, and the resulting metric complex $(X,C_\bullet)$ can be chosen so that it admits a free $G$-action.
\end{enumerate}
\end{restatable}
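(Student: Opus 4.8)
The plan is to construct the metric complex dimension by dimension, mirroring the informal inductive construction sketched in the introduction to this section but carrying the quantitative bookkeeping (displacement bounds and uniform acyclicity moduli) throughout. We start with the degree-zero piece $C_0$: take $\Sigma_0 := X$ with $p_0 = \mathrm{id}_X$, so that $C_0$ is the free $R$-module on $X$; this is automatically of finite type and $p_0$ is onto, and we set $\varepsilon(\sigma) = 1_R$ for each $\sigma \in \Sigma_0$. Now suppose inductively that for some $k \le n-1$ we have built a metric complex $[C_\bullet]_k$ of finite type, with $k$-displacement bounded by a constant depending only on $\lambda, \mu$, which is $\mu_k'$-uniformly $(k-1)$-acyclic (vacuously so when $k=0$). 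We must attach a free module $C_{k+1}$ with a finite-displacement boundary map $\partial_{k+1}\colon C_{k+1} \to C_k$ so that the result becomes uniformly $k$-acyclic.

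The key step is the following: for each $x \in X$ and each $r$, the coarse uniform $(k)$-acyclicity of $X$ over $R$ says that $\tilde H_k(P_i(N_r(x))) \to \tilde H_k(P_{\lambda(i)}(N_{\mu(i,r)}(x)))$ vanishes; feeding this through a comparison between the Rips chain complex and $[C_\bullet]_k$ (using that $[C_\bullet]_k$ is uniformly $(k-1)$-acyclic, which lets one build proper chain maps in both directions between $C_\bullet(P_i(X))$ and $C_\bullet$ that are chain homotopic to the respective identities in degrees $\le k$, by a standard acyclic-models-type argument with explicit displacement control) shows that every $k$-cycle of $C_k$ supported in a ball $N_r(x)$ becomes a boundary after enlarging to a controlled ball $N_{r'}(x)$ in a suitable enlargement of the complex. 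More precisely, one shows that the $k$-cycles of $C_k[N_r(x)]$ that die in $H_k[N_{r'}(x)]$ for an appropriate $r' = r'(r)$ generate, over $R$, all of $\tilde H_k[N_r(x)]$ with uniformly bounded "filling diameter." Using bounded geometry of $X$, there are only boundedly many such generating cycles per ball, up to translation; we let $\Sigma_{k+1}$ consist of one new basis element for each such cycle, with $p_{k+1}$ recording (a point in the support of) the cycle, and define $\partial_{k+1}$ of that basis element to be the cycle it fills. By construction $\partial_{k+1}$ has displacement bounded by the filling diameter, $\partial_k \partial_{k+1} = 0$, the new module has finite type by bounded geometry, and $\tilde H_k$ of every ball now vanishes after a controlled enlargement — i.e. $[C_\bullet]_{k+1}$ is $\mu_{k+1}'$-uniformly $k$-acyclic with all constants depending only on $\lambda, \mu$ (or on $\lambda_i, \mu_i$, $i \le k+1$, in the infinite case). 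Iterating gives (\ref{eqn:controlspacemain}) and, letting $n \to \infty$, (\ref{eqn:controlspaceinf}).

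For (\ref{eqn:controlspaceequiv}), when $G$ acts freely on $X$ one performs the construction $G$-equivariantly: at each stage $G$ acts on $\Sigma_k$ (compatibly with $p_k$, using that the $G$-action is by isometries so it permutes balls and hence the chosen generating cycles), and one picks $\Sigma_{k+1}$ to be a $G$-invariant set of new basis elements by selecting representatives from each $G$-orbit of fillable cycles and then closing up under the action; freeness of the action on $X$ ensures the action on each $\Sigma_k$ is free and that $p_k$ can be made equivariant, so $\hat\rho$ is well-defined and the resulting $(X, C_\bullet)$ carries a free $G$-action. The main obstacle is the key step: making the passage from the hypothesis (which is phrased in terms of Rips complexes) to a statement about the inductively-built metric complex $[C_\bullet]_k$ fully quantitative — one needs proper chain maps $C_\bullet(P_i(X)) \leftrightarrow C_\bullet$ with displacement bounds that do not blow up, and this requires the uniform $(k-1)$-acyclicity of $[C_\bullet]_k$ built at the previous stage, so the induction has to be set up carefully so that the acyclicity modulus and the displacement bound feed into each other correctly. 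This is exactly the content of Lemma 5.10 and Proposition 11.4 of \cite{kapovich2005coarse}, to which we refer for the detailed estimates.
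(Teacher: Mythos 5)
Your proposal captures the broad inductive strategy (build the complex dimension by dimension, using the coarse uniform acyclicity of $X$ to produce controlled fillings), but it has a genuine gap at the crucial step where new $(k+1)$-cells are adjoined. You add ``one new basis element for each'' generating cycle and assert that bounded geometry gives ``only boundedly many such generating cycles per ball, up to translation.'' In parts (\ref{eqn:controlspacemain}) and (\ref{eqn:controlspaceinf}) there is no group action, so ``up to translation'' is unavailable, and over a general ring $R$ (the statement is for arbitrary commutative $R$ with unity, not just $\mathbb{Z}_2$) there is no a priori uniform bound on the number of generators required for the module of reduced $k$-cycles in $C_k[N_r(x)]$ --- indeed for non-Noetherian $R$ this submodule need not be finitely generated at all. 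Consequently the new module you describe has no reason to be of finite type, and without that the resulting $(X,C_\bullet)$ is not an $R$-metric complex.

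The paper avoids this by a different device: rather than attaching a cell for each cycle to be killed, it first constructs (via Lemma~\ref{lem:extendmapscoarse}) a proper chain map $f_\#:[C_\bullet]_k\to C_\bullet(P_{i_k}(X);R)$ of controlled displacement, includes into a larger Rips complex $C_\bullet(P_{\lambda(i_k)}(X);R)$, and then forms the \emph{algebraic mapping cylinder} $D_\bullet$ of this composition, setting $D_j := C_j \oplus C_{j-1} \oplus C_j(P_{\lambda(i_k)}(X);R)$ with the usual mapping-cylinder boundary. Each summand is already a finite-type free $R$-module over $X$, so $D_{k+1}$ is automatically of finite type; a retraction $r_\#$ constructed via Lemma~\ref{lem:extendmapsunifrel} is then used to define the boundary $D_{k+1}\to C_k$, and the uniform $k$-acyclicity of the extended complex follows directly from the mapping-cylinder algebra and the fact that the relevant Rips-homology classes die after a controlled enlargement. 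This mapping-cylinder mechanism, which delivers finite type and uniform acyclicity simultaneously, is the structural idea your proposal is missing. Your sketch for part (\ref{eqn:controlspaceequiv}) --- pick orbit representatives and extend equivariantly --- is correct in spirit, but observe that the paper implements it concretely by first running the non-equivariant construction and then replacing $D_{k+1}$ with a module $D'_{k+1}$ whose basis consists of triples $(g,t,\rho)\in G\times T\times\Sigma_{k+1}$ over a fundamental domain $T$; this is what actually secures both equivariance and finite type in one step, whereas ``closing up under the action'' as you describe leaves the finite-type verification implicit.
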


Coupled with the preceding theorem, the following lemma is essential in allowing us to define coarse cohomology. It is a metric complex version of Proposition 6.47 and Corollary 6.49 of \cite{drutulectures}.
\begin{restatable}{lem}{extendmapsunif}\label{lem:extendmapsunif}
Suppose $(X,C_\bullet,\Sigma_\bullet,p_\bullet)$ and $(Y,D_\bullet,\Sigma'_\bullet,p'_\bullet)$ are $R$-metric complexes,  $(Y,D_\bullet)$ is  $\mu$-uniformly $(n-1)$-acyclic and  $C_\bullet$ and $D_\bullet$ have $n$-displacement at most $d_1$ and $d_2$ respectively. Then:
\begin{enumerate}
\item
every $(\eta,\phi)$-coarse embedding $f:X\rightarrow Y$  induces a chain map $f_\#:[C_\bullet]_n\rightarrow D_\bullet$ of $n$-displacement at most $M=M(\mu,\phi,d_1,d_2)$ over $f$;
\item for every $(\eta,\phi)$-coarse embedding $f:X\rightarrow Y$ and every pair of chain maps $f_\#,g_\#:[C_\bullet]_n\rightarrow D_\bullet$ of $n$-displacement at most $r$ over $f$, there exists a chain homotopy $h_\#:[C_\bullet]_{n-1}\rightarrow D_{\bullet+1}$ between $f_\#$ and $g_\#$ which has $(n-1)$-displacement at most $N=N(\mu,\phi,d_1,d_2,r)$ over $f$.
\end{enumerate}
\end{restatable}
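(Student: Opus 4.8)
The strategy is the standard "acyclic models" / inductive extension argument, carried out carefully enough to track displacement bounds. For part (1), I would construct $f_\#$ skeleton by skeleton. On $0$-chains, define $f_\#$ on a basis element $\sigma\in\Sigma_0$ by choosing any $\tau\in\Sigma'_0$ with $p'_0(\tau)$ within bounded distance of $f(p_0(\sigma))$, which is possible because $p'_0$ is onto and $Y$ has bounded geometry; this has displacement controlled by $\phi(0)$ and the surjectivity radius. Assume inductively that $f_\#$ has been defined on $[C_\bullet]_{k}$ with displacement at most some $M_k$. For $\sigma\in\Sigma_{k+1}$, the chain $f_\#(\partial\sigma)\in D_k$ is a cycle (since $f_\#$ is a chain map in lower degrees, or a reduced cycle when $k=0$ using condition (2) in the definition of a metric complex), and by Lemma \ref{lem:dispmetric} together with the displacement bounds on $f_\#$ and $\partial$ on $C_\bullet$, its support lies in a ball $B_{R}(y)$ of radius $R$ depending only on $\phi$, $d_1$, $d_2$, $M_k$, where $y$ is near $f(p_{k+1}(\sigma))$. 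Since $(Y,D_\bullet)$ is $\mu$-uniformly $(k)$-acyclic (we have $k\le n-1$), the class $[f_\#(\partial\sigma)]$ dies in $\tilde H_k[B_{\mu(R)}(y)]$, so there is $\beta_\sigma\in C_{k+1}[B_{\mu(R)}(y)]$ with $\partial\beta_\sigma=f_\#(\partial\sigma)$; set $f_\#(\sigma):=\beta_\sigma$. Its support lies in $B_{\mu(R)}(y)$, giving the uniform displacement bound $M_{k+1}$. Iterating up to degree $n$ (where acyclicity in degree $n-1$ is exactly what is needed for the last step) produces $f_\#$ with $n$-displacement at most $M=M(\mu,\phi,d_1,d_2)$.

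For part (2), the argument is the relative version of the same induction, building the chain homotopy $h_\#$ so that $\partial h_\# + h_\# \partial = g_\# - f_\#$. In degree $0$, the chain $g_\#(\sigma)-f_\#(\sigma)$ has augmentation zero (both land on basis elements, each contributing $1_R$) and bounded support near $f(p_0(\sigma))$, so uniform $0$-acyclicity of $D_\bullet$ produces $h_0(\sigma)\in D_1$ with $\partial h_0(\sigma)=g_\#(\sigma)-f_\#(\sigma)$ and controlled support. Inductively, having defined $h_\#$ on $[C_\bullet]_k$ with displacement $N_k$, for $\sigma\in\Sigma_{k+1}$ one checks that the chain $g_\#(\sigma)-f_\#(\sigma)-h_\#(\partial\sigma)$ is a cycle, with support in a ball whose radius depends only on $\mu,\phi,d_1,d_2,r,N_k$ (again via Lemma \ref{lem:dispmetric}); uniform $(k)$-acyclicity (for $k\le n-1$) lets us fill it, defining $h_{k+1}(\sigma)$ with controlled support. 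Running this to degree $n-1$ gives $h_\#$ of $(n-1)$-displacement at most $N=N(\mu,\phi,d_1,d_2,r)$.

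The main obstacle, and the only place real care is required, is bookkeeping the displacement constants: at each inductive stage one must verify that the support of the cycle to be filled lies in a single ball whose radius is bounded by a function of the \emph{previous} stage's constants (not of $\sigma$), so that uniform acyclicity applies with a uniform $\mu$. This is where Lemma \ref{lem:dispmetric} is indispensable — it converts "$\mathrm{supp}(\rho)\subseteq K$" into membership in $C_\bullet[N_{nr}(K)]$, allowing the filling chain to be chosen inside a controlled subcomplex and hence with controlled support. One also has to be slightly careful at the bottom of the induction with the augmented complex (conditions (2) and (3) in the definition of a metric complex are exactly what make the degree-$0$ steps work), and to note that finite type of the complexes plus bounded geometry guarantee all the relevant choices can be made with uniformly bounded displacement. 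Everything else is the textbook construction of chain maps and chain homotopies into an acyclic complex.
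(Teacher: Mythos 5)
Your proposal is correct and takes essentially the same route as the paper: an inductive, skeleton-by-skeleton construction that uses Lemma \ref{lem:dispmetric} to convert support bounds into subcomplex membership, uniform $(n-1)$-acyclicity of $(Y,D_\bullet)$ to fill the cycles produced at each stage with controlled supports, and the surjectivity of $p'_0$ plus augmentation-preservation (conditions (2)--(3) of the metric complex definition) to start the induction in degree $0$. The paper presents its proof as a minor variant of Lemma \ref{lem:extendmapscoarse} (with the Rips complex target replaced by $(Y,D_\bullet)$, so Lemma \ref{lem:dispmetric} takes over the role played there by Remark \ref{rem:ripsdisp}), but the substance matches yours.
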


We also need the following Lemma,  used in the proof of Proposition \ref{prop:controlspace}. It is similar to Lemma \ref{lem:extendmapsunif}, replacing the metric complex $(Y,D_\bullet)$ by suitable Rips complexes.
\begin{restatable}{lem}{extendmapscoarse}\label{lem:extendmapscoarse}
Suppose  $X$ and $Y$ are bounded geometry metric spaces, $(X,C_\bullet, \Sigma_\bullet,p_\bullet)$ is an  $R$-metric complex with $n$-displacement at most $d$ and $Y$ is $(\lambda,\mu)$-coarsely uniformly $(n-1)$-acyclic over $R$.
\begin{enumerate}
\item There exists an $i=i(\lambda)$ such that any $(\eta,\phi)$-coarse embedding $f:X\rightarrow Y$ induces a chain map $$f_\#:[C_\bullet]_n\rightarrow C_\bullet(P_i(Y);R)$$ of $n$-displacement at most $M=M(\lambda,\mu, \phi,d)$ over $f$. 
\item For $i\geq 0$, suppose  $f_\#,g_\#:[C_\bullet]_n\rightarrow C_\bullet(P_i(Y);R)$ are chain maps of  $n$-displacement at most $r$ over an $(\eta,\phi)$-coarse embedding $f:X\rightarrow Y$. Then there exists a $j=j(i,\lambda)$ and a chain homotopy $$h_\#:[C_\bullet]_{n-1}\rightarrow C_{\bullet+1}(P_j(Y);R),$$  of $(n-1)$-displacement at most $N=N(i,\lambda,\mu, \phi, r)$ over $f$, between $\iota_\# f_\#$ and $\iota_\# g_\#$, where $\iota_\#:C_{\bullet}(P_i(Y);R)\rightarrow C_{\bullet}(P_j(Y);R)$ is the inclusion. 
\end{enumerate}
\end{restatable}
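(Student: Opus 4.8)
The plan is to build $f_\#$ and $h_\#$ by the standard skeleton-by-skeleton induction, with the coarse uniform $(n-1)$-acyclicity of $Y$ (Definition \ref{defn:counac}) supplying the fillings and with the Rips parameter of the target growing by one application of $\lambda$ at each stage. Throughout, $\lambda$ and $\mu$ are non-decreasing with $\lambda(i)\ge i$ and $\mu(i,r)\ge r$, so the Rips parameters produced form an increasing sequence and all the partially defined maps can eventually be included into a single $P_i(Y)$ (resp.\ $P_j(Y)$). We use bounded geometry of $Y$ only to guarantee that the target Rips complexes are genuine metric complexes, and the $n$-displacement bound $d$ on $C_\bullet$ only to control the supports of boundaries of basis elements of $C_\bullet$.

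For (1): in dimension $0$, send each $\sigma\in\Sigma_0$ to the vertex $f(p_0(\sigma))$ of $P_{i_0}(Y)$ for a fixed small constant $i_0$; this has displacement $0$ over $f$ and respects the augmentation since coefficients are preserved. Assume inductively that $f_\#$ is defined on $[C_\bullet]_k$ for some $k<n$, lands in $C_\bullet(P_{i_k}(Y);R)$, is a chain map, and has $k$-displacement at most $M_k$ over $f$. For $\sigma\in\Sigma_{k+1}$, the $n$-displacement bound gives $\mathrm{supp}(\partial\sigma)\subseteq N_d(p_{k+1}(\sigma))$, so $f_\#(\partial\sigma)$ is a $k$-cycle (reduced when $k=0$) supported in $N_{M_k+\phi(d)}(f(p_{k+1}(\sigma)))$; it is a cycle because $f_\#$ is already a chain map in lower dimensions. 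Since $k\le n-1$, coarse uniform $(n-1)$-acyclicity lets us fill it by a $(k+1)$-chain in $P_{\lambda(i_k)}(Y)$ supported in $N_{\mu(i_k,\,M_k+\phi(d))}(f(p_{k+1}(\sigma)))$; set $f_\#(\sigma)$ to be this chain and extend $R$-linearly. Then $f_\#$ is a chain map on $[C_\bullet]_{k+1}$ with $i_{k+1}:=\lambda(i_k)$ and $M_{k+1}:=\mu(i_k,\,M_k+\phi(d))$; after $n$ steps, $i:=i_n=\lambda^{(n)}(i_0)$ depends only on $\lambda$ and $M:=M_n$ only on $\lambda,\mu,\phi,d$, and we include the lower skeleta into $P_i(Y)$ to obtain the stated chain map.

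For (2): run the same induction one dimension lower to produce a chain homotopy satisfying $\partial h_\#+h_\#\partial=\iota_\#g_\#-\iota_\#f_\#$. In dimension $0$, $(g_\#-f_\#)(\sigma)$ is a reduced $0$-cycle supported in $N_r(f(p_0(\sigma)))$ (again using that the chain maps respect the augmentation), so by coarse uniform $(n-1)$-acyclicity it bounds a $1$-chain in $P_{\lambda(i)}(Y)$ supported in $N_{\mu(i,r)}(f(p_0(\sigma)))$; let this be $h_\#(\sigma)$. Inductively, for $k\le n-2$ with $h_\#$ defined on $[C_\bullet]_k$ and satisfying the homotopy relation there, the element $z_\sigma:=g_\#(\sigma)-f_\#(\sigma)-h_\#(\partial\sigma)$ is a $(k+1)$-cycle by a short computation using $\partial\partial=0$ and the inductive relation; its support lies in a ball about $f(p_{k+1}(\sigma))$ of radius controlled by $r$, the previous displacement of $h_\#$, $\phi$, and $d$. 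Fill $z_\sigma$ using coarse uniform $(n-1)$-acyclicity (valid since $k+1\le n-1$) at the cost of one more application of $\lambda$, and set $h_\#(\sigma)$ to be the filling. After $n$ filling steps the Rips parameter has grown to $j:=\lambda^{(n)}(i)=j(i,\lambda)$, the accumulated displacement bound is $N=N(i,\lambda,\mu,\phi,r)$, and composing $f_\#,g_\#$ with the inclusion $\iota_\#$ into $P_j(Y)$ yields the desired chain homotopy.

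The only substantive work is the bookkeeping at each stage: checking that the chain to be filled is genuinely a cycle, that its support sits inside a ball of controlled radius about the image under $f$ of the projected basepoint $p_{k+1}(\sigma)$, and that the emerging parameters $i,j,M,N$ depend only on the advertised data (in particular not on $f$ beyond its distortion functions, nor on $X$). The mild subtleties are the augmentation compatibility in dimension $0$ (immediate since the constructions preserve coefficients) and the fact that the fillings furnished by Definition \ref{defn:counac} a priori live in a strictly larger Rips complex and a strictly larger ball — which is exactly why $i$ and $j$ arise by iterating $\lambda$ and why $M$ and $N$ involve $\mu$. I expect this accumulation of constants, rather than any conceptual difficulty, to be the main point requiring care.
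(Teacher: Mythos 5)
Your proposal is correct and follows essentially the same skeleton-by-skeleton induction as the paper's proof in Appendix \ref{app:metriccomp}: fill the image of each boundary (or homotopy defect) using Definition \ref{defn:counac}, growing the Rips parameter by one application of $\lambda$ per dimension and accumulating the displacement bound via $\mu$. The only slip is a small bookkeeping omission: a chain in $P_{i_k}(Y)$ supported in $N_r(y)$ is only guaranteed to lie in $C_\bullet(P_{i_k}(N_{r+i_k}(y));R)$, not $C_\bullet(P_{i_k}(N_r(y));R)$ (this is Remark \ref{rem:ripsdisp}), so the radius fed to $\mu$ at each stage picks up an extra $+i_k$ as in the paper's $M_{k+1}=\mu(i_k,\phi(d)+M_k+i_k)$; this does not affect the structure of the argument.
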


Proposition \ref{prop:controlspace} and Lemmas \ref{lem:extendmapsunif} and \ref{lem:extendmapscoarse} can be proved by applying techniques found in \cite{kapovich2005coarse} and \cite{drutulectures}. For the reader's convenience, we include these proofs in Appendix \ref{app:metriccomp}.

\subsection{Coarse Cohomology}\label{sec:coarsecohom}
Suppose $(X,C_\bullet)$ and $(X,C'_\bullet)$ are two  uniformly $(n-1)$-acyclic $R$-metric complexes. 
Applying Lemma \ref{lem:extendmapsunif} to $\mathrm{id}_X$, we define proper chain maps $f_\#:[C_\bullet]_n\rightarrow [C'_\bullet]_n$ and $g_\#:[C'_\bullet]_n\rightarrow [C_\bullet]_n$ of finite $n$-displacement over the identity. By Lemma \ref{lem:compdispmaps}, both  $g_\#f_\#$ and $f_\#g_\#$ have finite $n$-displacement over the identity, thus Lemma \ref{lem:extendmapsunif} says they are chain homotopic to $\mathrm{id}_{[C_\bullet]_{n-1}}$ and $\mathrm{id}_{[C'_\bullet]_{n-1}}$ respectively.
By Proposition \ref{prop:propchainmap}, $f_\#$ and $g_\#$ induce maps  $\hat f^*:\hat H^k_c(C'_\bullet)\rightarrow\hat H^k_c(C_\bullet)$ and $\hat g^*:\hat H^k_c(C_\bullet)\rightarrow\hat H^k_c(C'_\bullet)$ for $k\leq n$. Proposition \ref{prop:propchainmap} also tells us that each of $\hat f^*\hat g^*$ and $\hat g^*\hat f^*$ is the identity, thus $\hat f^*$ is an isomorphism.

Moreover, any two chain maps $f_\#,f'_\#:[C_\bullet]_n\rightarrow [C'_\bullet]_n$ of finite displacement over $\mathrm{id}_X$ are properly chain homotopic by Lemma \ref{lem:extendmapsunif}. Thus by Proposition \ref{prop:propchainmap}, $\hat f^*$ doesn't depend on the choice of $f_\#$. Consequently, for $k\leq n$ the isomorphism $\hat f^*:\hat H^k_c(C'_\bullet)\rightarrow\hat H^k_c(C_\bullet)$ is canonical, so $\hat H^k_c(C_\bullet)$ is effectively an invariant of $X$, independent of the choice of $C_\bullet$. This discussion demonstrates that the following is well-defined:

\begin{defn}\label{def:coarsecohom}
Let $X$ be a bounded geometry metric space which is coarsely uniformly $(n-1)$-acyclic over $R$. Let $(X,C_\bullet)$ be a uniformly $(n-1)$-acyclic $R$-metric complex, whose existence is guaranteed by Proposition \ref{prop:controlspace}. For $k\leq n$, we define the \emph{coarse cohomology} of $X$ to be $H^k_\mathrm{coarse}(X;R):=\hat H^k_c(C_\bullet)$. 
\end{defn}

\begin{rem}\label{rem:0thcohom}
It follows easily from the definition of modified cohomology with compact supports that $\hat{H}^0_c(C_\bullet)=0$ for every metric complex $(X,C_\bullet)$. Thus for every bounded geometry metric space, $H^0_\mathrm{coarse}(X;R)=0$.
\end{rem}

The subsequent lemma demonstrates that it is only necessary to use modified cohomology with compact supports to define $H^n_\mathrm{coarse}(X;R)$   when $X$ is coarsely uniformly $(n-1)$-acyclic over $R$ and not coarsely uniformly $n$-acyclic over $R$. Otherwise, we can just take ordinary (not modified) cohomology with compact supports of a suitable metric complex.
\begin{lem}\label{lem:modifvsordin}
Suppose $X$ is an infinite metric space, $n>0$ and $(X,C_\bullet,\Sigma_\bullet,p_\bullet)$ is a uniformly $(n-1)$-acyclic $R$-metric complex. Then for $k<n$, $H^k_c(C_\bullet)=\hat H^k_c(C_\bullet)$.
\end{lem}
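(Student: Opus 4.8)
The plan is to show that the inclusion $C^\bullet_c[X]\xrightarrow{\iota} C^\bullet[X]$ induces the zero map on $H^k$ for $k<n$, which immediately gives $\hat H^k_c(C_\bullet)=\ker(\iota^*)=H^k_c(C_\bullet)$ in those degrees. So fix $k<n$ and take a finitely supported cocycle $\alpha\in C^k_c[X]$ representing a class in $H^k_c(C_\bullet)$; I want to produce $\beta\in C^{k-1}[X]$ (not necessarily finitely supported) with $\delta\beta=\alpha$, exhibiting $[\iota\alpha]=0$ in $H^k(C_\bullet)$. Equivalently, dualizing, it suffices to show $H_k(C_\bullet[X])=0$ for $0<k<n$ — wait, that is not quite what I need since cohomology of the full cochain complex is not simply dual to homology; but the cleaner route is genuinely cohomological and local-to-global.

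First I would cover $X$ by balls and use uniform acyclicity. Since $\mathrm{supp}(\alpha)$ is finite, choose $x_0\in X$ and $r_0\geq 0$ with $\mathrm{supp}(\alpha)\subseteq B_{r_0}(x_0)$, and by Lemma \ref{lem:dispmetric} we may regard $\alpha$ (after enlarging $r_0$ by the displacement constant) as a cocycle in $C^k[B_{r_0}(x_0)]$ extended by zero. Now I build $\beta$ as an increasing union over an exhaustion $B_{r_0}(x_0)\subseteq B_{r_1}(x_0)\subseteq \cdots$ of $X$ by balls. On $B_{r_j}(x_0)$, uniform $(n-1)$-acyclicity — passing to a slightly larger ball $B_{\mu(r_j)}(x_0)$ — kills $\tilde H_{k}$, hence by the universal coefficient / duality argument for metric complexes (or directly: a finite-type free chain complex that is $(n-1)$-acyclic over $R$ in a range has the dual being $(n-1)$-coacyclic in a range) the restriction of $\alpha$ to $B_{r_j}(x_0)$ is a coboundary $\delta\beta_j$ on $B_{\mu(r_j)}(x_0)$, provided $k<n$. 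The subtlety is compatibility: $\beta_{j+1}$ restricted to $B_{\mu(r_j)}(x_0)$ need not equal $\beta_j$. To fix this I would argue inductively, correcting $\beta_{j+1}$ by a coboundary: on the overlap $\delta(\beta_{j+1}-\beta_j)=0$, so $\beta_{j+1}-\beta_j$ is a $(k-1)$-cocycle on $B_{\mu(r_j)}(x_0)$; when $k-1<n-1$, i.e. $k<n$, uniform acyclicity again lets me write it as a coboundary $\delta\gamma_j$ on a still larger ball, and I redefine $\beta_{j+1}\leftarrow \beta_{j+1}-\delta\gamma_j$ extended appropriately. Taking care that the exhaustion radii grow fast enough to absorb the $\mu$'s, the $\beta_j$ stabilize on each fixed ball and patch to a global $\beta\in C^{k-1}[X]$ with $\delta\beta=\alpha$. (The case $k=0$ is excluded since $n>0$ forces $k<n$ to allow $k=0$, but $\hat H^0_c=0=H^0_c$ is handled already by Remark \ref{rem:0thcohom} and $X$ infinite; I'd dispatch it in one line.)

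The main obstacle is precisely this gluing/limit argument: one must ensure the partial primitives $\beta_j$ are eventually constant on every finite subset of $X$ so the direct limit makes sense, and this requires threading the uniform-acyclicity constants $\mu$ through the exhaustion carefully and using finite displacement of $C_\bullet$ (via Lemma \ref{lem:dispmetric}) to control where correction cocycles live. Everything else — reducing to the cocycle being supported in a ball, the Hom-duality translating "$(n-1)$-acyclic homology" into "coboundary in the relevant range" — is routine given the machinery already set up, in particular uniform $(n-1)$-acyclicity of $(X,C_\bullet)$ and the finite-type condition.
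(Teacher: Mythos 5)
Your overall plan --- show the map $\iota^*\colon H^k_c(C_\bullet)\to H^k(C_\bullet)$ is zero --- is the right one, but the execution has a gap, and you walked away from a much shorter route for a reason that doesn't actually apply.

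The paper's proof rests on a single observation you never make: $C_\bullet$ is not merely \emph{uniformly} $(n-1)$-acyclic, it is genuinely $(n-1)$-acyclic as a chain complex. Indeed every chain $\rho\in C_k$ is a finite $R$-linear combination of basis elements, so $\mathrm{supp}(\rho)$ is finite and, by Lemma \ref{lem:dispmetric}, $\rho$ lies in $C_\bullet[B_r(x)]$ for some ball; uniform acyclicity then kills it in a bigger ball. Hence $\tilde H_k(C_\bullet)=0$ for $k\leq n-1$. Now the augmented complex is a partial free resolution of the $R$-module $R$, so for $0<k<n$ one has $H^k(C_\bullet)\cong\mathrm{Ext}^k_R(R,R)=0$, and the target of $\iota^*$ is the zero module; thus $\hat H^k_c=\ker(\iota^*)=H^k_c$ with no work. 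Your stated reason for abandoning the duality route --- ``cohomology of the full cochain complex is not simply dual to homology'' --- is precisely what this Ext computation handles: the ``extra'' universal-coefficients term is $\mathrm{Ext}^1_R(\tilde H_{k-1}(C_\bullet),R)$, and $\tilde H_{k-1}(C_\bullet)=0$ in the range in question, so it contributes nothing.

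Your replacement argument is then both unnecessary and under-justified. The local step --- a cocycle on $B_{r_j}(x_0)$ becomes a coboundary on $B_{\mu(r_j)}(x_0)$ --- requires exactly the homology-to-cohomology translation (UCT plus naturality, and over a general $R$ a double composite of bonds to kill the Ext term as well) that you declared problematic two sentences earlier; waving at ``the universal coefficient / duality argument for metric complexes'' is not a proof and in fact concedes the point that makes the direct argument work. You also gloss over the fact that restricting $\alpha$ to a ball and extending by zero need not produce a cocycle on $X$, and the inductive correction $\beta_{j+1}\leftarrow\beta_{j+1}-\delta\gamma_j$ requires verifying that the correction does not disturb already-stabilized values --- this needs the radii threaded carefully against $\mu$ and the displacement constants, and as written it is a sketch, not an argument. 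Finally, for $k=0$: Remark \ref{rem:0thcohom} gives $\hat H^0_c=0$, but the lemma's content there is $H^0_c=0$, which requires the separate argument with the augmentation and $X$ infinite; you flag this but do not supply it.
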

\begin{proof}
Since $C_\bullet$ is $(n-1)$-acyclic,  $H^k(C_\bullet)=0$ for $0<k<n$ and $H^0(C_\bullet)\cong R$. This can be seen using the universal coefficients theorem for cohomology, or can be calculated directly. Therefore, for $0<k<n$, $H^k_c(C_\bullet)=\ker(H^k_c(C_\bullet)\xrightarrow{\iota^*}  H^k(C_\bullet))=\hat H^k_c(C_\bullet)$.

Suppose $\alpha\in C^0_c$ is a cocycle and  $\Delta\in \Sigma_0$. For any $\Delta'\in \Sigma_0$, $\Delta-\Delta'$ is a reduced $0$-cycle, hence there is a 1-chain $\rho\in C_1$ such that $\partial\rho=\Delta-\Delta'$. Therefore $\alpha(\Delta)-\alpha(\Delta')=\alpha(\partial\rho)=\delta\alpha(\rho)=0$. Since $X$ is infinite and $\alpha$ has finite support, we see that $\alpha(\Delta)=0$. As $\Delta$ was arbitrary,  $\alpha=0$; hence $H^0_c(C_\bullet)=0$. Therefore $\hat H^0_c(C_\bullet)=H^0_c(C_\bullet)=0$.
\end{proof}

\begin{prop}\label{prop:coarsecohommap}
Let $X$ and $Y$ be bounded geometry metric spaces that are coarsely uniformly  $(n-1)$-acyclic over $R$. Then a coarse embedding $f:X\rightarrow Y$ induces a  homomorphism $f^*:H^k_\mathrm{coarse}(Y;R)\rightarrow H^k_\mathrm{coarse}(X;R)$ for all $k\leq n$.
Moreover, for any bounded geometry metric spaces $X$, $Y$ and $Z$ that are coarsely uniformly  $(n-1)$-acyclic over $R$, then:
\begin{enumerate}
\item $(\mathrm{id}_X)^*=\mathrm{id}_{H^k_\mathrm{coarse}(X;R)}$;
\item if $f,g:X\rightarrow Y$ are  close coarse embeddings, then $f^*=g^*$;
\item if $f:X\rightarrow Y$ and $g:Y\rightarrow Z$ are coarse embeddings, then $(gf)^*=f^*g^*$.
\end{enumerate}
In particular, if $f:X\rightarrow Y$ is a coarse isometry, then $f^*$ is an isomorphism.
\end{prop}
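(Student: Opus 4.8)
\textbf{Proof plan for Proposition \ref{prop:coarsecohommap}.}

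The plan is to reduce everything to the machinery already developed: Proposition \ref{prop:controlspace} gives us uniformly $(n-1)$-acyclic metric complexes as control spaces, Lemma \ref{lem:extendmapsunif} lets us realise coarse embeddings by proper chain maps and shows any two such are properly chain homotopic, and Proposition \ref{prop:propchainmap} says proper chain maps (resp. proper chain homotopies) induce maps (resp. equalities of maps) on modified cohomology with compact supports. So first I would fix, once and for all, uniformly $(n-1)$-acyclic $R$-metric complexes $(X,C_\bullet)$, $(Y,D_\bullet)$, $(Z,E_\bullet)$ realising $X$, $Y$, $Z$ as control spaces, with $n$-displacements bounded by some constants; by the discussion preceding Definition \ref{def:coarsecohom} the groups $\hat H^k_c(C_\bullet)$ etc.\ compute the coarse cohomology and are independent of these choices.

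To define $f^*$ for a coarse embedding $f\colon X\to Y$: apply Lemma \ref{lem:extendmapsunif}(1) to get a chain map $f_\#\colon [D_\bullet]_n\to C_\bullet$ of finite $n$-displacement over a coarse inverse-direction... wait, more precisely, since cohomology is contravariant I want a chain map in the direction $f$ and then dualise. Concretely: $f\colon X\to Y$ is a coarse embedding between bounded geometry spaces which are control spaces of uniformly $(n-1)$-acyclic complexes, so Lemma \ref{lem:extendmapsunif}(1) produces a proper chain map $f_\#\colon [C_\bullet]_n\to D_\bullet$ of finite $n$-displacement over $f$; then Proposition \ref{prop:propchainmap}(\ref{prop:propchainmapmap}) gives $\hat f^*\colon \hat H^k_c(D_\bullet)\to \hat H^k_c(C_\bullet)$ for $k\le n$, and I \emph{define} $f^*:=\hat f^*$. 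Well-definedness (independence of the choice of $f_\#$) is exactly Lemma \ref{lem:extendmapsunif}(2): any two such chain maps are properly chain homotopic, so Proposition \ref{prop:propchainmap}(\ref{prop:propchainmaphomotop}) forces equality of induced maps. Independence of the choice of control complexes follows by pre- and post-composing with the canonical isomorphisms $\hat f^*$ built from $\mathrm{id}_X$, $\mathrm{id}_Y$ in the discussion before Definition \ref{def:coarsecohom}, together with functoriality at the chain level (Lemma \ref{lem:compdispmaps} shows compositions still have finite displacement).

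For the three numbered properties: (1) $(\mathrm{id}_X)_\#$ can be taken to be the literal identity chain map of $[C_\bullet]_n$, whose dual induces the identity, so $(\mathrm{id}_X)^*=\mathrm{id}$. (2) If $f,g\colon X\to Y$ are close coarse embeddings, then a chain map realising $f$ also realises $g$ up to finite displacement — more carefully, a chain map $f_\#$ of finite $n$-displacement over $f$ is also of finite $n$-displacement over $g$ since $f$ and $g$ are close — so by Lemma \ref{lem:extendmapsunif}(2) any choice of $f_\#$ and any choice of $g_\#$ are properly chain homotopic, and Proposition \ref{prop:propchainmap}(\ref{prop:propchainmaphomotop}) gives $f^*=g^*$. (3) For $f\colon X\to Y$ and $g\colon Y\to Z$ coarse embeddings, pick $f_\#\colon[C_\bullet]_n\to D_\bullet$ over $f$ and $g_\#\colon[D_\bullet]_n\to E_\bullet$ over $g$; by Lemma \ref{lem:compdispmaps} the composite $g_\# f_\#\colon[C_\bullet]_n\to E_\bullet$ has finite $n$-displacement over $gf$, so it is a valid choice of chain map computing $(gf)^*$, and dualising reverses the order: $(gf)^*=\hat f^*\hat g^*=f^*g^*$. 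Finally, if $f$ is a coarse isometry with coarse inverse $\bar f$, then $\bar f f$ and $f\bar f$ are close to the respective identities, so by (1),(2),(3) we get $f^*\bar f^*=\mathrm{id}$ and $\bar f^* f^*=\mathrm{id}$, whence $f^*$ is an isomorphism. I expect the only genuinely delicate point to be bookkeeping the "independence of the chosen control complex" claim — making sure the canonical isomorphisms from the pre-Definition-\ref{def:coarsecohom} discussion are compatible with the naturality squares — but this is a formal diagram chase using only Lemmas \ref{lem:compdispmaps} and \ref{lem:extendmapsunif} and Proposition \ref{prop:propchainmap}, with no new ideas required.
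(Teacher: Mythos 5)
Your proposal is correct and takes essentially the same route as the paper: fix uniformly $(n-1)$-acyclic metric complexes via Proposition \ref{prop:controlspace}, realise the coarse embedding by a proper chain map via Lemma \ref{lem:extendmapsunif}(1), dualise via Proposition \ref{prop:propchainmap}(1), and get well-definedness and properties (1)--(3) from Lemma \ref{lem:extendmapsunif}(2), Proposition \ref{prop:propchainmap}(2), and Lemma \ref{lem:compdispmaps}. The paper's proof is terser (it exhibits the commuting square encoding independence of the chosen control complexes and then says parts (1)--(3) ``follow''), but your write-up fills in exactly the steps that the paper leaves implicit, including the correct observation in part (2) that a chain map of finite displacement over $f$ is automatically of finite displacement over any map close to $f$.
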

\begin{proof}
Let $(X,C_\bullet)$, $(X,C'_\bullet)$, $(Y,D_\bullet)$ and $(Y,D'_\bullet)$ be uniformly $(n-1)$-acyclic $R$-metric complexes, which necessarily exist by Proposition \ref{prop:controlspace}.
$$\begin{CD}
\hat H^k_c(D'_\bullet)  @>\hat f'^*>> \hat H^k_c(C'_\bullet)\\
@VV(\mathrm{id}_Y)^*V @VV(\mathrm{id}_X)^*V\\
\hat H^k_c(D_\bullet) @>\hat f^*>> \hat H^k_c(C_\bullet)
\end{CD}\hspace{0.3cm}.$$ 
 We now use Part (1) of Lemma \ref{lem:extendmapsunif} and Proposition \ref{prop:propchainmap} to construct the diagram shown above. 
 Part (2) of Lemma \ref{lem:extendmapsunif} and Proposition \ref{prop:propchainmap} ensure that this diagram commutes. Therefore, in view of the discussion preceding Definition \ref{def:coarsecohom}, $f$ does indeed  induce a map $f^*:H^k_\mathrm{coarse}(Y;R)\rightarrow H^k_\mathrm{coarse}(X;R)$, independent of the choice of $C_\bullet$ and $D_\bullet$. 
 Parts (1)--(3) follow from Lemmas \ref{lem:compdispmaps} and \ref{lem:extendmapsunif}.
\end{proof}

\begin{rem}\label{rem:quantity}
Let $X$ and $Y$ be bounded geometry metric spaces that are $(\lambda,\mu)$-coarsely uniformly $(n-1)$-acyclic over $R$, and suppose $f:X\rightarrow Y$ is an $(\eta,\phi)$-coarse embedding. Using Proposition \ref{prop:controlspace}, we can construct $\mu'$-uniformly $(n-1)$-acyclic  $R$-metric complexes $(X,C_\bullet)$ and $(Y,D_\bullet)$ of $n$-displacement at most $d$, where $\mu'$ and $d$ depend only on $\lambda$ and $\mu$.  Therefore, by Lemma \ref{lem:extendmapsunif}, we can construct a map $f_\#:[C_\bullet]_n\rightarrow D_\bullet$ of $n$-displacement at most $D=D(\lambda,\mu, \eta,\phi)$ over $f$. Thus there is some quantitative information associated to the induced map $f^*:H^k_\mathrm{coarse}(Y;R)\rightarrow H^k_\mathrm{coarse}(X;R)$. This idea will be developed fully in Section \ref{sec:mvseq}.
\end{rem}

There is a more general notion of coarse cohomology  due to Roe  that doesn't require coarse uniform acyclicity (see \cite{roe2003lectures}). When defined, $H^*_\mathrm{coarse}$ is  naturally  isomorphic to Roe's coarse cohomology. However, working with anti-\v{C}ech approximations \`a la Roe, we lose the quantitative information (e.g. Remark \ref{rem:quantity}) that we have when working with metric complexes. As Kapovich and Kleiner say in \cite{kapovich2005coarse}, ``one inevitably loses quantitative information
which is essential in many applications of coarse topology to
quasi-isometries and geometric group theory". We shall give an account of Roe's coarse cohomology in Appendix \ref{app:roecoarse}.

\subsection{Finiteness Properties of Groups and Group Cohomology}\label{sec:groupcohom}
We recall that a finitely generated group $G$ can be thought of as a bounded geometry metric space by endowing it with the word metric with respect to some finite generating set. Consider a free $RG$-module $M$ with $RG$-basis $\mathcal{B}$, so that $G\cdot \mathcal{B}$ is an $R$-basis of $M$. We can define a projection $p:G\cdot \mathcal{B}\rightarrow G$ so that for each $gb\in G\cdot \mathcal{B}$, $p(gb)=g$. Then $(M,G\cdot \mathcal{B},p)$ has the structure of a free $R$-module over $G$. Moreover, $M$ is finitely generated as an $RG$-module if and only if it is of  finite type. If $f:M\rightarrow N$ is an $RG$-module homomorphism between free $RG$-modules and $M$ is finitely generated as an $RG$-module, then $f$ has  finite displacement over $\mathrm{id}_G$.

A group is of type $FP_n^R$ if the trivial $RG$-module $R$ has a partial projective resolution of length $n$ consisting of finitely generated $RG$-modules. Such a partial projective resolution is in fact an $n$-dimensional, uniformly $(n-1)$-acyclic metric complex with control space $G$.

Conversely, suppose one has an $n$-dimensional uniformly $(n-1)$-acyclic metric complex $(G,C_\bullet)$ with control space $G$, admitting a $G$-action. This is a partial projective resolution of the trivial $RG$-module $R$ of length $n$, consisting of finitely generated $RG$-modules. By applying Brown's lemma \cite{brown1987finiteness}, it is straightforward to see that a group is of type $FP_n^R$ if and only if it is coarsely uniformly $(n-1)$-acyclic over $R$, proving Part (2) of Theorem \ref{thm:finitenesscoarse}.

\begin{prop}[{\cite{brown1982cohomology},\cite{geoghegan1986note},\cite{roe2003lectures}}]\label{prop:chargroupcohom}
Let $G$ be a group of type $FP_n^R$. Then for $i\leq n$, $H^i(G,RG)\cong H^i_\mathrm{coarse}(G;R)$ as right $RG$-modules.
\end{prop}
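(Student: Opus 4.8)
\textbf{Proof proposal for Proposition \ref{prop:chargroupcohom}.}

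The plan is to identify a natural chain complex that simultaneously computes both sides and then verify that the coarse-cohomology construction of Section \ref{sec:coarsecohom}, applied to this complex, recovers the algebraic cohomology $H^i(G,RG)$. Since $G$ is of type $FP_n^R$, it is coarsely uniformly $(n-1)$-acyclic over $R$ by Theorem \ref{thm:finitenesscoarse}(2), so the left-hand side $H^i_\mathrm{coarse}(G;R)$ is defined for $i\le n$. First I would invoke Proposition \ref{prop:controlspace}(\ref{eqn:controlspaceequiv}): using the free $G$-action on $G$ by left multiplication, we obtain a $\mu'$-uniformly $(n-1)$-acyclic $R$-metric complex $(G,C_\bullet)$ admitting a free cocompact $G$-action. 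By the discussion in Section \ref{sec:groupcohom}, the truncation $[C_\bullet]_n$ is precisely a partial projective resolution $C_n\to\cdots\to C_0\to R\to 0$ of the trivial $RG$-module by finitely generated free $RG$-modules. Because $H^i_\mathrm{coarse}(G;R)$ is independent of the choice of uniformly $(n-1)$-acyclic metric complex (the discussion preceding Definition \ref{def:coarsecohom}), it may be computed from this particular $(G,C_\bullet)$, so $H^i_\mathrm{coarse}(G;R)=\hat H^i_c(C_\bullet)$ for $i\le n$.

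Next I would unwind the two cochain complexes. On the algebraic side, $H^i(G,RG)$ is the cohomology of $\mathrm{Hom}_{RG}(C_\bullet,RG)$ in degrees $i\le n$ (a partial resolution of length $n$ suffices to compute $H^i$ for $i<n$, and for $i=n$ one uses that $C_\bullet$ is $(n-1)$-acyclic; this is where the ``modified'' cohomology with compact supports earns its keep, matching the remark after Proposition \ref{prop:propchainmap} that modified cohomology buys one extra dimension). On the coarse side, $\hat H^i_c(C_\bullet)$ is computed from $C^\bullet_c[G]=\mathrm{Hom}_R(C_\bullet,R)_{\mathrm{fin.\ supp.}}$, the finitely-supported $R$-linear cochains, with the kernel-of-$\iota^*$ modification. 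The key algebraic identity to check is the standard adjunction $\mathrm{Hom}_{RG}(C_i,RG)\cong \mathrm{Hom}_R(C_i,R)_{\mathrm{fin.\ supp.}}$ for a finitely generated free $RG$-module $C_i$: a free $RG$-module with basis $\mathcal B$ is $\bigoplus_{b\in\mathcal B} RG$, and $\mathrm{Hom}_{RG}(RG,RG)\cong RG$, which as an $R$-module is exactly the finitely-supported $R$-valued functions on $G$; finite generation over $RG$ guarantees these identifications commute with the (co)boundary maps and respect finite $R$-support in the sense of the projection map $p$. This gives a cochain isomorphism $\mathrm{Hom}_{RG}(C_\bullet,RG)\cong C^\bullet_c[G]$, compatible with the right $RG$-module structures (on the coarse side the $RG$-action comes from the free $G$-action on $(G,C_\bullet)$).

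Finally I would reconcile the modification: for $i<n$ Lemma \ref{lem:modifvsordin} already gives $H^i_c(C_\bullet)=\hat H^i_c(C_\bullet)$, and the isomorphism $H^i_c(C_\bullet)\cong H^i(G,RG)$ is then immediate from the cochain isomorphism above. For $i=n$ one must check that $\hat H^n_c(C_\bullet)=\ker\!\big(H^n_c(C_\bullet)\to H^n(C_\bullet)\big)$ agrees with $H^n$ of $\mathrm{Hom}_{RG}(C_\bullet,RG)$; here the point is that since $C_\bullet$ is only a partial resolution of length $n$, the algebraic $H^n$ is $\mathrm{coker}\,\delta^{n-1}$ on all $RG$-cochains, whereas the usual $H^n_c$ might see spurious classes that bound non-compactly — these are exactly the classes killed by passing to $\ker\iota^*$, so the modification is precisely what makes the two agree in the top degree. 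The main obstacle I anticipate is this top-degree bookkeeping — verifying carefully that, for a length-$n$ partial projective resolution, $\hat H^n_c$ of the associated metric complex equals $\mathrm{coker}\,\delta^{n-1}$ in $\mathrm{Hom}_{RG}(C_\bullet,RG)$, rather than some quotient thereof; the finiteness-property hypothesis and $(n-1)$-acyclicity are what make this work, but the argument is more delicate than the $i<n$ cases. Everything else — the adjunction, $G$-equivariance, naturality — is routine once the right metric complex is in hand. This is also essentially the content of the cited references \cite{brown1982cohomology}, \cite{geoghegan1986note}, \cite{roe2003lectures}, so the proof can reasonably be kept brief.
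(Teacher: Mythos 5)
You take the same route as the paper: the cochain isomorphism $\mathrm{Hom}_{RG}(C_\bullet,RG)\cong C^\bullet_c$ coming from Brown's VIII Lemma 7.4, with Lemma \ref{lem:modifvsordin} handling $i<n$ and modified cohomology with compact supports handling $i=n$, deferring the delicate top-degree verification to \cite{geoghegan1986note}. This is exactly the paper's proof sketch, and your $i<n$ portion is correct.

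Your description of the $i=n$ mechanism, however, is backwards and internally inconsistent, and following it as written would not produce a proof. You first say that passing to $\hat H^n_c=\ker(\iota^*)$ ``kills'' the spurious classes in $H^n_c$ that ``bound non-compactly,'' and then say the goal is to show $\hat H^n_c\cong\mathrm{coker}\,\delta^{n-1}$ --- which, via the adjunction applied to the $n$-dimensional truncation, is all of $H^n_c$. Those two statements cannot both hold. The correct picture is the opposite: $\hat H^n_c$ \emph{retains} precisely the classes of compactly supported $n$-cocycles that \emph{do} bound by a possibly non-compactly supported $(n-1)$-cochain, and discards the rest. Moreover $H^n(G,RG)$ is not $\mathrm{coker}\,\delta^{n-1}$; after extending the partial resolution one step $D_{n+1}\to C_n$, it is $\ker\delta^n/\mathrm{im}\,\delta^{n-1}$, a \emph{sub}object of $\mathrm{coker}\,\delta^{n-1}\cong H^n_c(C_\bullet)$. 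The substance of the Geoghegan-style argument is that these two subobjects of $H^n_c$ coincide: if $\alpha\in C^n_c$ is a modified cocycle, $\alpha=\delta\beta$ with $\beta\in C^{n-1}$, then $\alpha\partial_{n+1}=\beta\partial_n\partial_{n+1}=0$, so $\alpha\in\ker\delta^n$; conversely, $\alpha\partial_{n+1}=0$ forces $\alpha$ to vanish on $\ker\partial_n=\mathrm{im}\,\partial_{n+1}$, so $\alpha$ factors through $\mathrm{im}\,\partial_n\subseteq C_{n-1}$ and extends to some $\beta\in C^{n-1}$ with $\alpha=\delta\beta$ (immediate when $R$ is a field; the general case is where the cited sources do real work). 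With your stated picture you would be chasing the wrong containment.
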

This follows easily by applying \cite[VIII Lemma 7.4]{brown1982cohomology} which states that if $M$ is an $RG$-module, there is a natural isomorphism $\mathrm{Hom}_{RG}(M,RG)\cong \mathrm{Hom}_c(M,R)$. Here, $\mathrm{Hom}_c(M,R)$ consists of all $R$-module homomorphisms $f:M\rightarrow R$ such that for every $m\in M$, $f(gm)=0$ for all but finitely many $g\in G$.  A little more care is required in the $i=n$ case, in which one has to use modified cohomology with compact supports; this is done using an argument identical to one found in \cite{geoghegan1986note}.  Alternatively, one can use Appendix \ref{app:roecoarse} and \cite[Example 5.21]{roe2003lectures}, which shows that Roe's coarse cohomology is isomorphic to  group cohomology with group ring coefficients.

\subsection{Inverse Limits}\label{sec:invlim}
We briefly review  some basic properties of inverse systems  necessary to discuss the topology at infinity of spaces. 
All statements will be true in the category of $R$-modules and chain complexes of $R$-modules, and can be found in \cite{weibel1994introduction}. 
\begin{defn}

An \emph{inverse system} indexed by $\mathbb{N}$ is a sequence $(A_i)_{i\in \mathbb{N}}$  of objects equipped with morphisms $f_{i}^j:A_j\rightarrow A_i$ for all $j\geq i$,  such that: \begin{itemize}
\item $f_i^i$ is the identity for all $i$;
\item $f_i^k=f_i^j\circ f_j^k$ for all $i\leq j\leq k$.
\end{itemize} The maps $f_\bullet^\bullet$ are known as \emph{bonds}.
An \emph{inverse limit} of an inverse system $(A_i)$ consists of an object $A$ and morphisms $f_i:A\rightarrow A_i$ for each $i$, such that:
\begin{itemize}
\item $f_i=f_i^j f_j$ for all $i\leq j$;
\item if there is an object $Z$ and morphisms $q_i:Z\rightarrow A_i$ such that $q_i=f_i^jq_j$ for all $i\leq j$, then there is a morphism $h:Z\rightarrow A$ such that $q_i=f_i h$ for all $i$.
\end{itemize}
We denote the inverse limit by $\varprojlim A_\bullet$, which always exists (for $R$-modules and chain complexes of $R$-modules) and is unique up to isomorphism.
\end{defn}

There are dual notions of  \emph{direct systems} and \emph{direct limits}, and we denote the direct limit of the direct system $(A_\bullet)$ by $\varinjlim A_\bullet$.

\subsection{Topology at Infinity}\label{sec:topatinf}
For the remainder of this section, we set $R=\mathbb{Z}_2$ and assume all homology and cohomology is taken with coefficients in $\mathbb{Z}_2$ (or $\mathbb{Z}_2G$ when considering group cohomology). 
We give a characterisation of coarse cohomology in terms of topology at infinity. This section uses an argument from \cite{geoghegan1985free} (see also \cite{geoghegan2008topological}), although it is considerably simpler here as we are working over $\mathbb{Z}_2$. 

Let $X$ be a bounded geometry space which is  coarsely uniformly $n$-acyclic over $\mathbb{Z}_2$ and let $(X,C_\bullet)$ be a uniformly $n$-acyclic $\mathbb{Z}_2$-metric complex. A \emph{finite filtration} of $X$ is a nested sequence $K_1\subseteq K_2\subseteq K_3\subseteq \dots $ of finite subsets of $X$ such that $\cup_i K_i=X$. The \emph{$k^\mathrm{th}$ reduced homology at infinity} of $X$ is defined to be the inverse limit $\varprojlim_i \tilde{H}_k[X\backslash K_i]$, where the bonds of this inverse system are the maps induced by inclusion. We say that $X$ is \emph{$r$-acyclic at infinity over $\mathbb{Z}_2$} if $\varprojlim_i \tilde{H}_k[X\backslash K_i]$ vanishes for $k\leq r$. These notions are independent of the choice of finite filtration and metric complex.

\begin{prop}[\cite{geoghegan1985free},\cite{geoghegan2008topological}]\label{prop:topatinf}
Let $(X,C_\bullet,\Sigma_\bullet,p_\bullet)$ be a uniformly $n$-acyclic $\mathbb{Z}_2$-metric complex, where $n\geq 0$ and $X$ is infinite. For $k\leq n$ and $d\in \mathbb{N}$, $\varprojlim_i \tilde{H}_k[X\backslash K_i]$ is $d$-dimensional precisely when $H^{k+1}_\mathrm{coarse}(X;\mathbb{Z}_2)$ is. In particular for $r\leq n$, $X$ is $r$-acyclic at infinity over $\mathbb{Z}_2$ if and only if $H^k_\mathrm{coarse}(X;\mathbb{Z}_2)=0$ for $k\leq r+1$.
\end{prop}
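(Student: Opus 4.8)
The plan is to relate the reduced homology at infinity of $X$ to the modified cohomology with compact supports of a uniformly $n$-acyclic metric complex $(X,C_\bullet)$ via an algebraic duality argument over $\mathbb{Z}_2$, carried out one filtration step at a time and then passed to the limit. Fix a finite filtration $K_1 \subseteq K_2 \subseteq \cdots$ with $\bigcup_i K_i = X$. For each $i$, consider the pair $(X, X \setminus K_i)$ and the associated long exact sequences in $H_\bullet$, $H^\bullet$ and $H^\bullet_c$. The key point is that, since $(X,C_\bullet)$ is uniformly $n$-acyclic, we have $H_k[X] = 0$ and $H^k[X] = 0$ for $0 < k \leq n$ (and $H_0[X] \cong H^0[X] \cong \mathbb{Z}_2$ since $X$ is infinite and $0$-acyclic). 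So for $k \leq n$ the connecting maps give $\tilde H_k[X\setminus K_i] \cong H_{k+1}[X, X\setminus K_i]$, and dually $H^{k+1}_c[X, X \setminus K_i] \cong$ (the relevant piece of) $H^{k+1}_c[K_i]$-type data. Over $\mathbb{Z}_2$, finite-support cochains on the complex generated by $K_i$ are just ordinary cochains (the complex $C_\bullet[N_{nr}(K_i)]$ is supported on a finite set), and finite-dimensional $\mathbb{Z}_2$-vector spaces are canonically self-dual, so $H_{k+1}[X, X \setminus K_i]$ and $H^{k+1}_c$ of the corresponding relative complex have the same (finite) dimension.

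Concretely, first I would establish the $\mathbb{Z}_2$-linear-algebra fact that for a chain complex of finitely generated free $\mathbb{Z}_2$-modules, $\dim_{\mathbb{Z}_2} H_k \leq \dim_{\mathbb{Z}_2} H^k$ with equality in finite dimensions, and that dualization turns the direct system $\{C^\bullet_c[X\setminus K_i]\}$ (with bonds given by extension-by-zero going \emph{up} the filtration) into the inverse system $\{\tilde H_\bullet[X \setminus K_i]\}$ up to the above identifications. Then I would splice the pair sequences together: for $k \leq n$, uniform $n$-acyclicity kills $H_k[X]$ and $H_{k+1}[X]$, so $\partial\colon H_{k+1}[X,X\setminus K_i] \xrightarrow{\ \cong\ } \tilde H_k[X \setminus K_i]$ is an isomorphism, compatibly with the bonds. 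Dually, the excision-type identification $H^{k+1}_c[X, X\setminus K_i] \cong H^{k+1}_c[C_\bullet[N_*(K_i)]]$ together with $H^k_c[X] = \hat H^k_c[X]$ ($X$ infinite, Lemma \ref{lem:modifvsordin} / Remark \ref{rem:0thcohom}) and the fact that $\varinjlim_i H^{k+1}_c[\text{stuff on }K_i]$ computes $H^{k+1}_c(C_\bullet) = H^{k+1}_c[X]$ from the finite pieces, lets one identify $\varinjlim_i (\text{dual of } \tilde H_k[X\setminus K_i])$ with a piece of $H^{k+1}_c(C_\bullet)$. Taking the part that maps to zero in $H^{k+1}(C_\bullet)$ (which for $k+1 \leq n$ is automatic since $H^{k+1}(C_\bullet)=0$, and for $k+1 = n+1$ is exactly the defining condition of $\hat H^{n+1}_c$) yields $\hat H^{k+1}_c(C_\bullet) = H^{k+1}_\mathrm{coarse}(X;\mathbb{Z}_2)$.

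Since each $\tilde H_k[X \setminus K_i]$ is finite-dimensional over $\mathbb{Z}_2$ (bounded geometry plus $K_i$ finite makes the relative complexes finite), the inverse system $\{\tilde H_k[X \setminus K_i]\}$ is a system of finite-dimensional vector spaces; dualizing termwise converts $\varprojlim$ into $\varinjlim$ of the duals, and for systems of finite-dimensional spaces $\dim \varprojlim A_\bullet = \dim \varinjlim A_\bullet^*$ whenever one of them is finite. So $\dim \varprojlim_i \tilde H_k[X \setminus K_i] = \dim \varinjlim_i \tilde H_k[X\setminus K_i]^* = \dim \hat H^{k+1}_c(C_\bullet) = \dim H^{k+1}_\mathrm{coarse}(X;\mathbb{Z}_2)$, which is the first assertion; the ``in particular'' follows by noting $r$-acyclicity at infinity means all these inverse limits vanish for $k \leq r$, equivalently $H^{k+1}_\mathrm{coarse}(X;\mathbb{Z}_2) = 0$ for $k+1 \leq r+1$, and recalling $H^0_\mathrm{coarse}(X;\mathbb{Z}_2) = 0$ always (Remark \ref{rem:0thcohom}).

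The main obstacle I anticipate is the bookkeeping around \emph{displacement}: a metric complex is not a CW complex, so $C_\bullet[X \setminus K_i]$ is not literally the complement subcomplex one wants, and Lemma \ref{lem:dispmetric} forces one to thicken $K_i$ to $N_{nd}(K_i)$ when restricting cycles. One must check that replacing the filtration $\{K_i\}$ by $\{N_{nd}(K_i)\}$ (or an interleaved cofinal subfiltration) does not change the inverse limit — this is where ``independence of the choice of finite filtration'' is used — and that the bonds in the direct system of compactly-supported cochains are genuinely compatible with this thickening. Getting this cofinality/interleaving argument clean, rather than the underlying duality (which is elementary over $\mathbb{Z}_2$), is the delicate part; the rest is a diagram chase plus the observation that everything in sight is finite-dimensional.
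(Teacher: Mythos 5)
Your proposal is essentially correct and lands on the same ingredients as the paper's proof (long exact sequence of the pair $(X,X\setminus K_i)$, uniform acyclicity to kill the interior terms, $\mathbb{Z}_2$ finite-dimensional duality between homology and cohomology, passage between $\varprojlim$ and $\varinjlim$, and the modified cohomology $\hat H^{n+1}_c$ appearing exactly in the top dimension as the kernel of $H^{n+1}_c[X]\to H^{n+1}[X]$). The routing is a little different: you go through \emph{relative homology} $H_{k+1}[X,X\setminus K_i]$ and then dualize termwise, whereas the paper works directly with the \emph{cohomology} long exact sequence of the pair and invokes the cochain-level identification $\varinjlim_i C^\bullet[X,X\setminus K_i]\cong C^\bullet_c[X]$, after which $\mathbb{Z}_2$-duality between $\tilde H^k[X\setminus K_i]$ and $\tilde H_k[X\setminus K_i]$ is applied only once at the end. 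Both are fine; your route buys a slightly slicker finiteness argument (since $C_\bullet[X,X\setminus K_i]$ is a finite complex by bounded geometry plus Lemma \ref{lem:dispmetric}, every $\tilde H_k[X\setminus K_i]$ is automatically a quotient of a finite-dimensional space), while the paper establishes finite-dimensionality separately by a Mayer--Vietoris argument and keeps the duality step cleaner.

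Where you've slightly misjudged the difficulty: the obstacle you flag, namely replacing $\{K_i\}$ by a thickened or interleaved cofinal filtration, is not where the work lies. The identification $\varinjlim_i C^\bullet[X,X\setminus K_i]\cong C^\bullet_c[X]$ holds at the cochain level for the original filtration (a cochain vanishes on $C_\bullet[X\setminus K_i]$ for $i$ large precisely when it has finite support), and cohomology commutes with direct limits; no cofinal replacement is needed. The genuinely delicate point is instead the top dimension $k=n$, where the connecting map $H_{n+1}[X,X\setminus K_i]\to\tilde H_n[X\setminus K_i]$ is only surjective (with kernel the image of $H_{n+1}[X]$, which uniform $n$-acyclicity does not kill), and after dualizing and passing to the limit this is exactly what forces the appearance of $\hat H^{n+1}_c$ rather than $H^{n+1}_c$. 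You do mention this, but the careful bookkeeping of $\tilde H_n[X\setminus K_i]^*\cong\ker\bigl(H^{n+1}[X,X\setminus K_i]\to H^{n+1}[X]\bigr)$ and its compatibility with the direct limit is the step that deserves the most attention, not the filtration cofinality. Your closing dimension-count claim should also be phrased as in the paper: for an inverse system of finite-dimensional $\mathbb{Z}_2$-vector spaces, either $\varprojlim_i A_i$ and $\varinjlim_i A_i^*$ are both finite-dimensional and canonically dual, or both are infinite-dimensional.
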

\begin{proof}

We first show that $H_{k}[X\backslash K_i]$ is finitely generated for every $i\in\mathbb{N}$ and every $k\leq n$. Let $j>i$ be large enough so that for $k\leq n$,  every element  of $\Sigma_k$ lies in either $C_\bullet[K_j]_n$ or $C_\bullet[X\backslash K_i]_n$. This can be done using Lemma \ref{lem:dispmetric}. A standard argument produces the Mayer--Vietoris sequence $$\cdots \rightarrow H_{k}[K_j\backslash K_i]\rightarrow H_{k}[K_j]\oplus H_{k}[X\backslash K_i] \rightarrow H_{k}[X]\rightarrow \cdots.$$ As $K_j$ and $K_j\backslash K_i$ are finite, $H_{k}[K_j\backslash K_i]$ and $H_{k}[K_j]$ are finitely dimensional. Since $H_{k}[X]$ is zero, we see $H_{k}[X\backslash K_i]$ is a finite dimensional vector space of dimension $d_i^k$. 

For each $K_i$ we have  a long exact sequence $$ \tilde{H}^0[X] \rightarrow \tilde{H}^0[X\backslash K_i] \rightarrow H^1[X,X\backslash K_i]\rightarrow H^1[X]\rightarrow \cdots \rightarrow H^{n+1}[X].$$ It is easy to see that there is an isomorphism $\varinjlim C^\bullet[X,X\backslash K_i]\cong C^\bullet_c[X]$, and that cohomology commutes with direct limits. Since $(X,C_\bullet)$ is uniformly $n$-acyclic, we obtain the isomorphisms $\varinjlim\tilde{H}^k[X\backslash K_i]\cong \hat H^{k+1}_c[X]=\mathrm{ker}(H^{k+1}_c[X]\rightarrow H^{k+1}[X])$ for $k\leq n$, showing that $\varinjlim\tilde{H}^k[X\backslash K_i]\cong  H^{k+1}_\mathrm{coarse}(X;\mathbb{Z}_2)$ for $k\leq n$.

Since we are working over $\mathbb{Z}_2$ and each $\tilde H_k[X\backslash K_i]$ is finite dimensional, there are natural isomorphisms $\tilde H^k[X\backslash K_i]\cong \mathrm{Hom}_{\mathbb{Z}_2}(\tilde H_k[X\backslash K_i],\mathbb{Z}_2)$ and $\tilde H_k[X\backslash K_i]\cong \mathrm{Hom}_{\mathbb{Z}_2}(\tilde H^k[X\backslash K_i],\mathbb{Z}_2)$.
Thus either $\varprojlim_i \tilde{H}_k[X\backslash K_i]$ and $\varinjlim_i \tilde{H}^k[X\backslash K_i]$ are both finite dimensional, in which case there is an isomorphism $$\varinjlim_i \tilde{H}^k[X\backslash K_i]\cong \mathrm{Hom}_{\mathbb{Z}_2}(\varprojlim_i \tilde{H}_k[X\backslash K_i],\mathbb{Z}_2),$$ or both are infinite dimensional. The result now follows from Remark \ref{rem:0thcohom}, which says that  $H^0_\mathrm{coarse}(X;\mathbb{Z}_2)=0$.
\end{proof}

If $X$ is an unbounded coarse geodesic metric space, then Proposition \ref{prop:coarsegeodmetr} says that there is a $j$ such that the inclusion $X\rightarrow P_j(X)$ is a coarse isometry. We define the number of ends of $X$ to be equal to the number of ends of $P_j(X)$. Then the above argument shows that the number of ends of $X$ is equal to $\mathrm{dim}(H^1_\mathrm{coarse}(X;\mathbb{Z}_2))+1$.  In particular, $X$ is one-ended precisely when $H^1_\mathrm{coarse}(X;\mathbb{Z}_2)=0$. We think of $r$-acyclicity at infinity over $\mathbb{Z}_2$ as a higher dimensional analogue of being one-ended.

These ideas are well illustrated in the case of a hyperbolic group $G$, where the `topology at infinity' can be interpreted as the topology of the Gromov boundary $\partial G$. Bestvina and Mess show in \cite{bestvina1991boundary} that when $G$ is hyperbolic, there is an isomorphism $H^k(G,RG)\cong \check{H}^{k-1}(\partial G)$, where $\check{H}^{k-1}(\partial G)$ is the reduced \v{C}ech cohomology of the boundary. Since $H^k(G,RG)\cong H^{k}_\mathrm{coarse}(G;R)$, this gives a very concrete interpretation of coarse cohomology.

\section{Coarse Poincar\'e Duality}\label{sec:coarsepdn}

\subsection{Coarse Poincar\'e Duality Spaces}
We introduce the notion of coarse $PD_n^R$ spaces, which roughly speaking, are spaces which have the same coarse cohomological properties as $\mathbb{R}^n$.  The definition we use is found in the appendix of \cite{kapovich2005coarse}.

Let $X$ be a bounded geometry metric space. An \emph{$R$-chain complex over $X$} is a chain complex $C_\bullet$, such that each $C_i$ is a free $R$-module over $X$ and each boundary map $\partial:C_i\rightarrow C_{i-1}$ has finite displacement. We note that  $C_\bullet$ is an $R$-chain complex over $X$ for every $R$-metric complex $(X,C_\bullet)$. We extend the notion of finite displacement chain maps and homotopies from $R$-metric complexes to $R$-chain complexes over a metric space.

If $(X,C_\bullet,\Sigma_\bullet,p_\bullet)$ is a metric complex, then each $C^k_c$ is a finite type free module over $X$ with standard basis $\Omega_k$ dual to $\Sigma_k$. Each coboundary map $C^k_c\rightarrow C^{k+1}_c$ has finite displacement. Thus for any $n$,  $C^{n-\bullet}_c$ is an $R$-chain complex over $X$.

We recall that an $R$-metric complex $(X,C_\bullet)$ admits an augmentation $\varepsilon:C_0\rightarrow R$ such that $\varepsilon(\Delta)=1_R$ for each $\Delta\in \Sigma_0$. We thus define reduced homology groups $\tilde{H}_k(C_\bullet)$ of this augmented chain complex. Suppose we are given a homomorphism $\alpha:C^n_c\rightarrow R$ that is zero on all coboundaries, which we also call an \emph{augmentation of $C^{n-\bullet}_c$}. We then define an augmented cochain complex $\cdots \rightarrow C^0_c\rightarrow \dots \rightarrow C^n_c \xrightarrow{\alpha} R \rightarrow 0$ and calculate the \emph{reduced cohomology} of this cochain complex, which we denote by $\tilde H^k_c(C_\bullet)$. 

There is an ambiguity here, since $\tilde H^k_c(C_\bullet)$ may also refer to the cohomology of the cochain complex obtained by dualizing the augmented chain complex $\cdots \rightarrow C_0\xrightarrow{\varepsilon} R$ and restricting to cochains of finite support. We do not use this form of reduced cohomology in this section --- we always assume that reduced cohomology refers to the former notion, where the choice of a suitable augmentation $\alpha:C^n_c\rightarrow R$ is implicit.

\begin{defn}
A  \emph{coarse $PD^R_n$ complex} consists of a uniformly acyclic $R$-metric complex $(X,C_\bullet)$,  equipped with finite displacement chain maps 
$$C_c^{n-\bullet}\xrightarrow{P} C_{\bullet}\textrm{ and } C_{\bullet}\xrightarrow{\bar{P}} C_c^{n-\bullet}$$ 
over $\mathrm{id}_X$ and finite displacement chain homotopies  $\bar{P}\circ P\overset{\Phi}{\simeq} \mathrm{id}_{C^{n-\bullet}_c}$ and $P\circ \bar{P}\overset{\bar{\Phi}}{\simeq} \mathrm{id}_{C_\bullet}$ over $\mathrm{id}_X$. We call $P$ and $\bar P$ the \emph{duality maps}. We say that $X$ is a \emph{coarse $PD^R_n$ space} if it is the control space of some coarse $PD^R_n$ complex. 

We say that  a group $G$ \emph{acts on a coarse $PD_n^R$ complex} if it acts on the underlying metric complex $(X,C_\bullet)$, and the maps $P$, $\overline{P}$, $\Phi$ and $\overline{\Phi}$ are all $G$-equivariant.
\end{defn}

\begin{rem}\label{rem:augmentation}

If $(X,C_\bullet)$ is a coarse $PD_n^R$ complex, then the cochain complex admits an augmentation $\alpha:C^{n}_c\rightarrow R$  given by the composition $C^n_c\xrightarrow{P}C_0\xrightarrow{\varepsilon}R$. With respect to this augmentation, the maps $P$ and $\bar P$ are \emph{augmentation preserving}, i.e. $\varepsilon  (P\sigma)=\alpha(\sigma)$ for all $\sigma\in C^n_c$ and $\varepsilon(\gamma)=\alpha (\bar P \gamma)$  for all $\gamma\in C_0$.  We  note that $\alpha$ induces an isomorphism $H^n_c(C_\bullet)\cong \mathbb{Z}_2$.

\end{rem}

\begin{prop}\label{prop:pdncoarseisom}
Suppose $X$ is a coarse $PD_n^R$ space, $(Y,D_\bullet)$ is a uniformly acyclic $R$-metric complex and $f:X\rightarrow Y$ is a coarse isometry. Then $(Y,D_\bullet)$ is a coarse $PD_n^R$ complex. 
\end{prop}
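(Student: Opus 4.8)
The plan is to transport the coarse Poincaré duality structure on $X$ across the coarse isometry $f$ to obtain one on $(Y,D_\bullet)$. First I would fix a coarse $PD_n^R$ complex $(X,C_\bullet)$ with control space $X$, together with its duality maps $P\colon C_c^{n-\bullet}\to C_\bullet$ and $\bar P\colon C_\bullet\to C_c^{n-\bullet}$ and the chain homotopies $\Phi$, $\bar\Phi$. Let $g\colon Y\to X$ be a coarse inverse to $f$, so that $fg$ and $gf$ are close to the respective identities. Since $(X,C_\bullet)$ and $(Y,D_\bullet)$ are both uniformly $(n-1)$-acyclic for every $n$ (being uniformly acyclic), Lemma~\ref{lem:extendmapsunif} produces finite-displacement chain maps $f_\#\colon C_\bullet\to D_\bullet$ over $f$ and $g_\#\colon D_\bullet\to C_\bullet$ over $g$, and by Lemma~\ref{lem:compdispmaps} the compositions $g_\#f_\#$ and $f_\#g_\#$ have finite displacement over $gf$ and $fg$ respectively, hence (again by Lemma~\ref{lem:extendmapsunif}, using that they are close to the identity maps, which themselves have finite displacement) are chain homotopic to $\mathrm{id}_{C_\bullet}$ and $\mathrm{id}_{D_\bullet}$ via finite-displacement chain homotopies $\Psi$ and $\bar\Psi$.

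The second step is to dualize. A finite-displacement chain map $f_\#\colon C_\bullet\to D_\bullet$ over $f$ does not directly dualize to a map on compactly-supported cochains in the right direction, so instead I would use $g_\#$: the transpose $g_\#^*$ sends $C^k\to D^k$, and one checks — exactly as in the proof of Proposition~\ref{prop:propchainmap}, using that $g$ is a coarse embedding — that $g_\#^*$ preserves finite support, giving a finite-displacement cochain map $g^\#\colon C_c^\bullet\to D_c^\bullet$ over $f$ (note the direction has flipped because we transposed a map over $g$). Reindexing, $g^\#\colon C_c^{n-\bullet}\to D_c^{n-\bullet}$ is a chain map over $f$. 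Dually, $f_\#^*$ gives a finite-displacement chain map $f^\#\colon D_c^{n-\bullet}\to C_c^{n-\bullet}$ over $g$. Now define the candidate duality maps on $(Y,D_\bullet)$ by
$$Q := f_\#\circ P\circ f^\#\colon D_c^{n-\bullet}\to D_\bullet,\qquad \bar Q := g^\#\circ \bar P\circ g_\#\colon D_\bullet\to D_c^{n-\bullet}.$$
Each is a composition of finite-displacement maps over maps ($f$, $g$) that are coarse isometries, so by Lemma~\ref{lem:compdispmaps} each has finite displacement over $\mathrm{id}_Y$ (after composing the displacement-over-$gf$ bound with the fact that $gf$ is close to $\mathrm{id}$; strictly, $f_\#\circ\cdots\circ g_\#$ has finite displacement over $fg\simeq\mathrm{id}_Y$, so it is a finite-displacement chain map over $\mathrm{id}_Y$).

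The third step is to verify the homotopy identities $\bar Q\circ Q\simeq \mathrm{id}_{D_c^{n-\bullet}}$ and $Q\circ\bar Q\simeq\mathrm{id}_{D_\bullet}$ through finite-displacement chain homotopies over $\mathrm{id}_Y$. Here one expands
$$\bar Q Q = g^\#\bar P g_\# f_\# P f^\# \simeq g^\#\bar P P f^\# \simeq g^\# f^\# \simeq \mathrm{id},$$
where the first $\simeq$ uses the homotopy $g_\#f_\#\simeq\mathrm{id}_{C_\bullet}$ (conjugated by $\bar P$, $P$), the second uses $\bar P P\simeq\mathrm{id}$ (this is $\Phi$), and the last uses that $g^\#f^\#$ is a finite-displacement self-map of $C_c^{n-\bullet}$ over $gf\simeq\mathrm{id}_X$, hence homotopic to the identity — this needs the dual version of Lemma~\ref{lem:extendmapsunif} for the chain complexes $C_c^{n-\bullet}$, which are uniformly acyclic because $(X,C_\bullet)$ is a coarse $PD_n^R$ complex (the duality maps $P,\bar P$ exhibit a uniform-acyclicity equivalence between $C_\bullet$ and $C_c^{n-\bullet}$). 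The symmetric computation handles $Q\bar Q\simeq\mathrm{id}_{D_\bullet}$. Assembling all homotopies via the standard "concatenation" of chain homotopies (and checking finite displacement of each piece via Lemma~\ref{lem:compdispmaps}) completes the verification that $(Y,D_\bullet)$ is a coarse $PD_n^R$ complex.

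The main obstacle I anticipate is the bookkeeping in the third step: one must confirm that $C_c^{n-\bullet}$ (and $D_c^{n-\bullet}$) genuinely behave like uniformly acyclic metric complexes so that Lemma~\ref{lem:extendmapsunif}-type uniqueness-up-to-finite-displacement-homotopy applies to self-maps of these dualized complexes. Showing this amounts to observing that $P$ and $\bar P$ together with $\Phi,\bar\Phi$ give a finite-displacement homotopy equivalence between $C_\bullet$ and $C_c^{n-\bullet}$ over $\mathrm{id}_X$, so the uniform-acyclicity estimates transfer; but the dualized complexes are not metric complexes in the strict sense (they lack a nonnegative grading starting at $0$ with the right augmentation axioms), so one needs a mild extension of the lemma, or an argument reducing everything back to statements about $C_\bullet$ and $D_\bullet$ directly rather than their duals.
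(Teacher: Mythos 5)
Your proposal follows the paper's proof: the same coarse inverse $g$, the same chain maps $f_\#$, $g_\#$ and homotopies from Lemma \ref{lem:extendmapsunif}, and exactly the same candidate duality maps $Q=f_\#\circ P\circ f^\#$ and $\bar Q=g^\#\circ\bar P\circ g_\#$. The obstacle you flag at the end, however, is not a real one, and of your two suggested ways around it the second is the one that works, for a simpler reason than you seem to expect: the map $g^\#f^\#$ is literally the dual of $f_\#g_\#$, i.e.\ $g^\#f^\#=(f_\#g_\#)^\#$ (and note it is a self-map of $D_c^{n-\bullet}$ over $fg\simeq\mathrm{id}_Y$, not of $C_c^{n-\bullet}$ over $gf$, as you wrote). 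You already have an explicit finite-displacement chain homotopy $\bar\Psi$ witnessing $f_\#g_\#\simeq\mathrm{id}_{D_\bullet}$; its dual $\bar\Psi^\#$ (which preserves compact supports by the argument of Proposition \ref{prop:propchainmap}) is an explicit finite-displacement homotopy witnessing $g^\#f^\#\simeq\mathrm{id}_{D_c^{n-\bullet}}$. Similarly, $f^\#g^\#=(g_\#f_\#)^\#$, so the homotopy $g_\#f_\#\simeq\mathrm{id}_{C_\bullet}$ dualizes to give what is needed at the first $\simeq$ in your chain. Thus all three $\simeq$'s in the computation of $\bar QQ$ (and the symmetric one for $Q\bar Q$) are obtained by conjugating and concatenating the homotopies $\Phi$, $\bar\Phi$, $\Psi$, $\bar\Psi$, and their duals; no uniqueness-up-to-homotopy statement for the dualized complexes (and hence no extension of Lemma \ref{lem:extendmapsunif}) is invoked. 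This is what the paper means when it says the required homotopies ``can be explicitly written down in terms of existing homotopies and chain maps.''
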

\begin{proof}
There exists a coarse $PD_n^R$ complex $(X,C_\bullet)$ endowed with chain maps and homotopies $P$, $\overline{P}$, $\Phi$ and $\overline{\Phi}$ as above . Since $f$ is a coarse isometry, it has coarse inverse $g$. By Lemma \ref{lem:extendmapsunif}, we can construct finite displacement chain maps $f_\#:C_\bullet\rightarrow D_\bullet$ over $f$ and $g_\#:D_\bullet\rightarrow C_\bullet$ over $g$, as well as finite displacement chain homotopies $g_\#f_\#\simeq \mathrm{id}_{C_\bullet}$ and $f_\#g_\#\simeq \mathrm{id}_{D_\bullet}$ over $\mathrm{id}_X$ and $\mathrm{id}_Y$ respectively.  By the proof of Proposition \ref{prop:propchainmap}, $f_\#$ and $g_\#$ induce maps on cochains with compact supports. 
Lemma \ref{lem:compdispmaps} now tells us that $$D_c^{n-\bullet}\xrightarrow{f_\#\circ P\circ f^\#} D_{\bullet} \textrm{ and } D_{\bullet}\xrightarrow{g^\#\circ  \bar{P}\circ g_\#} D_c^{n-\bullet}$$ are finite displacement chain maps over $\mathrm{id}_Y$. Furthermore,  there are finite displacement chain homotopies $$f_\#\circ P\circ f^\#\circ g^\#\circ  \bar{P}\circ g_\#\simeq \mathrm{id}_{D_\bullet} \textrm{ and }g^\#\circ  \bar{P}\circ g_\#\circ f_\#\circ P\circ f^\# \simeq \mathrm{id}_{D^{n-\bullet}_c}$$ over $\mathrm{id}_Y$, which can be explicitly written down in terms of existing homotopies and chain maps.
\end{proof}

\begin{rem}
If $X$ is a coarse $PD_n^R$ space and $f:X\rightarrow Y$ is a coarse isometry, then by Propositions \ref{prop:unifacyccoarseinv} and  \ref{prop:controlspace}, there necessarily exists a uniformly acyclic $(Y,D_\bullet)$. Thus Proposition \ref{prop:pdncoarseisom} tells us that being a coarse $PD_n^R$ space is invariant under coarse isometries.
\end{rem}

\begin{defn}
A \emph{coarse $PD_n^{R}$ group} is a finitely generated group that, when equipped with a word metric with respect to a finite generating set, is a coarse $PD_n^R$ space.
\end{defn}

\subsection{Poincar\'e Duality Groups}
For the convenience of the reader, we briefly define $PD^R_n$ groups and relate them to coarse $PD_n^R$ groups. Although knowledge of $PD_n^R$ groups are not needed for our results, $PD^R_n$ groups provide a source of examples of coarse $PD_n^R$ groups.

\begin{defn}
 A group $G$ is a \emph{$PD_n^R$ group}  if: \begin{enumerate}
\item it has a finite length projective resolution of the trivial $RG$-module $R$ in which each module is finitely generated;
\item $H^i(G,RG)=0$ for $i\neq n$, and $H^n(G,RG)=R$.
\end{enumerate} 
\end{defn}

A $PD_n^\mathbb{Z}$ group $G$ is called \emph{orientable} if the action of $G$ on $H^n(G,\mathbb{Z}G)\cong \mathbb{Z}$ is trivial. For $PD_n^{\mathbb{Z}_2}$ groups, we no longer have to worry about orientation. A  $PD_n^{\mathbb{Z}}$ group is automatically a $PD_n^{R}$ group for any commutative ring   $R$ with unity, as shown in Proposition V.3.7 of \cite{dicks89groups}.

\begin{prop}[{\cite[Section 11.2]{kapovich2005coarse}}]\label{prop:pdfgpcmpx}
A group $G$ acts freely and cocompactly on a coarse $PD_n^R$ complex if and only if $G$ is a $PD_n^R$ group.
\end{prop}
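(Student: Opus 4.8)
The plan is to exploit the following dictionary: for a group acting freely and cocompactly on a metric complex, the underlying chain complex is automatically a complex of finitely generated free $RG$-modules, and conversely; under this dictionary the data defining a coarse $PD_n^R$ complex becomes exactly a finite-length finitely generated free resolution of the trivial module $R$ over $RG$ together with the cohomological condition $H^\ast(G,RG)\cong R$ concentrated in degree $n$. The ingredients I would invoke are Proposition \ref{prop:controlspace} (existence of uniformly acyclic, equivariant metric complexes), Theorem \ref{thm:finitenesscoarse} (finiteness properties versus coarse uniform acyclicity), the natural identification $\mathrm{Hom}_{RG}(M,RG)\cong\mathrm{Hom}_c(M,R)$ of \cite{brown1982cohomology}, and the observation from Section \ref{sec:groupcohom} that any $RG$-linear map between finitely generated free $RG$-modules has finite displacement over $\mathrm{id}_G$.

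For the forward direction I would start from a free cocompact action of $G$ on a coarse $PD_n^R$ complex $(X,C_\bullet)$ with duality maps $P,\bar P$ and homotopies $\Phi,\bar\Phi$. Freeness of the action on $X$ together with equivariance of the $p_i$ forces the $G$-action on each basis $\Sigma_i$ to be free, and cocompactness together with finite type forces finitely many $G$-orbits, so each $C_i$ is a finitely generated free $RG$-module; uniform acyclicity forces $\tilde H_k(C_\bullet)=0$ for all $k$ (pass to the direct limit over metric balls, where the bonds are eventually zero), so $C_\bullet$ is a finitely generated free resolution of $R$ over $RG$. Hence $H^i(G,RG)\cong H^i(\mathrm{Hom}_{RG}(C_\bullet,RG))\cong H^i_c(C_\bullet)$. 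Each $C_c^j$ is again finitely generated free over $RG$, with bidual $C_j$, so dualising the chain homotopy equivalence $\bar P\colon C_\bullet\to C_c^{n-\bullet}$ — a complex supported in degrees $0,\dots,n$ — yields a cochain homotopy equivalence between $C_c^\bullet$ and the backwards-indexed $n$-truncation $[C_\bullet]_n$. Reading off cohomology, using that $C_\bullet$ is acyclic and that $H^0_c(C_\bullet)=0$ by Lemma \ref{lem:modifvsordin}, gives $H^i_c(C_\bullet)=0$ for $i\neq n$ and $H^n_c(C_\bullet)\cong R$, which is condition (2) in the definition of a $PD_n^R$ group; and $C_c^{n-\bullet}$, being chain homotopy equivalent to the resolution $C_\bullet$ yet concentrated in finitely many degrees, is itself a finite-length resolution of $R$ by finitely generated free $RG$-modules, which is condition (1).

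For the converse, suppose $G$ is a $PD_n^R$ group. Then $G$ admits a finite-length finitely generated projective resolution of $R$, hence is of type $FP_m^R$ for every $m$, so by Theorem \ref{thm:finitenesscoarse} it is coarsely uniformly acyclic over $R$; Proposition \ref{prop:controlspace}(\ref{eqn:controlspaceinf}),(\ref{eqn:controlspaceequiv}) then furnishes a uniformly acyclic $R$-metric complex $(G,C_\bullet)$ with control space $G$ and a free $G$-action, which is cocompact since $G$ acts transitively on itself. As before, each $C_i$ is finitely generated free over $RG$ and $C_\bullet$ is a resolution of $R$, so $H_k(C_c^{n-\bullet})\cong H^{n-k}_c(C_\bullet)\cong H^{n-k}(G,RG)$, which by hypothesis is $R$ for $k=0$ and $0$ otherwise. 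Thus $C_\bullet$ and the bounded complex $C_c^{n-\bullet}$ are both finitely generated free $RG$-resolutions of $R$, so the comparison theorem for projective resolutions supplies $RG$-linear chain maps $P\colon C_c^{n-\bullet}\to C_\bullet$, $\bar P\colon C_\bullet\to C_c^{n-\bullet}$ and $RG$-linear homotopies $\bar P P\simeq\mathrm{id}$, $P\bar P\simeq\mathrm{id}$; these automatically have finite displacement over $\mathrm{id}_G$, so $(G,C_\bullet)$ equipped with this extra data is a coarse $PD_n^R$ complex carrying a free cocompact $G$-action.

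The only genuinely delicate point will be the homological bookkeeping in the forward direction: matching degrees carefully when dualising $\bar P$, checking that the bidual of $C_c^j$ is $C_j$ compatibly with the differentials, and pinning down the corner terms $H^0_c$ and $H^n_c$, where one must keep straight the distinction between ordinary and modified cohomology with compact supports and invoke Lemma \ref{lem:modifvsordin} and Remark \ref{rem:augmentation}. Everything else is formal, and the degenerate case $n=0$ (so $G$ is finite, and a point cannot carry a nontrivial coarse $PD_n$ structure for $n\geq 1$) should simply be disposed of separately.
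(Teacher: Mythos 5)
The paper does not actually supply a proof of this proposition; it simply cites \cite[Section~11.2]{kapovich2005coarse}. So I will assess your argument on its own terms. Your dictionary between free cocompact actions on metric complexes and finitely generated free $RG$-chain complexes is the right starting point, and the identification $C^\bullet_c\cong\mathrm{Hom}_{RG}(C_\bullet,RG)$ is exactly what makes the translation work. But there is a genuine gap at the centre of both directions, namely your repeated claim that $C_c^{n-\bullet}$ is ``bounded'' or ``concentrated in finitely many degrees'' or a ``finite-length resolution''. The metric complex $(G,C_\bullet)$ produced by Proposition~\ref{prop:controlspace} is, in general, unbounded above, so $C_c^{n-\bullet}$ has $(C_c^{n-\bullet})_{-j}=C^{n+j}_c\neq 0$ for all $j\geq 0$: it extends indefinitely in \emph{negative} degrees. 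Consequently, in the forward direction it cannot be cited as the finite-length projective resolution witnessing condition (1) of $PD_n^R$; and in the converse direction the comparison theorem for projective resolutions cannot be applied as stated, since that theorem compares complexes concentrated in nonnegative degrees, whereas $C_c^{n-\bullet}$ is bounded above, not below, and its $0$-cycle module $\ker(\delta^n\colon C^n_c\to C^{n+1}_c)$ is not obviously projective. A secondary issue, in the forward direction, is that invoking Lemma~\ref{lem:modifvsordin} for $H^0_c(C_\bullet)=0$ is not what one wants: once $C_c^{n-\bullet}$ is read correctly as a complex extending to negative degrees, $H^0_c(C_\bullet)$ dualises to $H_n(C_\bullet)=0$ automatically, and does not give $\ker\partial_n=0$.

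The missing ingredient is control of cohomological dimension. After establishing condition (2), one knows $\mathrm{cd}_R G=n$ (using that $G$ is of type $FP_\infty^R$), which forces the syzygy $K_n=\ker\partial_n$ to be finitely generated projective; this, not $C_c^{n-\bullet}$, supplies the finite-length projective resolution for condition (1). In the converse direction, a dimension-shifting argument using $H^{n+k}(G,RG)=0$ for $k\geq 1$ shows that $\ker(\delta^n)$ and all subsequent cycle modules in negative degrees are projective and that the negative tail of $C_c^{n-\bullet}$ is a contractible complex of projectives; only after splitting it off can one apply the comparison theorem to produce the $RG$-linear duality maps $P,\bar P$ and the homotopies $\Phi,\bar\Phi$, which then have finite displacement because they are $RG$-linear between finitely generated modules. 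There is also a point you leave implicit but should confront: a \emph{free} (not merely projective) $RG$-resolution of finite length is needed to match the metric-complex structure, and its existence involves a stably-free consideration. Your overall strategy is sound, but these boundedness and projectivity steps are exactly where the content of the Kapovich--Kleiner argument lives, and as written your proof skips over them.
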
 

\begin{exmp}[{\cite[VIII Example 10.1]{brown1982cohomology}}]
The fundamental group of a closed aspherical $n$-manifold is a $PD_n^\mathbb{Z}$ group. 
\end{exmp}

\begin{exmp}[Page 166, Example 1 in \cite{bieri1981homological}]\label{exmp:polycylic}
If $G$ is a torsion-free polycyclic group of Hirsch length $n$, then $G$ is a $PD_n^\mathbb{Z}$ group.
\end{exmp}
Every virtually polycyclic has a finite index subgroup that is a torsion-free polycyclic group. Hence every virtually polycyclic group is a coarse $PD_n^\mathbb{Z}$ group.
The following example shows that there are coarse $PD_n^R$ groups  that are not $PD_n^R$ groups.
\begin{exmp}
The infinite cyclic group $\mathbb{Z}$ is easily seen to be a $PD_1^\mathbb{Z}$ group. It is quasi-isometric to $H:=\mathbb{Z}_2*\mathbb{Z}_2$, which is known not to be a $PD_1^\mathbb{Z}$ group since it has torsion (see \cite{brown1982cohomology}).
Proposition \ref{prop:unifacyccoarseinv} and Proposition \ref{prop:controlspace} allow us to construct a uniformly acyclic metric complex $(H,D_\bullet)$ that admits a free $H$-action. By Proposition \ref{prop:pdncoarseisom}, $(H,D_\bullet)$ is a coarse $PD_1^\mathbb{Z}$ complex and hence $H$ is a coarse $PD_1^\mathbb{Z}$ group. However, even though $H$ acts freely and cocompactly on $(H,D_\bullet)$ \emph{as a metric complex}, Proposition \ref{prop:pdfgpcmpx} tells us it cannot act freely and cocompactly on $(H,D_\bullet)$ \emph{as a $PD_1^\mathbb{Z}$ complex}. In other words, the duality maps and homotopies cannot be $H$-equivariant. 
\end{exmp}

\subsection{A Technical Lemma}
The rest of this section will be devoted to a proof of the following Lemma, which plays an important role in the proof of Theorem \ref{thm:qiinv}.

\begin{lem}\label{lem:finindex}
 Let $Y$ be a  bounded geometry metric space which is coarsely uniformly $n$-acyclic over $\mathbb{Z}_2$ for some $n\geq 0$. Let $X\subseteq Y$ be a subspace which is a  $PD_n^{\mathbb{Z}_2}$ space, and let $f:X\rightarrow Y$ be the inclusion map. Suppose $G$ is a finitely generated group acting freely on  $Y$ such that $GX=X$ and the action of $G$ restricted to $X$ is cocompact. If for $k\leq n$, the maps $f^*:H^k_\mathrm{coarse}(Y;\mathbb{Z}_2)\rightarrow H^k_\mathrm{coarse}(X;\mathbb{Z}_2)$, induced by $f$, are isomorphisms, then $Y=N_r(X)$  for some $r<\infty$.
\end{lem}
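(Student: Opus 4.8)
The plan is to argue by contradiction: suppose $Y \neq N_r(X)$ for every $r$. Then there are points of $Y$ arbitrarily far from $X$, and we use this to produce a nonzero class in $H^{k}_\mathrm{coarse}(Y;\mathbb{Z}_2)$ for some $k \leq n$ that dies under $f^*$, contradicting the assumption that all the $f^*$ are isomorphisms. The mechanism should exploit the $PD_n^{\mathbb{Z}_2}$ structure of $X$: since $X$ is a coarse $PD_n^{\mathbb{Z}_2}$ space, its coarse cohomology is concentrated, $H^n_\mathrm{coarse}(X;\mathbb{Z}_2) \cong \mathbb{Z}_2$ and $H^k_\mathrm{coarse}(X;\mathbb{Z}_2) = 0$ for $k<n$ (this follows from the duality maps together with Remark \ref{rem:0thcohom} and the fact that $X$, being uniformly acyclic, has vanishing ordinary cohomology in positive degrees, so $\tilde{H}^n_c(C_\bullet[X])$ detects the top class). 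Consequently the hypothesis forces $H^k_\mathrm{coarse}(Y;\mathbb{Z}_2)=0$ for $k<n$ and $H^n_\mathrm{coarse}(Y;\mathbb{Z}_2) \cong \mathbb{Z}_2$, generated by the image under $(\text{"pushforward"})$ of the fundamental class of $X$ — more precisely, $f^*$ carries the generator of $H^n_\mathrm{coarse}(Y;\mathbb{Z}_2)$ isomorphically to the generator of $H^n_\mathrm{coarse}(X;\mathbb{Z}_2)$.

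Next I would set up a coarse Mayer--Vietoris / excision argument comparing $Y$ with $X$. Fix uniformly $n$-acyclic metric complexes $(Y,D_\bullet)$ and $(X,C_\bullet)$ with $C_\bullet$ a coarse $PD_n^{\mathbb{Z}_2}$ complex, and realize $f_\#:[C_\bullet]_n \to D_\bullet$ of finite displacement over the inclusion. The generator $\theta \in H^n_c(C_\bullet[X])$ given by the augmentation $\alpha$ of Remark \ref{rem:augmentation} is, via $f_\#$, the restriction to $X$ of a compactly supported $n$-cocycle on $Y$. The key point is a \emph{nonvanishing} statement: if $y \in Y$ is very far from $X$, then evaluating (a representative of) the class $f^*{}^{-1}[\theta] \in H^n_\mathrm{coarse}(Y;\mathbb{Z}_2)$ — equivalently, capping the top coarse cohomology class against a small cycle near $y$ — should be forced to vanish locally near $y$, while the duality on $X$ forces it to be nonzero globally; this tension is what we convert into a contradiction. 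Concretely, I would show: (i) $H^n_c(D_\bullet[N_R(X)]) \to H^n_c(D_\bullet[X])$ is an isomorphism for $R$ large (using Lemma \ref{lem:dispmetric} and uniform acyclicity of the "collar"), so the fundamental class lives on a bounded neighborhood of $X$; (ii) if $Y \supsetneq N_r(X)$ for all $r$, one can build, using the long exact sequence of the pair $(Y, Y\setminus N_r(X))$ and the finite-dimensionality arguments of Proposition \ref{prop:topatinf}, a nonzero element of $\varprojlim \tilde H_{n-1}[Y\setminus K_i]$, i.e. $H^n_\mathrm{coarse}(Y;\mathbb{Z}_2)$ would have to contain a class supported "at infinity away from $X$", which $f^*$ kills — contradiction with $f^*$ injective.

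The cleanest route to (ii) is probably via the cocompactness of the $G$-action on $X$: because $GX=X$ and $G\curvearrowright X$ cocompactly while $G$ acts freely on all of $Y$, the complement $Y \setminus N_r(X)$, if deep, is $G$-invariant up to bounded neighborhoods and its "end" contributes a $G$-invariant subspace to $H^n_\mathrm{coarse}(Y;\mathbb{Z}_2)$; but that group is $1$-dimensional and already accounted for by $X$. I would make this precise by choosing a finite filtration $K_i$ of $Y$ adapted to $X$ (e.g. $K_i = N_i(X) \cap (\text{ball of radius } i \text{ about a basepoint})$ intersected appropriately, or rather a $G$-cocompact exhaustion of $X$ thickened), computing the coarse Mayer--Vietoris sequence for the decomposition $Y = N_{R}(X) \cup (Y \setminus N_{R-1}(X))$ with intersection a "collar" coarsely isometric to $X \times [0,1]$ — which is again coarse $PD_n^{\mathbb{Z}_2}$ with the same top cohomology — and reading off that $H^n_\mathrm{coarse}(Y\setminus N_{R-1}(X);\mathbb{Z}_2)$ must vanish, forcing $Y\setminus N_{R-1}(X)$ to be shallow, i.e. contained in some $N_{R'}(X)$; hence $Y = N_{R'}(X)$.

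\textbf{Main obstacle.} The crux is controlling the "collar" $N_R(X)\setminus X$ and the complement $Y\setminus N_R(X)$ coarse-cohomologically without a priori knowing either is uniformly acyclic — metric complexes do not restrict to subsets cleanly (the displacement issue flagged before Remark \ref{rem:subcomplexexplicit}), so the Mayer--Vietoris sequence must be built by hand with careful displacement bookkeeping (Lemma \ref{lem:dispmetric}, Corollary \ref{cor:findispmap}), and one must verify that the relevant comparison maps are isomorphisms in the right range. Equivalently stated homologically-at-infinity: the hard part is showing that the top-dimensional coarse cohomology of $Y$ "comes entirely from $X$" in a quantitative way, i.e. that no class can be supported outside every bounded neighborhood of $X$, and then promoting that to the geometric conclusion $Y=N_r(X)$ via Lemma \ref{lem:hfinitenbhd}-type reasoning. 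I expect this to occupy the bulk of the proof; the $PD_n$ input and the hypothesis on $f^*$ enter only to pin down $\dim H^n_\mathrm{coarse}(Y;\mathbb{Z}_2)=1$, which is the fuel for the contradiction.
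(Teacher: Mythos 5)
The cohomological framing — that $X$, being coarse $PD_n^{\mathbb{Z}_2}$, has $H^n_{\mathrm{coarse}}(X;\mathbb{Z}_2)\cong\mathbb{Z}_2$ and that the hypothesis pins down $H^\bullet_{\mathrm{coarse}}(Y;\mathbb{Z}_2)$ — is consistent with what the paper uses, but your proposed bridge from cohomology to the geometric conclusion does not go through. The Mayer--Vietoris decomposition $Y=N_R(X)\cup(Y\setminus N_{R-1}(X))$ cannot be fed into Proposition \ref{prop:mvseq}: that result requires the separating set $W$ (here the collar) and the ambient space to both be coarsely uniformly $(n-1)$-acyclic, but you know nothing about the intrinsic coarse geometry of the annulus $N_R(X)\cap N_1(Y\setminus N_{R-1}(X))$ or of $Y\setminus N_{R-1}(X)$ — they need not be coarsely connected, let alone coarsely uniformly acyclic, so $H^\bullet_{\mathrm{coarse}}$ is not even defined for them. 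Moreover, even granting the sequence, the final inference "$H^n_{\mathrm{coarse}}(Y\setminus N_{R-1}(X);\mathbb{Z}_2)=0$, hence $Y\setminus N_{R-1}(X)$ is shallow" is not a valid step — vanishing of coarse cohomology of a subspace does not imply containment in a bounded neighborhood of anything (indeed $\mathbb{R}^{n+2}$ would have vanishing $H^n_{\mathrm{coarse}}$). The appeal to "Lemma \ref{lem:hfinitenbhd}-type reasoning" also misfires: that lemma is a statement about subgroups and $H$-finiteness, and $X$ is not assumed to be a coset of a subgroup.

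The missing idea is a genuinely geometric one and occupies the bulk of the paper's proof. Using an augmentation-preserving chain map $g_\#:C_\bullet\to D^{n-\bullet}_c$ of finite displacement over the inclusion (Lemma \ref{lem:chaininverse}), one pushes the nontrivial reduced $(n-1)$-cycles at infinity of $X$ (which exist by $H^n_c(C_\bullet)\cong\mathbb{Z}_2$ and Proposition \ref{prop:topatinf}) to nontrivial relative $1$-cocycles $\sigma_i$ on $Y$ whose supports live in $N_{R_1}(X)\setminus K^Y_i$. Because $\tilde H^1_c(D_\bullet)=0$, each $q^\#_j\sigma_j$ bounds a compactly supported $0$-cochain $\omega$, which one shows takes the constant value $1$ on a large ball about the basepoint. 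The decisive step is then a height-function argument, not a Mayer--Vietoris computation: if $y\in Y$ realizes $\mathrm{height}(y)=d(y,x)>\mathrm{diam}(\omega)$ with $x\in X$, then walking a geodesic from $x$ to $y$ the value of $\omega$ must flip, so the path crosses $\mathrm{supp}(q^\#_j\sigma_j)\subseteq N_{R_1}(X)$ at a point $w_t$; but then there is $z\in X$ near $w_t$ with $d(z,y)<d(x,y)$, contradicting the choice of $x$. Cocompactness of $G\curvearrowright X$ plus $G$-equivariance of $Q$ then bounds $Q$ globally. None of this is visible from the coarse Mayer--Vietoris machinery alone; it is exactly the quantitative, metric-complex-level argument that your sketch defers to "the bulk of the proof" without supplying.
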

\begin{proof}
We first restrict to the case where $Y$ is 1-geodesic. Since $Y$ is coarsely uniformly $0$-acyclic, Proposition \ref{prop:coarsegeodacyc} tells us it is coarse geodesic. Hence Proposition \ref{prop:coarsegeodmetr} says that for some $s$ large enough, $P_s^1(Y)$ is connected and the inclusion $Y\rightarrow P_s^1(Y)$ is a coarse isometry. We thus remetrise $Y$ by identifying it with the $0$-skeleton of  $P_s^1(Y)$, endowed with the subspace metric. This procedure doesn't alter our hypotheses.

We use Proposition \ref{prop:controlspace} to construct  $\mathbb{Z}_2$-metric complexes $(X,C_\bullet,\Sigma_\bullet,p_\bullet)$ and $(Y,D_\bullet,\Sigma_\bullet',p_\bullet')$ that are uniformly acyclic and uniformly $n$-acyclic respectively and admit a $G$-action. Without loss of generality, we may assume that $C_\bullet$ is $(n+1)$-dimensional. By Proposition \ref{prop:pdncoarseisom}, we observe that $(X,C_\bullet)$ is a $PD_n^{\mathbb{Z}_2}$ complex with (not necessarily $G$-equivariant) duality maps $P$ and $\bar P$ and homotopies $\Phi$ and $\bar \Phi$. By Lemma \ref{lem:extendmapsunif}, there exists a chain map $f_\#:C_\bullet\rightarrow D_\bullet$ that has finite $(n+1)$-displacement over $f$. This induces a cochain map $f^\#:D^{n-\bullet}_c\rightarrow C^{n-\bullet}_c$.

We fix some $\tilde{x}\in X$ and define $K^X_i:=N^X_i(\tilde x)$ and $K^Y_i:=N^Y_i(\tilde x)$ for each $i\in \mathbb{N}$. For every $j\geq i$, we let $\iota_\#^{i,j}:C_\bullet[X\backslash K_j^X]\rightarrow C_\bullet[X\backslash K_i^X]$ and $\iota^\#_{i,j}:D^\bullet_c[Y,K_j^Y]\rightarrow D^\bullet_c[Y,K_i^Y]$ be the maps  induced by inclusion. For each $i$, there exists a short exact sequence \begin{align}\label{eqn:finindses1}
   1\rightarrow D^{n-\bullet}_c[Y,K^Y_i]\xrightarrow{q^\#_i} D^{n-\bullet}_c[Y]\xrightarrow{p^\#_i} D^{n-\bullet}_c[K^Y_i]\rightarrow 1,
\end{align} where $p^\#_i$ is the map induced by inclusion. We note that $q^\#_i\iota^\#_{i,j}=q^\#_j$ for all $i\leq j$.

We let  $\varepsilon$ and $\alpha$ be the augmentations defined in Remark \ref{rem:augmentation}.  We define an augmentation $\beta$ by $D^n_c\xrightarrow{f^\#}C^n_c\xrightarrow{\alpha} \mathbb{Z}_2$, and observe that  $\beta$ induces an isomorphism $H^n_c(D_\bullet)\cong \mathbb{Z}_2$. For each $i\in \mathbb{N}$,  $\beta \circ q^\#_i:D^{n}_c[Y,K^Y_i]\rightarrow \mathbb{Z}_2$ defines an augmentation,  allowing us to define the reduced cohomology  $\tilde  H^k_c(D_\bullet[Y,K^Y_{R}])$.

By Lemma \ref{lem:chaininverse}, there is an augmentation preserving chain map $g_\#:C_\bullet\rightarrow D^{n-\bullet}_c$ of finite displacement over $f$. By Lemma \ref{lem:pdncycleatinf}, there is a $D\in \mathbb{N}$ such that for every $R\in \mathbb{N}$,  $g_\#$ and $h_\#:=Pf^\#$ induce augmentation preserving maps 
\begin{align}g_\#^{R+D}&:  C_\bullet[X\backslash K^X_{R+D}]\rightarrow D^{n-\bullet}_c[Y,K^Y_{R}]\label{eqn:ginduce}\end{align}\begin{align}  h_\#^{R+D}&:   D^{n-\bullet}_c[Y,K^Y_{R+D}]\rightarrow  C_\bullet[X\backslash K^X_{R}].\label{eqn:hinduce}\end{align} Moreover, Lemma \ref{lem:pdncycleatinf} also tells us that the maps induced on reduced homology and cohomology by (\ref{eqn:ginduce}) and (\ref{eqn:hinduce}) give the commutative diagram shown in Figure \ref{fig:cdpdntopatinf}, whose horizontal maps are induced by inclusion.  

\begin{figure}[ht]
\caption{}
\label{fig:cdpdntopatinf}
\xymatrix@=66pt{
\dots & \ar[l]_{\iota_*^{R-2D,R}} \tilde H_{n-1}(C_\bullet[X\backslash K^X_{R}]) \ar[d]^{g_*^{R}}  & \ar[l]_{\iota_*^{R,R+2D}} \tilde H_{n-1}(C_\bullet[X\backslash K^X_{R+2D}])\ar[d]^{g_*^{R+2D}} & \dots \ar[l] \\
\dots &\ar[l]\ar[ul]_{h_*^{R-D}} \mathrm{im}(g_*^{R})  & \ar[l]_{\iota^*_{R-D,R+D}}\ar[ul]_{h_*^{R+D}} \mathrm{im}(g_*^{R+2D}) & \dots \ar[l]_{\iota^*_{R+D,R+3D}}  \ar[ul]_{h_*^{R+3D}}}
\end{figure}

As $D_\bullet$ is uniformly $0$-acyclic, there is an $R_0$ such that  for all $y\in Y$, the map $\tilde{H}_0(D_\bullet[N_1^Y(y)])\rightarrow \tilde{H}_0(D_\bullet[ N_{R_0}^Y(y)])$, induced by inclusion, is zero. By Corollary \ref{cor:findispmap},   there is some $R_1$ such that $g_\#(C_\bullet[X]_n)\subseteq D^{n-\bullet}_c[N^Y_{R_1}(X)]$.
As $H^n_c(C_\bullet)\cong \mathbb{Z}_2$, Proposition \ref{prop:topatinf}   tells us   $(\tilde H_{n-1}(C_\bullet[X\backslash K^X_{i}]))_{i=0}^\infty$ has inverse limit $\mathbb{Z}_2$. 
Hence there is an $i_0\geq 0$ such that for all $i\geq i_0$, there is a non-trivial reduced cycle  $\lambda_{i}\in  C_{n-1}[X\backslash K^X_{i+D}]$ such that for all $j\geq i$, $\iota_*^{i+D,j+D}[\lambda_{j}]=[\lambda_{i}]$.

We let $\sigma_i=g_\#^{i+D}\lambda_i\in D^1_c[Y,K_i^Y]$. It follows from the way $g_\#^{i+D}$ is defined in Lemma \ref{lem:pdncycleatinf} that $\mathrm{supp}(q_i^{\#}\sigma_i)\subseteq N^Y_{R_1}(X)\backslash K_i^Y$. 
We deduce from  Figure \ref{fig:cdpdntopatinf}  that $\iota^*_{i,j}[\sigma_j]=[\sigma_i]$ for all $j\geq i\geq i_0$. Let $i_1:=i_0+2D$. 
For $i\geq i_1$, as $$[\iota_\#^{i_0+D,i-D}h_\#^{i}g_\#^{i+D}\lambda_{i}]=[\iota_\#^{i_0+D,i+D}\lambda_{i}]=[\lambda_{i_0}]\neq 0,$$ we see that $[\sigma_{i}]=[g_\#^{i+D}\lambda_{i}]\neq 0$.

We choose $i\geq i_1$ large enough so $G K^X_i=X$, and let $j=i+2R_0+R_1+1$.  There is an $\omega\in C^0_c[Y]$ such that $\delta\omega=q^\#_j\sigma_j$. This is because  $\tilde H^1_c(D_\bullet)=0$ and, since $g_\#^{j+D}$ is augmentation preserving, $q^\#_j\sigma_j$ is a reduced cocycle.  We let $R_2:=\mathrm{diam}(\omega)$.

We claim that $\omega(\Delta)=1$ for every $\Delta\in {p_0'}^{-1}(K_i^Y)$. If $p^\#_i\omega=0$, then by the exactness of (\ref{eqn:finindses1}),   $\omega=q_i^\#(\omega_i)$ for some $\omega_i\in D^0_c[Y,K_i^Y]$. Thus $\iota_{i,j}^\#\sigma_j=\delta \omega_i$, which cannot happen as $[\iota^\#_{i,j}\sigma_j]=[\sigma_i]$ is non-trivial. Thus $p^\#_i\omega\neq 0$, so we can pick $\Delta_0\in {p_0'}^{-1}(K_i^Y)$ such  that $\omega(\Delta_0)=1$.

Let $\Delta\in {p_0'}^{-1}(K_i^Y)$. Then there exists a $1$-chain $p_0'(\Delta_0)=x_0,x_1,\dots, x_n=p_0'(\Delta)$ in $K_i^Y$. This is because $Y$ is 1-geodesic and $K_i^Y=N_i^Y(\tilde{x})$.
For $1\leq k<n$, we pick some  $\Delta_k\in (p_0')^{-1}(x_k)$ and let $\Delta_n=\Delta$. Each $\Delta_{k+1}-\Delta_k$ is a reduced $0$-cycle in $D_\bullet[N_1^Y(x_k)]$, hence there is some $\rho_k\in D_1[N^Y_{R_0}(x_k)]\subseteq D_1[K_j^Y]$ such that $\partial\rho_k=\Delta_{k+1}-\Delta_k$. Letting  $\rho:=\sum_{k=0}^{n-1} \rho_k\in C_1[K_j^Y]$, we see   $\partial\rho=\Delta-\Delta_0$, and so 
$$\omega(\Delta)-\omega(\Delta_0)=\omega(\partial\rho)=(q^\#_j\sigma_j)(\rho)=0.$$ Thus  $\omega(\Delta)=\omega(\Delta_0)=1$, proving the claim.

We define a function $Q:X\rightarrow \mathbb{R}\cup \infty$ which measures the distance from a point of $X$ to the furthest point of $Y$ that isn't closer to any other point of $X$. More precisely, for each $y\in Y$, we define $\mathrm{height}(y):=d_Y(y,X)=\inf\{d_Y(x,y)\mid x\in X\}$. For   $x\in X$, we define $$Q(x):=\mathrm{sup}_{y\in Y}\{\mathrm{height}(y)\mid \mathrm{height}(y)=d_Y(x,y)\}.$$

Suppose $\sup_{x\in X}Q(x)\leq r<\infty$ and $y\in Y$. Then there  exists an $x\in X$ such that $\mathrm{height}(y)=d_Y(x,y)$. Indeed, for any $x'\in X$, the set  $\{x\in X\mid d_Y(y,x)\leq d_Y(y,x')\}$ is non-empty and  finite. Hence there exists an $x\in X$ such that $d_Y(x,y)\leq d_Y(x'',y)$ for all $x''\in X$, so $\mathrm{height}(y)=d_Y(x,y)$. 
Since $d_Y(x,y)\leq Q(x)\leq r$, we see that $y\in N^Y_r(X)$. This holds for each $y\in Y$, so $Y=N^Y_r(X)$. As $G$ acts on $Y$ by isometries, $Q$ is $G$-equivariant. Since $GK_i^X=X$, it is sufficient to show that $Q$ is bounded on $K_i^X$.

Let $x\in K_i^X$; we claim that $Q(x)\leq R_2$. Suppose for contradiction that there is some $y\in Y$ such that $\mathrm{height}(y)=d_Y(y,x)> R_2$. We choose a 1-geodesic $x=w_0,w_1,\dots, w_n=y$, and pick $\Lambda_k\in p'^{-1}_0(w_k)$.  For each $0\leq k<n$, there exists a $\mu_k\in D_1[N_{R_0}^Y(w_k)]$ such that $\partial\mu_k=\Lambda_{k+1}-\Lambda_{k}$. 

As $x\in K_i^X\subseteq K_i^Y$, we know that $\omega(\Lambda_0)=1$. Since $d(x,y)>R_2=\mathrm{diam}(\omega)$, we see that $\omega(\Lambda_n)=0$. Therefore, $(q^\#_j\sigma_j)(\sum_{k=0}^{n-1}\mu_k)=\omega(\Lambda_n)-\omega(\Lambda_0)=1$. Hence there exists some $0\leq t<n$ such that $(q^\#_j\sigma_j)(\mu_t)=1$. Since $\mathrm{supp}(q_j^\#\sigma_j)\subseteq N_{R_1}^Y(X)\backslash K_j^Y$  and $\mu_t\in D_1[N_{R_0}^Y(w_t)]$, $w_t\notin K_{j-R_0}^Y$  and there is some $z\in X$ such that $d(z,w_t)\leq R_0+R_1$. As $w_0=x\in K_i^X$ it follows that $t=d(w_0,w_t)\geq (j-R_0)-i=R_0+R_1+1$. Hence $$d(z,y)\leq d(z,w_t)+d(w_t,w_n)\leq (R_0+R_1)+(n-t)\leq n-1<d(x,y),$$ which contradicts $\mathrm{height}(y)=d(x,y)$. Thus $Q(x)\leq R_2$ for every $x\in K_i^X$.
\end{proof}

The following two lemmas freely use notation from the preceding proof.

\begin{lem}\label{lem:chaininverse}
There is an augmentation preserving chain map $g_\#:C_{\bullet}\rightarrow D^{n-\bullet}_c$ of finite displacement over $f$, such that $Pf^\#g_\#$ is chain homotopic to the identity via a finite displacement chain homotopy $\Lambda$.
\end{lem}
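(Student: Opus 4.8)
The goal is to produce an augmentation-preserving chain map $g_\#\colon C_\bullet\to D^{n-\bullet}_c$ of finite displacement over $f$ which is a one-sided chain homotopy inverse (after composing with $Pf^\#$) to the identity on $C_\bullet$. The natural candidate is obtained by transporting the duality map $\bar P\colon C_\bullet\to C^{n-\bullet}_c$ across the cochain map $f^\#\colon D^{n-\bullet}_c\to C^{n-\bullet}_c$, but $f^\#$ goes the wrong way, so I first need a finite-displacement cochain map in the other direction. The plan is as follows.

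First I would apply Lemma \ref{lem:extendmapsunif}(1) to the coarse isometry $f\colon X\to Y$ (or rather to a coarse inverse $g\colon Y\to X$, using that $f$ is the inclusion of a cocompact — hence coarsely isometrically embedded — subspace and $Y$ is coarse geodesic, so Proposition \ref{prop:coarsegeodmetr} and the surrounding discussion apply): this gives a finite-displacement chain map $D_\bullet\to C_\bullet$ over $g$ and, dually, a finite-displacement cochain map $C^{n-\bullet}_c\to D^{n-\bullet}_c$ over $f$ (using that $g$ is a coarse inverse to $f$, so the displacement is controlled relative to $f$ via Lemma \ref{lem:compdispmaps}). I will denote the resulting cochain map $\widehat f^\#_{!}\colon C^{n-\bullet}_c\to D^{n-\bullet}_c$. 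Then I set
$$
g_\# := \widehat f^\#_{!}\circ \bar P\colon C_\bullet \xrightarrow{\ \bar P\ } C^{n-\bullet}_c \xrightarrow{\ \widehat f^\#_{!}\ } D^{n-\bullet}_c,
$$
which is a chain map of finite displacement over $f$ by Lemma \ref{lem:compdispmaps} and the fact that $\bar P$ has finite displacement over $\mathrm{id}_X$.

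Next I must check two things: that $g_\#$ can be chosen augmentation-preserving, and that $Pf^\#g_\#\simeq \mathrm{id}_{C_\bullet}$ by a finite-displacement homotopy. For the augmentation, recall from Remark \ref{rem:augmentation} that $\bar P$ is augmentation-preserving with respect to $\varepsilon$ on $C_0$ and $\alpha$ on $C^n_c$, and from the proof of Lemma \ref{lem:finindex} that the augmentation $\beta$ on $D^n_c$ is defined precisely as $\alpha\circ f^\#$; combined with the compatibility between $\widehat f^\#_{!}$ and $f^\#$ (they are finite-displacement chain maps in opposite directions over $f$, hence induce mutually inverse isomorphisms on $H^n_c$ by the argument in the discussion preceding Definition \ref{def:coarsecohom}), one gets $\beta\circ g_\# = \varepsilon$ up to a coboundary correction; absorbing that correction (it affects $g_\#$ only in degree $0$ by a finite-displacement term) makes $g_\#$ genuinely augmentation-preserving. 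For the homotopy, compute
$$
Pf^\#g_\# = Pf^\#\widehat f^\#_{!}\bar P.
$$
Since $f^\#\widehat f^\#_{!}\colon C^{n-\bullet}_c\to C^{n-\bullet}_c$ has finite displacement over $\mathrm{id}_X$ and agrees with $\mathrm{id}$ on cohomology, Lemma \ref{lem:extendmapsunif}(2) (applied to the uniformly acyclic $(X,C_\bullet)$, noting $C^{n-\bullet}_c$ is a chain complex over $X$) furnishes a finite-displacement homotopy $f^\#\widehat f^\#_{!}\simeq \mathrm{id}_{C^{n-\bullet}_c}$; conjugating by the finite-displacement maps $P$ and $\bar P$ and using $P\bar P\overset{\bar\Phi}{\simeq}\mathrm{id}_{C_\bullet}$ yields
$$
Pf^\#g_\# \simeq P\,\mathrm{id}_{C^{n-\bullet}_c}\,\bar P = P\bar P \simeq \mathrm{id}_{C_\bullet},
$$
all homotopies having finite displacement over $\mathrm{id}_X$ by Lemma \ref{lem:compdispmaps}; concatenating them gives the desired $\Lambda$.

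The main obstacle I anticipate is not any single deep point but the bookkeeping around \emph{augmentations and displacement control over $f$ versus over the identity}: one must be careful that $\widehat f^\#_{!}$ really does have finite displacement \emph{over $f$} (not merely over the coarse inverse $g$), which requires invoking Lemma \ref{lem:compdispmaps} with $g$ a coarse inverse to $f$ and keeping track of the upper distortion function, and that the augmentation correction term does not destroy the finite-displacement property. A secondary subtlety is that Lemma \ref{lem:extendmapsunif} is stated for metric complexes, whereas $C^{n-\bullet}_c$ and $D^{n-\bullet}_c$ are merely chain complexes over a metric space; one should either check the relevant parts of Lemma \ref{lem:extendmapsunif} go through in this generality (they do, since only the displacement estimates and uniform acyclicity of the target are used), or phrase the homotopy-inverse construction entirely on the metric-complex side $C_\bullet$ via $P$ and $\bar P$, which is what the computation above does.
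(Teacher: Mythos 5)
Your construction of $g_\#$ rests on the claim that the inclusion $f\colon X\to Y$ is a coarse isometry and therefore admits a coarse inverse $g\colon Y\to X$, which you then use both to build $\widehat f^\#_!$ and to argue that $f^\#\widehat f^\#_!$ has finite displacement over $\mathrm{id}_X$ and induces the identity on cohomology. This is circular. In the hypotheses of Lemma \ref{lem:finindex} the group $G$ acts cocompactly \emph{only on $X$}; on $Y$ it acts merely freely. Cocompactness of the $G$-action on $X$ does not make $X$ coarsely dense in $Y$ — that density (i.e.\ $Y=N_r(X)$) is precisely the conclusion the ambient lemma is trying to establish. At the point where Lemma \ref{lem:chaininverse} is invoked, $f$ is only known to be a coarse embedding, not a coarse isometry, so no coarse inverse $g\colon Y\to X$ is available and $\widehat f^\#_!$ cannot be constructed by your route. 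The same issue infects the augmentation step, which uses ``mutually inverse isomorphisms on $H^n_c$'' induced by $f$ and $g$.

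The paper's proof sidesteps this by never going from $Y$ back to $X$ at all. Instead it exploits the free $G$-action on both $(X,C_\bullet)$ and $(Y,D_\bullet)$ together with cocompactness of the $G$-action on $(X,C_\bullet)$: one picks a finite set of $G$-orbit representatives $x^k_1,\dots,x^k_{t_k}$ in each $\Sigma_k$, defines $g_\#$ on these representatives by solving $\delta\omega=g_\#(\partial x^k_i)$ in $D^{n-\bullet}_c$ (using $H^k_c(D_\bullet)=0$ for $k<n$ and the isomorphism $H^n_c(D_\bullet)\cong\mathbb{Z}_2$ induced by $\beta$), and extends $G$-equivariantly. Finite displacement of $g_\#$ over $f$ is then automatic from equivariance — the displacement is bounded on the finitely many representatives and translated around by isometries — and the augmentation is arranged at the base of the induction by choosing $\sigma\in D^n_c$ with $\beta(\sigma)=1$, noting $G$ preserves the nonzero class in $H^n_c(D_\bullet)$ so $\beta(h\sigma)=1$ for all $h\in G$. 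Once $g_\#$ is built this way, the finite displacement of $f^\#g_\#$ over $\mathrm{id}_X$ is checked directly, and then Lemma \ref{lem:compdispmaps} and Lemma \ref{lem:extendmapsunif}(2) give the homotopy $\Lambda$; your final step would essentially agree with this once $g_\#$ exists. The lesson is that the crucial displacement control here must come from equivariance and cocompactness on the \emph{domain}, not from a coarse inverse to $f$.
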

\begin{proof}

Since $G$ acts cocompactly on $(X,C_\bullet,\Sigma_\bullet,p_\bullet)$,  $\Sigma_k$ has only finitely many $G$-orbits; we thus let $x^k_1,\dots, x^k_{t_k}$ be a set of representatives for the $G$-orbits of $\Sigma_k$. We define $g_\#$ on each $x^k_i$ and extend equivariantly. The finite displacement of  $g_\#$ then follows readily from $G$-equivariance. Indeed,  one can choose $D_k$ large enough so that $\mathrm{supp} (g_\#(x_i^k))\subseteq N^Y_{D_k}(f(p_k(x_i^k)))$ for each $x_i^k$. Thus for each $h\in G$ and $x_i^k$,  we see  $$\mathrm{supp} (g_\#(h x_i^k))\subseteq h N^Y_{D_k}(f(p_k( x_i^k)))=N^Y_{D_k}(f(p_k(h x_i^k))),$$ showing that $g_\#$ does indeed have finite displacement over $f$.

As the augmentation $\beta:D^n_c\rightarrow \mathbb{Z}_2$ induces an isomorphism $H^n_c(D_\bullet)\rightarrow \mathbb{Z}_2$, we may  choose an $n$-cycle $\sigma\in D^n_c$ such that $\beta(\sigma)=1$. Each $h\in G$ preserves cocycles and coboundaries, so preserves the unique non-trivial cohomology class in $H^n_c(D_\bullet)$; thus $[h\sigma]=[\sigma]$,  so $\beta(h\sigma)=\beta(\sigma)=1$.

For each $x_i^0$, we define $g_\#(x_i^0)=\sigma$ and extend equivariantly. We note that $g_\#$ is augmentation preserving, 
since for each $h\in G$, $\beta (g_\#(hx_i^0))=\beta (h g_\#(x_i^0))=\beta(h\sigma)=1=\varepsilon(hx_i^0)$. 
For each $x_i^1$, we see that $\delta g_\#(\partial x_i^1)=0$ and $\beta(g_\#\partial x_i^1)=\varepsilon(\partial x_i^1)=0$. Therefore, as $\beta$ induces an isomorphism $H^n_c(D_\bullet)\cong \mathbb{Z}_2$, there exists an $\omega_i^1\in D^{n-1}_c$ such that $\delta\omega_i^1=g_\#(\partial x_i^1)$. We define $g_\#(x_i^1):=\omega_i^1$ for each $x_i^1$ and extend equivariantly. We continue similarly for higher dimensions, using the fact $H^{k}_c(D_\bullet)=0$ for $k<n$.

We now claim that $f^\#g_\#:C_\bullet\rightarrow C^{n-\bullet}_c$ is a chain map of finite displacement over $\mathrm{id}_X$. If the claim is  true, then by Lemma \ref{lem:compdispmaps},  $Pf^\#g_\#$ also has finite displacement over $\mathrm{id}_X$.  By Lemma \ref{lem:extendmapsunif}, we thus see that $Pf^\#g_\#$ is chain homotopic to the identity chain map via a finite displacement chain homotopy $\Lambda$.

To prove the claim, we suppose $x\in \mathrm{supp}(f^\#g_\#hx_i^k)$. Then there exists a $\Delta\in \Sigma_{n-k}$ such that $p_{n-k}(\Delta)=x$ and $f^\#g_\#hx_i^k(\Delta)\neq 0$; thus $g_\#(hx_i^k)(f_\#\Delta)\neq 0$. We see that $\mathrm{supp}(g_\#(hx_i^k))\subseteq N^Y_{D_k}(f(p_k(h x_i^k)))$ and $\mathrm{supp}(f_\#\Delta)\subseteq N^Y_{R}(p_{n-k}(\Delta))$, where $f_\#$ has $n$-displacement at most $R$  over $f$. Therefore, as $f$ is the inclusion map $$d_X(x,p_k(hx_i^k))=d_Y(f(p_{n-k}(\Delta)),f(p_k(hx_i^k)))\leq R+D_k.$$
Hence $\mathrm{supp}(f^\#g_\#hx_i^k)\subseteq N^X_{R+D_k}(p_k(hx_i^k))$, so $f^\#g_\#$ does indeed have finite displacement over $\mathrm{id}_X$. 
\end{proof}

\begin{lem}\label{lem:pdncycleatinf}
Let $h_\#:=Pf^\#:D^{n-\bullet}_c\rightarrow C_\bullet$. There is a  $D\in \mathbb{N}$, such that for every $R\in \mathbb{N}$, $g_\#$ and $h_\#$  induce maps $g_\#^{R+D}$ and $h_\#^{R+D}$ as in (\ref{eqn:ginduce}) and (\ref{eqn:hinduce}). These maps then induce the commutative diagram as shown in Figure \ref{fig:cdpdntopatinf}.
\end{lem}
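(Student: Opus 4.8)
The plan has three parts: produce the constant $D$, show that $g_\#$ and $Pf^\#$ restrict to the claimed subcomplexes, and then check that the diagram commutes. Every map in play — $g_\#$, $f_\#$, the duality map $P$, the differentials of $C_\bullet$ and $D_\bullet$, and the chain homotopy $\Lambda$ of Lemma \ref{lem:chaininverse} — has finite displacement over $f$ or over $\mathrm{id}_X$, with bounds $D_0,r_f,r_P,r_C,r_D,r_\Lambda$ respectively that do \emph{not} depend on $R$; let $N$ bound the dimensions of $C_\bullet$ and $D_\bullet$. I take $D$ to be any integer exceeding $D_0$, $Nr_D+r_f+r_P+Nr_C$, and $r_\Lambda+Nr_C$. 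The whole point is that, since none of these constants involves $R$, a single $D$ serves for every $R$ at once.

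For the restricted maps I use that $f$ is an isometric embedding with $K_i^X=X\cap K_i^Y$. If $\sigma\in C_\bullet[X\backslash K_{R+D}^X]$ then $\mathrm{supp}(g_\#\sigma)\subseteq N_{D_0}^Y(\mathrm{supp}\,\sigma)\subseteq Y\backslash K_R^Y$; a finite-support cochain supported off $K_R^Y$ vanishes on $D_\bullet[K_R^Y]$, since by Remark \ref{rem:subcomplexexplicit} every basis element of that subcomplex projects into $K_R^Y$, so $g_\#\sigma\in\ker(p_R^\#)=D_c^{n-\bullet}[Y,K_R^Y]$, giving $g_\#^{R+D}$. Dually, if $\tau\in D_c^{n-\bullet}[Y,K_{R+D}^Y]$ then $\tau$ kills $D_\bullet[K_{R+D}^Y]$, which by Lemma \ref{lem:dispmetric} contains every basis element of $D_m$ projecting into $K_{R+D-Nr_D}^Y$; hence $\mathrm{supp}\,\tau\subseteq Y\backslash K_{R+D-Nr_D}^Y$. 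The support estimate for $f^\#$ from the proof of Proposition \ref{prop:propchainmap} together with the displacement of $P$ then give $\mathrm{supp}(Pf^\#\tau)\subseteq X\backslash K_{R+D-Nr_D-r_f-r_P}^X$, and Lemma \ref{lem:dispmetric} applied with $K=X\backslash K_s^X$ (using $N_{mr_C}(X\backslash K_s^X)\subseteq X\backslash K_{s-mr_C}^X$) puts $Pf^\#\tau$ into $C_\bullet[X\backslash K_{R+D-Nr_D-r_f-r_P-Nr_C}^X]\subseteq C_\bullet[X\backslash K_R^X]$, giving $h_\#^{R+D}$. In both cases the restriction is a chain map because the target is cut out as a kernel, hence a subcomplex, and it is augmentation preserving because $g_\#$ and $Pf^\#$ are, the augmentations on the sub/quotient complexes being the induced ones; in particular each restricted map descends to reduced homology.

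For commutativity, observe that at the chain level $g_\#^{\bullet}$ is literally $g_\#$ with restricted domain and codomain and every horizontal arrow of Figure \ref{fig:cdpdntopatinf} is an inclusion of subcomplexes, so the squares commute on the nose — both composites $C_\bullet[X\backslash K_{R+2D}^X]\to D_c^{n-\bullet}[Y,K_{R-D}^Y]$ are just ``apply $g_\#$'' — hence they commute on homology. For the upper triangles, take a reduced $(n{-}1)$-cycle $\sigma\in C_{n-1}[X\backslash K_{R+2D}^X]$; then $h_\#^{R+D}g_\#^{R+2D}\sigma=Pf^\#g_\#\sigma$, and Lemma \ref{lem:chaininverse} gives $Pf^\#g_\#\sigma-\sigma=\partial(\Lambda\sigma)+\Lambda\partial\sigma=\partial(\Lambda\sigma)$. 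Since $\mathrm{supp}(\Lambda\sigma)\subseteq N_{r_\Lambda}^X(\mathrm{supp}\,\sigma)$, Lemma \ref{lem:dispmetric} and the choice of $D$ force $\Lambda\sigma\in C_n[X\backslash K_R^X]$, so $[Pf^\#g_\#\sigma]=[\sigma]$ in $\tilde H_{n-1}(C_\bullet[X\backslash K_R^X])$, i.e.\ $h_*^{R+D}g_*^{R+2D}=\iota_*^{R,R+2D}$. The lower triangles are then automatic: any class of $\mathrm{im}(g_*^{R+2D})$ has the form $g_*^{R+2D}(x)$, whence $g_*^R h_*^{R+D}(g_*^{R+2D}x)=g_*^R\iota_*^{R,R+2D}(x)=\iota^*_{R-D,R+D}(g_*^{R+2D}x)$ by the square and upper-triangle identities already proved.

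The hard part is not really any single step but keeping the displacement bookkeeping straight and seeing that the \emph{same} $D$ makes all three families of maps land in the right subcomplexes for all $R$; the only non-formal ingredient is the chain homotopy $\Lambda$ of Lemma \ref{lem:chaininverse}, whose finite displacement over $\mathrm{id}_X$ is precisely what keeps $\Lambda\sigma$ inside $C_\bullet[X\backslash K_R^X]$ and thereby upgrades the upper-triangle identity from ``correct up to an uncontrolled error'' to ``correct on the homology of the complements''.
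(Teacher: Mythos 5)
Your proof is correct and follows essentially the same route as the paper's: displacement bounds plus Lemma~\ref{lem:dispmetric} show that $g_\#$, $Pf^\#$ (and $\Lambda$) restrict compatibly to the filtered subcomplexes, and the homotopy $\Lambda$ from Lemma~\ref{lem:chaininverse} gives the triangle relation. The only cosmetic differences are that the paper unifies your various constants into a single $D_0$ and sets $D=2(n+1)D_0$, and that it records the restriction $\Lambda^{R+2D}$ as a chain homotopy rather than evaluating it only on reduced cycles; your cycle-level version suffices since the diagram is on reduced homology.
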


\begin{proof}
We use similar methods to those used in Section 6 of \cite{kapovich2005coarse}.
We pick $D_0\in \mathbb{N}$ such that $g_\#$ and  $f_\#$ have $n$-displacement at most $D_0$ over $f$, while  $P$, $\Lambda$ and the boundary maps of $C_\bullet$ and $D_\bullet$ all have  $n$-displacement at most $D_0$ over the identity; we now define $D=2(n+1)D_0$.  For each $i\in \mathbb{N}$ we define $q^\#_i$ and $p^\#_i$ as in (\ref{eqn:finindses1}), and let $r_\#^i:C_\bullet[X\backslash K_i^X]\rightarrow C_\bullet$ be the inclusion map.

Let $k\leq n$. If $\sigma\in C_k[X\backslash K^X_{R+D}]$, then  $\mathrm{supp}(r_\#^{R+D}\sigma)\subseteq X\backslash K^X_{R+D}$, so $$\mathrm{supp}(g_\#r_\#^{R+D}\sigma)\subseteq N_{D_0}^Y(X\backslash K_{R+D}^X)\subseteq  Y\backslash K^Y_{R+D-D_0}\subseteq Y\backslash K^Y_{R}.$$  Therefore $p_R^\#g_\#r_\#^{R+D}\sigma=0$, so by the exactness of (\ref{eqn:finindses1}) there exists an $\omega_\sigma\in D^{n-k}_c[Y,K_R^Y]$ such that $q^\#_R\omega_\sigma=g_\#r_\#^{R+D}\sigma$. 
As $q^\#_R$ is injective, we can define an augmentation preserving chain map $g_\#^{R+D}:C_\bullet[X\backslash K_{R+D}^X]_n\rightarrow D^{n-\bullet}_c[Y,K_R^Y]$ such that $q^\#_Rg^{R+D}_\#=g_\#r_\#^{R+D}$.

Similarly, we define $h_{\#}^{R+D}:D^{n-\bullet}_c[Y,K_{R+D}^Y]_n\rightarrow C_\bullet[X\backslash K_{R}^X]$ so that $r_\#^{R}h_{\#}^{R+D}=Pf^\#q^\#_{R+D}$. Indeed, suppose $\rho\in D^k_c[Y,K^Y_{R+D}]$ for $k\leq n$. If $\Delta\in (p'_k)^{-1}(K^Y_{R+D-nD_0})$, then by Lemma \ref{lem:dispmetric}, $\Delta\in D_k[K^Y_{R+D-nD_0+kD_0}]\subseteq D_k[K^Y_{R+D}]$, so that $q_{R+D}^\#(\rho)(\Delta)=0$. Therefore, $\mathrm{supp} (q_{R+D}^\#\rho)\subseteq Y\backslash K^Y_{R+D-nD_0}$ and so $\mathrm{supp}(Pf^\#q^\#_{R+D}\rho)\subseteq X\backslash K^X_{R+D-(n+2)D_0}=X\backslash K^X_{R+nD_0}$. By another application of Lemma \ref{lem:dispmetric}, we see that $Pf^\#q^\#_{R+D}\rho=r_\#^{R}\gamma_\rho$ for some $\gamma_\rho\in C_\bullet[X\backslash K_R^X]$; this allows us to define $h_{\#}^{R+D}$. 

By a similar argument, $\Lambda$ induces  a map $\Lambda^{R+2D}:C_\bullet[X\backslash K_{R+2D}]_n\rightarrow C_{\bullet+1}[X\backslash K_{R}]$ such that $\Lambda r_\#^{R+2D}=r_\#^R\Lambda^{R+2D}$; 
it follows that $h_{\#}^{R+D}g_\#^{R+2D}-\iota^{R,R+2D}_\#=\partial\Lambda^{R+2D}+\Lambda^{R+2D}\partial$. This proves the existence of the commutative diagram shown in Figure \ref{fig:cdpdntopatinf}.
\end{proof}

\section{Coarse Separation}\label{sec:coarsecompcomp}

\subsection{Coarse Complementary Components}
We introduce the notion of coarse complementary components, generalising the notion of an almost invariant set (see Proposition \ref{prop:coarsecompinvecspace}). It should be remarked that a coarse complementary component $C$ is not necessarily coarsely connected, i.e. $P_r(C)$ may not be connected for any $r$. Lemmas \ref{lem:coarsecompvscomp} and \ref{lem:compcompcoarseinv}  provide some motivation for the term `coarse complementary components'.

\begin{defn}\label{def:ccc}
Let $X$ be a $1$-geodesic metric space,  $C\subseteq X$ and $r\geq 0$. We define the \emph{coarse $r$-boundary} of $C$ to be $$\partial_r C:=\{x\in X\backslash C\mid d(x,C)\leq r \}.$$
If $W\subseteq X$, $r\geq 1$ and $A\geq 0$, we say that  $C\subseteq X$ is an \emph{$(r,A)$-coarse complementary component} of $W$ if $$\partial_r (C\backslash N_A(W)) \subseteq N_A(W).$$ 
We say that $C$ is a \emph{coarse complementary component} of $W$ if it is an \emph{$(r,A)$-coarse complementary component} of $W$ for some $r\geq1$ and $A\geq 0$.
\end{defn}

\begin{rem}\label{rem:bdrycoarsecompreal}
If $\partial_r(C)\subseteq N_A(W)$, then $C$ is an $(r,A)$-coarse complementary component of $W$. Conversely, if $C$ is an $(r,A)$-coarse complementary component of $W,$ then $\partial_r(C)\subseteq N_{A+r}(W)$.
\end{rem}

\begin{lem}\label{lem:coarsecompvscomp}
Let $X$ be a $1$-geodesic metric space and $W\subseteq X$. Then $C\subseteq X$ is an  $(r,A)$-coarse complementary component of $W$ if and only if  $C\backslash N_A(W)$ is the vertex set of a union of components of $P_r(X)\backslash P_r(N_{A}(W))$.
\end{lem}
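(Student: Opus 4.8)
The statement is essentially a translation between the definition of an $(r,A)$-coarse complementary component and the combinatorial structure of the Rips graph $P_r(X)$, so the plan is to unpack both sides and check they say the same thing. Throughout, write $V:=C\backslash N_A(W)$ and observe that $P_r(N_A(W))$ is a full subcomplex of $P_r(X)$ on the vertex set $N_A(W)$, so $P_r(X)\backslash P_r(N_A(W))$ is the full subcomplex on the vertex set $X\backslash N_A(W)$; in particular its components are determined by the edges of $P_r(X)$ joining points of $X\backslash N_A(W)$, i.e. by pairs $x,y\in X\backslash N_A(W)$ with $d(x,y)\le r$.

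First I would prove the ``only if'' direction. Suppose $C$ is an $(r,A)$-coarse complementary component of $W$, so $\partial_r(V)\subseteq N_A(W)$. To show $V$ is a union of components of the full subcomplex on $X\backslash N_A(W)$, it suffices to show no edge of $P_r(X)$ with both endpoints in $X\backslash N_A(W)$ joins a point of $V$ to a point of $(X\backslash N_A(W))\backslash V$. Indeed if $x\in V$, $y\in X\backslash N_A(W)$, $y\notin V$ and $d(x,y)\le r$, then $y\in X\backslash C$ (since $y\notin N_A(W)$ but is in $C\backslash N_A(W)$ would force $y\in V$, contradiction; more carefully $y\notin V=C\backslash N_A(W)$ together with $y\notin N_A(W)$ gives $y\notin C$) and $d(y,C)\le d(y,x)\le r$, so $y\in\partial_r(C)$. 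But actually we need $y\in\partial_r(V)$: since $d(y,x)\le r$ with $x\in V$ and $y\notin V$, indeed $y\in\partial_r(V)\subseteq N_A(W)$, contradicting $y\notin N_A(W)$. Hence $V$ is a union of vertex sets of components.

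Next the ``if'' direction. Suppose $V=C\backslash N_A(W)$ is the vertex set of a union of components of $P_r(X)\backslash P_r(N_A(W))$. I must show $\partial_r(V)\subseteq N_A(W)$. Take $x\in\partial_r(V)$, so $x\notin V$ and there is $v\in V$ with $d(x,v)\le r$. If $x\notin N_A(W)$, then $x$ is a vertex of the full subcomplex on $X\backslash N_A(W)$, and the edge from $v$ to $x$ (present since $d(x,v)\le r\ge 1$, and both endpoints lie outside $N_A(W)$) shows $x$ lies in the same component of $P_r(X)\backslash P_r(N_A(W))$ as $v\in V$; since $V$ is a union of full components, $x\in V$, contradicting $x\notin V$. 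Therefore $x\in N_A(W)$, proving $\partial_r(V)\subseteq N_A(W)$, i.e. $C$ is an $(r,A)$-coarse complementary component of $W$.

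The only mild subtlety — and the step I would be most careful about — is the bookkeeping distinguishing ``$x\notin V$'' from ``$x\notin C$'' and keeping track of which points lie in $N_A(W)$: a point of $\partial_r(V)$ need not lie in $\partial_r(C)$ because it might lie in $N_A(W)\cap C$, but in the argument above we never need that implication, only the (easy) fact that a point adjacent in $P_r$ to a point of $V$ and lying outside $N_A(W)$ must itself lie in $V$ once $V$ is saturated for components. Since $r\ge 1$ and $X$ is $1$-geodesic, $P_r(X)$ has at least all the edges of $P_1(X)$, so there is no issue with the graph being too sparse to carry these adjacency arguments. No further estimates are needed; the whole proof is a direct verification.
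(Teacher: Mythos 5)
Your proof is correct and follows essentially the same route as the paper: the ``if'' direction is verbatim the paper's argument, and the ``only if'' direction is the paper's argument reduced to the single-edge level (the paper walks along an $r$-chain from $x$ to $y$ and finds the first step that would exit $C$; you phrase the same obstruction as ``no edge leaves $V$ into $(X\backslash N_A(W))\backslash V$,'' which is the inductive step of the paper's argument). You also correctly flag the one real bookkeeping issue — that one must land in $\partial_r(V)$ rather than $\partial_r(C)$ — and resolve it the same way the paper implicitly does. One small imprecision worth noting: the set difference $P_r(X)\backslash P_r(N_A(W))$ is not literally the full subcomplex on $X\backslash N_A(W)$, since it also contains simplices having some (but not all) vertices in $N_A(W)$; however, this does not affect the argument, because the notion of ``component'' used in the paper (and in your proof) is the graph-theoretic one given by $r$-chains lying entirely in $X\backslash N_A(W)$, which coincides with components of the full subcomplex on $X\backslash N_A(W)$.
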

\begin{proof}
Assume $C$ is an $(r,A)$-coarse complementary component of $W$. For $x\in C$ and $y\in X$, suppose $x$ and $y$ lie in the same component of $P_r(X)\backslash P_r(N_{A}(W))$. Then there exists an $r$-chain $x=x_0,x_1,\dots ,x_n=y$ in $X\backslash N_A(W)$. If some  $x_i\in C$ and $x_{i+1}\not \in C$, then $x_{i+1}\in \partial_r(C\backslash N_A(W)) \subseteq N_A(W)$ which contradicts $x_{i+1}\in X\backslash N_A(W)$. Therefore $y\in C$. 

Now suppose $C\backslash N_A(W)$ is the vertex set of a union of components of $P_r(X)\backslash P_r(N_A(W))$. If $x\in \partial_r (C\backslash N_A(W))$, then there is some $y\in C	\backslash N_A(W)$  such that $d(x,y)\leq r$. If $x\not \in N_A(W)$, then $x$ and $y$ lie in the same component of $P_r(X)\backslash P_r(N_A(W))$,  so $x\in C\backslash N_A(W)$, which is a contradiction. Therefore $x\in N_A(W)$, showing that $\partial_r(C\backslash N_A(W))\subseteq N_A(W)$ and hence $C$ is an $(r,A)$-coarse complementary component of $W$.
\end{proof}

\begin{cor}
The complement, union, intersection or symmetric difference of $(r,A)$-coarse complementary components of $W$ is itself an $(r,A)$-coarse complementary component of $W$.
\end{cor}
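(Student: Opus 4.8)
The plan is to reduce everything to Lemma~\ref{lem:coarsecompvscomp}, which identifies the $(r,A)$-coarse complementary components of $W$ with the subsets $C\subseteq X$ whose trimmed versions $C\setminus N_A(W)$ are vertex sets of unions of connected components of the graph $P_r(X)\setminus P_r(N_A(W))$. The key point is purely combinatorial: the collection of subsets of a graph that are unions of connected components forms a Boolean subalgebra of the power set, since each connected component lies either entirely inside or entirely outside any such subset, and hence this collection is closed under complement, union, intersection and symmetric difference.

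First I would record the elementary observation that the subcomplex $P_r(N_A(W))$ has vertex set exactly $N_A(W)$, so that $P_r(X)\setminus P_r(N_A(W))$ has vertex set $X\setminus N_A(W)$. Thus, by Lemma~\ref{lem:coarsecompvscomp}, $C$ is an $(r,A)$-coarse complementary component of $W$ if and only if $C\setminus N_A(W)$ is a union of connected components of $P_r(X)\setminus P_r(N_A(W))$. Now, given $(r,A)$-coarse complementary components $C_1$ and $C_2$ of $W$, set $D_i:=C_i\setminus N_A(W)$, each a union of components of $P_r(X)\setminus P_r(N_A(W))$. Then
\begin{align*}
(X\setminus C_1)\setminus N_A(W) &= (X\setminus N_A(W))\setminus D_1, \\
(C_1\cup C_2)\setminus N_A(W) &= D_1\cup D_2, \\
(C_1\cap C_2)\setminus N_A(W) &= D_1\cap D_2, \\
(C_1\triangle C_2)\setminus N_A(W) &= D_1\triangle D_2,
\end{align*}
and in each case the right-hand side is again a union of connected components of $P_r(X)\setminus P_r(N_A(W))$, because every such component is contained in $D_1$ (resp.\ $D_2$) or is disjoint from it. Applying Lemma~\ref{lem:coarsecompvscomp} in the reverse direction then shows that the complement, union, intersection and symmetric difference are $(r,A)$-coarse complementary components of $W$, as required.

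There is no genuine obstacle here; the only point requiring a little care is to keep the distinction between a coarse complementary component $C$ and its trimmed version $C\setminus N_A(W)$, and in particular to note that $(X\setminus C_1)\setminus N_A(W)$ is the complement of $D_1$ \emph{inside the vertex set $X\setminus N_A(W)$}, rather than inside all of $X$, so that it is indeed again a union of components of $P_r(X)\setminus P_r(N_A(W))$.
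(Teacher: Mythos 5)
Your proof is correct and is exactly the argument the paper intends — the statement is a direct corollary of Lemma~\ref{lem:coarsecompvscomp}, which is why the paper gives no explicit proof, and your set-theoretic identities together with the observation that unions of connected components form a Boolean subalgebra of $\mathcal{P}(X\setminus N_A(W))$ fill in the details cleanly.
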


\begin{lem}\label{lem:compcompcoarseinv}
Let $X$ and $Y$ be 1-geodesic metric spaces with $W\subseteq X$.
Suppose $C\subseteq X$ is an $(r,A)$-coarse complementary component of $W$ and $f:X\rightarrow Y$ is an $(\eta,\phi,B)$-coarse isometry. Then for each $s\geq 1$, there exists an $A'=A'(A,\eta,\phi,B,s)$ such that $N_B(f(W))$ is an $(s,A')$-coarse complementary component of $f(W)$.
\end{lem}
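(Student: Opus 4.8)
The plan is to verify directly that $N_B(f(C))$ satisfies the defining condition of an $(s,A')$-coarse complementary component of $f(W)$, i.e.\ that $\partial_s\big(N_B(f(C))\setminus N_{A'}(f(W))\big)\subseteq N_{A'}(f(W))$, by transporting $s$-chains in $Y$ back to $r$-chains in $X$ along a well-chosen coarse inverse of $f$. First I would fix a coarse inverse $g\colon Y\to X$ of $f$ of a specific form: using that $f$ is $B$-dense, for each $y\in Y$ choose $g(y)\in X$ with $d_Y(f(g(y)),y)\le B$, so that $fg$ is $B$-close to $\mathrm{id}_Y$. A routine computation (as in the discussion following Remark~\ref{rem:propinv}) then shows that $g$ is a coarse embedding, that $gf$ is $D$-close to $\mathrm{id}_X$ with $D:=\tilde\eta(B)$, and that $\psi(t):=\max\big(\phi(t),\tilde\eta(t+2B)\big)$ is a common upper distortion function for $f$ and $g$; all of $D$ and $\psi$ depend only on $\eta,\phi,B$. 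I would then set $\psi_1:=\psi(B)+D$, $A_1:=A+\max(\psi(s),\psi_1)$ and $A':=B+\phi(A_1)$, which depends only on $A,\eta,\phi,B,s$, and argue by contradiction: suppose $y\in\partial_s\big(N_B(f(C))\setminus N_{A'}(f(W))\big)$ with $y\notin N_{A'}(f(W))$. Then $y\notin N_B(f(C))$, and there is $y'\in N_B(f(C))\setminus N_{A'}(f(W))$ with $d_Y(y,y')\le s$.

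The core of the argument takes place in $X$. Since $y'\in N_B(f(C))$, fix $c\in C$ with $d_Y(y',f(c))\le B$; then $d_X(g(y'),c)\le\psi(B)+D=\psi_1$, and pushing forward by $f$ one checks from $y'\notin N_{A'}(f(W))$ that $c\notin N_A(W)$, so $c\in C\setminus N_A(W)$. Next, the hypotheses $y,y'\notin N_{A'}(f(W))$ together with $A'=B+\phi(A_1)$ force $d_X(g(y),W)>A_1$ and $d_X(g(y'),W)>A_1$: indeed $d_X(g(y),w)\le A_1$ for some $w\in W$ would give $d_Y(y,f(w))\le d_Y(y,f(g(y)))+d_Y(f(g(y)),f(w))\le B+\phi(A_1)=A'$, contradicting $y\notin N_{A'}(f(W))$. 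Because $X$ is $1$-geodesic I would then concatenate a $1$-geodesic from $g(y)$ to $g(y')$ with one from $g(y')$ to $c$: every vertex of the first lies within $\psi(s)$ of $g(y)$ and every vertex of the second within $\psi_1$ of $g(y')$, so by the distance estimates just obtained and the choice $A_1\ge A+\max(\psi(s),\psi_1)$ (adding $1$'s of slack where needed, since $r\ge 1$) this concatenation is an $r$-chain from $g(y)$ to $c$ contained in $X\setminus N_A(W)$. By Lemma~\ref{lem:coarsecompvscomp}, $C\setminus N_A(W)$ is the vertex set of a union of components of $P_r(X)\setminus P_r(N_A(W))$; hence $g(y)$ lies in the same component as $c$, so $g(y)\in C$. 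But then $f(g(y))\in f(C)$ and $d_Y(y,f(g(y)))\le B$, so $y\in N_B(f(C))$, contradicting our assumption. Therefore $y\in N_{A'}(f(W))$, which is exactly what was needed.

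I expect the main obstacle to be the bookkeeping forced by the density constant $B$: the target neighbourhood is $N_B(f(C))$ with \emph{that} particular $B$, so one cannot work with an arbitrary coarse inverse but must use the $B$-section of $f$ constructed above, which makes $fg$ exactly $B$-close to $\mathrm{id}_Y$. A second, milder point is that $g(y')$ is only \emph{near} $C$ and need not lie in $C$, so the coarse-complementary-component property cannot be applied to $g(y')$ directly; this is why the pulled-back chain is routed through an honest point $c\in C\setminus N_A(W)$ (which in turn requires the auxiliary estimate $c\notin N_A(W)$). The remaining ingredients — interpolating chains via $1$-geodesics, the triangle-inequality estimates, and checking that the final constant $A'$ depends only on $A,\eta,\phi,B,s$ — are routine.
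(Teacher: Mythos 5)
Your proof is correct, and takes a genuinely (if mildly) different route from the paper's. You argue by contradiction: assuming $y\notin N_{A'}(f(W))$, you pull back to $X$ along an explicitly constructed coarse inverse $g$, concatenate two $1$-geodesics (from $g(y)$ through $g(y')$ to $c$) that avoid $N_A(W)$, invoke Lemma~\ref{lem:coarsecompvscomp} to conclude $g(y)\in C$, and hence $y\in N_B(f(C))$, contradicting $y\in\partial_s(N_B(f(C))\setminus N_{A'}(f(W)))$. The paper argues directly and never names a coarse inverse: after choosing preimages $x$ of $y$ and $x'\in C$ of $y'$ using only $B$-density, it walks a single $1$-geodesic from $x'$ to $x$, locates the first vertex $x_t$ that leaves $C\setminus N_A(W)$, observes $x_t\in\partial_r(C\setminus N_A(W))\subseteq N_A(W)$, and pushes the resulting $w\in W$ forward to get $d_Y(f(w),y)\leq\phi(A+\tilde\eta(s+2B))+B=:A'$. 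Both proofs rest on the same geometric ingredients --- $1$-geodesic interpolation, $B$-density, the lower distortion $\eta$, and the component structure of $C\setminus N_A(W)$ --- and both deliver an $A'$ depending only on $A,\eta,\phi,B,s$. The direct version is a bit leaner: it extracts its conclusion from the \emph{exit point} of one geodesic rather than showing a two-leg chain never exits, its constant $A'=\phi(A+\tilde\eta(s+2B))+B$ is tighter than your $B+\phi(A+\max(\psi(s),\psi(B)+\tilde\eta(B)))$, and it sidesteps the very bookkeeping around the coarse inverse that you identify as the main obstacle. Your insight that the chain must be routed through an honest point $c\in C\setminus N_A(W)$ is exactly the paper's step of producing $x'\in C\setminus N_A(W)$.
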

\begin{proof}
Let $Z:=f(W)$ and $Q:=N_B(f(C))$. Setting $A':=\phi(A+ \tilde{\eta}(s+2B))+B$,  we claim that $Q$ is an $(s,A')$ coarse complementary component of $Z$, where $\tilde{\eta}$ is as defined in Remark \ref{rem:propinv}.	We need to show $\partial_s(Q\backslash N_{A'}(Z))\subseteq N_{A'}(Z)$. 

Let $y\in \partial_s(Q\backslash N_{A'}(Z))$; there exists an $x\in X$ such that $d_Y(f(x),y)\leq B$. If $x\in C$, then $y\in Q$ so $y\in N_{A'}(Z)$ and we are done. If  $x\notin C$, then there exists a $y'\in Q\backslash N_{A'}(Z)$ such that $d_Y(y,y')\leq s$. We thus pick $x'\in C$ such that $d_Y(f(x'),y')\leq B$. Since $A'\geq \phi(A)+B$ and $y'\notin N_{A'}(Z)$, it follows that $x'\in C\backslash N_A(W)$. As $d_Y(y,y')\leq s$, we see that $d_X(x,x')\leq \tilde{\eta}(s+2B)$.

We now observe that $x'$ and $x$ can be joined by a 1-geodesic $x'=x_0,\dots, x_n=x$ such that $n=d_X(x,x')\leq \tilde{\eta}(s+2B)$. We pick the minimal $t$ such that $x_t\notin C\backslash N_A(W)$ and  note that 
$$d_X(x,x_t)= n-t\leq \tilde{\eta}(s+2B).$$ 
By the minimality of $t$,  $x_t\in \partial_r(C\backslash N_A(W))\subseteq N_A(W)$, so there exists a $w\in W$ such that $d_X(w,x_t)\leq A$. Therefore $$d_Y(f(w),y)\leq d_Y(f(w),f(x))+B \leq \phi(A+\tilde{\eta}(s+2B))+B=A',$$ showing that $y\in N_{A'}(Z)$.\qedhere
\end{proof}

We say that $C$ is an \emph{irreducible $(r,A)$-coarse complementary component} whenever  $C\backslash N_A(W)$ is the vertex set of a single component of  $P_r(X)\backslash P_r(N_{A}(W))$.  However, unlike (general) coarse complementary components, irreducible coarse complementary components are not preserved under coarse isometries. 

\begin{cor}\label{cor:compcompparamindep}
For any finite collection $\{C_i\}_{i\in I}$ of coarse complementary components of $W$ and any $r\geq 1$, there exists an $A\geq 0$ such that every $C_i$ is an $(r,A)$-coarse complementary component of $W$.
\end{cor}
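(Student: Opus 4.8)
The plan is to reduce immediately to a single coarse complementary component and then to prove the genuinely useful statement hiding inside the corollary: every $(r_0,A_0)$-coarse complementary component of $W$ is automatically an $(r,A)$-coarse complementary component of $W$ for $A:=A_0+r$, and in fact for every $A\ge A_0+r$. Granting this, the corollary is immediate: if $C_i$ is an $(r_i,A_i)$-coarse complementary component of $W$ (for suitable $r_i\ge 1$, $A_i\ge 0$), then setting $A:=r+\max_{i\in I}A_i$ makes every $C_i$ an $(r,A)$-coarse complementary component of $W$, since $A\ge r+A_i$. This step is where one must be slightly careful: enlarging the scale parameter $r$ is \emph{not} a monotone operation on the defining inequality of Definition~\ref{def:ccc}, so one cannot simply quote a monotonicity-in-$A$ statement; a genuine argument using the $1$-geodesic structure of $X$ is needed. (Equivalently, one could invoke Lemma~\ref{lem:compcompcoarseinv} with $f=\mathrm{id}_X$, the $(\mathrm{id},\mathrm{id},0)$-coarse isometry; but unwinding that proof for the identity map amounts to exactly the argument below.)

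For the single-component claim, fix an $(r_0,A_0)$-coarse complementary component $C$ of $W$ and $r\ge 1$, put $A:=A_0+r$, and suppose $x\in\partial_r(C\setminus N_A(W))$ with $x\notin N_A(W)$; then necessarily $x\notin C$, and there is $y\in C\setminus N_A(W)$ with $d(x,y)\le r$. Since $X$ is $1$-geodesic, pick a $1$-geodesic $x=z_0,z_1,\dots,z_n=y$ with $n=d(x,y)\le r$. As $z_0\notin C$ and $z_n\in C$, let $t$ be the largest index with $z_t\notin C$; then $z_{t+1}\in C$. From $d(z_{t+1},y)=n-t-1\le r-1$ and $d(y,W)>A=A_0+r$ we get $d(z_{t+1},W)>A_0+1>A_0$, so $z_{t+1}\in C\setminus N_{A_0}(W)$. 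Since $d(z_t,z_{t+1})=1\le r_0$ and $z_t\notin C$, this gives $z_t\in\partial_{r_0}(C\setminus N_{A_0}(W))\subseteq N_{A_0}(W)$, hence $d(z_t,W)\le A_0$ and therefore $d(x,W)\le d(x,z_t)+d(z_t,W)=t+A_0\le (r-1)+A_0<A$, contradicting $x\notin N_A(W)$. Thus $\partial_r(C\setminus N_A(W))\subseteq N_A(W)$, as desired; the same computation goes through verbatim for any $A\ge A_0+r$.

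The only subtlety, and the place I expect to have to be careful, is the bookkeeping in the last display — specifically checking that the "last exit point" $z_{t+1}$ still lies outside $N_{A_0}(W)$ (this is what forces the choice $A=A_0+r$ rather than something smaller) and that the final distance estimate is strict. Everything else is routine: the reduction to one component is trivial (take a maximum over the finite index set $I$), and the containment used at the key step is precisely the defining condition of an $(r_0,A_0)$-coarse complementary component together with $r_0\ge 1$, which guarantees consecutive points of a $1$-geodesic are within the coarse $r_0$-boundary range.
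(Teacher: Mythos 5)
Your proof is correct and is essentially the paper's argument: the paper reduces to one component, applies Lemma~\ref{lem:compcompcoarseinv} with $f=\mathrm{id}_X$ (an $(\mathrm{id},\mathrm{id},0)$-coarse isometry) to change the scale parameter to $r$, then takes a maximum using monotonicity in $A$, while you simply unwind that lemma's $1$-geodesic argument for the identity map, arriving at the same bound $A = A_0 + r$. The one small inaccuracy in your commentary is the claim that "one cannot simply quote a monotonicity-in-$A$ statement" — for \emph{fixed} $r$, being an $(r,A)$-coarse complementary component \emph{is} monotone in $A$ (the paper asserts and uses exactly this in its first line, and it follows from the same containment $x\in\partial_r(C\setminus N_A(W))\subseteq N_A(W)\subseteq N_{A'}(W)$ you deploy); what fails, as you correctly note, is changing $r$ for fixed $A$, and that is the part that genuinely needs the geodesic argument.
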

\begin{proof}
If $C$ is an $(r,A)$-coarse complementary component of $W$, then it is an $(r,A')$-coarse complementary component of $W$ for any $A'\geq A$.
Applying  Lemma \ref{lem:compcompcoarseinv} with $f=\mathrm{id}_X$, we see that every $C_i$ is an $(r,A_i)$ coarse complementary component for some $A_i$. Letting $A:=\max_{i\in I}(A_i)$, we see that each $C_i$ is an $(r,A)$-coarse complementary component of $W$.
\end{proof}

Since every coarse complementary component is a $(1,A)$-coarse complementary component for some $A\geq 0$, one might think that the `$r$' parameter in Definition \ref{def:ccc} is redundant. The following Lemma, which plays a crucial role in the proofs of Propositions \ref{prop:compcoarseacycl} and \ref{prop:mvseq}, motivates the need for two parameters in Definition \ref{def:ccc}.

\begin{lem}\label{lem:coarsecompsimplex}
Let $C\subseteq X$ be an $(r,A)$-coarse complementary component of $W$. Then every simplex of $P_r(X)$ lies in either $P_r(N_A(W)\cup C)$ or $P_r(N_A(W)\cup (X\backslash C))$.
\end{lem}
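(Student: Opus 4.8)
The plan is a short proof by contradiction on the vertex set of a single simplex. Let $\Delta=\{x_0,\dots,x_k\}$ be the vertex set of a simplex of $P_r(X)$, so that $d(x_i,x_j)\leq r$ for all $i,j$. Since these pairwise distances are intrinsic to $\Delta$, it suffices to show that, as a set of vertices, either $\Delta\subseteq N_A(W)\cup C$ or $\Delta\subseteq N_A(W)\cup(X\backslash C)$; in either case $\Delta$ automatically spans a simplex of the corresponding Rips subcomplex, which is exactly the assertion.

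So suppose $\Delta\not\subseteq N_A(W)\cup C$. Then there is a vertex $x_i\in\Delta$ with $x_i\notin N_A(W)$ and $x_i\notin C$; I claim that in this case $\Delta\subseteq N_A(W)\cup(X\backslash C)$. If not, there is a vertex $x_j\in\Delta$ with $x_j\notin N_A(W)$ and $x_j\in C$, that is, $x_j\in C\backslash N_A(W)$. Now $x_i\notin C$ gives $x_i\notin C\backslash N_A(W)$, while $d(x_i,x_j)\leq r$ gives $d(x_i,C\backslash N_A(W))\leq r$. Hence $x_i\in\partial_r(C\backslash N_A(W))\subseteq N_A(W)$, using that $C$ is an $(r,A)$-coarse complementary component of $W$. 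This contradicts $x_i\notin N_A(W)$, proving the claim, and with it the lemma.

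The only step that needs any attention is this middle one, and even it is routine: the point is simply that a single edge $x_ix_j$ of the simplex already places $x_i$ within distance $r$ of $C\backslash N_A(W)$, so the defining inclusion $\partial_r(C\backslash N_A(W))\subseteq N_A(W)$ applies directly. In particular, I expect to need neither Lemma \ref{lem:coarsecompvscomp} nor the $1$-geodesic hypothesis on $X$ for this argument.
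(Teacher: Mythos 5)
Your proof is correct and uses the same key observation as the paper: a single edge of the simplex joining a vertex in $C\backslash N_A(W)$ to a vertex outside $C\cup N_A(W)$ immediately contradicts $\partial_r(C\backslash N_A(W))\subseteq N_A(W)$. The paper phrases this as a direct case analysis (all vertices in $N_A(W)$, or WLOG one vertex in $C\backslash N_A(W)$, then every vertex lands in $C\cup N_A(W)$), whereas you run it as a double contradiction, but the content is identical; and you are right that neither Lemma \ref{lem:coarsecompvscomp} nor the $1$-geodesic hypothesis is needed here.
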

\begin{proof}
Let $\Delta=\{x_0,\dots, x_n\}$ be a simplex of $P_r(X)$. If each $x_i\in N_A(W)$, then $\Delta$ is a simplex of both $P_r(N_A(W)\cup C)$ and $P_r(N_A(W)\cup (X\backslash C))$. We therefore assume  $x_i\notin N_A(W)$ for some $x_i\in \Delta$. Interchanging $C$ and $X\backslash C$ if necessary, we may assume $x_i\in C\backslash N_A(W)$. For each $x_j\in \Delta$, either $x_j\in C\backslash N_A(W)$ or $x_j\in \partial_r(C\backslash N_A(W))\subseteq N_A(W)$. Thus $\Delta$ is a simplex of $P_r(N_A(W)\cup C)$.
\end{proof}

We now show that if $C$ is a coarse complementary component of $W$ and both $X$ and $W$ are coarsely uniformly $(n-1)$-acyclic, then $W\cup C$ is also coarsely uniformly $(n-1)$-acyclic. This  allows us  to define the coarse cohomology of $W\cup C$.

\begin{prop}\label{prop:compcoarseacycl}
Let $X$ be a 1-geodesic metric space with $W\subseteq X$, such that both $X$ and $W$ are $(\lambda,\mu)$-coarsely uniformly $(n-1)$-acyclic over $R$ for some $n>0$. If $C$ is a $(1,A)$-coarse complementary component of $W$, then $W\cup C$ is  $(\lambda',\mu')$-coarsely uniformly $(n-1)$-acyclic over $R$, where $\lambda'$ and $\mu'$ depend only on $A$, $\lambda$ and $\mu$.
\end{prop}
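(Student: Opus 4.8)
The plan is to take a reduced $k$-cycle in a ball of $W\cup C$, fill it in the ambient space $X$, and then repair the filling so that it lies in $W\cup C$. Fix $y\in W\cup C$, parameters $i,r$, and a reduced $k$-cycle $z$ in $P_i(N^{W\cup C}_r(y))$ with $k\le n-1$; the goal is to produce $\lambda'(i)$ and $\mu'(i,r)$, depending only on $A,\lambda,\mu$, together with a $(k+1)$-chain filling $z$ in $P_{\lambda'(i)}(N^{W\cup C}_{\mu'(i,r)}(y))$. Since $W\cup C\subseteq X$, we may regard $z$ as a reduced $k$-cycle in $P_i(N^X_r(y))$, and because $X$ is $(\lambda,\mu)$-coarsely uniformly $(n-1)$-acyclic over $R$ we may write $z=\partial c$ for a $(k+1)$-chain $c$ in $P_{\lambda(i)}(N^X_{\mu(i,r)}(y))$.

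Next I would split $c$ using the separation. Put $i_2:=\lambda(i)$. By Lemma~\ref{lem:compcompcoarseinv} applied to $\mathrm{id}_X$, $C$ is an $(i_2,A_2)$-coarse complementary component of $W$ with $A_2$ depending only on $A$ and $i_2$ (one may take $A_2=A+i_2$), so Lemma~\ref{lem:coarsecompsimplex} lets us write $c=c_1+c_2$, where $c_1$ collects the simplices of $c$ lying in $P_{i_2}(N_{A_2}(W)\cup C)$ and $c_2$ the remaining simplices, which lie in $P_{i_2}(N_{A_2}(W)\cup(X\setminus C))$. Set $w:=\partial c_2$. Then $w$ is a reduced $k$-cycle whose simplices lie in $P_{i_2}(N_{A_2}(W)\cup(X\setminus C))$, and since $w=z-\partial c_1$ has support in $N_{A_2}(W)\cup C$ (as $z$ is supported in $W\cup C\subseteq N_{A_2}(W)\cup C$), every simplex of $w$ actually has all vertices in $(N_{A_2}(W)\cup(X\setminus C))\cap(N_{A_2}(W)\cup C)=N_{A_2}(W)$; moreover $w$ is supported in $N^X_{\mu(i,r)}(y)$. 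The nearest-point projection $N_{A_2}(W)\to W$ is a coarse isometry (the Hausdorff distance is at most $A_2$), so by Proposition~\ref{prop:unifacyccoarseinv} the subspace $N_{A_2}(W)$ is $(\lambda_3,\mu_3)$-coarsely uniformly $(n-1)$-acyclic over $R$, with $\lambda_3,\mu_3$ depending only on $A_2,\lambda,\mu$. Picking a vertex $v_0$ of $w$ (or $c_3:=0$ if $w=0$), the cycle $w$ lies in the radius-$2\mu(i,r)$ ball about $v_0$ in $N_{A_2}(W)$, hence $w=\partial c_3$ for a $(k+1)$-chain $c_3$ in $P_{\lambda_3(i_2)}$ of a controlled ball inside $N_{A_2}(W)$. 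Thus $c_1+c_3$ is a $(k+1)$-chain, supported in $N_{A_2}(W)\cup C$ and in a ball of controlled radius $\rho$ about $y$, with $\partial(c_1+c_3)=\partial c_1+w=\partial c=z$.

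It remains to push this filling into $W\cup C$. Let $\pi\colon N_{A_2}(W)\cup C\to W\cup C$ be the identity on $W\cup C$ and a nearest-point-of-$W$ assignment on the rest of $N_{A_2}(W)$; then $d(v,\pi(v))\le A_2$ for every $v$, so $\pi$ carries simplices of $P_{\lambda_3(i_2)}(N_{A_2}(W)\cup C)$ to simplices of $P_{\lambda_3(i_2)+2A_2}(W\cup C)$, inducing a chain map $\pi_\#$. As $\pi$ fixes $W\cup C$ pointwise, $\pi_\#(z)=z$, and therefore $z=\partial(\pi_\#(c_1+c_3))$, where $\pi_\#(c_1+c_3)$ is supported in $N^{W\cup C}_{\rho+A_2}(y)$ and uses simplices of $P_{\lambda_3(i_2)+2A_2}$. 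Setting $\lambda'(i):=\lambda_3(\lambda(i))+2A_2$ and $\mu'(i,r):=\rho+A_2$ — which, on unwinding the constants, depend only on $A,\lambda,\mu$ — this shows $[z]=0$ in $\tilde H_k(P_{\lambda'(i)}(N^{W\cup C}_{\mu'(i,r)}(y)))$, so $W\cup C$ is $(\lambda',\mu')$-coarsely uniformly $(n-1)$-acyclic over $R$.

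The routine part is the bookkeeping of the radii $\rho$ and the ones hidden in $c_3$, and checking that the various chains stay in the advertised balls. The one genuinely delicate point is the last paragraph: the filling $c_1+c_3$ produced from the ambient acyclicity of $X$ unavoidably spills into $N_{A_2}(W)\setminus(W\cup C)$, and because $A_2$ grows with $i$ this neighbourhood cannot be absorbed into $W\cup C$ once and for all; the resolution is to replace the filling by its image under the retraction $\pi_\#$, which collapses that neighbourhood onto $W$ while fixing $W\cup C$, so the boundary stays equal to $z$ and the new filling lands inside $W\cup C$. I expect this ``retract the filling'' step to be the only real obstacle.
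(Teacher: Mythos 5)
Your argument is correct and follows the same route as the paper's proof: fill the cycle in the ambient space $X$, split the filling via Lemma~\ref{lem:coarsecompsimplex}, observe that the boundary of the discarded piece is a cycle supported in $N_{A'}(W)$ and refill it there using Proposition~\ref{prop:unifacyccoarseinv}, then retract the combined filling into $W\cup C$ by a bounded-displacement projection. The only cosmetic difference is that you inline the closest-point retraction at the end where the paper invokes Lemma~\ref{lem:neighbhdseq}; the ``delicate point'' you flag is precisely what that lemma handles.
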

To prove this, we use the following lemma.

\begin{lem}\label{lem:neighbhdseq}
Let $X$ be a bounded geometry metric space with $A\subseteq B \subseteq X$. Suppose there exists some $t\geq 0$ such that $B\subseteq N_t(A)$. If $f_\#:C_\bullet(P_i(A);R)\rightarrow C_\bullet(P_i(B);R)$ is the inclusion of chain complexes, then there is a chain map $$p_\#:C_\bullet(P_i(B);R)\rightarrow C_\bullet(P_{i+2t}(A);R)$$  such that $p_\#f_\#$ is the inclusion $C_\bullet(P_i(A);R)\rightarrow C_\bullet(P_{i+2t}(A);R)$ and $$p_\#(C_\bullet(P_i(N^B_r(x));R))\subseteq C_\bullet(P_{i+2t}(N_{r+t}^A(x);R)$$ for every $x\in X$.
\end{lem}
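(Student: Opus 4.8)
The plan is to realise $p_\#$ as the chain map induced by a coarse ``nearest--point projection'' $\pi\colon B\to A$. First I would fix, using the hypothesis $B\subseteq N_t(A)$, a map $\pi\colon B\to A$ with $d(b,\pi(b))\leq t$ for all $b\in B$ and $\pi(b)=b$ whenever $b\in A$ (for $b\in B\setminus A$ simply choose some $a\in A$ with $d(b,a)\leq t$). The key point is that $\pi$ is a simplicial map $P_i(B)\to P_{i+2t}(A)$: if $\{x_0,\dots,x_n\}$ spans a simplex of $P_i(B)$, then for all $j,k$
$$d(\pi(x_j),\pi(x_k))\leq d(\pi(x_j),x_j)+d(x_j,x_k)+d(x_k,\pi(x_k))\leq 2t+i,$$
so $\{\pi(x_0),\dots,\pi(x_n)\}$ spans a simplex (of dimension at most $n$) of $P_{i+2t}(A)$.

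Next I would take $p_\#\colon C_\bullet(P_i(B);R)\to C_\bullet(P_{i+2t}(A);R)$ to be the chain map induced by this simplicial map in the standard way, namely $p_\#[x_0,\dots,x_n]:=[\pi(x_0),\dots,\pi(x_n)]$ when the vertices $\pi(x_0),\dots,\pi(x_n)$ are pairwise distinct and $p_\#[x_0,\dots,x_n]:=0$ otherwise; that $\partial p_\#=p_\#\partial$ is the usual fact about simplicial maps. To check $p_\#f_\#$ is the asserted inclusion, note that $P_i(A)$ is a subcomplex of $P_{i+2t}(A)$ (every $i$-bounded tuple is $(i+2t)$-bounded), and for a simplex $\{a_0,\dots,a_n\}$ of $P_i(A)$ we have $\pi(a_j)=a_j$, so $p_\#f_\#[a_0,\dots,a_n]=[a_0,\dots,a_n]$, which is exactly the image of this chain under the inclusion $C_\bullet(P_i(A);R)\hookrightarrow C_\bullet(P_{i+2t}(A);R)$. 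Finally, for the locality statement: if $\{x_0,\dots,x_n\}$ spans a simplex of $P_i(N^B_r(x))$ then $d(x_j,x)\leq r$, whence $\pi(x_j)\in A$ and $d(\pi(x_j),x)\leq d(\pi(x_j),x_j)+d(x_j,x)\leq t+r$, so the (possibly degenerate) image simplex lies in $P_{i+2t}(N^A_{r+t}(x))$; this gives $p_\#\big(C_\bullet(P_i(N^B_r(x));R)\big)\subseteq C_\bullet(P_{i+2t}(N^A_{r+t}(x));R)$.

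The only point requiring any care is the bookkeeping around degenerate simplices, i.e.\ making sure $p_\#$ is well defined and a chain map when $\pi$ collapses a simplex to a lower-dimensional one; this is entirely routine. In particular bounded geometry of $X$ plays no role in this lemma and is carried along only because it is part of the standing setup for metric complexes.
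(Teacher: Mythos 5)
Your proof is correct and follows essentially the same route as the paper: both choose a projection $\pi\colon B\to A$ with $\pi|_A=\mathrm{id}_A$ and $d(b,\pi(b))\leq t$, check via the triangle inequality that it is simplicial $P_i(B)\to P_{i+2t}(A)$, and take the induced chain map. Your explicit remark about the degenerate-simplex convention is a small bit of bookkeeping the paper leaves implicit, but nothing substantively differs.
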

\begin{proof}
We choose a closest point projection $p:B\rightarrow A$, noting that for each $b\in B$, $d(b,p(b))\leq t$. Then $p$ induces the chain map $p_\#:C_\bullet(P_i(B);R)\rightarrow C_\bullet(P_{i+2t}(A);R)$ given by $[x_0,\dots x_n]\mapsto [p(x_0),\dots, p(x_n)]$ on each oriented simplex $[x_0,\dots x_n]$.   Since $p|_{A}=\mathrm{id}_A$,  $p_\#f_\#$ is just the inclusion of chain complexes.
\end{proof}

\begin{proof}[Proof of Proposition \ref{prop:compcoarseacycl}]	
Without loss of generality, we may assume  $\lambda(i)\geq 1$ for every $i$.
For any $k<n$, $i\geq 0$, $r\geq 0$ and $x\in X$, let $\sigma \in C_k(P_i(N_r^{C\cup W}(x));R)$ be a reduced cycle.    There exists a $(k+1)$-chain $\omega \in C_{k+1}(P_{\lambda(i)}(N_{\mu(i,r)}^X(x));R)$ such that $\partial\omega=\sigma$.

By Corollary \ref{cor:compcompparamindep}, there is an $A'=A'(\lambda( i),A)\geq 0$ such that $C$ is a $(\lambda(i),A')$-coarse complementary component of $W$. By Lemma \ref{lem:coarsecompsimplex},  each simplex of $P_{\lambda(i)}(X)$ lies in either $P_{\lambda(i)}(N^X_{A'}(W)\cup C)$ or $P_{\lambda(i)}(N^X_{A'}(W)\cup (X\backslash C))$. Therefore, $\omega=\omega_1+\omega_2$, where $\omega_1\in C_{k+1}( P_{\lambda(i)}(N^{N^X_{A'}(W)\cup C}_{\mu(i,r)}(x));R)$ and $\omega_2\in C_{k+1}( P_{\lambda(i)}(N^{N^X_{A'}(W)\cup (X\backslash C)}_{\mu(i,r)}(x));R)$.
As $\partial\omega_2=\partial\omega-\partial\omega_1=\sigma -\partial\omega_1$, we see that $\partial\omega_2 \in C_k(P_{\lambda(i)}(N^{N^X_{A'}(W)}_{\mu(i,r)}(x));R)$ is a reduced $k$-cycle. The inclusion $W\rightarrow N^X_{A'}(W)$ is an $(\mathrm{id}_{\mathbb{R}_{\geq 0}},\mathrm{id}_{\mathbb{R}_{\geq 0}},A')$-coarse isometry. Consequently, Proposition \ref{prop:unifacyccoarseinv} tells us that $N^X_{A'}(W)$ is $(\lambda',\mu')$ coarsely uniformly $(n-1)$-acyclic, where $\lambda'$ and $\mu'$ depend only on $A$, $\lambda$ and $\mu$. Hence there is a $\gamma\in C_{k+1}(P_{i'}(N^{N^X_{A'}(W)}_{r'}(x));R)$ such that $\partial\gamma=\partial\omega_2$, where $i'=\lambda'(\lambda(i))$ and $r'=\mu'(\lambda(i),\mu(i,r))$. 

Let $f_\#$ be the inclusion $C_\bullet(P_{i'}(C\cup W);R)\rightarrow C_\bullet(P_{i'}(C\cup N^X_{A'}(W));R)$.  As $C\cup N^X_{A'}(W)\subseteq N_{A'}^X(C\cup W)$, Lemma \ref{lem:neighbhdseq} tells us there is a chain map $p_\#:C_\bullet(P_{i'}(C\cup N^X_{A'}(W));R)\rightarrow C_\bullet(P_{i'+2{A'}}(C\cup W);R) $  such that $p_\#f_\#$ is  the inclusion. We see that $$p_\#(\omega_1+\gamma)\in C_{k+1}(P_{i'+2{A'}}(N^{C\cup W}_{r'+{A'}}(x));R)$$ and  $\partial p_\#(\omega_1+\gamma)=p_\#f_\#(\sigma)=\sigma$. Therefore,  $C\cup W$ is coarsely uniformly $(n-1)$-acyclic over $R$, and the uniform acyclicity functions depend only on ${A}$, $\lambda$ and $\mu$.
\end{proof}

\subsection{Coarse \texorpdfstring{$n$}{n}-separation}
\begin{defn}
A coarse complementary component $C$ of $W$ is \emph{shallow} if $C\subseteq N_R(W)$ for some $R\geq 0$. A coarse complementary component that is not shallow is called \emph{deep}. A collection $\{C_i\}_{i\in I}$ of coarse complementary components of $W$ is said to be \emph{coarse disjoint} if for every $i\neq i'\in I$, $C_i\cap C_{i'}$ is shallow. Similarly, we say that a component of $P_r(X)\backslash P_r(N_A(W))$ is \emph{deep} (resp. \emph{shallow}) if its vertex set is deep (resp. shallow). 

We say that $W$ \emph{coarsely $n$-separates} $X$ if $X$ has at least $n$ deep, coarse disjoint, coarse complementary components of $W.$ We say that $W$ \emph{coarsely separates} $X$ if $W$ coarsely 2-separates $X$. 
\end{defn}

\begin{lem}\label{lem:nbhdcomp}
Let $X$ be a 1-geodesic metric space and suppose $C$ is an $(r,A)$-coarse complementary component of $W$ for some $r\geq 1$ and $A\geq 0$. Then for all $R\geq 0$, $$N_R(C)\subseteq C \cup N_{A+R}(W).$$
\end{lem}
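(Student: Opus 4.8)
The plan is to exploit the $1$-geodesic hypothesis to turn the statement into a one-step argument. Fix $R \ge 0$ and let $x \in N_R(C)$; if $x \in C$ there is nothing to prove, so assume $x \notin C$ and aim to show $d(x,W) \le A + R$. Choose $y \in C$ with $d(x,y) \le R$. Since $X$ is $1$-geodesic there is a $1$-geodesic $y = x_0, x_1, \dots, x_n = x$ with $d(x_i,x_j) = |i-j|$; in particular $n = d(x,y) \le R$, and $n \ge 1$ since $x \ne y$.

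Now I would track where this $1$-geodesic first leaves $C$. As $x_0 = y \in C$ and $x_n = x \notin C$, there is a least index $t$ with $x_t \notin C$, and then $1 \le t \le n$, $x_{t-1} \in C$, and $d(x_{t-1}, x_t) = 1 \le r$. Distinguish two cases according to whether $x_{t-1}$ lies in $N_A(W)$. If $x_{t-1} \notin N_A(W)$, then $x_{t-1} \in C \setminus N_A(W)$ while $x_t \notin C \setminus N_A(W)$ and $d(x_t, x_{t-1}) \le r$, so $x_t \in \partial_r(C \setminus N_A(W))$; since $C$ is an $(r,A)$-coarse complementary component of $W$, this forces $x_t \in N_A(W)$, whence $d(x,W) \le d(x,x_t) + A = (n-t) + A \le R + A$. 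If instead $x_{t-1} \in N_A(W)$, then directly $d(x,W) \le d(x,x_{t-1}) + A = (n - t + 1) + A \le n + A \le R + A$. In both cases $x \in N_{A+R}(W)$, which is what is required; since $x$ was arbitrary this gives $N_R(C) \subseteq C \cup N_{A+R}(W)$.

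There is no real obstacle here; the only points demanding a little care are that the $1$-geodesic from $y$ to $x$ has length exactly $d(x,y) \le R$, so that every vertex of it stays within distance $R$ of $x$; that the first-exit index $t$ satisfies $t \ge 1$, which is what makes the bound $n - t + 1 \le n$ legitimate in the second case; and that one correctly unwinds the definition of the coarse $r$-boundary $\partial_r(C \setminus N_A(W))$ in the first case (in particular that $x_t$ genuinely lies in $X \setminus (C \setminus N_A(W))$, which holds because $x_t \notin C$).
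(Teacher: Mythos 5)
Your proof is correct and follows essentially the same route as the paper: walk along a $1$-geodesic from a point of $C$ to $x$, locate the first step where the geodesic leaves the relevant set, and use the coarse-boundary condition to land in $N_A(W)$. The only cosmetic difference is that the paper picks the minimal $t$ with $x_t\notin C\setminus N_A(W)$ (splitting on $t=0$ versus $t>0$), whereas you pick the minimal $t$ with $x_t\notin C$ (splitting on whether $x_{t-1}\in N_A(W)$); the two bookkeepings are interchangeable.
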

\begin{proof}
Suppose $x\in N_{R}(C)\backslash C$. There exists an $x_0\in C$ with $d(x_0,x)\leq R$. Hence there is a 1-geodesic $x_0,\dots, x_n=x$ with $n=d(x_0,x)\leq R$.
We pick the minimal $t$ such that $x_t\notin C\backslash N_A(W)$ and claim $x_t\in N_A(W)$.
If $t=0$, then because $x_0\in C$,  $x_0\in N_A(W)$. If $t>0$, then  $x_t\in \partial_r(C\backslash N_A(W))\subseteq N_A(W)$. 
As $d(x,x_t)=n-t\leq R$, we see that $x\in N_R(N_A(W))\subseteq N_{A+ R}(W)$.
\end{proof}

\begin{prop}
Suppose $X$ and $Y$ are $1$-geodesic and $f:X\rightarrow Y$ is an $(\eta,\phi,B)$-coarse isometry. If $W\subseteq X$ coarsely $n$-separates $X$, then $f(W)$ coarsely $n$-separates $Y$.
\end{prop}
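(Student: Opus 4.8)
The plan is to produce the required deep, coarse disjoint, coarse complementary components of $f(W)$ as thickened images of those of $W$. So let $\{C_1,\dots,C_n\}$ be a collection of $n$ deep, coarse disjoint, coarse complementary components of $W$; by Corollary \ref{cor:compcompparamindep} I may assume they are all $(1,A)$-coarse complementary components of $W$ for a single $A\ge 0$. Set $D_i:=N_B(f(C_i))$. Since $X$ and $Y$ are $1$-geodesic, Lemma \ref{lem:compcompcoarseinv} (with, say, $s=1$) applies and shows that each $D_i$ is a coarse complementary component of $f(W)$. It then suffices to check two things: that each $D_i$ is deep, and that $\{D_1,\dots,D_n\}$ is coarse disjoint.

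For deepness I would pass to a coarse inverse $g:Y\to X$ of $f$, chosen together with a constant $A_0$ and an upper distortion function $\psi$ (depending only on $\eta,\phi,B$) so that $gf$ is $A_0$-close to $\mathrm{id}_X$ and both $f$ and $g$ have upper distortion function $\psi$, exactly as in the proof of Proposition \ref{prop:unifacyccoarseinv}. If $D_i\subseteq N_R(f(W))$ for some $R\ge 0$, then combining $C_i\subseteq N_{A_0}(gf(C_i))\subseteq N_{A_0}(g(D_i))$ with the displacement estimate $g(N_R(f(W)))\subseteq N_{\psi(R)}(g(f(W)))$ and with $g(f(W))\subseteq N_{A_0}(W)$ yields $C_i\subseteq N_{\psi(R)+2A_0}(W)$, contradicting the fact that $C_i$ is deep. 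Hence each $D_i$ is deep.

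The main point is coarse disjointness. Fix $i\ne i'$ and choose $S\ge 0$ with $C_i\cap C_{i'}\subseteq N_S(W)$, which exists because the $C_i$ are coarse disjoint. Given $y\in D_i\cap D_{i'}$, pick $c\in C_i$ and $c'\in C_{i'}$ with $d_Y(y,f(c)),\,d_Y(y,f(c'))\le B$; then $d_Y(f(c),f(c'))\le 2B$, so $d_X(c,c')\le\tilde\eta(2B)$ by Remark \ref{rem:propinv}. Thus $c\in N_{\tilde\eta(2B)}(C_{i'})$, and since $C_{i'}$ is a $(1,A)$-coarse complementary component of $W$, Lemma \ref{lem:nbhdcomp} gives $c\in C_{i'}\cup N_{A+\tilde\eta(2B)}(W)$. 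If $c\in C_{i'}$ then $c\in C_i\cap C_{i'}\subseteq N_S(W)$, while otherwise $c\in N_{A+\tilde\eta(2B)}(W)$; in either case $d_X(c,W)\le\max\{S,\,A+\tilde\eta(2B)\}$, whence $d_Y(y,f(W))\le B+\phi(\max\{S,\,A+\tilde\eta(2B)\})$. So $D_i\cap D_{i'}$ lies in a bounded neighbourhood of $f(W)$, i.e.\ is shallow. Therefore $\{D_1,\dots,D_n\}$ is a collection of $n$ deep, coarse disjoint, coarse complementary components of $f(W)$, and $f(W)$ coarsely $n$-separates $Y$.

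The only step that is not routine bookkeeping with the distortion functions is the use of Lemma \ref{lem:nbhdcomp} in the coarse disjointness argument: the estimate $N_R(C)\subseteq C\cup N_{A+R}(W)$ is precisely what allows one to conclude that a point lying $B$-close to both $f(C_i)$ and $f(C_{i'})$ must have a preimage either in the (shallow) overlap $C_i\cap C_{i'}$ or in a controlled neighbourhood of $W$. I expect that to be the only real point to get right; everything else follows from Corollary \ref{cor:compcompparamindep}, Lemma \ref{lem:compcompcoarseinv}, Remark \ref{rem:propinv}, and the standard properties of coarse inverses recalled above.
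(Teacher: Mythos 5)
Your proof is correct and follows essentially the same route as the paper: you thicken the images, invoke Lemma~\ref{lem:compcompcoarseinv} for the coarse complementary property, and use Lemma~\ref{lem:nbhdcomp} together with the shallowness of $C_i\cap C_{i'}$ for coarse disjointness. The only cosmetic difference is in the deepness step, where you route through a coarse inverse $g$ while the paper applies the lower distortion function $\tilde\eta$ directly to get $C_i\subseteq N_{\tilde\eta(R)}(W)$ from $N_B(f(C_i))\subseteq N_R(f(W))$; both are fine.
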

\begin{proof}
Since $W$ coarsely $n$-separates $X$,  there exist $n$ deep, coarse disjoint, coarse complementary components of $W$, which we denote $C_1,\dots, C_n$.
By Lemma \ref{lem:compcompcoarseinv}, we need only show that  $N_B(f(C_1)),\dots, N_B(f(C_n))$ are  deep and coarse disjoint.  If $N^Y_B(f(C_i))\subseteq N^Y_R(f(W))$ for some $R\geq 0$, then $C_i\subseteq N^X_{\tilde{\eta}(R)}(W)$. Consequently, as each $C_i$ is deep,  so is each $N_B(f(C_i))$.

For each $1\leq i<j\leq n$, we choose $R\geq 0$ such that  $C_i\cap C_j\subseteq N_R(W)$. There is some $r\geq 1$ and $A\geq 0$ such that $C_j$ is an $(r,A)$-coarse complementary component of $W$. Suppose $y\in N_B(f(C_i))\cap N_B(f(C_j))$. We pick $c_i\in C_i$ and $c_j\in C_j$ such that $d_Y(f(c_i),y),d_Y(f(c_j),y)\leq B$; hence $d_X(c_i,c_j)\leq \tilde{\eta}(2B)$. By Lemma \ref{lem:nbhdcomp} $$c_i\in C_i\cap N_{\tilde{\eta}(2B)}(C_j)\subseteq C_i\cap (C_j\cup N_{A+\tilde{\eta}(2B)}(W))\subseteq N_{A'}(W),$$ where $A':=\max(R,A+\tilde{\eta}(2B))$. Hence $y\in N_{\phi(A')+B}(f(W))$, so  $N_B(f(C_i))$ and $N_B(f(C_j))$ are coarse disjoint. 
\end{proof}

There is another notion of coarse separation, defined in terms of actual complementary components rather than coarse complementary components: if $\Gamma$ is the Cayley graph of a group $G$ with respect to some finite generating set, we say that $W$ coarsely separates $G$ if for some $R\geq 0$, $\Gamma\backslash N_R(W)$ has at least two deep components, i.e. two components not contained in $N_S(W)$ for any $S\geq 0$. This is the definition used in \cite{kapovich2005coarse} and \cite{papasoglu2007group}.

The two notions of coarse separation are not necessarily equivalent. To see why, we observe that if $C$ is an $(r,A)$-coarse complementary component of $W,$ then $C$ is a union $\cup _{i\in I}C_i$ of irreducible $(r,A)$-coarse complementary components. It is possible for $C$ to be deep and for each $C_i$ to be shallow. The following lemma rules out this behaviour when $W$ is a subgroup of a finitely generated group. More generally, whenever $W$ satisfies the \emph{shallow condition} as defined in \cite{vavrichek2012coarse}, then the two notions of coarse separation are equivalent.

\begin{lem}[{\cite[Lemma 7.1]{vavrichek2012coarse}}]\label{lem:sbgpshallow}
Let $G$ be a finitely generated group and let $H\leq G$ be a subgroup.  Then for all $r\geq 1$ and $A\geq0$, there exists an $R\geq A$ such that every shallow  $(r,A)$-coarse complementary component of $H$ is contained in $N_R(H)$.

\end{lem}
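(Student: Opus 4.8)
The plan is to exploit the left multiplication action of $H$ on $G$: it permutes the $(r,A)$-coarse complementary components of $H$, it acts cocompactly on bounded neighbourhoods of $H$, and it preserves the property of being shallow. First I would reduce to the case of a single component. By Lemma \ref{lem:coarsecompvscomp}, for any $(r,A)$-coarse complementary component $C$ of $H$ the set $C\setminus N_A(H)$ is a union of connected components of $P_r(G)\setminus P_r(N_A(H))$; if $C$ is shallow, say $C\subseteq N_S(H)$, then each such component is contained in $N_S(H)$ and is therefore itself shallow. Since $C=(C\cap N_A(H))\cup(C\setminus N_A(H))$, it suffices to produce a uniform $R\geq A$ that bounds every shallow component of $P_r(G)\setminus P_r(N_A(H))$, as then $C\subseteq N_A(H)\cup N_R(H)=N_R(H)$.

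Next I would show that every component $D$ of $P_r(G)\setminus P_r(N_A(H))$ meets the ``collar'' $L:=N_{A+1}(H)\setminus N_A(H)$. Given a vertex $v\in D$, choose a $1$-geodesic in $G$ from $v$ to the identity (which lies in $N_A(H)$), and let $v_t$ be the last vertex on this geodesic not lying in $N_A(H)$. The vertices $v=v_0,\dots,v_t$ form a $1$-chain, hence an $r$-chain since $r\geq1$, avoiding $N_A(H)$, so they all lie in $D$; and since the successor $v_{t+1}$ lies in $N_A(H)$ we get $d(v_t,H)\leq A+1$, so $v_t\in D\cap L$.

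The key structural point is then that there are only finitely many $H$-orbits of such components. Left multiplication by $h\in H$ is an isometry of $G$ fixing $H$ setwise, hence preserves $P_r(N_A(H))$ and induces a bijection on the components of its complement; moreover $L$ is $H$-invariant and, being contained in $N_{A+1}(H)$, is $H$-finite by Lemma \ref{lem:hfinitenbhd}, so $L$ has only finitely many $H$-orbits, with representatives $w_1,\dots,w_m$. Letting $E_i$ denote the unique component containing $w_i$, any component $D$ picks out some $v=hw_i\in D\cap L$ with $h\in H$, whence $w_i\in h^{-1}D$ and therefore $h^{-1}D=E_i$, i.e. $D=hE_i$. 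Finally, among $E_1,\dots,E_m$ only finitely many are shallow; for each such one choose $R_i$ with $E_i\subseteq N_{R_i}(H)$ and set $R:=\max\bigl(\{A\}\cup\{R_i : E_i\text{ shallow}\}\bigr)$. If $D$ is any shallow component, then $D=hE_i$ for some $h\in H$; since $h^{-1}$ is an isometry fixing $H$, $E_i=h^{-1}D$ is shallow, so $E_i\subseteq N_{R_i}(H)$ and $D=hE_i\subseteq N_{R_i}(hH)=N_{R_i}(H)\subseteq N_R(H)$. Combined with the first-step reduction, this gives the claim.

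I do not anticipate a serious obstacle; the one point that needs care is that ``shallow'' must be recognised as an $H$-invariant property of an individual component even though the constant $S$ witnessing shallowness is a priori component-dependent — this is exactly what allows the finitely many component-wise bounds $R_i$ to be promoted to a single uniform $R$. Everything else is bookkeeping with metric neighbourhoods and the $H$-finiteness of $N_{A+1}(H)$.
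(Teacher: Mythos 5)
Your proof is correct and is essentially the same argument as the paper's: in both, the left $H$-action permutes the components of $P_r(G)\setminus P_r(N_A(H))$ and preserves shallowness, every component touches a uniformly bounded neighbourhood of $H$ because $G$ (or $P_r(G)$) is connected, and bounded geometry plus $H$-finiteness reduces the problem to finitely many orbit representatives. The paper phrases this by translating an arbitrary component so that it meets the fixed ball $N_{A+r}(e)$; your version uses $H$-orbits of the collar $L=N_{A+1}(H)\setminus N_A(H)$, which is the same idea. One small slip: when you pick a $1$-geodesic from $v\in D$ to $e$ and take ``the last vertex $v_t$ not lying in $N_A(H)$,'' the initial segment $v_0,\dots,v_t$ need not avoid $N_A(H)$ — the geodesic could re-exit after first entering. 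You should instead take $v_{t+1}$ to be the \emph{first} vertex on the geodesic lying in $N_A(H)$; then $v_0,\dots,v_t$ avoid $N_A(H)$, the chain lies in $D$, and $d(v_t,H)\le A+1$ as you need. With that fix everything goes through as written.
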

\begin{proof}
Using Lemma \ref{lem:coarsecompvscomp}, it is enough to show there is an  $R\geq 0$ such that every shallow component of $P_r(G)\backslash P_r(N_A(H))$ is contained in $P_r(N_R(H))$.
We observe that $P_r(G)\backslash P_r(N_A(H))$ has finitely many components that intersect the ball $N_{A+r}(e)$. Hence there is an $R\geq 0$ such that every shallow component of $P_r(G)\backslash P_r(N_A(H))$ that intersects $N_{A+r}(e)$ is contained in $P_r(N_R(H))$. As $P_r(G)$ is connected, an arbitrary shallow component $C$ of $P_r(G)\backslash P_r(N_A(H))$ intersects the ball $N_{A+r}(h)$ for some $h\in H$. Since $h^{-1}C$ intersects $N_{A+r}(e)$,  it is contained in $P_r(N_R(H))$; therefore $C\subseteq h P_r(N_R(H))=P_r(N_R(H))$.
\end{proof}

\subsection{Relative Ends of Groups}\label{sec:relends}

Suppose $G$ is a finitely generated group and $\mathcal{P}(G)$ is the powerset of $G$. Then $\mathcal{P}(G)$ is a vector space over $\mathbb{Z}_2$ in which the addition operation is the symmetric difference of two subsets. If $H$ is a subgroup of $G$, let $\mathcal{F}_H(G)$ be the subspace of all $H$-finite subsets of $G$.

\begin{defn}
We say that $C\subseteq G$ is \emph{$H$-almost invariant} (or $H$-a.i.) if for all $g\in G$, $C+Cg$ is $H$-finite. An $H$-a.i. subset $C$  is \emph{proper} if neither $C$ nor $G\backslash C$ is $H$-finite.
\end{defn}
In other words, $C$ is $H$-almost invariant precisely when it projects to an element in the fixed set $(\mathcal{P}(G)/\mathcal{F}_H(G))^G$. The following proposition gives a geometric interpretation of $H$-a.i. sets in terms of coarse complementary components of $W$.

\begin{prop}\label{prop:coarsecompinvecspace}
Let $G$ be a group equipped with the word metric with respect a finite generating set $S$, and suppose $H\leq G$ is a subgroup. A subset $C\subseteq G$ is $H$-a.i. if and only if it is a  coarse complementary component of $H$.
\end{prop}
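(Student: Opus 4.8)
The plan is to establish both implications by relating the two parameters in Definition~\ref{def:ccc} to the algebra of the symmetric difference operation on $\mathcal{P}(G)$. The key observation is that since $G$ is $1$-geodesic, adjacency in the Cayley graph is the same as being at distance $1$, and right translation by a generator $s\in S$ moves each point to an adjacent point; iterating, right translation by an arbitrary $g\in G$ moves each point a bounded distance (namely $\mathrm{length}(g)$). This bounded displacement is what converts ``$H$-finite'' into ``contained in a bounded neighbourhood of $H$'' via Lemma~\ref{lem:hfinitenbhd}, and conversely.

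\textbf{($\Rightarrow$).} Suppose $C$ is $H$-a.i., so $C + Cg$ is $H$-finite for every $g\in G$. I would show $\partial_1(C) \subseteq N_A(H)$ for a suitable $A$, which by Remark~\ref{rem:bdrycoarsecompreal} suffices. Take $x\in \partial_1 C$, so $x\notin C$ and there is $c\in C$ with $d(x,c)\le 1$; then $c = xs$ for some $s\in S\cup\{e\}$ (using $1$-geodesicity / the Cayley graph structure). Since $c = xs \in C$ but $x\notin C$, we get $x\in Cs^{-1} + C$ (as $x\in Cs^{-1}$ and $x\notin C$, the symmetric difference $C + Cs^{-1}$ contains $x$ — more precisely one works with $C + Cs^{-1}$, which is $H$-finite because $C$ is $H$-a.i.). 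As $S$ is finite, $\bigcup_{s\in S}(C+Cs^{-1})$ is a finite union of $H$-finite sets, hence $H$-finite, hence contained in $N_A(H)$ for some $A$ by Lemma~\ref{lem:hfinitenbhd}. Thus $\partial_1(C)\subseteq N_A(H)$, so $C$ is a $(1,A)$-coarse complementary component of $H$.

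\textbf{($\Leftarrow$).} Suppose $C$ is an $(r,A)$-coarse complementary component of $H$. Fix $g\in G$; I must show $C + Cg$ is $H$-finite, equivalently (by Lemma~\ref{lem:hfinitenbhd}) contained in $N_t(H)$ for some $t=t(g)$. Let $x\in C+Cg$; then either ($x\in C$, $xg^{-1}\notin C$) or ($x\notin C$, $xg^{-1}\in C$). In either case $x$ and $xg^{-1}$ lie on opposite sides of $C$ and are joined by a $1$-geodesic of length $\mathrm{length}(g)$; walking along this geodesic one crosses from $C$ to $X\setminus C$, so some vertex $y$ of the geodesic lies in $\partial_1(C)\cup\partial_1(X\setminus C)$ (one should first pass to the parameter $r=1$, which is legitimate since every $(r,A)$-coarse complementary component is a $(1,A')$-coarse complementary component for some $A'$ by Corollary~\ref{cor:compcompparamindep} applied to $\{C, X\setminus C\}$, using that $X\setminus C$ is also a coarse complementary component of $H$). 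By Lemma~\ref{lem:nbhdcomp} (or directly from the definition of the boundary, together with $\partial_1(X\setminus C)\subseteq C$ being controlled symmetrically), $y\in N_{A'}(H)$; since $d(x,y)\le \mathrm{length}(g)$ we get $x\in N_{A'+\mathrm{length}(g)}(H)$. Hence $C+Cg\subseteq N_{A'+\mathrm{length}(g)}(H)$ is $H$-finite.

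\textbf{The main obstacle} is bookkeeping the interplay between the ``$r$'' and ``$A$'' parameters: the clean symmetric-difference argument in ($\Rightarrow$) naturally produces a $(1,A)$-component, while ($\Leftarrow$) starts from a general $(r,A)$-component, so one must invoke Corollary~\ref{cor:compcompparamindep} (and the fact, noted in the corollary preceding Lemma~\ref{lem:coarsecompvscomp}, that complements of coarse complementary components are again coarse complementary components) to normalize $r=1$ before running the crossing argument. One also needs to be slightly careful that ``$H$-finite'' is closed under finite unions and under symmetric difference — both are immediate since $\mathcal{F}_H(G)$ is a subspace of $\mathcal{P}(G)$ — and that translating an $H$-finite set on the right keeps it $H$-finite, which follows from Lemma~\ref{lem:hfinitenbhd} since right translation by $g$ is a bounded-displacement self-map of $G$. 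Beyond that, the argument is a routine translation between the combinatorial and geometric languages.
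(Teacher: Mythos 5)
Your proposal is correct and follows essentially the same route as the paper's proof: both directions hinge on Lemma~\ref{lem:hfinitenbhd} to convert between $H$-finiteness and containment in $N_A(H)$, the forward direction observes that any boundary point of $C$ witnesses an element of some $C + Cs^{\pm 1}$, and the backward direction writes $g$ as a word in the generators and locates a crossing point in $\partial_1(C)$ (after normalizing to a $(1,A)$-coarse complementary component via Corollary~\ref{cor:compcompparamindep}). The only cosmetic differences are that you prove the slightly stronger inclusion $\partial_1(C)\subseteq N_A(H)$ rather than $\partial_1(C\setminus N_A(H))\subseteq N_A(H)$ and track both $\partial_1(C)$ and $\partial_1(G\setminus C)$ in the converse, whereas the paper keeps the orientation of the crossing and works with $\partial_1(C)$ alone; these are interchangeable.
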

\begin{proof}
Say $C\subseteq G$ is $H$-almost invariant. By Lemma \ref{lem:hfinitenbhd}, we see that for every $g\in G$ there exists an $r_g\geq 0$ such that $Cg+C\subseteq N_{r_g}(H)$. We choose $A$ large enough so that for all $s \in S^{\pm 1}$,  $Cs+C\subseteq N_A(H)$.
We claim that $C$ is a $(1,A)$-coarse complementary component of $H$. Let  $g\in \partial_1(C\backslash N_A(H))$. If $g\in C$, then as $g\notin C\backslash N_A(H)$,  $g\in N_A(H)$ and we are done. If $g\notin C$, then there exists an $h\in C$ such that $g=hs$ for some $s\in  S^{\pm 1}$, and so $g\in Cs+C\subseteq N_A(H)$.

Conversely, suppose $C$ is a coarse complementary component of $H$. By Corollary \ref{cor:compcompparamindep}, there exists an $A\geq 0$ such that $C$ is a $(1,A)$-coarse complementary component of $H$. For any $g\in G$, we write $g^{-1}=s_1\dots s_n$, where for each $i$, either $s_i$ or  its inverse is contained in $S$. If $b\in C+Cg$, then exactly one of $b$ or $bs_1\dots s_n$ lies in $C$. Therefore, there exists a $1\leq t\leq n$ such that $b':=bs_1,\dots s_t\in \partial_1(C)$. By Remark \ref{rem:bdrycoarsecompreal},  $b'\in N_{A+1}(H)$ and so  $b\in N_{A+n+1}(H)$. Thus $C+Cg$ is $H$-finite.
\end{proof}

In \cite{kropholler1989relative}, Kropholler and Roller defined the number  of relative ends of a pair of groups to be $$\tilde{e}(G,H):=\mathrm{dim}_{\mathbb{Z}_2}(\mathcal{P}(G)/\mathcal{F}_H(G))^G.$$  This can be characterised geometrically as follows:
\begin{prop}[{\cite[Lemma 7.5]{vavrichek2012coarse}}]\label{prop:relendscoarsep}
Let $G$ be a finitely generated group and let $H\leq G$. Then $H$ coarsely $n$-separates $G$ if and only if $\tilde{e}(G,H)\geq n$.
\end{prop}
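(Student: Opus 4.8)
The plan is to recast the statement as a fact about Boolean algebras. Set $V:=\mathcal{P}(G)/\mathcal{F}_H(G)$, regarded as a $\mathbb{Z}_2$-vector space with multiplication induced by intersection, so that the quotient map $\pi\colon\mathcal{P}(G)\to V$ is a homomorphism of Boolean rings and $\tilde{e}(G,H)=\dim_{\mathbb{Z}_2}V^G$ by definition. Let $\mathcal{C}$ denote the set of coarse complementary components of $H$. By Proposition \ref{prop:coarsecompinvecspace}, together with the observation recorded after the definition of $H$-almost invariant sets that these are precisely the elements of $\pi^{-1}(V^G)$, we have $\mathcal{C}=\pi^{-1}(V^G)$; and by the corollary following Lemma \ref{lem:coarsecompvscomp} (closure of $(r,A)$-coarse complementary components under the Boolean operations) together with Corollary \ref{cor:compcompparamindep} (any finite family of coarse complementary components has a common pair $(r,A)$), the set $\mathcal{C}$ is a Boolean subalgebra of $\mathcal{P}(G)$. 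Let $\mathcal{S}\subseteq\mathcal{C}$ be the set of shallow coarse complementary components. I would first check that $\mathcal{S}$ is an ideal of $\mathcal{C}$ (a subset of a shallow set lying in $\mathcal{C}$ is shallow, and a union of two shallow sets is shallow) and, using Lemma \ref{lem:hfinitenbhd}, that $\mathcal{S}=\mathcal{C}\cap\mathcal{F}_H(G)=\mathcal{C}\cap\ker\pi$. It then follows that $\pi$ induces an isomorphism of Boolean rings $\mathcal{B}:=\mathcal{C}/\mathcal{S}\cong V^G$, so $\tilde{e}(G,H)=\dim_{\mathbb{Z}_2}\mathcal{B}$. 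Under this identification, the class of $C\in\mathcal{C}$ in $\mathcal{B}$ is nonzero precisely when $C$ is deep, and the classes of $C,C'\in\mathcal{C}$ have product $0$ precisely when $C\cap C'$ is shallow, i.e.\ when $C$ and $C'$ are coarse disjoint. Thus the proposition becomes equivalent to the algebraic assertion that $\dim_{\mathbb{Z}_2}\mathcal{B}\geq n$ if and only if $\mathcal{B}$ contains $n$ nonzero elements that are pairwise \emph{disjoint}, meaning their pairwise products vanish.

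To prove this assertion: if $b_1,\dots,b_n$ are nonzero and pairwise disjoint, they are linearly independent over $\mathbb{Z}_2$, since multiplying a relation $\sum_{i\in S}b_i=0$ (with $S\neq\emptyset$) by some $b_j$, $j\in S$, yields $b_j=0$. For the converse I argue contrapositively: if $\mathcal{B}$ contains no $n$ pairwise disjoint nonzero elements, choose a pairwise disjoint family $b_1,\dots,b_k$ of nonzero elements of maximal size, so $k\leq n-1$. Maximality forces $b_1\vee\dots\vee b_k=1$ (otherwise $1+(b_1\vee\dots\vee b_k)$ would be a nonzero element disjoint from every $b_i$) and forces each $b_i$ to be an atom (otherwise splitting a proper nonzero $c<b_i$ off $b_i$ into $c$ and $b_i+c$ enlarges the family). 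Since $1=b_1+\dots+b_k$ is a sum of these atoms, every $x\in\mathcal{B}$ equals $xb_1+\dots+xb_k$ with each $xb_i\in\{0,b_i\}$, so $\{b_1,\dots,b_k\}$ spans $\mathcal{B}$ and $\dim_{\mathbb{Z}_2}\mathcal{B}=k\leq n-1$. This contrapositive form also disposes of the infinite-dimensional case uniformly.

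Finally I would transport the conclusion across the isomorphism $\mathcal{B}\cong V^G$: from $n$ pairwise disjoint nonzero elements of $\mathcal{B}$, choose coarse complementary components $C_1,\dots,C_n$ of $H$ representing them; these are deep (their classes are nonzero) and pairwise coarse disjoint (for $i\neq j$, $C_i\cap C_j\in\mathcal{C}$ has zero class, hence lies in $\mathcal{S}$ and is shallow), witnessing that $H$ coarsely $n$-separates $G$. Conversely, $n$ deep, coarse disjoint coarse complementary components of $H$ have classes in $\mathcal{B}$ that form $n$ pairwise disjoint nonzero elements, so $\tilde{e}(G,H)=\dim_{\mathbb{Z}_2}\mathcal{B}\geq n$. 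I expect the only mildly delicate point to be the maximal-family step in the Boolean-algebra assertion (in particular its handling of the infinite-dimensional case), but the contrapositive packaging makes it clean; the rest is bookkeeping with the dictionary supplied by Proposition \ref{prop:coarsecompinvecspace} and the corollaries on coarse complementary components.
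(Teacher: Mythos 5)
Your proof is correct and, in the converse direction, takes a genuinely different route from the paper's. The forward direction (pairwise coarse disjoint deep components give linearly independent elements of $(\mathcal{P}(G)/\mathcal{F}_H(G))^G$) matches the paper. For the converse, the paper takes $n$ components representing linearly independent classes and modifies them pairwise --- replacing a non-disjoint pair $C_1,C_2$ by two of $C_1\setminus C_2$, $C_1\cap C_2$, $C_2\setminus C_1$ chosen to preserve independence while forcing disjointness, and then ``applying this procedure to all pairs.'' You instead observe that the coarse complementary components form a Boolean subring $\mathcal{C}$ of $\mathcal{P}(G)$, that the shallow ones form the ideal $\mathcal{S}=\mathcal{C}\cap\mathcal{F}_H(G)$, and that $\mathcal{C}/\mathcal{S}\cong(\mathcal{P}(G)/\mathcal{F}_H(G))^G$ as Boolean rings; this reduces the proposition to the purely algebraic fact that a Boolean ring has $\mathbb{Z}_2$-dimension at least $n$ if and only if it contains $n$ pairwise disjoint nonzero elements, proved by showing that a maximal pairwise disjoint nonzero family is a partition of $1$ into atoms and hence a basis. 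Your version buys a non-iterative, self-contained converse: the paper's ``apply to all pairs'' step is left to the reader and is not obviously well-founded, since a later replacement such as $C_3\setminus C_1$ need not stay coarse disjoint from an earlier one such as $C_1\cap C_2$, whereas the maximal-atom argument avoids iteration altogether, and the contrapositive packaging handles the case of infinite $\tilde{e}(G,H)$ with no extra work. The bookkeeping you defer --- that $\mathcal{C}$ is a Boolean subring (the corollary following Lemma \ref{lem:coarsecompvscomp} plus Corollary \ref{cor:compcompparamindep}), that $\mathcal{S}$ is an ideal with $\mathcal{S}=\mathcal{C}\cap\mathcal{F}_H(G)$ (Lemma \ref{lem:hfinitenbhd}), and that $\mathcal{C}=\pi^{-1}(V^G)$ (Proposition \ref{prop:coarsecompinvecspace}) --- all checks out, so the reduction to the Boolean-ring lemma is legitimate.
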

\begin{proof}
If  $H$ coarsely $n$-separates $G$, then there exist $n$ deep, coarse disjoint, coarse complementary  components of $H$, which we label $C_1, \dots, C_n$. By Proposition \ref{prop:coarsecompinvecspace}, these correspond to elements of $(\mathcal{P}(G)/\mathcal{F}_H(G))^G$. Since they are coarse disjoint, they are linearly independent, so $\tilde e(G,H)\geq n$.

Now suppose $\tilde{e}(G,H)\geq n$. Then there exist $n$ deep,  coarse complementary components  $C_1,\dots, C_n$ of $H$ which represent linearly independent elements of $(\mathcal{P}(G)/\mathcal{F}_H(G))^G$. We explain how to modify these coarse complementary components pairwise so that they are coarse disjoint.

We set $A:=C_1\backslash C_{2}$,  $B:=C_{1}\cap C_{2}$ and $C=C_{2}\backslash C_1$, observing that $C_1=A+B$ and $C_{2}=B+C$. By the linear independence of $C_1,\dots, C_n$, at least two of $A$, $B$ and $C$, say  $A$ and $B$,  aren't contained in the span of $C_3,\dots, C_n$. Then $A,B,C_3,\dots, C_n$ are linearly independent and $A$ and $B$ are coarse disjoint. Applying this procedure to all pairs, we obtain $n$ deep, coarse disjoint, coarse complementary components of $H$, showing that $H$ coarsely $n$-separates $G$.
\end{proof}

There is another notion of relative ends of groups due to Haughton \cite{houghton1974ends} and Scott \cite{scott1977ends}. They define $$e(G,H):=\mathrm{dim}_{\mathbb{Z}_2}(\mathcal{P}(H\backslash G)/\mathcal{F}(H\backslash G))^G,$$ where $\mathcal{P}(H\backslash G)$ is the powerset of the coset space $H\backslash G$, and $\mathcal{F}(H\backslash G)$ consists of finite subsets of $H\backslash G$. 
The following is an analogue of Proposition \ref{prop:relendscoarsep} and can be proved in the same way.
\begin{prop}[{\cite[Lemma 1.6]{scott1977ends}}]\label{prop:scotthaughtoncoarsecomp}
Let $G$ be finitely generated and $H\leq G$. Then $e(G,H)\geq n$ if and only if there exist $n$ deep, coarse disjoint, coarse complementary components of $H$, denoted $C_1,\dots, C_n$, such that $HC_i=C_i$ for each $C_i$.
\end{prop}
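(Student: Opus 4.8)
The plan is to reduce the statement to Proposition~\ref{prop:coarsecompinvecspace} together with the argument already used for Proposition~\ref{prop:relendscoarsep}, by passing through the coset space $H\backslash G$. First I would set up a dictionary. Write $\pi\colon G\to H\backslash G$ for the quotient map $g\mapsto Hg$, and for an $H$-invariant subset $C\subseteq G$ (that is, $HC=C$) set $\bar C:=\pi(C)$, so $\pi^{-1}(\bar C)=C$. Then $C\mapsto\bar C$ is a $\mathbb{Z}_2$-linear bijection between the $H$-invariant subsets of $G$ and $\mathcal{P}(H\backslash G)$, it intertwines the right $G$-actions since $\pi(Cg)=\bar Cg$, and --- using Lemma~\ref{lem:hfinitenbhd} and the observation that an $H$-invariant set is $H$-finite iff its image in $H\backslash G$ is finite --- it restricts to a bijection between the $H$-invariant $H$-finite subsets of $G$ and the finite subsets $\mathcal{F}(H\backslash G)$. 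It follows that an $H$-invariant $C\subseteq G$ is $H$-a.i.\ if and only if $\bar C$ is almost invariant in $H\backslash G$ (i.e.\ $\bar C+\bar Cg\in\mathcal{F}(H\backslash G)$ for all $g\in G$), and hence that $\bar C\mapsto[\pi^{-1}(\bar C)]$ defines an injective $\mathbb{Z}_2$-linear map $(\mathcal{P}(H\backslash G)/\mathcal{F}(H\backslash G))^G\to(\mathcal{P}(G)/\mathcal{F}_H(G))^G$ whose image is the span of the classes of $H$-invariant coarse complementary components of $H$ (the latter identification being Proposition~\ref{prop:coarsecompinvecspace}).

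For the ``only if'' direction, I would start from $n$ linearly independent classes $[\bar C_1],\dots,[\bar C_n]$ in $(\mathcal{P}(H\backslash G)/\mathcal{F}(H\backslash G))^G$ with each $\bar C_i\subseteq H\backslash G$ almost invariant, and put $C_i:=\pi^{-1}(\bar C_i)$. Then $HC_i=C_i$, each $C_i$ is $H$-a.i.\ and hence a coarse complementary component of $H$ by Proposition~\ref{prop:coarsecompinvecspace}, and by injectivity of the map above the classes $[C_1],\dots,[C_n]$ are linearly independent in $\mathcal{P}(G)/\mathcal{F}_H(G)$; in particular no $C_i$ is $H$-finite, so each $C_i$ is deep. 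Finally I would apply verbatim the pairwise modification procedure from the proof of Proposition~\ref{prop:relendscoarsep} to arrange that $C_1,\dots,C_n$ are coarse disjoint. Since that procedure only replaces the $C_i$ by Boolean combinations of them, the condition $HC_i=C_i$ is preserved, and we obtain the desired collection.

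For the ``if'' direction, suppose $C_1,\dots,C_n$ are deep, coarse disjoint, coarse complementary components of $H$ with $HC_i=C_i$. By Proposition~\ref{prop:coarsecompinvecspace} each $C_i$ is $H$-a.i., so $\bar C_i$ is almost invariant and defines a class in $(\mathcal{P}(H\backslash G)/\mathcal{F}(H\backslash G))^G$. I then need to check these classes are linearly independent. For $i\neq j$ the set $C_i\cap C_j$ is shallow, hence $H$-finite by Lemma~\ref{lem:hfinitenbhd}, so $\bar C_i\cap\bar C_j$ is finite; and each $C_i$ is deep, hence not $H$-finite, so each $\bar C_i$ is infinite. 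If $\sum_{i\in S}\bar C_i\in\mathcal{F}(H\backslash G)$ for some nonempty $S\subseteq\{1,\dots,n\}$, then, the $\bar C_i$ being pairwise almost disjoint, $\sum_{i\in S}\bar C_i$ agrees with $\bigcup_{i\in S}\bar C_i$ outside a finite set, which would force each $\bar C_i$ with $i\in S$ to be finite --- a contradiction. Hence $e(G,H)=\dim_{\mathbb{Z}_2}(\mathcal{P}(H\backslash G)/\mathcal{F}(H\backslash G))^G\geq n$.

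None of the steps presents a real difficulty; the routine content is the dictionary of the first paragraph (matching $H$-finiteness with finiteness in $H\backslash G$, and the two notions of almost invariance), and the only point worth flagging is that the modification step imported from Proposition~\ref{prop:relendscoarsep} preserves left $H$-invariance, which holds because that step is built entirely from unions, intersections and complements of the $C_i$.
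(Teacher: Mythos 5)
Your proposal is correct and follows exactly the route the paper intends: the paper states that Proposition \ref{prop:scotthaughtoncoarsecomp} ``is an analogue of Proposition \ref{prop:relendscoarsep} and can be proved in the same way,'' and your argument simply carries out that analogy by translating through $H\backslash G$, applying Proposition \ref{prop:coarsecompinvecspace}, and reusing the pairwise modification procedure while noting that it preserves left $H$-invariance since it is built from Boolean operations.
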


It follows from Proposition \ref{prop:relendscoarsep} and \ref{prop:scotthaughtoncoarsecomp} that $e(G,H)\leq \tilde{e}(G,H)$ for all $H\leq G$. 
We say that $H\leq G$ is a  \emph{codimension one subgroup} of $G$ if $e(G,H)>1$. If $G$ splits over $H$, then $\tilde e(G,H)\geq e(G,H)>1$.

If we know that $G$ is coarsely separated by a subgroup $H$,  the following lemma allows us to construct a codimension one subgroup $H'\leq H$ of $G$.

\begin{lem}\label{lem:stabcoarsesep}
Let $G$ be a finitely generated group and $H$ be a subgroup. Let $C$ be a deep, irreducible $(1,A)$-coarse complementary component of $H$ and suppose $G\backslash C$ is deep. If $H':=\mathrm{Stab}_H(C\backslash N_A(H))$, then $e(G,H')>1$.
\end{lem}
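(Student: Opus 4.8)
The goal is to show that $H':=\mathrm{Stab}_H(C\backslash N_A(H))$ satisfies $e(G,H')>1$; by Proposition \ref{prop:scotthaughtoncoarsecomp} it suffices to produce two deep, coarse disjoint, coarse complementary components $D_1,D_2$ of $H'$ in $G$ that are invariant under the left action of $H'$. The natural candidates are (coarse versions of) the sets $H'C$ and $G\backslash H'C$, or more precisely the saturations of $C$ and its complement under $H'$. First I would record the following key geometric fact: since $C$ is a \emph{deep, irreducible} $(1,A)$-coarse complementary component of $H$, the stabiliser $H'$ in $H$ of the component $C\backslash N_A(H)$ of $P_1(G)\backslash P_1(N_A(H))$ acts on that component, and moreover $H'$ is exactly the set of $h\in H$ with $h(C\backslash N_A(H))=C\backslash N_A(H)$ (so $hC$ and $C$ agree up to an $H$-finite, hence $N_{r}$-bounded, set). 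The elements of $H\setminus H'$ move the irreducible component $C\backslash N_A(H)$ entirely off itself (two distinct components of $P_1(G)\backslash P_1(N_A(H))$ are disjoint), so for $h\in H\setminus H'$ the set $hC\cap C$ is $H$-finite, i.e. contained in $N_{r}(H)$ for some uniform $r$ by Lemma \ref{lem:hfinitenbhd} together with Lemma \ref{lem:sbgpshallow}.

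Next I would set $D:=C$ and observe that $C$ is a coarse complementary component of $H'$ as well as of $H$: indeed $N_A(H)\subseteq N_{A'}(H')$ fails in general, so instead I work intrinsically. Decompose $G$ along $H'$: writing $G=\bigsqcup_{i} H' g_i$ for coset representatives, the point is that $C$ is ``$H'$-invariant up to bounded error'' because $H'C\subseteq N_{r}(C)$ (each $h\in H'$ fixes $C\backslash N_A(H)$, so $hC$ differs from $C$ within $N_A(H)$, and $N_A(H)\cap H'g_i$ is controlled). Then I would verify directly from Definition \ref{def:ccc} that $C$, viewed in the metric space $G$, is an $(1,A'')$-coarse complementary component of $H'$ for a suitable $A''$: its coarse $1$-boundary lands in $N_A(H)$, and one shows $N_A(H)$ is within bounded distance of $H'$ \emph{along} $C$ — this is the subtle point and uses that $C$ is deep and irreducible so that the $H$-translates of $C\backslash N_A(H)$ that meet a bounded neighbourhood of $C$ come from $H'$. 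Symmetrically $G\backslash C$ is an $(1,A'')$-coarse complementary component of $H'$. Both are deep: $C$ is deep by hypothesis, and $G\backslash C$ is deep by hypothesis as well. They are coarse disjoint since $C\cap(G\backslash C)=\emptyset$. Finally $H'C=C$ up to bounded Hausdorff error needs to be promoted to genuine $H'$-invariance; here one replaces $C$ by $C':=\{g\in G: g\in C\text{ and } H'g\subseteq N_{r}(C)\}$ or simply passes to $\bigcap_{h\in H'}hC$-type modifications, noting $H'$ may be infinitely generated but the bound $r$ is uniform — so the modification changes $C$ by an $H'$-finite set and preserves being a deep coarse complementary component. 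This exhibits $C'$ and its complement as the two $H'$-invariant deep coarse disjoint components required, giving $e(G,H')\geq 2$.

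\textbf{Main obstacle.} The delicate step is the claim that $N_A(H)$, \emph{restricted to a bounded neighbourhood of $C$}, is within bounded distance of $H'$ — equivalently, that the only $H$-translates of the irreducible component $C\backslash N_A(H)$ which come close to $C$ are the $H'$-translates (which equal $C\backslash N_A(H)$ itself). If this failed, the coarse $1$-boundary of $C$ as a subset of $G$ would not be captured by a bounded neighbourhood of $H'$, and $C$ would not be a coarse complementary component of $H'$. This is precisely where deepness of both $C$ and $G\setminus C$ and irreducibility are used: irreducibility guarantees $C\backslash N_A(H)$ is a single component, so any $h\in H$ with $h(C\backslash N_A(H))\cap C\neq\emptyset$ beyond bounded error must preserve that component (distinct components being disjoint in $P_1(G)\setminus P_1(N_A(H))$), forcing $h\in H'$; and Lemma \ref{lem:sbgpshallow} makes the ``bounded error'' uniform. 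I expect the bulk of the write-up to be the careful bookkeeping of these constants ($A$, $r$, $A''$) and verifying the coarse complementary component axioms for $C$ relative to $H'$, rather than any conceptual difficulty beyond this point.
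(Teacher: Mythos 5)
Your overall strategy is the same as the paper's: produce two deep, coarse disjoint, $H'$-invariant coarse complementary components of $H'$ and invoke Proposition \ref{prop:scotthaughtoncoarsecomp}. You also correctly identify where the real work lies. But there are two issues with the execution.

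First, the key claim you isolate in your ``Main obstacle'' paragraph is false as stated: it is \emph{not} true that the only $H$-translates of the component $\tilde C := C\backslash N_A(H)$ coming close to $C$ are the $H'$-translates (hence $\tilde C$ itself). For instance, take $G$ a free group and $H$ a cyclic subgroup: there are infinitely many deep components of $P_1(G)\backslash P_1(N_A(H))$, and their $H$-translates all cluster arbitrarily near any given $\tilde C$, since every component's $\partial_1$-boundary lives in $N_{A+1}(H)$. What is actually true, and what the paper proves, is a pointwise statement: for each $x\in \partial_1(\tilde C)$ one produces an element of $H'$ within uniformly bounded distance of $x$. The mechanism is bounded geometry, not disjointness of components per se: choose $h\in H$ with $d(h,x)\leq A$; then $h^{-1}\tilde C$ meets $N_{A+1}(e)$, so by bounded geometry $h^{-1}\tilde C = h_r\tilde C$ for $h_r$ in a fixed finite list, whence $hh_r\in H'$ lies within $A+\max_r \mathrm{length}(h_r)$ of $x$. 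Your citation of Lemma \ref{lem:sbgpshallow} here is also misplaced; that lemma concerns uniform bounds on shallow components and plays no role. The disjointness-of-components observation you make (that $h\notin H'$ forces $h\tilde C\cap \tilde C=\emptyset$) is correct but does not by itself yield the required containment $\partial_1(\tilde C)\subseteq N_{r_2}(H')$, and the ``careful bookkeeping'' you defer is where the missing idea lives.

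Second, you needlessly complicate the $H'$-invariance step. You propose replacing $C$ by a saturated set $\bigcap_{h\in H'}hC$ or a set defined via $H'g\subseteq N_r(C)$, to ``promote bounded-error invariance to genuine invariance.'' But $C':=C\backslash N_A(H)$ is already \emph{exactly} $H'$-invariant by the very definition $H'=\mathrm{Stab}_H(C\backslash N_A(H))$, and similarly $G\backslash C'$ is $H'$-invariant. The paper simply takes these two sets. Once you know $\partial_1(C')\subseteq N_{r_2}(H')$, Remark \ref{rem:bdrycoarsecompreal} makes $C'$ a $(1,r_2)$-coarse complementary component of $H'$; both $C'$ and $G\backslash C'$ are deep because $C$ and $G\backslash C$ are (and $H'\leq H$ only makes deepness harder to lose); and they are disjoint, so Proposition \ref{prop:scotthaughtoncoarsecomp} applies directly. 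The saturation modifications are not needed and would only obscure the argument.
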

\begin{proof}
We use an argument from the proof of \cite[Lemma 7.6]{vavrichek2012coarse}.
We first claim that $H'$ and $\partial_1 (C\backslash N_A(H))$ are finite Hausdorff distance from one another. Since $H'$ stabilizes $C\backslash N_A(H)$, it stabilizes $\partial_1 (C\backslash N_A(H))$. We let $r_1:=d(e,\partial_1 (C\backslash N_A(H)))$. Then for each $h\in H'$, we see that $$d(h,\partial_1 (C\backslash N_A(H)))=d(e,h^{-1}\partial_1 (C\backslash N_A(H)))=d(e,\partial_1 (C\backslash N_A(H)))=r_1,$$ so $H'\subseteq N_{r_1}(\partial_1 (C\backslash N_A(H)))$.

As $C$ is an irreducible $(1,A)$-coarse complementary component of $H$, $C\backslash N_A(H)$ is the vertex set of a component $\tilde{C}$ of $P_1(G)\backslash P_1(N_A(H))$. As $G$ has bounded geometry, there are only finitely many components $\{h \tilde{C}\}_{h\in H}$  of $P_1(G)\backslash P_1(N_A(H))$ that intersect $N_{A+1}(e)$.  We label these components $h_1\tilde{C}, \dots, h_m\tilde{C}$ and set  $r_2:=A+\max_{1\leq k\leq m} \mathrm{length}_S(h_k)$.

Suppose $x\in \partial_1(C\backslash N_A(H))\subseteq N_A(H)$; we choose $h\in H$ such that $d(h,x)\leq A$. There exists a $y\in C\backslash N_A(H)$ such that $d(x,y)\leq 1$. As $d(h,y)\leq A+1$, we see that $h^{-1}\tilde{C}$ intersects $N_{A+1}(e)$, hence $h^{-1}\tilde{C}=h_r\tilde{C}$ for some $r$. Thus $\tilde{C}=hh_r\tilde{C}$, so $hh_r\in H'$.  Hence $x\in N_{r_2}(H')$ and so $\partial_1(C\backslash N_A(H))\subseteq N_{r_2}(H')$.

Letting $C':=C\backslash N_A(H)$, Remark \ref{rem:bdrycoarsecompreal} tells us that $C'$ is a $(1,r_2)$-coarse complementary component of $H'$. Since $C$ and $G\backslash C$ are deep as coarse complementary components of $H$, $C'$ and $G\backslash C'$ are also deep as coarse complementary components of $H$. As $H'\leq H$, $C'$ and $G\backslash C'$ are deep as coarse complementary components of $H'$. Since $H'C'=C'$ and $H'(G\backslash C')=(G\backslash C')$, Proposition \ref{prop:scotthaughtoncoarsecomp} tells us that $e(G,H')>1$.
\end{proof}

\begin{cor}\label{cor:stabfiniteindex}
Let $G$, $H$, $C$ and $H'$ be as above, and suppose that $\tilde{e}(G,K)=1$ for all infinite index subgroups $K\leq H$. Then $[H:H']$ is finite.
\end{cor}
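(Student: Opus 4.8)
The plan is to obtain the conclusion by combining Lemma \ref{lem:stabcoarsesep} with the hypothesis via contraposition; essentially all of the work has already been done in the proof of Lemma \ref{lem:stabcoarsesep}, so this will be very short. First I would record the trivial but crucial observation that $H'=\mathrm{Stab}_H(C\backslash N_A(H))$ is by construction a subgroup of $H$, so exactly one of the following holds: $[H:H']<\infty$ or $H'$ has infinite index in $H$. It therefore suffices to rule out the second possibility.

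By Lemma \ref{lem:stabcoarsesep} we have $e(G,H')>1$. Combining this with the inequality $e(G,H')\leq \tilde e(G,H')$ (which follows from Propositions \ref{prop:relendscoarsep} and \ref{prop:scotthaughtoncoarsecomp}) gives $\tilde e(G,H')>1$, and in particular $\tilde e(G,H')\neq 1$. On the other hand, the standing hypothesis asserts that $\tilde e(G,K)=1$ for every infinite index subgroup $K\leq H$. If $H'$ had infinite index in $H$, applying this with $K=H'$ would force $\tilde e(G,H')=1$, contradicting $\tilde e(G,H')\neq 1$. Hence $H'$ does not have infinite index in $H$, i.e. $[H:H']$ is finite. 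I do not anticipate any genuine obstacle here: the one point to be careful about is simply citing the correct direction of the inequality $e(G,H')\leq\tilde e(G,H')$ and that $H'\leq H$, both of which are already in place.
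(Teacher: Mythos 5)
Your proof is correct and is exactly the intended argument: the paper states this corollary without proof, and it follows immediately from Lemma \ref{lem:stabcoarsesep} together with the inequality $e(G,H')\leq\tilde e(G,H')$ (noted in the paper right after Proposition \ref{prop:scotthaughtoncoarsecomp}) and the standing hypothesis applied to $K=H'\leq H$.
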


\subsection{A Coarse Mayer--Vietoris Sequence}\label{sec:mvseq}
Throughout this section, we fix a commutative ring $R$ with unity and assume all cohomology  is taken with coefficients in $R$.

 In \cite{higson1993coarse}, Higson, Roe and Yu describe a coarse Mayer--Vietoris sequence for coarse cohomology. We suppose $C_1$ and $C_2$ are  disjoint, coarse complementary components of $W$ such that $G=C_1\cup C_2$ and $W=C_1\cap C_2$. Lemma \ref{lem:nbhdcomp} tells us that for all $B\geq 0$, there is a $B'\geq 0$ such that $N_B(C_1)\cap N_B(C_2)\subseteq N_{B'}(W)$. Thus $C_1$ and $C_2$ are \emph{coarsely excisive} in the sense of \cite{higson1993coarse}. Using Roe's coarse cohomology  as described in Appendix \ref{app:roecoarse}, it follows from Theorem 1 of \cite{higson1993coarse} that  there is an exact sequence {
\begin{align*}
0\rightarrow H^0_{\mathrm{coarse}}(X)\xrightarrow{q^*} H^0_{\mathrm{coarse}}(C_1\cup W)\oplus H^0_{\mathrm{coarse}}(C_2\cup W) \xrightarrow{p^*} H^0_{\mathrm{coarse}}(W)\xrightarrow{\delta} H^1_{\mathrm{coarse}}(X)\xrightarrow{q^*}\cdots
\\\cdots \xrightarrow{q^*} H^{n-1}_{\mathrm{coarse}}(C_1\cup W)\oplus H^{n-1}_{\mathrm{coarse}}(C_2\cup W) \xrightarrow{p^*}H^{n-1}_{\mathrm{coarse}}(W)\xrightarrow{\delta} H^n_{\mathrm{coarse}}(X).
\end{align*}
}We derive this Mayer--Vietoris sequence using metric complexes. This preserves quantitative information that cannot be deduced using \cite{higson1993coarse}.

\begin{restatable}{prop}{mvseq}\label{prop:mvseq}
Let $X$ be a 1-geodesic, bounded geometry metric space with $W\subseteq X$. Suppose both $X$ and $W$ are coarsely uniformly $(n-1)$-acyclic over $R$. Let $C_1$  be a $(1,A)$-coarse complementary component of $W$ and let $C_2:=X\backslash C_1$.
Then there exist  uniformly $(n-1)$-acyclic $R$-metric complexes  $(W,D_\bullet)$, $(C_1\cup W, A_\bullet)$, $(C_2\cup W,B_\bullet)$ and $(X,C_\bullet)$, and an  exact sequence 
\begin{align*}
0\rightarrow H^0_c(C_\bullet)\xrightarrow{q^*} H^0_c(A_\bullet)\oplus H^0_c(B_\bullet) \xrightarrow{p^*} H^0_c(D_\bullet)\xrightarrow{\delta} H^1_c(C_\bullet)\xrightarrow{q^*}\cdots
\\\cdots \xrightarrow{q^*} H^{n-1}_c(A_\bullet)\oplus H^{n-1}_c(B_\bullet) \xrightarrow{p^*}H^{n-1}_c(D_\bullet)\xrightarrow{\delta} \hat H^n_c(C_\bullet).
\end{align*}

Let $p^\#_{C_1}: A^\bullet_c\rightarrow D^\bullet_c$, $p^\#_{C_2}:B^\bullet_c\rightarrow D^\bullet_c$, $q^\#_{C_1}:C^\bullet_c\rightarrow A^\bullet_c$ and $q^\#_{C_2}:C^\bullet_c\rightarrow B^\bullet_c$ be maps which have finite displacement over the respective inclusions.
The above maps in cohomology are induced by the cochain maps $p^\#(\sigma,\tau)=p^\#_{C_1}\sigma +p^\#_{C_2}\tau$ and $q^\#(\lambda)=(q^\#_{C_1}\lambda,-q^\#_{C_2}\lambda)$.
Moreover, for $r<n$:
\begin{enumerate}
\item \label{prop:mvseqbdrmap} there is an $R=R(X,W,A)\geq 0$ such that whenever $\sigma\in D^{r}_c$ is a cocycle supported in $K\subseteq W$, then $\delta[\sigma]$ can be represented by a cocycle in $C^{r+1}_c$ supported in $N_R^X(K)$;
\item \label{prop:mvseqtwocomps} there exists an $s_0=s_0(X,W,A)$ such that for any $s\geq s_0$ the following holds: a cohomology class  $[\omega]\in \hat H^{r+1}_c(C_\bullet)$ is contained in the image of the boundary map if and only if $[\omega]$ can be represented by two modified cocycles $\alpha,\beta\in C^{r+1}_c$ supported in $C_1\backslash N_s(W)$ and  $C_2\backslash N_s(W)$ respectively.
\end{enumerate}
\end{restatable}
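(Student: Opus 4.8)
The plan is to realise the four spaces by metric complexes that fit into a single short exact sequence of chain complexes, read off the Mayer--Vietoris sequence by dualising and passing to finite supports, and then obtain (1)--(2) by tracking displacements through the connecting homomorphism. Put $C_2:=X\setminus C_1$, so that $X=(C_1\cup W)\cup(C_2\cup W)$ and $(C_1\cup W)\cap(C_2\cup W)=W$. By Proposition~\ref{prop:compcoarseacycl}, both $C_1\cup W$ and $C_2\cup W$ are coarsely uniformly $(n-1)$-acyclic over $R$, with acyclicity functions depending only on $X$, $W$, $A$. First use Proposition~\ref{prop:controlspace} to build a uniformly $(n-1)$-acyclic metric complex $(W,D_\bullet)$. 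Running the inductive pseudo-cell attachment of Proposition~\ref{prop:controlspace} \emph{relative} to $D_\bullet$ (start from $D_0$ together with the free module on $(C_1\cup W)\setminus W$, and at each stage only attach new pseudo-cells for cycles not already bounded inside $D_\bullet$) produces a uniformly $(n-1)$-acyclic metric complex $(C_1\cup W,A_\bullet)$ containing $(W,D_\bullet)$ as a subcomplex with $A_\bullet[W]=D_\bullet$ and with displacement bounded in terms of $X,W,A$; build $(C_2\cup W,B_\bullet)\supseteq(W,D_\bullet)$ similarly, arranging its new cells to be disjoint from those of $A_\bullet$. Set $C_\bullet:=A_\bullet\cup_{D_\bullet}B_\bullet$, with standard basis $\Sigma^A_\bullet\cup_{\Sigma^D_\bullet}\Sigma^B_\bullet$ and boundary maps the common extension of those of $A_\bullet,B_\bullet$. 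This is an $R$-metric complex of finite type and finite displacement over $X$ in which $A_\bullet,B_\bullet,D_\bullet$ sit as subcomplexes with $A_\bullet\cap B_\bullet=D_\bullet$, giving a short exact sequence of chain complexes
\[
0\longrightarrow D_\bullet\xrightarrow{\,(i_A,\,i_B)\,}A_\bullet\oplus B_\bullet\xrightarrow{\,j_A-j_B\,}C_\bullet\longrightarrow 0 .
\]
That $(X,C_\bullet)$ is uniformly $(n-1)$-acyclic follows by a chain-level Mayer--Vietoris argument in the style of Proposition~\ref{prop:compcoarseacycl}: a reduced $k$-cycle ($k\le n-1$) in a ball decomposes along the basis as $\sigma_A+\sigma_B$ with $\partial\sigma_A=-\partial\sigma_B\in D_{k-1}$ a reduced cycle; fill it in $D_\bullet$, then fill $\sigma_A-\rho$ in $A_\bullet$ and $\sigma_B+\rho$ in $B_\bullet$, and add the fillings.

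\textbf{The exact sequence and the top term.} Dualising the displayed sequence and restricting to cochains of finite support gives, since $\Sigma^A_\bullet$ and $\Sigma^B_\bullet$ meet exactly in $\Sigma^D_\bullet$, an exact sequence $0\to C^\bullet_c\xrightarrow{q^\#}A^\bullet_c\oplus B^\bullet_c\xrightarrow{p^\#}D^\bullet_c\to 0$ with $q^\#,p^\#$ built from the restriction maps (which have finite displacement over the inclusions), exactly as in the statement. Its long exact cohomology sequence, truncated at degree $n$, is the asserted sequence: in degrees $k<n$ we have $H^k_c=\hat H^k_c$ for all four complexes by Lemma~\ref{lem:modifvsordin} (using that the relevant spaces are infinite; the degenerate case is treated separately), so only the last term needs care. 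For that, dualise the same chain sequence \emph{without} passing to finite supports to obtain a second Mayer--Vietoris sequence, in ordinary cohomology, with connecting map $\delta'$, together with a commuting ladder into it induced by $\iota:C^\bullet_c\hookrightarrow C^\bullet$. Naturality of connecting maps gives $\iota^*_C\circ\delta_c=\delta'\circ\iota^*_D$ on $H^{n-1}_c(D_\bullet)$; but $\iota^*_D\colon H^{n-1}_c(D_\bullet)\to H^{n-1}(D_\bullet)$ vanishes, since $H^{n-1}_c(D_\bullet)=\hat H^{n-1}_c(D_\bullet)=\ker\iota^*_D$ by Lemma~\ref{lem:modifvsordin}. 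Hence $\operatorname{im}\big(\delta_c\colon H^{n-1}_c(D_\bullet)\to H^n_c(C_\bullet)\big)\subseteq\ker\iota^*_C=\hat H^n_c(C_\bullet)$, which permits replacing $H^n_c(C_\bullet)$ by $\hat H^n_c(C_\bullet)$ in the final term.

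\textbf{Parts (1) and (2).} For (1): given a cocycle $\sigma\in D^r_c$ supported in $K\subseteq W$, take the preimage $(\tilde\sigma,0)$ of $\sigma$ under $p^\#$, where $\tilde\sigma\in A^r_c$ is $\sigma$ extended by zero along $\Sigma^A_r\setminus\Sigma^D_r$; then $\operatorname{supp}\tilde\sigma=K$, so $\operatorname{supp}\delta\tilde\sigma\subseteq N^X_d(K)$ with $d$ the displacement bound of $C_\bullet$, and the resulting representative of $\delta[\sigma]$ --- the unique cocycle $\omega\in C^{r+1}_c$ with $\omega|_A=\delta\tilde\sigma$, $\omega|_B=0$ --- is supported in $N^X_d(K)$, so $R:=d$ works and depends only on $X,W,A$. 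Part (2) is the technical heart. By exactness, $[\omega]\in\operatorname{im}\delta$ iff $q^\#$ kills it, i.e.\ iff $\omega|_A$ and $\omega|_B$ are coboundaries of finitely-supported cochains of $A_\bullet$, $B_\bullet$; the work is to convert this into the stated geometric condition. The crucial geometric input is that $C_1$ is a $(1,A)$-coarse complementary component of $W$, so any $1$-geodesic between $C_1\setminus N_s(W)$ and $C_2\setminus N_s(W)$ meets $N_A(W)$, whence these two regions are at distance $>2(s-A)$; thus once $s$ exceeds $A$ plus the ambient displacement $d$, (i) a cochain supported in one region has coboundary disjoint from the other region and from $W$, so extension-by-zero $A^\bullet_c\to C^\bullet_c$ and $B^\bullet_c\to C^\bullet_c$ are cochain maps on such cochains, and (ii) a cocycle supported in $(C_1\setminus N_s(W))\sqcup(C_2\setminus N_s(W))$ splits as a sum $\alpha+\beta$ of two cocycles, one in each region. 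Using (i), one trims the finitely-supported coboundaries witnessing $[\omega|_A]=0$, $[\omega|_B]=0$ away from a bounded collar of $W$ (invoking uniform acyclicity of $D_\bullet$ near $W$) to move $[\omega]$ onto a representative supported in $X\setminus N_s(W)$, which then splits as in (ii); conversely, a representative $\alpha+\beta$ of this form has $q^\#$-image a pair of finitely-supported coboundaries by the same displacement chase, hence lies in $\ker q^\#$. The constant $s_0(X,W,A)$ is read off from $d$ and $A$.

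\textbf{Main obstacle.} The delicate point throughout --- and the reason the metric-complex argument is needed rather than the soft one via Higson--Roe--Yu --- is exactly the bookkeeping in (2): tracking which basis elements of $C_\bullet$ lie over $W$, over $C_1$, or over $C_2$, and using the finite displacement of the boundary maps to confine all ``leakage'' of cochains between these regions to a bounded neighbourhood of $W$, while keeping every auxiliary constant (the metric complexes, their displacements, the acyclicity functions of the pieces, and ultimately $R$ and $s_0$) controlled purely by $X$, $W$ and $A$ --- which is what Propositions~\ref{prop:compcoarseacycl} and~\ref{prop:controlspace} are there to guarantee.
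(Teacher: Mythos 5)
Your approach is genuinely different from the paper's. The paper never assembles the four metric complexes into a literal short exact sequence; it explicitly notes that ``although $W$ is a subspace of $C_1\cup W$, $D_\bullet$ isn't a subcomplex of $A_\bullet$,'' and instead routes everything through Rips complexes of successively larger scales (Lemma~\ref{lem:mvseqtech}), where Lemma~\ref{lem:coarsecompsimplex} makes the sequence exact. You propose to avoid this entirely by running the relative version of the Proposition~\ref{prop:controlspace} construction so that $D_\bullet$, $A_\bullet$, $B_\bullet$, $C_\bullet:=A_\bullet\cup_{D_\bullet}B_\bullet$ fit a genuine short exact sequence of chain complexes. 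That is a clean and natural idea, and your derivation of the long exact sequence, the $\hat H^n_c$ endpoint via the ladder against full cohomology, and part~(1) all look correct modulo the (unproved but plausible) relative form of Proposition~\ref{prop:controlspace}.

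The genuine gap is in the forward direction of part~(2), and it has two components. First, you have misread the statement: the conclusion is that the class $[\omega]$ admits \emph{two} representatives, $\alpha$ with $[\alpha]=[\omega]$ supported in $C_1\setminus N_s(W)$, and \emph{separately} $\beta$ with $[\beta]=[\omega]$ supported in $C_2\setminus N_s(W)$. Your argument aims to produce a \emph{single} representative $\mu$ of $[\omega]$ supported in $X\setminus N_s(W)$ and then split it as $\mu=\alpha+\beta$ via~(ii); but that only gives $[\alpha]+[\beta]=[\omega]$, not $[\alpha]=[\beta]=[\omega]$. (Both Lemma~\ref{lem:noncrossing} and Proposition~\ref{prop:pdnhalf} use the stronger, paper's-version statement, so the distinction matters downstream.) Second, and independently, the ``trimming'' step that is supposed to push the support of a representative outside $N_s(W)$ for \emph{arbitrary} $s\geq s_0$ is not justified and is exactly where the difficulty lives. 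In the paper this is done via Remark~\ref{rem:nklarge}: because $\alpha_C$ and the target $\hat H^{r+1}_c(C_\bullet)$ are unchanged as $n_k$ grows, one can dial $n_k$ arbitrarily large and force the canonical representative $\omega_{\beta^\#\gamma}$ outside $N_s(W)$. Your $C_\bullet$ is a single fixed metric complex with no such dial, and the snake-lemma representative $\delta\tilde\sigma$ that part~(1) produces is supported on $\Sigma^A\setminus\Sigma^D$ \emph{near} $W$, on exactly the wrong side of the inequality; ``invoking uniform acyclicity of $D_\bullet$ near $W$'' does not by itself supply a coboundary of $C^{\bullet}_c$ that moves this support past $N_s(W)$ for all large $s$. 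As written, I do not see how the argument closes without importing a scale parameter analogous to $n_k$ or giving an explicit construction of the required coboundaries.
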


The difficulty in  deriving the above exact sequence when compared to the singular or simplicial Mayer--Vietoris sequence,  lies in the fact that maps which we want to be injective or surjective may not be. For example, although $W$ is a subspace of $C_1\cup W$, $D_\bullet$ isn't a subcomplex of $A_\bullet$, so  the map $D_\bullet\rightarrow A_\bullet$, induced by inclusion, may not be injective. 

The proof is similar to that found in \cite{higson1993coarse}, except that as	 spaces are coarsely uniformly $(n-1)$-acyclic, one doesn't need to pass to  inverse limits. We make use of the following lemma, which relates coarse cohomology to the cohomology of successive Rips complexes.

\begin{lem}\label{lem:mvseqtech}
Let $X$ be a 1-geodesic, bounded geometry metric space with $W\subseteq X$. Suppose both $X$ and $W$ are coarsely uniformly $(n-1)$-acyclic over $R$. For  every $A\geq 0$, there exist $i\leq j\leq k$ and $A\leq n_i\leq n_j\leq n_k$ such that  the following holds:
for every $(1,A)$-coarse complementary component $C$ of $W$, there exists a uniformly $(n-1)$-acyclic $R$-metric complex $(C\cup W,C_\bullet)$  and proper chain maps 
\begin{align}\label{eqn:mvseqtech}
[C_\bullet]_n \xrightarrow{\alpha_C} [C_\bullet(P_i(C_i))]_n \xrightarrow{\iota_1} [C_\bullet(P_j(C_j))]_n \xrightarrow{\iota_2} [C_\bullet(P_k(C_k))]_n \xrightarrow{\beta_C} [C_\bullet]_n
\end{align}
where:
\begin{enumerate}
\item \label{lem:mvseqtech1} $C_i:=N_{n_i}^X(W)\cup C$, $C_j:=N_{n_j}^X(W)\cup C$ and $C_k:=N_{n_k}^X(W)\cup C$;
\item  every $(1,A)$-coarse complementary component of $W$ is also an $(i,n_i)$, $(j,n_j)$ and $(k,n_k)$-coarse complementary component of $W$;
\item the maps $\iota_1$ and $\iota_2$ are inclusions;
\item the map $\alpha_C$ has displacement at most $r_\alpha=r_\alpha(X,W,A)$ over the inclusion $W\rightarrow C_i$;
\item  the map $\beta_C$ has displacement at most $r_\beta=r_\beta(X,W,A)$ over a coarse inverse to the inclusion $W\rightarrow C_k$;
\item \label{lem:mvseqtechfinal} on the level of modified cohomology with compact supports in dimensions at most $n$, the following identities hold:\begin{align}\alpha_C^*\iota_1^*\iota_2^*\beta_C^*&=\mathrm{id}\\ 
\iota_1^*\iota_2^*\beta_C^*\alpha_C^*\iota_1^*&=\iota_1^*\\
\label{eqn:2ndlevelhomtop}\iota_2^*\beta_C^*\alpha_C^*\iota_1^*\iota_2^*&=\iota_2^*.
\end{align} 
\end{enumerate}
Moreover, (\ref{eqn:mvseqtech}) is natural in the following sense: for every pair of $(1,A)$-coarse complementary components $C\subseteq D$ of $W$, there are proper chain maps 
$$ \begin{CD}
[C_\bullet]_n @>\alpha_C>> [C_\bullet(P_i(C_i))]_n @>\iota_1>> [C_\bullet(P_j(C_j))]_n@>\iota_2>> [C_\bullet(P_k(C_k))]_n @>\beta_C>> [C_\bullet]_n\\
@VVfV@VVf_iV@VVf_jV@VVf_kV@VVfV\\
[D_\bullet]_n @>\alpha_D>> [D_\bullet(P_i(D_i))]_n @>\iota_1>> [D_\bullet(P_j(D_j))]_n@>\iota_2>> [D_\bullet(P_k(D_k))]_n @>\beta_D>> [D_\bullet]_n
\end{CD}$$
where:
\begin{enumerate}
\item each row is an instance of (\ref{eqn:mvseqtech}) applied to $C$ and $D$ respectively;
\item the maps $f_i,f_j$ and $f_k$ are inclusions of chain complexes;
\item the chain map $f:[C_\bullet]_n\rightarrow [D_\bullet]_n$ has finite displacement over the inclusion $C\cup W\rightarrow D\cup W$;
\item on the level of modified cohomology with compact supports in dimensions at most $n$, the following identities hold:\begin{align}
\label{eqn:betapcommute}\beta_C^*f^*&=f_k^*\beta_D^*\\
\label{eqn:alphapcommute}\alpha_C^*f_i^*\iota_1^*&=f^*\alpha_D^*\iota_1^*.
\end{align} 
\end{enumerate}
\end{lem}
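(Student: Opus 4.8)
The plan is to construct, \emph{uniformly over all} $(1,A)$-coarse complementary components $C$ of $W$, a uniformly $(n-1)$-acyclic metric complex $(C\cup W,C_\bullet)$ together with the maps $\alpha_C,\beta_C$, and then to deduce the cohomological identities of item (6) by repeatedly invoking the uniqueness-up-to-homotopy statements of Lemmas~\ref{lem:extendmapsunif} and \ref{lem:extendmapscoarse} together with Proposition~\ref{prop:propchainmap}. Since $X$ and $W$ are $(\lambda,\mu)$-coarsely uniformly $(n-1)$-acyclic, Proposition~\ref{prop:compcoarseacycl} shows $C\cup W$ is coarsely uniformly $(n-1)$-acyclic with functions depending only on $A,\lambda,\mu$, so Proposition~\ref{prop:controlspace}(1) lets us choose for each $C$ a uniformly $(n-1)$-acyclic $(C\cup W,C_\bullet)$ of $n$-displacement at most $d_A$, with $d_A$ and the uniform acyclicity function depending only on $A,\lambda,\mu$. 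For any $m\geq 0$ the inclusion $C\cup W\hookrightarrow N^X_m(W)\cup C$ is an $(\mathrm{id},\mathrm{id},m)$-coarse isometry, so by Proposition~\ref{prop:unifacyccoarseinv} each $N^X_m(W)\cup C$ is coarsely uniformly $(n-1)$-acyclic with functions depending only on $A,\lambda,\mu,m$. A short argument using that $X$ is $1$-geodesic shows that every $(1,A)$-coarse complementary component of $W$ is an $(r,A+r)$-coarse complementary component for every $r\geq 1$; hence, once we arrange $i\leq j\leq k$ and $n_i\geq A+i$, $n_j\geq A+j$, $n_k\geq A+k$, conditions (1)--(3) hold and the Rips-complex inclusions $\iota_1,\iota_2,f_i,f_j,f_k$ are available.

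To build $\alpha_C$, apply Lemma~\ref{lem:extendmapscoarse}(1) to the identity inclusion $C\cup W\hookrightarrow C\cup W$: this produces an index $i_0=i_0(A)$ and a chain map $[C_\bullet]_n\to C_\bullet(P_{i_0}(C\cup W))$ of bounded displacement over the inclusion, and composing with $C_\bullet(P_{i_0}(C\cup W))\hookrightarrow C_\bullet(P_i(C_i))$ (valid once $i\geq i_0$, since $C\cup W\subseteq C_i$) yields $\alpha_C$, giving (4). To build $\beta_C$, take a closest-point projection $g_C:C_k\to C\cup W$; it is a coarse inverse to the inclusion $C\cup W\hookrightarrow C_k$ with affine upper distortion function controlled by $n_k$ and it fixes $C\cup W$ pointwise. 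Since $C_\bullet(P_k(C_k))$ is a metric complex of $n$-displacement at most $k$ and $C_\bullet$ is uniformly $(n-1)$-acyclic, Lemma~\ref{lem:extendmapsunif}(1) produces $\beta_C:[C_\bullet(P_k(C_k))]_n\to C_\bullet$ of bounded displacement over $g_C$, giving (5). All the displacement bounds obtained so far depend only on $A,\lambda,\mu$ and the (already fixed) indices.

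For item (6): the composite $\beta_C\iota_2\iota_1\alpha_C:[C_\bullet]_n\to C_\bullet$ has finite $n$-displacement over $g_C$ composed with the inclusion $C\cup W\hookrightarrow C_k$, which is $\mathrm{id}_{C\cup W}$; as the target $C_\bullet$ is uniformly $(n-1)$-acyclic, Lemma~\ref{lem:extendmapsunif}(2) gives a proper chain homotopy between it and $\mathrm{id}_{[C_\bullet]_n}$, and Proposition~\ref{prop:propchainmap}(2) gives $\alpha_C^*\iota_1^*\iota_2^*\beta_C^*=\mathrm{id}$. The other two identities are proved the same way, except that the relevant targets are now Rips complexes, which need not be uniformly acyclic, so Lemma~\ref{lem:extendmapscoarse}(2) must be used in place of Lemma~\ref{lem:extendmapsunif}(2): comparing $\alpha_C\beta_C\iota_2\iota_1$ with the identity as chain maps $[C_\bullet(P_i(C_i))]_n\to C_\bullet(P_i(C_i))$ produces a chain homotopy into $C_{\bullet+1}(P_{j_0}(C_i))$ for some $j_0=j_0(i,A)$, which lies inside $C_{\bullet+1}(P_j(C_j))$ once $j\geq j_0$; comparing $\iota_1\alpha_C\beta_C\iota_2$ with the identity as chain maps $[C_\bullet(P_j(C_j))]_n\to C_\bullet(P_j(C_j))$ produces a homotopy into $C_{\bullet+1}(P_{k_0}(C_j))\subseteq C_{\bullet+1}(P_k(C_k))$ once $k\geq k_0=k_0(j,A)$. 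Composing with the relevant Rips-complex inclusions and applying Proposition~\ref{prop:propchainmap}(2) gives $\iota_1^*\iota_2^*\beta_C^*\alpha_C^*\iota_1^*=\iota_1^*$ and the identity (\ref{eqn:2ndlevelhomtop}); this is exactly why three indices $i\leq j\leq k$ are needed, since each successive chain homotopy only becomes available after enlarging the ambient Rips complex. For naturality, Lemma~\ref{lem:extendmapsunif}(1) gives $f:[C_\bullet]_n\to[D_\bullet]_n$ of bounded displacement over the inclusion $C\cup W\hookrightarrow D\cup W$, with $f_i,f_j,f_k$ the evident inclusions (using $C_i\subseteq D_i$, etc.); for (\ref{eqn:alphapcommute}) one homotopes $f_i\alpha_C$ and $\alpha_D f$ (both of bounded displacement over maps close to the inclusion $C\cup W\hookrightarrow D_i$) inside $C_{\bullet+1}(P_j(D_j))$ via Lemma~\ref{lem:extendmapscoarse}(2), and for (\ref{eqn:betapcommute}) one homotopes $f\beta_C$ and $\beta_D f_k$ inside $D_\bullet$ via Lemma~\ref{lem:extendmapsunif}(2), which applies because the underlying maps $g_C$ (followed by the inclusion) and $g_D|_{C_k}$ are $2n_k$-close (as $C_k\subseteq N^X_{n_k}(C\cup W)$); Proposition~\ref{prop:propchainmap}(2) then finishes.

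The main obstacle is not conceptual but bookkeeping: the indices must be chosen so as to avoid circular dependencies while remaining independent of the particular component $C$. Concretely, $i_0$ (hence $i$) depends only on $A$; then $n_i:=A+i$, and the coarse-acyclicity constants of $C_i$ and $D_i$ depend only on $A,i$; then $j$ is chosen to exceed $i$, $j_0(i,A)$ and the naturality index $j_1(i,A)$, and $n_j:=A+j$; then $k$ exceeds $j$ and $k_0(j,A)$, and $n_k:=A+k$; finally $d_A$, the displacement bounds $r_\alpha,r_\beta$, and the displacement of $f$ are read off, all depending only on $X$, $W$ and $A$. At each step one must check that the chain homotopies supplied by Lemmas~\ref{lem:extendmapsunif} and \ref{lem:extendmapscoarse} have displacement bounds independent of $C$; this holds because every coarse-uniform-acyclicity constant in play has been arranged to depend only on $A$ (and the fixed $\lambda,\mu$). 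The underlying idea — a composite of $\alpha_C,\iota_1,\iota_2,\beta_C$ is homotopic to the identity because every space in sight is coarsely uniformly $(n-1)$-acyclic with uniform constants — is straightforward; only its quantitative implementation requires care.
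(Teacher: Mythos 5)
Your proof takes essentially the same route as the paper: build $(C\cup W,C_\bullet)$ via Propositions~\ref{prop:compcoarseacycl} and \ref{prop:controlspace} with constants depending only on $X,W,A$; construct $\alpha_C$ via Lemma~\ref{lem:extendmapscoarse}(1) followed by a Rips inclusion and $\beta_C$ via Lemma~\ref{lem:extendmapsunif}(1) over a coarse inverse to the inclusion $C\cup W\hookrightarrow C_k$; obtain the three identities of item (6) from the uniqueness-up-to-proper-homotopy statements of Lemmas~\ref{lem:extendmapsunif}(2) and \ref{lem:extendmapscoarse}(2) together with Proposition~\ref{prop:propchainmap}(2), enlarging the Rips parameter once per homotopy (hence the three indices $i\le j\le k$); and deduce naturality the same way. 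The only minor deviation is cosmetic: you prove directly that a $(1,A)$-coarse complementary component is an $(r,A+r)$-component for all $r\ge 1$, where the paper invokes Lemma~\ref{lem:compcompcoarseinv}/Corollary~\ref{cor:compcompparamindep}; your explicit inequality is correct and gives the required uniformity over $C$ just as well.
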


\begin{proof}
By Proposition \ref{prop:compcoarseacycl} there exist  $\lambda'$ and $\mu'$, depending only on $X$, $W$ and $A$, such that for every $(1,A)$-coarse complementary component $C$, $C\cup W$ is $(\lambda',\mu')$-coarsely uniformly $(n-1)$-acyclic.
Using Proposition \ref{prop:controlspace}, we construct a $\tilde \mu$-uniformly $(n-1)$-acyclic metric complex $(C\cup W,C_\bullet)$ such that $\tilde \mu$ and the $n$-displacement of $C_\bullet$ depend only on $X$, $W$ and $A$.

	We proceed by repeated applications of  Lemmas \ref{lem:extendmapsunif} and \ref{lem:extendmapscoarse}. Using the notation of Lemma \ref{lem:extendmapscoarse} we set $i=i(\lambda')$, and using Lemma \ref{lem:compcompcoarseinv} we  pick $n_i$ such that every $(1,A)$-coarse complementary component of $W$ is also an $(i,n_i)$-coarse complementary component of $W$. By Lemma \ref{lem:extendmapscoarse} and our choice of $i$, for every $(1,A)$-coarse complementary component $C$ of $W$, there exists a finite displacement chain map $[C_\bullet]_n\rightarrow C_\bullet(P_i(W\cup C))$. We postcompose this map with the inclusion to get the required map $\alpha_C$.

By Proposition \ref{prop:unifacyccoarseinv}, $C_i$ is $(\lambda'',\mu'')$-coarsely uniformly $(n-1)$-acyclic for some $\lambda''$ and $\mu''$, depending only on $\lambda$, $\mu$ and $n_i$. Using the notation of Lemma \ref{lem:extendmapscoarse}, we set $j=j(i,\lambda'')$. We define suitable $n_j$, $k$ and $n_k$ similarly. We choose a map $C_k\rightarrow W\cup C$ which is a coarse inverse to the inclusion, and by Lemma \ref{lem:extendmapsunif}, we can define a suitable $\beta_C$.

By Lemma \ref{lem:compdispmaps}, $\beta_C\iota_2\iota_1\alpha_C$ has finite displacement over the identity, so Lemma \ref{lem:extendmapsunif} tells us it is properly chain homotopic to the identity chain map. Thus $\alpha_C^*\iota_1^*\iota_2^*\beta_C^*=\mathrm{id}$. Similarly, we see that $\alpha_C\beta_C\iota_2\iota_1$ has finite displacement over the identity. By our choice of $j$, we see that $\iota_1\alpha_C\beta_C\iota_2\iota_1$ is properly chain homotopic to $\iota_1$, hence $\iota_1^*\iota_2^*\beta_C^*\alpha_C^*\iota_1^*=\iota_1^*$. We deduce (\ref{eqn:2ndlevelhomtop}) similarly. 

To show naturality, we use Lemma \ref{lem:extendmapsunif} to define an $f:[C_\bullet]_n\rightarrow [D_\bullet]_n$ that has finite displacement over the inclusion $C\subseteq D$. We then deduce (\ref{eqn:betapcommute}) and (\ref{eqn:alphapcommute}) using Lemmas \ref{lem:extendmapsunif} and \ref{lem:extendmapscoarse} respectively.
\end{proof}
\begin{rem}\label{rem:nklarge}
In the proof of Lemma \ref{lem:mvseqtech}, one can choose $n_k$ arbitrarily large whilst fixing $(X,C_\bullet)$, $i,j,k,n_i,n_j$ and $\alpha_C$. However, as we vary $n_k$ we lose the bound $r_\beta$ on the displacement of $\beta_C$.
\end{rem}
In the proof of Proposition \ref{prop:mvseq}, we also use the following easy lemma:
\begin{lem}\label{lem:restrictzero}
Let $X$ be a bounded geometry metric space and let $(X,C_\bullet)$ be an $R$-metric complex. Suppose that for some $k$ and $n$,  $\beta:[C_\bullet(P_k(X))]_n\rightarrow C_\bullet$ is a chain map of  $n$-displacement at most $r_\beta$. Let $L\subseteq X$ and suppose $q_\#:C_\bullet(P_k(L))\rightarrow C_\bullet(P_k(X))$ is the inclusion of chain complexes. If $\mathrm{supp}(\alpha)\cap N^X_{r_\beta}(L)=\emptyset$ for some $\alpha\in C^n_c$, then $q^\#\beta^\#\alpha=0$.
\end{lem}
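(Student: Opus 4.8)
The statement is a purely formal unwinding of the definitions of support and displacement, so the plan is to trace through what it means for a simplex $\Delta$ of $P_k(L)$ to contribute to $q^\#\beta^\#\alpha$. First I would fix an oriented $n$-simplex $\Delta$ of $P_k(L)$, regarded via $q_\#$ as a simplex of $P_k(X)$, and compute $(q^\#\beta^\#\alpha)(\Delta) = \alpha(\beta^\#\, q_\#\Delta) = \alpha(\beta_\#\Delta)$. Since $\beta$ has $n$-displacement at most $r_\beta$ and $\Delta$ has a projection point $p_k(\Delta)\in \Delta \subseteq L$, the definition of displacement gives $\mathrm{supp}(\beta_\#\Delta) \subseteq N^X_{r_\beta}(p_k(\Delta)) \subseteq N^X_{r_\beta}(L)$.

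The second step is then immediate: writing $\beta_\#\Delta = \sum_{b} r_b\, b$ in the standard basis of $C_n$, every basis element $b$ occurring with $r_b \neq 0$ has $p_n(b)\in \mathrm{supp}(\beta_\#\Delta) \subseteq N^X_{r_\beta}(L)$, whereas by hypothesis $\mathrm{supp}(\alpha)\cap N^X_{r_\beta}(L)=\emptyset$, so $p_n(b)\notin \mathrm{supp}(\alpha)$ and hence $\alpha(b)=0$. Therefore $\alpha(\beta_\#\Delta)=0$. Since $\Delta$ ranges over a basis of $C_n(P_k(L))$, and $q^\#\beta^\#\alpha$ is the element of $C^n_c(P_k(L))$ (or of $C^n[P_k(L)]$, whichever the ambient notation demands) taking each such $\Delta$ to this value, we conclude $q^\#\beta^\#\alpha=0$.

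There is no real obstacle here; the only point requiring a word of care is that $\beta^\# q^\# = (q_\#)^*\beta^\#$ agrees with precomposition by $\beta_\# \circ q_\#$, and that $q_\#$ is literally the inclusion so $\beta_\# q_\# \Delta$ is just $\beta_\#$ applied to $\Delta$ viewed in $C_n(P_k(X))$; this is exactly the kind of identification already used repeatedly in the proof of Proposition \ref{prop:propchainmap}. Everything else is the observation that a cochain vanishes on a chain whose support is disjoint from the cochain's support, combined with the displacement bound on $\beta$.
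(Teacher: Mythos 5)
Your argument is correct and is essentially the paper's proof: both fix an $n$-simplex $\Delta$ of $P_k(L)$, use that $q_\#\Delta$ has support in $L$, apply the $r_\beta$-displacement bound on $\beta$ to conclude $\mathrm{supp}(\beta_\#q_\#\Delta)\subseteq N^X_{r_\beta}(L)$, and then invoke the disjointness hypothesis to get $\alpha(\beta_\#q_\#\Delta)=0$. (Minor typo: the intermediate expression should read $\alpha(\beta_\#\,q_\#\Delta)$ rather than $\alpha(\beta^\#\,q_\#\Delta)$, as you in fact use in the next line.)
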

\begin{proof}
It is enough to show that $q^\#\beta^\#\alpha$ vanishes on every $n$-simplex of $P_k(L)$. Let $\Delta$ be an $n$-simplex of $P_k(L)$. Then $\mathrm{supp}(q_\#\Delta)\subseteq L$, so $\mathrm{supp}(\beta_\#q_\#\Delta)\subseteq N^X_{r_\beta}(L)$; therefore $q^\#\beta^\#\alpha(\Delta)=\alpha(\beta_\#q_\#\Delta)=0$.
\end{proof}

\begin{proof} [Proof of \ref{prop:mvseq}]
We note that $W$ and $X$ are themselves $(1,A)$-coarse complementary components of $W$.
We use Lemma \ref{lem:mvseqtech} to produce the proper chain maps in Figure \ref{fig:mvseq}: the `$p$ maps' are sums of maps induced by the inclusions $W\rightarrow W\cup C_1$ and $W\rightarrow W\cup C_2$ and the `$q$ maps' are differences of the maps induced by the inclusions $W\cup C_1\rightarrow X$ and $W\cup C_2\rightarrow X$.  We let  $W_i=N^X_{n_i}(W)$, $A_i=C_1\cup W_i$, $B_i=C_2\cup W_i$, and similarly for $W_j$, $A_j$ etc. 

\begin{figure}[ht]
\caption{}
\label{fig:mvseq}
{$$\begin{CD}
[W_\bullet]_n	@>p>>	[A_\bullet]_n\oplus [B_\bullet]_n @>q>> [C_\bullet]_n\\
@VV{\alpha}V @VV{\alpha}V @VV{\alpha}V\\
[C_\bullet(P_i(W_i))]_n@>p>>[C_\bullet(P_i(A_i))]_n\oplus [C_\bullet(P_i(B_i))]_n@>q>> [C_\bullet(P_i(X))]_n\\
@VV{\iota_1}V @VV{\iota_1}V @VV{\iota_1}V\\
[C_\bullet(P_j(W_j))]_n@>p>>[C_\bullet(P_j(A_j))]_n\oplus [C_\bullet(P_j(B_j))]_n@>q>> [C_\bullet(P_j(X))]_n\\
@VV{\iota_2}V @VV{\iota_2}V @VV{\iota_2}V\\
[C_\bullet(P_k(W_k))]_n@>p>>[C_\bullet(P_k(A_k))]_n\oplus [C_\bullet(P_k(B_k))]_n@>q>> [C_\bullet(P_k(X))]_n\\
@VV{\beta}V @VV{\beta}V @VV{\beta}V\\
[W_\bullet]_n	@>p>>	[A_\bullet]_n\oplus [B_\bullet]_n @>q>> [C_\bullet]_n\\
\end{CD}$$}
\end{figure}
By Lemma \ref{lem:coarsecompsimplex}, each simplex of $P_i(X)$ is contained in either $P_i(A_i)$ or $P_i(B_i)$. This means that the second row of Figure \ref{fig:mvseq} is a short exact sequence, so the Mayer--Vietoris sequence in simplicial cohomology with compact supports gives us: \begin{equation}\label{eqn:mvinrips}
\cdots\xrightarrow{p^*} H^*_c(P_i(N_{n_i}(W)))\xrightarrow{\tilde{\delta}} H^{*+1}_c(P_i(X))\xrightarrow{q^*}\cdots.
\end{equation}We obtain similar Mayer--Vietoris sequences in the third and forth rows, and by naturality of the simplicial  Mayer--Vietoris sequence,  the boundary map $\tilde \delta$ commutes with the maps $\iota_2^*$ and $\iota_1^*$.

For $k \leq n-1$, we define the required boundary map  $\delta:=\alpha^*\iota^*_1\iota^*_2\tilde{\delta}\beta^*=\alpha^*\iota^*_1\tilde{\delta}\iota^*_2\beta^*$.
As $H^{n-1}(W_\bullet)=0$, we see that on the level of ordinary cohomology, i.e. cohomology possibly without compact supports, the boundary map $\delta$ is zero. 
Therefore, $\delta$ defines a map $\delta: H^{n-1}_c(W_\bullet)\rightarrow \hat H^n_c(C_\bullet)$. In lower dimensions, Lemma \ref{lem:modifvsordin} tells us that modified cohomology with compact supports and cohomology with compact supports are equal.

It is easy to verify exactness using the identities from Lemma \ref{lem:mvseqtech}. We shall show exactness at $H^r_c(W_\bullet)$ for $r<n$. Let $x\in H^r_c(W_\bullet)$. Using (\ref{eqn:betapcommute}), we see that $\beta^*$ and $p^*$ commute.
If $x=p^*(x')$, then as (\ref{eqn:mvinrips}) is exact, we see that $$\delta(x)=\alpha^*\iota^*_1\iota^*_2\tilde{\delta}\beta^*p^*(x')=\alpha^*\iota^*_1\iota^*_2\tilde{\delta}p^*\beta^*(x')=0.$$
We now suppose $\delta(x)=0$. Using (\ref{eqn:2ndlevelhomtop}), we see 
$$\tilde{\delta}\iota^*_2\beta^*(x)=
\iota^*_2\tilde{\delta}\beta^*(x)=
\iota_2^*\beta^*\alpha^*\iota^*_1\iota^*_2\tilde{\delta}\beta^*(x)=
\iota_2^*\beta^*\delta(x)=0.$$ By the exactness of (\ref{eqn:mvinrips}), $p^*(y)=\iota^*_2\beta^*(x)$ for some $y\in H^r_c(P_j(A_j))\oplus H^r_c(P_j(B_j))$. Let $z=\alpha^*\iota_1^*(y)\in H^r_c(A_\bullet)\oplus H^r_c(B_\bullet)$. Then $$p^*(z)=p^*\alpha^*\iota_1^*(y)=\alpha^*p^*\iota_1^*(y)=\alpha^*\iota_1^*p^*(y)=\alpha^*\iota_1^*\iota^*_2\beta^*(x)=x,$$ using (\ref{eqn:alphapcommute}) and the fact that $p^*$ and $\iota_1^*$ commute. Thus $\ker(\delta)=\mathrm{im}(p^*)$, so we have exactness at $H^r_c(W_\bullet)$.

(\ref{prop:mvseqbdrmap}): Let $r<n$.  Given an $r$-cycle $\sigma\in Z^r_c(P_k(W_k))$, we
 extend $\sigma$ to $\rho_\sigma\in C^r_c(P_k(A_k))$ by setting $\rho_\sigma$ to be zero outside of $P_k(W_k)$. We extend $\delta\rho_\sigma\in C^{r+1}_c(P_k(A_k))$ to $\omega_\sigma\in C^{r+1}_c(P_k(X))$ by  setting $\omega_\sigma$ to be zero outside $P_k(A_k)$. Since $p^\#(\rho_\sigma,0)=\sigma$ and  $q^\#(\omega_\sigma)=(\delta\rho_\sigma,0)$, we see that $\tilde \delta [\sigma]=[\omega_\sigma]$. 
It  follows that if $\gamma\in D^k_c$ is a cocycle supported in $K\subseteq W$, then $\alpha^\#\iota^\#_1\iota^\#_2\omega_{\beta^\# \gamma}$ is a cocycle representing $\delta[\gamma]$ and supported in $N^X_R(K)$, where $R$ depends only on $k$, $r_\alpha$ and $r_\beta$.

(\ref{prop:mvseqtwocomps}):  We first fix a cocycle $\gamma\in D^r_c$ and  some $s\geq 0$. We will show that  $\delta[\gamma]$ can be represented by two cocycles supported in $C_1\backslash N_s(W)$ and $C_2\backslash N_s(W)$ respectively. Defining $\omega_{\beta^\#\gamma}$ as above, we see that $q_{C_2}^\#\omega_{\beta^\#\gamma}=0$, where $q^\#_{C_2}:C^\bullet_c(P_k(X))\rightarrow C^\bullet_c(P_k(N_{n_k}(W)\cup C_2))$  is the restriction map.  Thus if $\omega_{\beta^\#\gamma}(\Delta)\neq 0$ for some $(r+1)$-simplex $\Delta\in C_{r+1}(P_k(X))$, then $\Delta$ intersects $C_1\backslash N_{n_k}(W)$. Hence by Lemma \ref{lem:nbhdcomp}, $\omega_{\beta^\#\gamma}$ is supported in $N_k(C_1\backslash N_{n_k}(W))\subseteq (C_1\cup N_{k+A}(W))\backslash N_{n_k-k}(W)$.

As in Remark \ref{rem:nklarge}, we can choose $n_k$  arbitrarily large while keeping $k$ and $\alpha$ fixed. Since $\mathrm{supp}(\omega_{\beta^\#\gamma})\subseteq (C_1\cup N_{k+A}(W))\backslash N_{n_k-k}(W)$ and $\alpha^\#\iota^\#_1\iota^\#_2\omega_{\beta^\# \gamma}$ is a cocycle representing $\delta[\gamma]$, we can choose $n_k$ large enough so that $\alpha^\#\iota^\#_1\iota^\#_2\omega_{\beta^\# \gamma}$ is supported in $C_1\backslash N_s(W)$. Reversing the roles of $C_1$ and $C_2$, we see that $\delta[\gamma]$ can be represented by a cocycle supported in $C_2\backslash N_s(W)$. 

For the converse, we keep $n_k$ fixed and  choose $s_0= r_\beta+n_k$. By Lemma \ref{lem:nbhdcomp}, we see $N_{r_\beta}(C_i\cup N_{n_k}(W))\subseteq C_i \cup N_{s_0}(W)$ for $i=1,2$.
Let $s\geq s_0$. By  Lemma \ref{lem:restrictzero} and our choice of $s$, we see that for any $\mu,\nu \in C^{n+1}_c$ supported in $C_1\backslash N_s(W)$ and $C_2\backslash N_s(W)$ respectively, then
$$q^\#_{C_1}\beta^\#\nu=q^\#_{C_2}\beta^\#\mu=p^\#_{C_1}q^\#_{C_1}\beta^\#\mu=0.$$

Suppose there exist modified cocycles $\mu,\nu\in C^{n+1}_c$,  supported in $C_1\backslash N_s(W)$ and  $C_2\backslash N_s(W)$ respectively, each representing $[\omega]\in \hat H^{n+1}_c[C_\bullet]$. Then there is a $\tau\in C^n_c$ such that $\delta\tau=\mu-\nu$.  
We see that $p^\#(q^\#_{C_1}\beta^\#\tau,0)=p^\#_{C_1}q^\#_{C_1}\beta^\#\tau$ and $$\delta(q^\#_{C_1}\beta^\#\tau,0)=(q^\#_{C_1}\beta^\#(\mu-\nu),0)=(q^\#_{C_1}\beta^\#\mu,-q^\#_{C_2}\beta^\#\mu)=q^\#\beta^\#\mu.$$ Since $\delta p^\#_{C_1}q^\#_{C_1}\beta^\#\tau=p^\#_{C_1}q^\#_{C_1}\beta^\#(\mu-\nu)=0$,  
$p^\#_{C_1}q^\#_{C_1}\beta^\#\tau$ is a cocycle with $\tilde \delta[p^\#_{C_1}q^\#_{C_1}\beta^\#\tau]=[\beta^\#\mu]=\beta^*[\omega]$. We therefore see that \begin{align*}
\delta(\alpha^*\iota_1^*\iota_2^*[p^\#_{C_1}q^\#_{C_1}\beta^\#\tau])&=\alpha^*\iota_1^*\tilde \delta\iota_2^* \beta^*\alpha^*\iota_1^*\iota_2^*[p^\#_{C_1}q^\#_{C_1}\beta^\#\tau]\\&=\alpha^*\iota_1^*\iota_2^*\tilde \delta[p^\#_{C_1}q^\#_{C_1}\beta^\#\tau]=\alpha^*\iota_1^*\iota_2^*\beta^*[\omega]=[\omega].\qedhere
\end{align*}
\end{proof}

\section{Constructing a Splitting}\label{sec:final}
\subsection{Essential Components}\label{sec:essential}
Throughout this section, we assume that $X$ is a $1$-geodesic, bounded geometry metric space that is coarsely uniformly $n$-acyclic over $\mathbb{Z}_2$, and that $W\subseteq X$ is a coarse $PD_n^{\mathbb{Z}_2}$ space. We assume all metric complexes are $\mathbb{Z}_2$-metric complexes and that  cohomology is taken with coefficients in $\mathbb{Z}_2$.

The following definition makes sense in light of Proposition \ref{prop:compcoarseacycl} which shows that if $C$ is a coarse complementary component of $W,$ then $C\cup W$ is coarsely uniformly $n$-acyclic; hence we can define the coarse cohomology $H^k_\mathrm{coarse}(C\cup W)$ for $k\leq n+1$.
\begin{defn}
 A coarse complementary component $C\subseteq X$ of $W$ is said to be \emph{essential} if the map $H^n_\mathrm{coarse}(C\cup W)\rightarrow H^n_\mathrm{coarse}(W)$, induced by the inclusion $W\rightarrow C\cup W$, is zero.
\end{defn}
The following proposition shows that essential coarse complementary components are preserved under coarse isometries.
\begin{prop}\label{prop:essentialqiinv}
Suppose $C\subseteq X$ is an essential coarse complementary component of $W$ and $f:X\rightarrow Y$ is a coarse isometry with $N^Y_B(f(X))=Y$. Then $N_B^Y(f(C))$ is an essential coarse complementary component of $f(W)$.
\end{prop}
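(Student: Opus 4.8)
The plan is to transport the defining property of an essential component along $f$, using the coarse‑isometry invariance of coarse cohomology (Proposition \ref{prop:coarsecohommap}). Set $W':=f(W)$ and $C':=N_B^Y(f(C))$, so that $C'\cup W'=N_B^Y(f(C))\cup f(W)$. First I would invoke Lemma \ref{lem:compcompcoarseinv} to see that $C'$ is a coarse complementary component of $W'$ in $Y$, so that calling it \emph{essential} makes sense. Next I would check that the coarse cohomology groups in play are defined. The space $W$ is coarsely uniformly $n$‑acyclic over $\mathbb{Z}_2$, being a coarse $PD_n^{\mathbb{Z}_2}$ space; the map $g:=f|_W\colon W\to W'$ is a coarse embedding which is onto $W'$, hence a coarse isometry, so $W'$ is coarsely uniformly $n$‑acyclic by Proposition \ref{prop:unifacyccoarseinv}; the union $C\cup W$ is coarsely uniformly $n$‑acyclic by Proposition \ref{prop:compcoarseacycl} (as in the discussion preceding the definition of essential component); and $C'\cup W'$ lies between $f(C\cup W)=f(C)\cup f(W)$ and its $B$‑neighbourhood $N_B^Y(f(C\cup W))$, hence is coarsely isometric to $f(C\cup W)$ and so to $C\cup W$, whence $C'\cup W'$ is coarsely uniformly $n$‑acyclic by a further appeal to Proposition \ref{prop:unifacyccoarseinv}. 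Thus $H^n_{\mathrm{coarse}}(W)$, $H^n_{\mathrm{coarse}}(W')$, $H^n_{\mathrm{coarse}}(C\cup W)$ and $H^n_{\mathrm{coarse}}(C'\cup W')$ are all defined.

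The heart of the argument is a single naturality square. Let $\tilde f\colon C\cup W\to C'\cup W'$ be the restriction of $f$ (its image $f(C)\cup f(W)$ indeed lies in $C'\cup W'$). Each of $g$ and $\tilde f$ is a coarse embedding that is coarsely onto its target — $g$ is onto $W'$, and the image of $\tilde f$ is $B$‑dense in $C'\cup W'$ — hence a coarse isometry, so Proposition \ref{prop:coarsecohommap} yields isomorphisms $g^*$ and $\tilde f^*$ on $H^n_{\mathrm{coarse}}(-;\mathbb{Z}_2)$. Writing $\iota\colon W\hookrightarrow C\cup W$ and $\iota'\colon W'\hookrightarrow C'\cup W'$ for the inclusions, one has the equality of maps $\tilde f\circ\iota=\iota'\circ g$, since both send $w\mapsto f(w)$; hence, by functoriality (Proposition \ref{prop:coarsecohommap}(3)), $\iota^*\circ\tilde f^*=g^*\circ(\iota')^*$ as homomorphisms $H^n_{\mathrm{coarse}}(C'\cup W')\to H^n_{\mathrm{coarse}}(W)$.

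Finally, the hypothesis that $C$ is essential says exactly that $\iota^*=0$, so the left‑hand side above vanishes and we obtain $g^*\circ(\iota')^*=0$; since $g^*$ is an isomorphism, this forces $(\iota')^*=0$, which is precisely the assertion that $C'=N_B^Y(f(C))$ is an essential coarse complementary component of $f(W)$. I do not expect a genuine obstacle: the argument is a routine transport of structure along a coarse isometry. The only points deserving care are the verifications in the first paragraph — chiefly that $N_B^Y(f(C))\cup f(W)$ carries a well‑defined degree‑$n$ coarse cohomology, which I arrange via the Hausdorff‑distance comparison with $f(C\cup W)$ rather than through Proposition \ref{prop:compcoarseacycl} directly (as $Y$ need not be assumed $1$‑geodesic) — and keeping track of which subspace metrics make the restrictions of $f$ into genuine coarse isometries.
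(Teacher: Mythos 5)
Your proof is correct and takes essentially the same approach as the paper: a naturality square transporting the vanishing of $\iota^*$ along coarse isometries, with the paper phrasing the same argument at the chain level via Lemma~\ref{lem:extendmapsunif} while you invoke the packaged Proposition~\ref{prop:coarsecohommap}. Your extra care in justifying that $H^n_{\mathrm{coarse}}(C'\cup W')$ is defined — via the Hausdorff-distance comparison with $f(C\cup W)$ rather than Proposition~\ref{prop:compcoarseacycl} — is a worthwhile observation, since the paper simply asserts the existence of the uniformly $n$-acyclic complex $(N_B^Y(f(C))\cup f(W),A'_\bullet)$ without remark.
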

\begin{proof}
It follows from Lemma \ref{lem:compcompcoarseinv} that $N_B^Y(f(C))$ is a coarse complementary component of $f(W)$, so we need only show that it is essential. Let $(W,D_\bullet)$, $(f(W),D'_\bullet)$, $(C\cup W,A_\bullet)$ and $(N_B^Y(f(C))\cup f(W),A'_\bullet)$ be uniformly $n$-acyclic metric complexes. Using Lemma \ref{lem:extendmapsunif}, we define proper chain maps 
$$\begin{CD} 
[D_\bullet]_{n+1} @>a_\#>> [A_\bullet]_{n+1}\\
@Vp_\#VV @Vq_\#VV\\
[D'_\bullet]_{n+1} @>a'_\#>> [A'_\bullet]_{n+1}
\end{CD}$$
where $a_\#$ and $a'_\#$ have finite displacement over the inclusions $W\rightarrow C\cup W$ and $f(W)\rightarrow N_B^Y(f(C))\cup f(W)$ respectively, and $p_\#$ and $q_\#$ have finite displacement over $f|_W$ and $f|_{C\cup W}$ respectively. By Lemma \ref{lem:extendmapsunif}, we see that $q_\#a_\#$ is properly chain homotopic to $a'_\#p_\#$. Since $C$ is essential, the map $a^*: H^n_c(A_\bullet)\rightarrow H^n_c(D_\bullet)$ is zero, therefore $a^*q^*=p^*(a')^*: H^n_c(A'_\bullet)\rightarrow H^n_c(D_\bullet)$ is also zero.  As $f|_W:W\rightarrow f(W)$ is a coarse isometry,  $p^*$ is an isomorphism and so $(a')^*$ is zero; hence  $N_B(f(C))$ is essential.
\end{proof}

\begin{prop}\label{prop:essentialbasicprops}
Suppose $C$ is a subspace of $X$ such that $C\cup W$ is coarsely uniformly $n$-acyclic and  $H^n_\mathrm{coarse}(C\cup W)\rightarrow H^n_\mathrm{coarse}(W)$, induced by inclusion, is zero.  Then:
\begin{enumerate}
\item \label{prop:essentialbasicprops_deep} for every $R\geq 0$, $C$ is not contained in $N^X_R(W)$;
\item \label{prop:essentialbasicprops_goingup} if $D$ is a coarse complementary component of $W$ and $C\subseteq D$, then $D$ is essential;
\item \label{prop:essentialbasicprops_goingdown} if there exist coarse complementary components $C_1$ and $C_2$ of $W$ such that $C_1\cup C_2=C$, then at least one of $C_1$ or $C_2$ is essential.
\end{enumerate}
\end{prop}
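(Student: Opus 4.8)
\emph{Proof proposal.} The plan is to establish (1) and (2) by soft functoriality arguments, and then to deduce (3) from (2) by a coarse Mayer--Vietoris argument. For (1): if $C\subseteq N^X_R(W)$ then $W\subseteq C\cup W\subseteq N^X_R(W)$, so $d_{\mathrm{Haus}}(W,C\cup W)<\infty$ and the inclusion $\iota\colon W\hookrightarrow C\cup W$ is a coarse isometry; by Proposition~\ref{prop:coarsecohommap} the induced map $\iota^*\colon H^n_{\mathrm{coarse}}(C\cup W)\to H^n_{\mathrm{coarse}}(W)$ is then an isomorphism, contradicting the hypothesis that it is zero -- provided $H^n_{\mathrm{coarse}}(W)\ne 0$. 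To see the latter, let $(W,C_\bullet)$ be a coarse $PD^{\mathbb{Z}_2}_n$ complex; coarse Poincar\'e duality gives a chain equivalence $C^{n-\bullet}_c\simeq C_\bullet$, whence $H_n(C_\bullet)\cong H^0_c(C_\bullet)=0$ (the last equality because $W$ is infinite), so over the field $\mathbb{Z}_2$ we get $H^n(C_\bullet)\cong\mathrm{Hom}_{\mathbb{Z}_2}(H_n(C_\bullet),\mathbb{Z}_2)=0$; together with Remark~\ref{rem:augmentation} this yields $H^n_{\mathrm{coarse}}(W)=\hat H^n_c(C_\bullet)=H^n_c(C_\bullet)\cong\mathbb{Z}_2$. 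For (2): since $C\subseteq D$, the inclusion $W\hookrightarrow D\cup W$ factors through $C\cup W$, and all three spaces are coarsely uniformly $n$-acyclic by Proposition~\ref{prop:compcoarseacycl}; by functoriality (Proposition~\ref{prop:coarsecohommap}) the map $H^n_{\mathrm{coarse}}(D\cup W)\to H^n_{\mathrm{coarse}}(W)$ factors through $H^n_{\mathrm{coarse}}(C\cup W)\to H^n_{\mathrm{coarse}}(W)$, which is zero by hypothesis, so $D$ is essential.

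For (3), I would first reduce to the case that $C_1$, $C_2$ and $W$ are pairwise disjoint: replacing $C_1$ by $C_1\setminus W$ and $C_2$ by $C_2\setminus(W\cup C_1)$ leaves $C_1\cup W$ unchanged, keeps $C_2\setminus(W\cup C_1)\subseteq C_2$ (so that, by part (2), it suffices to prove essentiality of the modified components), preserves $C_1\cup C_2\cup W=C\cup W$, and still gives coarse complementary components of $W$ -- here one uses that $W$ and $X\setminus W$, hence all Boolean combinations of coarse complementary components of $W$, are again coarse complementary components of $W$. After this reduction put $Z:=C\cup W$; it is coarsely uniformly $n$-acyclic by hypothesis, and remetrising it by a Rips graph (Propositions~\ref{prop:coarsegeodacyc} and~\ref{prop:coarsegeodmetr}) we may assume $Z$ is $1$-geodesic. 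Since $W$ is coarsely uniformly $n$-acyclic, Proposition~\ref{prop:mvseq} with $n$ replaced by $n+1$, applied to $Z$ decomposed via $C_1$ and $Z\setminus C_1=C_2\cup W$, produces an exact sequence whose relevant segment -- after identifying $H^n_c=\hat H^n_c=H^n_{\mathrm{coarse}}$ using Lemma~\ref{lem:modifvsordin} and the canonicity of coarse cohomology -- is
\[ H^n_{\mathrm{coarse}}(C\cup W)\xrightarrow{\ q^*\ }H^n_{\mathrm{coarse}}(C_1\cup W)\oplus H^n_{\mathrm{coarse}}(C_2\cup W)\xrightarrow{\ p^*\ }H^n_{\mathrm{coarse}}(W), \]
where $q^*=(j_1^*,j_2^*)$ for the inclusions $j_i\colon C_i\cup W\hookrightarrow C\cup W$ and $p^*(a,b)=\iota_1^*a+\iota_2^*b$ for the inclusions $\iota_i\colon W\hookrightarrow C_i\cup W$.

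Now suppose, for contradiction, that neither $C_1$ nor $C_2$ is essential. Then each $\iota_i^*$ is nonzero, hence surjective onto $H^n_{\mathrm{coarse}}(W)\cong\mathbb{Z}_2$; choose $a_i$ with $\iota_i^*(a_i)$ equal to the generator. Over $\mathbb{Z}_2$ we have $p^*(a_1,a_2)=\iota_1^*a_1+\iota_2^*a_2=0$, so $(a_1,a_2)\in\ker p^*=\mathrm{im}\,q^*$; write $(a_1,a_2)=q^*(x)$. The inclusion-induced map $H^n_{\mathrm{coarse}}(C\cup W)\to H^n_{\mathrm{coarse}}(W)$ factors as $W\hookrightarrow C_1\cup W\hookrightarrow C\cup W$, i.e. it equals $\iota_1^*\circ j_1^*$, so it sends $x$ to $\iota_1^*(a_1)\ne 0$, contradicting the standing hypothesis on $C$. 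Hence one of $C_1$, $C_2$ is essential. The main obstacle is this last part: one must produce the coarse Mayer--Vietoris sequence in a range of degrees one higher than in Proposition~\ref{prop:mvseq} as literally stated -- which is exactly why the $n$-acyclicity hypotheses, rather than merely $(n-1)$-acyclicity, are assumed -- while tracking that the maps involved are the natural inclusion-induced ones; the disjointness reduction is what makes the intersection of the two halves of the decomposition equal to $W$.
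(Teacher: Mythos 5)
Your proof is correct and follows the same three-part strategy as the paper's: a coarse-isometry argument for (1), functoriality of coarse cohomology for (2), and the coarse Mayer--Vietoris sequence from Proposition~\ref{prop:mvseq} applied in the ambient space $C\cup W$ for (3). The two places where you go beyond the paper --- spelling out $H^n_{\mathrm{coarse}}(W)\cong\mathbb{Z}_2$ via Poincar\'e duality and universal coefficients in (1), and explicitly reducing to disjoint $C_1$, $C_2$ (using part (2) and the Boolean-algebra structure of coarse complementary components) before invoking the Mayer--Vietoris sequence in (3) --- are exactly the details the paper elides, and both are sound.
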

\begin{proof}
(\ref{prop:essentialbasicprops_deep}): If $C\subseteq N^X_R(W)$ for some $R\geq 0$, then  the inclusion $W\rightarrow C\cup W$ is a coarse isometry, so induces an isomorphism $H^n_\mathrm{coarse}(C\cup W)\rightarrow H^n_\mathrm{coarse}(W)\cong \mathbb{Z}_2$. 

(\ref{prop:essentialbasicprops_goingup}): Let $f:W\rightarrow C\cup W$ and $g:C\cup W\rightarrow D\cup W$ be inclusions. The composition $H^n_\mathrm{coarse}(D\cup W)\xrightarrow{g^*} H^n_\mathrm{coarse}(C\cup W)\xrightarrow{f^*} H^n_\mathrm{coarse}(W)$ is zero, since $f^*$ is zero. Therefore $(gf)^*=f^*g^*=0$, so $D$ is essential.

(\ref{prop:essentialbasicprops_goingdown}): For $i=1,2$, let $a_i:W\rightarrow W\cup C_i$ and $b_i:W\cup C_i\rightarrow W\cup C$ be inclusions. The relevant terms in the coarse Mayer--Vietoris sequence from Proposition \ref{prop:mvseq} are $$\resizebox{1.0 \textwidth}{!}{$\cdots \rightarrow H^n_\mathrm{coarse}(C\cup W)\xrightarrow{b^*=b_1^*-b_2^*} H^n_\mathrm{coarse}(C_1\cup W)\oplus H^n_\mathrm{coarse}(C_2\cup W)\xrightarrow{a^*=a_1^*+a_2^*} H^n_\mathrm{coarse}(W)\rightarrow \cdots.$}$$
If both $a_1^*$ and $a_2^*$ are non-zero, then as $H^n_\mathrm{coarse}(W)\cong \mathbb{Z}_2$, there exist $x,y$ such that $a_1^*(x)=a_2^*(y)=1$. As $a^*(x,-y)=a_1^*(x)-a_2^*(y)=0$, there exists a $z \in H^n_\mathrm{coarse}(C\cup W)$ such that $b^*(z)=(b_1^*(z),-b_2^*(z))=(x,-y)$. Therefore $(b_1a_1)^*(z)=a_1^*b_1^*(z)=a_1^*(x)=1$, contradicting our hypothesis on $C$.
\end{proof}

\begin{rem}\label{rem:essentacycl}
If $C\cup W$ is $(n-1)$-acyclic at infinity over $\mathbb{Z}_2$, then Theorem \ref{prop:topatinf} tells us that $H^n_\mathrm{coarse}(C\cup W)=0$, and so the conclusions of Proposition \ref{prop:essentialbasicprops} hold.
\end{rem}

\begin{defn}
Let $C$ be a bounded geometry metric space and $W\subseteq C$ be a coarse $PD_n^{\mathbb{Z}_2}$ space. We say that $(C,W)$ is a \emph{coarse $PD_{n+1}^{\mathbb{Z}_2}$ half-space} if there exists a coarse $PD_{n+1}^{\mathbb{Z}_2}$ space $Y$ with $C\subseteq Y,$ such that $C$ and $Y\backslash C$ are both deep coarse complementary components of $W$ in $Y$.
\end{defn}

The coarse Jordan separation theorem \cite[Corollary 7.8]{kapovich2005coarse} states that if $f:W\rightarrow Y$ is a coarse embedding of a coarse $PD_n^{\mathbb{Z}_2}$ space into a coarse $PD_{n+1}^{\mathbb{Z}_2}$ space, then $Y$ contains two disjoint coarse complementary components of $f(W)$ that are coarse $PD_{n+1}^{\mathbb{Z}_2}$ half-spaces. 

\begin{prop}\label{prop:pdnhalf}
Let $W\subseteq C\subseteq X$ and suppose $(C,W)$ is a coarse $PD_{n+1}^{\mathbb{Z}_2}$ half-space. Then the map $H^n_\mathrm{coarse}(C\cup W)\rightarrow H^n_\mathrm{coarse}(W)$, induced by inclusion, is zero.
\end{prop}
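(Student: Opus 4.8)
The plan is to apply the coarse Mayer--Vietoris sequence of Proposition \ref{prop:mvseq} inside the ambient coarse $PD_{n+1}^{\mathbb{Z}_2}$ space $Y$ and to show that the relevant connecting homomorphism is nonzero by localising the fundamental class at infinity of $Y$. We may assume $n\geq 1$, since for $n=0$ we have $H^n_\mathrm{coarse}(W)=H^0_\mathrm{coarse}(W)=0$ by Remark \ref{rem:0thcohom} and there is nothing to prove. Let $Y$ be a coarse $PD_{n+1}^{\mathbb{Z}_2}$ space with $C\subseteq Y$ such that $C_1:=C$ and $C_2:=Y\backslash C$ are deep coarse complementary components of $W$ in $Y$; since $W\subseteq C$ we have $C\cup W=C$, and after remetrising $Y$ as in the proof of Lemma \ref{lem:finindex} we may assume $Y$ is $1$-geodesic. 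As $Y$ and $W$ are coarsely uniformly $n$-acyclic over $\mathbb{Z}_2$ (being coarse $PD_{n+1}^{\mathbb{Z}_2}$ and coarse $PD_n^{\mathbb{Z}_2}$ respectively), I would apply Proposition \ref{prop:mvseq} to $Y$, $W$, $C_1$, $C_2$ with its parameter ``$n$'' replaced by $n+1$. Identifying the modified cohomology with compact supports of the resulting metric complexes with coarse cohomology in degrees at most $n+1$ (Definition \ref{def:coarsecohom}, Lemma \ref{lem:modifvsordin}), the relevant part of the exact sequence is
\[
H^n_\mathrm{coarse}(Y)\xrightarrow{q^*}H^n_\mathrm{coarse}(C_1\cup W)\oplus H^n_\mathrm{coarse}(C_2\cup W)\xrightarrow{p^*}H^n_\mathrm{coarse}(W)\xrightarrow{\delta}H^{n+1}_\mathrm{coarse}(Y),
\]
and by Proposition \ref{prop:coarsecohommap} the restriction of $p^*$ to the summand $H^n_\mathrm{coarse}(C_1\cup W)$ is exactly the map induced by the inclusion $W\hookrightarrow C\cup W$. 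So it suffices to prove $p^*=0$.

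Computing with a coarse $PD_n^{\mathbb{Z}_2}$ complex for $W$ and a coarse $PD_{n+1}^{\mathbb{Z}_2}$ complex for $Y$, Poincar\'e duality together with uniform acyclicity and the universal coefficient theorem over $\mathbb{Z}_2$ gives $H^k_\mathrm{coarse}(Y)=0$ for all $k\leq n$, $H^n_\mathrm{coarse}(W)\cong\mathbb{Z}_2$ and $H^{n+1}_\mathrm{coarse}(Y)\cong\mathbb{Z}_2$. In particular $q^*=0$, so $p^*$ is injective; hence $p^*=0$ will follow once I show that $\delta\colon\mathbb{Z}_2\to\mathbb{Z}_2$ is nonzero, since then $\ker\delta=0=\mathrm{im}(p^*)$.

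To prove $\delta\neq0$ I would use Proposition \ref{prop:mvseq}(\ref{prop:mvseqtwocomps}): it is enough to exhibit, for some $s\geq s_0(Y,W)$, a generator of $H^{n+1}_\mathrm{coarse}(Y)$ represented both by a modified cocycle supported in $C_1\backslash N_s(W)$ and by a modified cocycle supported in $C_2\backslash N_s(W)$. Fix a coarse $PD_{n+1}^{\mathbb{Z}_2}$ complex $(Y,C'_\bullet)$ with duality chain map $\bar P$; it sends each $0$-cell $\Delta\in\Sigma'_0$ to a cocycle $\bar P\Delta$ in degree $n+1$ with finite support contained in a bounded neighbourhood of $p'_0(\Delta)$. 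Since $\bar P$ induces the duality isomorphism $H_0(C'_\bullet)\xrightarrow{\cong}H^{n+1}_c(C'_\bullet)$ and $[\Delta]$ generates $H_0(C'_\bullet)\cong\mathbb{Z}_2$, the class $[\bar P\Delta]$ generates $H^{n+1}_c(C'_\bullet)$. Because $C_1$ and $C_2$ are deep, I can pick $0$-cells $\Delta_1,\Delta_2$ projecting respectively into $C_1$ and $C_2$ arbitrarily far from $W$; as $\Delta_1+\Delta_2$ has zero augmentation, uniform acyclicity gives $\Delta_1+\Delta_2=\partial\rho$ for some $\rho\in C'_1$, so $\bar P\Delta_1+\bar P\Delta_2=\delta(\bar P\rho)$ and $[\bar P\Delta_1]=[\bar P\Delta_2]$ is a single generator. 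Transporting this class across the canonical isomorphism $\hat H^{n+1}_c(C'_\bullet)\cong\hat H^{n+1}_c(C_\bullet)$ induced by a proper chain map $C_\bullet\to C'_\bullet$ of finite displacement over $\mathrm{id}_Y$ changes the two support sets by only a bounded amount; using Lemma \ref{lem:nbhdcomp} and choosing the base points $p'_0(\Delta_i)$ far enough from $W$, the transported representatives stay supported in $C_1\backslash N_s(W)$ and $C_2\backslash N_s(W)$ for a suitable $s\geq s_0(Y,W)$, and one checks they are still modified cocycles because $C'_\bullet$ is a Poincar\'e duality complex. Then Proposition \ref{prop:mvseq}(\ref{prop:mvseqtwocomps}) shows this generator lies in $\mathrm{im}(\delta)$, so $\delta\neq0$.

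The main obstacle will be the last step: keeping the supports of the two localised dual cocycles inside $C_1\backslash N_s(W)$ and $C_2\backslash N_s(W)$ after transporting them from the Poincar\'e duality complex $(Y,C'_\bullet)$ to the metric complex $(Y,C_\bullet)$ produced by Proposition \ref{prop:mvseq}. This is exactly where one trades the depth of $C_1$ and $C_2$ --- so that their points can be taken arbitrarily far from $W$ --- against the finite displacement of the transport chain map and of $\bar P$, so that the enlarged supports, estimated via Lemma \ref{lem:nbhdcomp}, still avoid a large neighbourhood of $W$.
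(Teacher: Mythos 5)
Your proof follows essentially the same route as the paper: apply the coarse Mayer--Vietoris sequence of Proposition \ref{prop:mvseq} inside the ambient coarse $PD_{n+1}^{\mathbb{Z}_2}$ space $Y$, use the duality map to represent the generator of $H^{n+1}_{\mathrm{coarse}}(Y)$ by cocycles supported deep inside each of $C$ and $Y\setminus C$, and conclude from Proposition \ref{prop:mvseq}(\ref{prop:mvseqtwocomps}) that the connecting map is nonzero and hence that the map to $H^n_\mathrm{coarse}(W)$ vanishes. The only real difference is technical: rather than transporting the localised fundamental class from a separate $PD_{n+1}^{\mathbb{Z}_2}$ complex $(Y,C'_\bullet)$ to the Mayer--Vietoris complex $(Y,C_\bullet)$, the paper invokes Proposition \ref{prop:pdncoarseisom} to see that the metric complex produced by Proposition \ref{prop:mvseq} is itself already a coarse $PD_{n+1}^{\mathbb{Z}_2}$ complex, which removes the transport step entirely and with it the obstacle you flag at the end.
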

\begin{proof}
Since $(C,W)$ is a coarse $PD_{n+1}^{\mathbb{Z}_2}$ half-space, there exists a coarse $PD_{n+1}^{\mathbb{Z}_2}$ space $Y$ with $C\subseteq Y$, such that $C$ and $C':=Y\backslash C$ are deep, coarse disjoint, coarse complementary components of $W$. Proposition \ref{prop:mvseq} then tells us that there is an exact sequence $$\cdots \rightarrow H^n_\mathrm{coarse}(C\cup W)\oplus H^n_\mathrm{coarse}(C'\cup W)\rightarrow H^n_\mathrm{coarse}(W)\rightarrow H^{n+1}_\mathrm{coarse}(Y)\rightarrow \cdots.$$ 
We pick a metric complex $(Y,D_\bullet)$ such that Proposition \ref{prop:mvseq} holds; by Proposition \ref{prop:pdncoarseisom}, $(Y,D_\bullet)$ is a coarse $PD_{n+1}^{\mathbb{Z}_2}$ complex. Using the duality  map, we observe that there is a number $R$ such that for each $y\in Y$, the non-trivial class $[\sigma]$ of  $H^{n+1}_c(D_\bullet)\cong \mathbb{Z}_2$ can be represented by a cocycle supported in $N^Y_R(y)$. Thus we can represent $[\sigma]$ by cocycles supported in $C\backslash N_s^Y(W)$ and $C'\backslash N_s^Y(W)$ for any $s\geq 0$. Then Part (\ref{prop:mvseqtwocomps}) of Proposition \ref{prop:mvseq} tells us that $H^n_\mathrm{coarse}(W)\rightarrow H^{n+1}_\mathrm{coarse}(Y)$ is non-zero, hence an isomorphism. Therefore the map $H^n_\mathrm{coarse}(C\cup W)\rightarrow H^n_\mathrm{coarse}(W)$, induced by inclusion, is zero.
\end{proof}

\begin{cor}\label{cor:pdnhalf}
Suppose $D\subseteq X$ is a coarse complementary component of $W$ and there exists a coarse $PD_{n+1}^{\mathbb{Z}_2}$ half-space $(C,W)$ such that $C\subseteq D$. Then $D$ is essential.
\end{cor}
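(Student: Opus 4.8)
The plan is to deduce the corollary directly from Proposition~\ref{prop:pdnhalf} together with part~(\ref{prop:essentialbasicprops_goingup}) of Proposition~\ref{prop:essentialbasicprops}. There is no serious obstacle: the one thing that needs checking is that the subspace $C$ satisfies the coarse uniform acyclicity hypothesis appearing in Proposition~\ref{prop:essentialbasicprops}.

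To verify this, I would argue as follows. Since $(C,W)$ is a coarse $PD_{n+1}^{\mathbb{Z}_2}$ half-space, there is a coarse $PD_{n+1}^{\mathbb{Z}_2}$ space $Y$ with $C\subseteq Y$ such that $C$ is a deep coarse complementary component of $W$ in $Y$. Being the control space of a uniformly acyclic metric complex, $Y$ is coarsely uniformly acyclic over $\mathbb{Z}_2$; the same holds for the coarse $PD_n^{\mathbb{Z}_2}$ space $W$. As $Y$ is coarsely uniformly $0$-acyclic it is coarse geodesic by Proposition~\ref{prop:coarsegeodacyc}, so after identifying $Y$ with the $0$-skeleton of $P^1_s(Y)$ for $s$ sufficiently large we may assume $Y$ is $1$-geodesic. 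Proposition~\ref{prop:compcoarseacycl}, applied to $Y$, $W$ and the coarse complementary component $C$ with $n+1$ in place of $n$, then shows that $C\cup W$ is coarsely uniformly $n$-acyclic over $\mathbb{Z}_2$; since $W\subseteq C$ we have $C\cup W=C$, and by Proposition~\ref{prop:unifacyccoarseinv} this property is unaffected by replacing the metric by the one $C$ carries as a subspace of $X$.

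Finally, Proposition~\ref{prop:pdnhalf} tells us that the map $H^n_\mathrm{coarse}(C\cup W)\rightarrow H^n_\mathrm{coarse}(W)$ induced by inclusion is zero. Hence $C$ is a subspace of $X$ meeting both hypotheses of Proposition~\ref{prop:essentialbasicprops}, and since $D$ is a coarse complementary component of $W$ with $C\subseteq D$, part~(\ref{prop:essentialbasicprops_goingup}) of that proposition yields that $D$ is essential. The only mildly delicate step is the acyclicity bookkeeping in the middle paragraph; everything else is an immediate appeal to the cited results.
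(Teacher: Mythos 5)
Your proof takes the same route as the paper: the paper's entire argument is one sentence, ``We apply Proposition \ref{prop:pdnhalf}, followed by Part (\ref{prop:essentialbasicprops_goingup}) of Proposition \ref{prop:essentialbasicprops}.'' You do exactly this, and additionally spell out the check that $C\cup W=C$ is coarsely uniformly $n$-acyclic (so that Proposition \ref{prop:essentialbasicprops} actually applies), a verification the paper leaves implicit; your bookkeeping via Propositions \ref{prop:coarsegeodacyc}, \ref{prop:compcoarseacycl} and \ref{prop:unifacyccoarseinv} is sound and a reasonable thing to include.
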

\begin{proof}
We apply Proposition \ref{prop:pdnhalf}, followed by Part (\ref{prop:essentialbasicprops_goingup}) of Proposition \ref{prop:essentialbasicprops} .
\end{proof}

The following proposition shows that an essential coarse complementary component of $W$ is  `uniformly close' to every point of $W$.

\begin{prop}\label{prop:deepcondition}
For every  $A\geq 0$, there is a number $B=B(X,W,A)$ such that whenever $C\subseteq X$ is an essential $(1,A)$-coarse  complementary component of $W,$ then $W\subseteq N_B(C\backslash N_A(W))$. 
\end{prop}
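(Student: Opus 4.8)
The plan is to argue by contradiction, exploiting the coarse Poincar\'e duality of $W$ and the coarse Mayer--Vietoris sequence of Proposition~\ref{prop:mvseq}. Suppose no such $B$ exists. Then for each $m\in\mathbb{N}$ there is an essential $(1,A)$-coarse complementary component $C_m$ of $W$ and a point $w_m\in W$ with $d(w_m,C_m\backslash N_A(W))> m$; that is, $N_m^X(w_m)$ misses $C_m\backslash N_A(W)$ entirely. Fixing a metric complex $(W,D_\bullet)$ realising Proposition~\ref{prop:mvseq} and recalling from Remark~\ref{rem:augmentation} that the coarse $PD_n^{\mathbb{Z}_2}$ complex structure on $W$ gives $H^n_c(D_\bullet)\cong\mathbb{Z}_2$ via the augmentation $\alpha$, the crucial point is that duality lets us represent the generator of $H^n_{\mathrm{coarse}}(W)$ by cocycles supported in arbitrarily small neighbourhoods: applying the duality map $\bar P$ to the $0$-chain at $w_m$ produces a modified cocycle $\gamma_m\in D^n_c$ with $\alpha(\gamma_m)=1$ (hence $[\gamma_m]$ generates $H^n_{\mathrm{coarse}}(W)$, using Lemma~\ref{lem:modifvsordin} and that $H^n_c\cong \widehat{H}^n_c$ here) and with $\operatorname{supp}(\gamma_m)\subseteq N^X_{R_0}(w_m)$ for some fixed $R_0$ depending only on the displacement of $\bar P$, $\Phi$ and the metric complexes --- i.e. on $X$, $W$ alone.

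Now fix $m$ large relative to the constant $s_0=s_0(X,W,A)$ from Part~(\ref{prop:mvseqtwocomps}) of Proposition~\ref{prop:mvseq} and relative to $R_0$; concretely take $m> s_0+R_0+A$. Since $C_m$ is essential, the map $H^n_{\mathrm{coarse}}(C_m\cup W)\to H^n_{\mathrm{coarse}}(W)$ is zero, so by the exactness of the coarse Mayer--Vietoris sequence for the decomposition $X = (C_m\cup W)\cup (C_m'\cup W)$, where $C_m':=X\backslash C_m$, the class $[\gamma_m]\in H^n_{\mathrm{coarse}}(W)$ lies in the image of the connecting map $\delta\colon H^n_{\mathrm{coarse}}(W)\to \widehat H^{n+1}_c(C_\bullet)$ --- wait, more carefully: essentiality of $C_m$ forces $a_1^*=0$ in the sequence
\[
\cdots\to H^n_{\mathrm{coarse}}(C_m\cup W)\oplus H^n_{\mathrm{coarse}}(C_m'\cup W)\xrightarrow{a^*} H^n_{\mathrm{coarse}}(W)\xrightarrow{\delta}\widehat H^{n+1}_{\mathrm{coarse}}(X)\to\cdots,
\]
and by Part~(\ref{prop:essentialbasicprops_goingdown}) of Proposition~\ref{prop:essentialbasicprops} (or directly) at least one of the two components is essential; since $[\gamma_m]$ is the generator and $a^*$ restricted to the $C_m\cup W$-summand is zero, the generator is \emph{not} killed unless $a_2^*$ hits it, in which case $C_m'$ too would have to be examined --- the clean route is: $[\gamma_m]\notin\operatorname{im}(a^*)$ would give $\delta[\gamma_m]\neq 0$, and I will instead use that $\delta[\gamma_m]=0$ because $\widehat H^{n+1}_{\mathrm{coarse}}(X)$ need not vanish, so I must argue differently.

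Here is the cleaner argument, which is the heart of the proof. Because $\operatorname{supp}(\gamma_m)\subseteq N^X_{R_0}(w_m)$ and $N^X_m(w_m)$ is disjoint from $C_m\backslash N_A(W)$, the cocycle $\gamma_m$ is supported in $W\backslash\bigl(C_m\backslash N_A(W)\bigr)$, hence (enlarging constants) in $C_m'\cup N_{A+R_0}^X(W)$ and away from deep parts of $C_m$; in particular $\gamma_m$ is supported in $C_m'\backslash N_s^X(W)$-type regions after we push it. The key obstruction, and the main technical step, is to show that essentiality of $C_m$ combined with the support control on $\gamma_m$ via Part~(\ref{prop:mvseqbdrmap}) and Part~(\ref{prop:mvseqtwocomps}) of Proposition~\ref{prop:mvseq} forces a contradiction: applying $\delta$ to $[\gamma_m]$ and noting $\gamma_m$ can be represented by a cocycle supported deep inside $C_m'$ but, by duality plus the fact that $w_m\in W$, \emph{cannot} simultaneously be represented by one supported deep inside $C_m$ (this is where essentiality bites, since $a_1^*[\text{something}]=[\gamma_m]$ is exactly the statement that would be needed and is excluded). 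Thus I would: (1) build $\gamma_m$ via duality with the stated uniform support bound $R_0$; (2) show $[\gamma_m]$ generates $H^n_{\mathrm{coarse}}(W)\cong\mathbb{Z}_2$; (3) observe that, because $\operatorname{supp}(\gamma_m)$ avoids the deep part of $C_m$, the class $b_1^*(\text{lift})$ realising $[\gamma_m]$ through $H^n_{\mathrm{coarse}}(C_m\cup W)$ would be nonzero, giving $a_1^*\ne 0$ and contradicting essentiality of $C_m$. The only subtlety is making the support bookkeeping uniform in $m$ --- but all the constants ($R_0$, $s_0$, $r_\alpha$, $r_\beta$, the coarse $PD_n$ displacement data for $W$) depend only on $X$, $W$ and $A$, exactly as the statement requires, so once the logical skeleton is in place the constant $B$ is assembled as an explicit function of these. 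The main obstacle is precisely the last bookkeeping: correctly tracking which neighbourhood of $W$ the duality-produced cocycle lives in after passing through the maps $\alpha_{C_m},\iota_1,\iota_2,\beta_{C_m}$ of Lemma~\ref{lem:mvseqtech}, and ensuring the threshold beyond which essentiality yields the contradiction is uniform.
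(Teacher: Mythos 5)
Your conceptual starting point matches the paper's: coarse Poincar\'e duality lets you represent the generator of $H^n_\mathrm{coarse}(W)\cong\mathbb{Z}_2$ by a cocycle supported in a uniformly bounded neighbourhood of any given $x\in W$, and the idea is to extend this cocycle by zero into the component $C\cup W$ and play it off against essentiality. However, the proposal never actually closes that circle, and you say so yourself (``the main obstacle is precisely the last bookkeeping'', ``I would:\dots''). The missing step is the crux of the argument: one must show that the zero-extension $\omega_x$ of the small-support dual cocycle to (a Rips complex of) $N_{n_k}(W)\cup C$ \emph{cannot} be a cocycle when $C$ is essential. The paper does this cleanly using Lemma~\ref{lem:mvseqtech}: essentiality of $C$ says $\iota_2^*p_C^*=0$, while the lemma also supplies a map $\beta$ with $\iota_2^*\beta^*$ injective; were $\omega_x$ a cocycle, one would compute $\iota_2^*\beta^*[\overline P\sigma_x]=\iota_2^*p_C^*[\omega_x]=0$, a contradiction. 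Therefore $\delta\omega_x\neq 0$, and since $\beta^\#\overline P\sigma_x$ \emph{is} a cocycle on $P_k(N_{n_k}(W))$, the coboundary can fail only on a simplex of $P_k(N_{n_k}(W)\cup C)$ that sticks into $C\setminus N_{n_k}(W)$; the support bound on $\omega_x$ forces that simplex to lie within uniformly bounded distance of $x$, producing the required nearby point of $C\setminus N_A(W)$. That two-sided logic --- contradiction from cocycle-ness, then locating the offending simplex --- is the content you declare to be ``bookkeeping'' but which in fact is the entire proof.

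Two further issues. First, the detour through Proposition~\ref{prop:mvseq} and the connecting map $\delta$ goes nowhere, as you yourself notice mid-paragraph: $\widehat H^{n+1}_\mathrm{coarse}(X)$ need not vanish, and neither Part~(\ref{prop:mvseqbdrmap}) nor Part~(\ref{prop:mvseqtwocomps}) of that proposition is actually engaged in the way the statement needs. The paper avoids the Mayer--Vietoris exact sequence entirely here and works only with the chain-level maps of Lemma~\ref{lem:mvseqtech}. Second, the framing by contradiction on the existence of a uniform $B$ (passing to sequences $C_m$, $w_m$) is unnecessary once one tracks constants explicitly --- and indeed obscures exactly the quantitative control (depending on $X$, $W$, $A$ only) that the statement requires. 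Also note that $\gamma_m=\overline P\sigma_{w_m}$ is supported in $W$ itself, so the claim that it is supported ``in $C_m'\backslash N_s^X(W)$-type regions after we push it'' is not correct as stated; the relevant question is whether Rips simplices near its support poke into $C_m\backslash N_A(W)$, which is a different thing.
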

\begin{proof}
By Lemma \ref{lem:mvseqtech}, there exists a uniformly acyclic metric complex $(W,D_\bullet)$, and numbers $j\leq k$ and $A\leq n_j\leq n_k$ depending only on $X$, $W$ and $A$ such that the following holds: there exists a proper chain map $\beta:[C_\bullet(P_k(N_{n_k}(W)))]_{n+1}\rightarrow D_\bullet$ such that if $\iota^*_2:H^n_c(P_k(N_{n_k}(W)))\rightarrow H^n_c(P_j(N_{n_j}(W)))$ is the restriction map, then $\iota^*_2\beta^*$ is injective. As $C$ is essential, Lemma \ref{lem:mvseqtech} also tells us that $\iota_2^*p_C^*$ is zero, where    $p_C^\#:C^{\bullet}_c(P_k(N_{n_k}(W)\cup C))\rightarrow C^{\bullet}_c(P_k(N_{n_k}(W)))$ is the restriction map.

By Proposition \ref{prop:pdncoarseisom}, $(W,D_\bullet)$ is a coarse $PD_n^{\mathbb{Z}_2}$  complex, hence there is a duality map $\overline P:D_\bullet\rightarrow D^{n-\bullet}_c$. For each $x\in W$, we pick $\sigma_x\in p_0^{-1}(x)\subseteq\Sigma_0$. Each cocycle $\overline{P}(\sigma_x)\in D^n_c$ represents the non-trivial element of $H^n_c(D_\bullet)\cong \mathbb{Z}_2$.

Since $\beta$ and $P$ have finite displacement, there exists a $D\geq 0$ depending only on $X$, $W$ and $A$ such that for every $x\in W$, $\beta^\#\overline P\sigma_x$ is supported in $N_D(x)$.  We extend $\beta^\#\overline P\sigma_x$ to a cochain $\omega_x\in C^n_c(P_k(N_{n_k}(W)\cup C))$ by setting $\omega_x$ to be zero outside $P_k(N_{n_k}(W))$. We note that $p_C^\#\omega_x=\beta^\#\overline P\sigma_x$.

We suppose for contradiction that $\omega_x$ is a cocycle. Then $\iota_2^*\beta^*[\overline P\sigma_x]=\iota_2^*p_C^*[\omega_x]=0$. This contradicts the fact that $\iota_2^*\beta^*$ is injective. 
Thus $\omega_x$ is not a cocycle. It follows that there is some $(n+1)$-simplex $\Delta$ of $P_k(N_{n_k}(W)\cup C)$, not contained in $P_k(N_{n_k}(W))$, such that $(\delta\omega_x)(\Delta)=\omega_x(\partial\Delta)\neq 0$. Thus there exists some $y_x\in C\backslash N_{n_k}(W)\subseteq C\backslash N_A(W)$ with $d(x,y_x)\leq k+D$.
\end{proof}

A coarse complementary component $C$ is said to be \emph{almost essential} if it satisfies the conclusion of Proposition \ref{prop:deepcondition}, i.e. for every $A\geq0$ there is a number $B\geq 0$ such that $W\subseteq N_B(C\backslash N_A(W))$. The top component of Figure \ref{fig:ess2} in the introduction gives an almost essential component which is not essential.

\begin{ques}
Does there exist a group $G$ of type $FP_{n+1}^{\mathbb{Z}_2}$ containing a coarse $PD_n^{\mathbb{Z}_2}$ subgroup $H\leq G$ and a coarse complementary component $C$ of $H$ that is almost essential but not essential?
\end{ques}
We would expect the answer to this to be positive, but  have been unable to find such a group.

\begin{defn}
We say that $W \subseteq X$ is \emph{essentially embedded} if every deep, coarse complementary component of $W$ is essential. We say that $W\subseteq G$ is  \emph{almost essentially embedded} if for every $A\geq 0$, there exists a $B=B(X,W,A)\geq 0$  such that $W\subseteq N_B(C\backslash N_A(W))$ for every deep $(1,A)$-coarse complementary component $C$ of $W$.
\end{defn}

The almost essentially embedded condition is a reformulation of the deep condition of \cite{vavrichek2012coarse}. Using Proposition \ref{prop:relendscoarsep}, we see a subgroup $K$ coarsely separates $G$ if and only if $\tilde e(G,K)>1$. We can therefore deduce the following from Lemmas 7.6 and 8.12 of \cite{vavrichek2012coarse}.

\begin{prop}[{\cite[Lemmas 7.6 and 8.12]{vavrichek2012coarse}}]\label{prop:deepcvsminimalsepp}
Let $G$ be a finitely generated group and $H\leq G$ be a subgroup. Then $H$ is almost essentially embedded if and only if no infinite index subgroup $K$ of $H$ coarsely separates $G$.
\end{prop}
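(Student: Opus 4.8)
The statement to prove is Proposition \ref{prop:deepcvsminimalsepp}: for a finitely generated group $G$ and a subgroup $H\leq G$, $H$ is almost essentially embedded if and only if no infinite index subgroup $K\leq H$ coarsely separates $G$. The plan is to deduce this by translating the geometric notions (almost essential embeddedness, coarse separation) into the language of relative ends and almost invariant sets, and then invoking Lemmas 7.6 and 8.12 of \cite{vavrichek2012coarse}. The key dictionary entry has already been set up in the excerpt: by Proposition \ref{prop:relendscoarsep}, a subgroup $K\leq G$ coarsely separates $G$ if and only if $\tilde e(G,K)>1$, and the ``deep condition'' of \cite{vavrichek2012coarse} is, by definition, precisely the almost essentially embedded condition here. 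So the main task is bookkeeping: check that Vavrichek's hypotheses and conclusions line up with ours under this translation.

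First I would recall, for the reader's orientation, the precise form of the two cited lemmas from \cite{vavrichek2012coarse}: roughly, Lemma 7.6 says that if $H$ fails the deep condition, then one can extract an infinite-index subgroup $K\leq H$ with $\tilde e(G,K)>1$ (this is essentially the content of Lemma \ref{lem:stabcoarsesep} and Corollary \ref{cor:stabfiniteindex} in the present excerpt — one takes a deep irreducible coarse complementary component $C$ of $H$ whose complement is also deep, and the stabiliser $\mathrm{Stab}_H(C\backslash N_A(H))$ is the required subgroup, which has infinite index exactly when $H$ is not almost essentially embedded), while Lemma 8.12 gives the converse: if some infinite-index $K\leq H$ has $\tilde e(G,K)>1$, then $H$ fails the deep condition. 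I would then argue the two directions separately. For the forward direction (almost essentially embedded $\Rightarrow$ no infinite-index subgroup coarsely separates): suppose some infinite index $K\leq H$ coarsely separates $G$; by Proposition \ref{prop:relendscoarsep} this means $\tilde e(G,K)>1$, and Lemma 8.12 of \cite{vavrichek2012coarse} then shows $H$ violates the deep condition, i.e.\ $H$ is not almost essentially embedded, a contradiction. For the converse: suppose $H$ is not almost essentially embedded; then there is some $A\geq 0$ and a deep $(1,A)$-coarse complementary component $C$ of $H$ with $H\not\subseteq N_B(C\backslash N_A(H))$ for all $B$. Writing $C$ as a union of irreducible components, Lemma \ref{lem:sbgpshallow} guarantees at least one such irreducible piece, say $C_0$, is deep (since the shallow ones are uniformly bounded), and among the deep irreducible pieces one with deep complement can be found; applying Lemma \ref{lem:stabcoarsesep} and Corollary \ref{cor:stabfiniteindex} produces $H':=\mathrm{Stab}_H(C_0\backslash N_A(H))$ with $e(G,H')>1$, hence $\tilde e(G,H')\geq e(G,H')>1$, so $H'$ coarsely separates $G$ by Proposition \ref{prop:relendscoarsep}; and the failure of the almost-essential bound forces $[H:H']$ to be infinite. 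This gives the required infinite-index coarsely separating subgroup.

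The main obstacle I anticipate is precisely the last sentence of the converse: verifying that the failure of the uniform bound ``$H\subseteq N_B(C\backslash N_A(H))$'' is exactly what makes $H'=\mathrm{Stab}_H(C_0\backslash N_A(H))$ have \emph{infinite} index in $H$. By Lemma \ref{lem:hfinitenbhd} and Remark \ref{rem:finindex}, $[H:H']$ is finite iff $H\subseteq N_r(H')$ for some $r$; and in the proof of Lemma \ref{lem:stabcoarsesep} one sees $H'$ is at finite Hausdorff distance from $\partial_1(C_0\backslash N_A(H))$, which sits inside $N_1(C_0\backslash N_A(H))\subseteq N_{A+1}(C\backslash N_A(H))$. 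So a finite index would give $H\subseteq N_r(C\backslash N_A(H))$ for some $r$ — contradicting the assumed failure, \emph{provided} one has arranged that the ``bad'' component $C$ and the irreducible subcomponent $C_0$ one passes to still witness the unboundedness. The subtlety is that passing from $C$ to an irreducible piece $C_0$ could a priori shrink the set and destroy the unboundedness; one must use the finitely-many-orbits argument (as in the proof of Lemma \ref{lem:sbgpshallow}) together with the deep condition's quantifier structure to see that some irreducible piece still fails the bound. This orbit-counting argument is the technical heart and is exactly what \cite{vavrichek2012coarse} supplies in Lemmas 7.6 and 8.12; I would cite it rather than reprove it, noting only that the present formulation of the deep condition in terms of $(1,A)$-coarse complementary components matches Vavrichek's via Lemma \ref{lem:coarsecompvscomp} and Corollary \ref{cor:compcompparamindep}.
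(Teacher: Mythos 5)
Your proposal takes the same route as the paper: the paper simply notes that the almost essentially embedded condition is a reformulation of Vavrichek's deep condition, translates coarse separation into $\tilde e(G,K)>1$ via Proposition \ref{prop:relendscoarsep}, and then cites Lemmas 7.6 and 8.12 of \cite{vavrichek2012coarse} without further proof. Your sketch of how those lemmas work internally (via Lemma \ref{lem:stabcoarsesep}, Corollary \ref{cor:stabfiniteindex}, and the orbit-counting in Lemma \ref{lem:sbgpshallow}) is a reasonable elaboration of what lies behind the citation, but the overall strategy matches the paper's.
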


It is also shown in \cite{vavrichek2012coarse} that being almost essentially embedded is a quasi-isometry invariant  in the following sense:

\begin{prop}[{\cite[Lemma 2.10]{vavrichek2012coarse}}]\label{prop:deepconditionqiinv}
Let $G$ and $G'$ be  finitely generated groups with subgroups $H\leq G$ and $H'\leq G$. Suppose there is a quasi-isometry $f:G\rightarrow G'$ such that $d_\mathrm{Haus}(f(H),H')<\infty$. Then $H$ is almost essentially embedded if and only if $H'$ is almost essentially embedded.
\end{prop}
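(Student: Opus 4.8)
The plan is to deduce this from Proposition \ref{prop:deepcvsminimalsepp} together with the quasi-isometry invariance of the relevant hypotheses, rather than arguing directly with neighbourhoods. By Proposition \ref{prop:deepcvsminimalsepp}, $H$ (respectively $H'$) is almost essentially embedded in $G$ (respectively $G'$) if and only if no infinite index subgroup of $H$ (respectively $H'$) coarsely separates $G$ (respectively $G'$). So it suffices to show: if no infinite index subgroup of $H$ coarsely separates $G$, then no infinite index subgroup of $H'$ coarsely separates $G'$ (the reverse implication follows by applying the same argument to a quasi-inverse of $f$). The key point is that $f$ takes the subspace $H\subseteq G$ to a subspace at finite Hausdorff distance from $H'\subseteq G'$, and, being a quasi-isometry (hence a coarse isometry by the discussion in Section \ref{sec:gpgeom}), it transports coarse separation data.

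First I would recall that coarse $n$-separation is a quasi-isometry invariant: by the proposition following the definition of coarse $n$-separation, if $f\colon G\to G'$ is a coarse isometry and $W\subseteq G$ coarsely $n$-separates $G$, then $f(W)$ coarsely $n$-separates $G'$, and if $d_\mathrm{Haus}(f(W),W')<\infty$ then $W'$ also coarsely $n$-separates $G'$ (replacing $W'$ by $f(W)$ changes coarse complementary components only up to finite Hausdorff distance, which does not affect deepness or coarse disjointness). Second, I would show that $f$ induces a correspondence between infinite index subgroups of $H$ and infinite index subgroups of $H'$ up to finite Hausdorff distance. Concretely, suppose $K'\leq H'$ has infinite index and coarsely separates $G'$; by Lemma \ref{lem:hfinitenbhd} and Remark \ref{rem:finindex}, infinite index in $H'$ means $K'$ is not contained in any bounded neighbourhood of — wait, that is the wrong direction; rather, $K'$ has infinite index in $H'$ means $H'$ is \emph{not} contained in $N_r(K')$ for any $r$. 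I would use a quasi-inverse $g\colon G'\to G$ of $f$: then $g(K')$ is at finite Hausdorff distance from some subspace of $G$, and since $K'\subseteq H'$ and $g(H')$ is at finite Hausdorff distance from $H$, we get that $g(K')$ lies in a bounded neighbourhood of $H$.

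The main obstacle — and the step requiring the most care — is the passage between the \emph{geometric} statement ``$g(K')$ is a subspace of $G$ lying near $H$ and coarsely separating $G$'' and the \emph{algebraic} statement ``there is an infinite index \emph{subgroup} $K\leq H$ that coarsely separates $G$''. A priori $g(K')$ need not be at finite Hausdorff distance from any subgroup. The resolution is that the almost essentially embedded property, via Proposition \ref{prop:deepcvsminimalsepp}, is really a statement about the existence of a subgroup of $H$ that is codimension one in $G$, and one can instead work purely with the geometry: ``$H\subseteq G$ is almost essentially embedded'' is, by the definition given just before Proposition \ref{prop:deepcvsminimalsepp}, a purely coarse-geometric condition on the pair $(G,H)$ — for every $A$ there is $B$ with $H\subseteq N_B(C\setminus N_A(H))$ for every deep $(1,A)$-coarse complementary component $C$ of $H$. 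This condition is manifestly preserved under a coarse isometry $f$ carrying $H$ to within finite Hausdorff distance of $H'$: deep coarse complementary components of $H$ correspond to deep coarse complementary components of $H'$ by Lemma \ref{lem:compcompcoarseinv} and the subsequent proposition on coarse $n$-separation, the parameter shifts are controlled by the distortion and density constants of $f$ together with $d_\mathrm{Haus}(f(H),H')$, and the neighbourhood-covering inequality transforms under $f$ with a quantitative change in $B$ depending only on these constants. So in fact I would bypass Proposition \ref{prop:deepcvsminimalsepp} entirely and prove the invariance directly from the geometric definition, which is exactly what \cite[Lemma 2.10]{vavrichek2012coarse} does; Proposition \ref{prop:deepcvsminimalsepp} is then not needed for this particular statement, and the proof reduces to a routine but slightly tedious tracking of constants through Lemma \ref{lem:compcompcoarseinv}.
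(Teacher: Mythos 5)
The paper does not actually give a proof of this proposition --- it is stated as a citation of \cite[Lemma 2.10]{vavrichek2012coarse} with no argument of its own --- so there is nothing internal to compare against. Your final conclusion is correct, and your self-correction midway through is exactly the right call: the attempt to route through Proposition \ref{prop:deepcvsminimalsepp} genuinely breaks down for the reason you name, namely that $g(K')$ is merely a subspace near $H$ and not a priori at finite Hausdorff distance from any subgroup of $G$, so you cannot produce the ``infinite index subgroup of $H$ coarsely separating $G$'' that the algebraic characterisation would require.

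The direct geometric argument you land on does work, and the constant-tracking, while tedious, closes. The one place it needs a little care (and which you flag as ``tracking constants'') is the following: when you transport a deep $(1,A')$-coarse complementary component $C'$ of $H'$ back through a quasi-inverse $g$ to get $C := N_{B_g}(g(C'))$, a deep $(1,A_0)$-coarse complementary component of $H$, you should not apply the almost-essentially-embedded hypothesis with the parameter $A_0$ itself. Rather, first enlarge to $A_1 := \max\bigl(A_0,\ \phi_g(A') + r_0 + B_g + 1\bigr)$ (where $\phi_g$ is the upper distortion of $g$, $r_0$ controls $d_{\mathrm{Haus}}(g(H'),H)$, and $B_g$ is the density constant of $g$), then invoke the hypothesis at parameter $A_1$ to get $H\subseteq N_{B_1}(C\setminus N_{A_1}(H))$. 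The point of this enlargement is that it forces the witness $c\in C\setminus N_{A_1}(H)$ produced for a given $h\in H$ to have its preimage $c'\in C'$ lie outside $N_{A'}(H')$ --- if $c'$ were inside, $c$ would land in $N_{\phi_g(A')+r_0+B_g}(H)$, contradicting $c\notin N_{A_1}(H)$. With that choice the resulting $B'$ for $(G',H')$ depends only on $A'$, the quasi-isometry constants, and $d_{\mathrm{Haus}}(f(H),H')$, as required. This is in substance Vavrichek's argument.
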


By imposing additional hypothesis, we can strengthen Proposition \ref{prop:deepcvsminimalsepp} to give a criterion for when $H$ is essentially embedded.

\essentialminimal
\begin{proof}
If $H$ is essentially embedded, then Proposition \ref{prop:deepcondition} says  it is almost essentially embedded. Thus Proposition \ref{prop:deepcvsminimalsepp} tells us that no infinite index subgroup $K$ of $H$ coarsely separates $G$.

Now suppose that no infinite index subgroup $K$ of $H$ coarsely separates $G$.
Let $C$ be a deep coarse complementary component of $H$. We will show that $C$ is essential.  We explain how to reduce to the case where $HC=C$.  As $C$ is deep, Lemma \ref{lem:sbgpshallow} tells us that for some $A\geq 0$, $C$ contains a deep, irreducible $(1,A)$-coarse complementary component $C'\subseteq C$. By Corollary \ref{cor:stabfiniteindex}, there is a finite index subgroup $H'\leq H$ such that $H'(C'\backslash N_A(H))=C'\backslash N_A(H)$. Proposition \ref{prop:essentialbasicprops} then tells that if $C'\backslash N_A(H)$ is essential as a coarse complementary component of $H'$, then $C$ is essential as a coarse complementary component of $H$. Thus we may assume that $HC=C$.

Since $G$ is $(n-1)$-acyclic at infinity, Proposition \ref{prop:topatinf} says that $H^k_\mathrm{coarse}(G)=0$ for $k\leq n$. 
We now apply the coarse Mayer--Vietoris sequence to the coarse complementary components $C$ and $G\backslash C$. This gives the exact sequence $$0=H^k_\mathrm{coarse}(G)\rightarrow H^k_\mathrm{coarse}(H\cup C)\oplus H^k_\mathrm{coarse}(H\cup (G\backslash C)) \rightarrow H^k_\mathrm{coarse}(H)$$ for each $k\leq n$. If $C$ is not essential, then the maps $H^k_\mathrm{coarse}(H\cup C)\rightarrow H^k_\mathrm{coarse}(H)$, induced by inclusion, are isomorphisms for each $k\leq n$. Thus by Lemma \ref{lem:finindex}, $C$ must be shallow; this contradicts our choice of $C$, so  $C$ is essential.
\end{proof}

\begin{lem}[Non-Crossing Lemma]\label{lem:noncrossing}
Let $C_1$, $C_2$ and $C_3$ be deep, disjoint, $(1,A)$-coarse complementary components of $W$ in $X$, and suppose   $C_3$ is essential.
Then there  exists an $s_0=s_0(X,W,A)$ and a uniformly $n$-acyclic metric complex $(X,C_\bullet)$ such that whenever $[\omega]\in \hat H^{n+1}_c(C_\bullet)$ is represented by two cocycles supported in $C_1\backslash N_{s_0}(W)$ and  $C_2\backslash N_{s_0}(W)$ respectively, then $[\omega]=0$.
\end{lem}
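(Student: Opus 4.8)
The plan is to combine the coarse Mayer--Vietoris machinery from Proposition \ref{prop:mvseq} applied to the pair $(X,W)$ and the component $C_1$ (with $C_2':=X\backslash C_1\supseteq C_2\cup W$) with the hypothesis that $C_3$ is essential. First I would fix $s_0$ and a uniformly $n$-acyclic metric complex $(X,C_\bullet)$ large enough so that Part (\ref{prop:mvseqtwocomps}) of Proposition \ref{prop:mvseq} applies to the coarse complementary component $C_1$; enlarging $s_0$ if necessary (using that $C_1,C_2,C_3$ are pairwise disjoint and Lemma \ref{lem:nbhdcomp}) I would also ensure that a cochain supported in $C_i\backslash N_{s_0}(W)$ has support disjoint from $C_j\backslash N_{s_0}(W)$ for $i\neq j$, and disjoint from $C_3\backslash N_{s_0}(W)$ in particular. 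Suppose now $[\omega]\in\hat H^{n+1}_c(C_\bullet)$ is represented by cocycles $\alpha$ supported in $C_1\backslash N_{s_0}(W)$ and $\beta$ supported in $C_2\backslash N_{s_0}(W)$. By Part (\ref{prop:mvseqtwocomps}) of Proposition \ref{prop:mvseq}, $[\omega]$ lies in the image of the boundary map $\delta\colon H^{n-1}_c(D_\bullet)\to\hat H^n_c(C_\bullet)$ wait --- more precisely $[\omega]=\delta[\gamma]$ for some cocycle $\gamma\in D^{n}_c$, where $(W,D_\bullet)$ is the metric complex for $W$ in that sequence.

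The heart of the argument is then to play the same class $[\omega]$ against the component $C_3$. Because $\alpha$ is supported in $C_1\backslash N_{s_0}(W)$ and $\beta$ in $C_2\backslash N_{s_0}(W)$, both are supported in $X\backslash N_{s_0}(W)$ and, by the disjointness arrangement above, both have support disjoint from $C_3\backslash N_{s_0}(W)$; hence $[\omega]$ is represented by a cocycle supported in $(X\backslash C_3)\backslash N_{s_0}(W)$. Applying Proposition \ref{prop:mvseq} now with respect to the component $C_3$ (and $X\backslash C_3$), I would argue that $[\omega]$ is represented both by a cocycle supported in $C_3\backslash N_s(W)$ --- namely the zero cocycle, since no component of $\alpha$ or $\beta$ meets that region --- and by a cocycle supported in $(X\backslash C_3)\backslash N_s(W)$. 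Wait: I need to be careful that ``represented by zero in $C_3\backslash N_s(W)$'' is exactly the statement ``can be represented by cocycles in both pieces", which by Part (\ref{prop:mvseqtwocomps}) puts $[\omega]$ in the image of the corresponding boundary map; but since essentiality of $C_3$ says the inclusion-induced map $H^n_\mathrm{coarse}(C_3\cup W)\to H^n_\mathrm{coarse}(W)$ is zero, the relevant portion of the Mayer--Vietoris sequence for $C_3$ forces that boundary map to be injective on the classes representable in both complementary pieces --- equivalently, the only such class representable by the zero cocycle in $C_3\backslash N_s(W)$ is $0$.

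Let me restate the decisive step more carefully, since it is the main obstacle. In the Mayer--Vietoris sequence of Proposition \ref{prop:mvseq} for the pair $(X,W)$ and component $C_3$, the segment
\begin{equation*}
H^n_c(A^{(3)}_\bullet)\oplus H^n_c(B^{(3)}_\bullet)\xrightarrow{p^*} H^n_c(D_\bullet)\xrightarrow{\delta}\hat H^{n+1}_c(C_\bullet)
\end{equation*}
holds, where $A^{(3)}_\bullet$, $B^{(3)}_\bullet$ are complexes for $C_3\cup W$ and $(X\backslash C_3)\cup W$. The essentiality of $C_3$ means the component $H^n_c(A^{(3)}_\bullet)\to H^n_c(D_\bullet)$ of $p^*$ is zero; combined with the fact that $[\omega]$ is representable by the zero cocycle supported in $C_3\backslash N_{s}(W)$, the explicit description in Part (\ref{prop:mvseqtwocomps}) of how preimages under $\delta$ arise (one builds $[\gamma]$ from the two local cocycles) shows $[\gamma]$ itself is representable by a cocycle that is trivial on the $C_3$-side, hence lies in the image of $H^n_c(B^{(3)}_\bullet)$; but then $[\omega]=\delta[\gamma]$ already vanishes because $[\gamma]$ restricts to an honest cohomology class coming from $(X\backslash C_3)\cup W$ and $\delta$ kills the image of $p^*$. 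I would then conclude $[\omega]=0$. The delicate point, and where I expect to spend the most care, is keeping the various $n_k$-enlargements and the parameters $s_0$ consistent across the \emph{two} applications of Proposition \ref{prop:mvseq} (once for $C_1$, once for $C_3$) so that a single $s_0=s_0(X,W,A)$ and a single complex $(X,C_\bullet)$ work simultaneously; this is exactly the kind of bookkeeping that Remark \ref{rem:nklarge} is designed to handle, and I would invoke it to fix $(X,C_\bullet)$, $i,j,k,n_i,n_j$ first and only then choose $n_k$ (hence $s_0$) large enough for both component decompositions at once.
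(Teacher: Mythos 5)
Your high-level plan --- leveraging essentiality of $C_3$ through a Mayer--Vietoris argument --- is in the right spirit, but there are genuine gaps in the execution that would need to be repaired before the argument closes.

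The first and most serious is the claim that ``$[\omega]$ is represented by the zero cocycle supported in $C_3\backslash N_s(W)$.'' A cohomology class is represented by the zero cocycle only if it vanishes, which is precisely what you are trying to prove; the claim is circular. What is actually true is that the given representatives $\alpha,\beta$ have supports \emph{disjoint} from $C_3$ --- a statement about two particular cochains, not a representability statement for $[\omega]$ over $C_3$. In particular you cannot invoke Part (\ref{prop:mvseqtwocomps}) of Proposition \ref{prop:mvseq} for the pair $(C_3,X\backslash C_3)$, because you do not possess any cocycle representative of $[\omega]$ supported in $C_3\backslash N_s(W)$. You flag your own uncertainty here, and it is indeed a real problem.

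The second issue is that the inference ``$[\gamma]$ is trivial on the $C_3$-side, hence lies in the image of the $(X\backslash C_3)\cup W$ piece'' has the sides reversed. The correct observation is the opposite one: since $\mu-\nu=\delta\tau$ with $\mu,\nu$ both vanishing near $C_3$, Lemma \ref{lem:restrictzero} shows that the restriction $q^\#_{C_3}\beta^\#\tau$ to the Rips complex of $N_{n_k}(W)\cup C_3$ is already a \emph{cocycle}, so the relevant class factors through the restriction map from the $C_3$-side (the $A^{(3)}_\bullet$-piece in your notation), and it is precisely the map $\iota_2^*p^*_{C_3}$ from that side that essentiality of $C_3$ kills via Lemma \ref{lem:mvseqtech}. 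The paper makes this precise by applying Proposition \ref{prop:mvseq} once, to the decomposition $C_1$ against $C_2\cup C_3$ (rather than $C_1$ against $X\backslash C_1$), and then computes $[\omega]=\alpha^*\iota_1^*\tilde\delta\,\iota_2^*p^*_{C_3}[q^\#_{C_3}\beta^\#\tau]=0$ directly at the cochain level.

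There is a third issue your route silently relies on: you obtain $[\omega]=\delta[\gamma]$ from the $C_1$-decomposition, then apply exactness in the $C_3$-decomposition to conclude $\delta[\gamma]=0$. These are two different Mayer--Vietoris boundary maps, and they need not agree (consider a tree with three ends: the boundary of the generator of $H^0_c(W)$ lands on a different end class for each decomposition). The paper's cochain-level factorisation through $\iota_2^*p^*_{C_3}$ avoids ever having to compare boundary maps across decompositions. Your approach may well be repairable by importing that computation, but as stated the ``image of the $(X\backslash C_3)$-piece'' step does not follow, and the subsequent appeal to exactness uses the wrong boundary map.
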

\begin{proof}
Proposition \ref{prop:mvseq} says that there exist uniformly $n$-acyclic metric complexes $(C_1\cup W,A_\bullet)$, $(C_2\cup C_3\cup W, B_\bullet)$ and $(W,D_\bullet)$ and a Mayer--Vietoris sequence $$\cdots \xrightarrow{q^*} H^n_c(A_\bullet)\oplus H^n_c(B_\bullet)\xrightarrow{p^*} H^n_c(D_\bullet) \xrightarrow{\delta} \hat H^{n+1}_c(C_\bullet).$$ We also choose $i,j,k,n_i,n_j,n_k$ and $\beta:C_\bullet[P_k(X)]_n\rightarrow C_\bullet$ as in Lemma \ref{lem:mvseqtech}.

The proof proceeds in a similar fashion to the proof of (\ref{prop:mvseqtwocomps}) in Proposition \ref{prop:mvseq}, except we have three components rather than two.
As in Proposition \ref{prop:mvseq}, we set $s_0:=r_\beta+n_k$, which depends only on $X$, $W$ and $A$.
We consider the restriction maps
\begin{align*}
q_{C_i}^\#&:C^\bullet_c(P_k(X))\rightarrow C^\bullet_c(P_k(N_{n_k}(W)\cup C_i))\textrm{ for $i=1,2,3$}\\
p_{C_i}^\#&:C^\bullet_c(P_k(N_{n_k}(W)\cup C_i))\rightarrow C^\bullet_c(P_k(N_{n_k}(W)))\textrm{ for $i=1,2,3$},
\end{align*}
noting	 that $p^\#_{C_1}q^\#_{C_1}=p^\#_{C_3}q^\#_{C_3}$ is the restriction map $C^\bullet_c(P_k(X))\rightarrow C^\bullet_c(P_k(N_{n_k}(W)))$.

Suppose there exist cocycles $\mu,\nu\in C^{n+1}_c$,  supported in $C_1\backslash N_{s_0}(W)$ and  $C_2\backslash N_{s_0}(W)$ respectively, each representing $[\omega]$. Then there is a $\tau\in C^n_c$ such that $\delta\tau=\mu-\nu$. Let $\tilde \delta$ be the boundary map associated to the Mayer--Vietoris sequence of Rips complexes.  As in the proof of Proposition \ref{prop:mvseq}, we see that $\tilde \delta[p^\#_{C_1}q^\#_{C_1}\beta^\#\tau]=[\beta^\#\mu]=\beta^*[\omega]$.
 By Lemma \ref{lem:restrictzero} and our choice of $s_0$, we see that  $q^\#_{C_3}\beta^\#\delta\tau=q^\#_{C_3}\beta^\#(\mu-\nu)=0$. Therefore $q^\#_{C_3}\beta^\#\tau$ is a cocycle and $\tilde\delta p^*_{C_3}[q^\#_{C_3}\beta^\#\tau]=\tilde \delta[p^\#_{C_1}q^\#_{C_1}\beta^\#\tau]=\beta^*[\omega]$.

As $C_3$ is essential, Lemma \ref{lem:mvseqtech} tells us that the map $\iota_2^*p^*_{C_3}:H^n_c(P_k(N_{n_k}(W)\cup C_3))\rightarrow H^n_c(P_j(N_{n_j}(W)))$ is zero. As noted in the proof of Proposition \ref{prop:mvseq}, the boundary map $\tilde\delta$ commutes with the restriction map $\iota_2^*$, so 
$[\omega]=\alpha^*\iota_1^*\iota_2^*\beta^*[\omega]=\alpha^*\iota_1^*\tilde\delta\iota_2^* p^*_{C_3}[q^\#_{C_3}\beta^\#\tau]=0$.
\end{proof}

\subsection{Kleiner's Mobility Sets}\label{sec:mobility}
We now outline a method from an unpublished manuscript of Kleiner.  For each coarse cohomology class $[\sigma]\in H^k_\mathrm{coarse}(G)$, Kleiner produces a subset $\mathrm{Mob}([\sigma])\subseteq G$ which  is the support of all possible cocycles of uniformly bounded diameter representing $[\sigma]$. All the results in this subsection are contained in \cite{kleinercohomology}.

Suppose $G$ is a finitely generated group and $(G,C_\bullet)$ is a metric complex admitting a free $G$-action. We let $Z^k_c\leq C^k_c$ be the set of $k$-cocycles with compact support.
For $D>0 $, we define  $$Z^k_c(D):=\{\alpha \in Z^k_c\mid \mathrm{diam}(\alpha)\leq D\}.$$
If a group $G$ acts  cocompactly on $(X,C_\bullet)$, then $G$ acts on $Z^k_c(D)$ via the right action
$(\alpha g)(\sigma)=\alpha(g\sigma).$ Note that $\mathrm{supp}(\alpha)=g\mathrm{supp}(\alpha g)$ for all $g\in G$.  We define $\mathrm{Stab}([\alpha_0])\leq G$ to be the subgroup of $G$ which preserves the cohomology class $[\alpha_0]$.
For each $\alpha_0\in Z^k_c$, let $Z([\alpha_0],D):=\{\alpha\in Z^k_c(D) \mid  [\alpha]=[\alpha_0]\}.$
We now define the \emph{mobility set} to be  $$\mathrm{Mob}([\alpha_0],D)=\bigcup_{\alpha\in Z([\alpha_0],D)} \mathrm{supp}(\alpha).$$

\begin{lem}\label{lem:mobset}
Let $G$ be  a finitely generated group  and suppose  $(G,C_\bullet)$ is a metric complex admitting free $G$-action. For all $D\geq 0$ and $k\in \mathbb{N}$, suppose $\alpha_0\in Z^k_c(D)$ is a nonzero cocycle. Then there exists an $R\geq 0$ such that $d_\mathrm{Haus}(\mathrm{Stab}([\alpha_0])\mathrm{supp}(\alpha_0),\mathrm{Mob}([\alpha_0],D))\leq R$.
\end{lem}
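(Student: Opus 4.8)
\textbf{Proof proposal for Lemma \ref{lem:mobset}.}

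The plan is to establish the two inclusions $\mathrm{Mob}([\alpha_0],D)\subseteq N_R(\mathrm{Stab}([\alpha_0])\mathrm{supp}(\alpha_0))$ and $\mathrm{Stab}([\alpha_0])\mathrm{supp}(\alpha_0)\subseteq N_R(\mathrm{Mob}([\alpha_0],D))$ for a single suitable $R$. The second inclusion is immediate: for every $g\in \mathrm{Stab}([\alpha_0])$ the cochain $\alpha_0 g^{-1}$ lies in $Z([\alpha_0],D)$ (it has the same diameter as $\alpha_0$, and $[\alpha_0 g^{-1}]=[\alpha_0]$ since $g$ preserves the class), and $\mathrm{supp}(\alpha_0 g^{-1})=g\,\mathrm{supp}(\alpha_0)$; hence $g\,\mathrm{supp}(\alpha_0)\subseteq \mathrm{Mob}([\alpha_0],D)$, so in fact $\mathrm{Stab}([\alpha_0])\mathrm{supp}(\alpha_0)\subseteq \mathrm{Mob}([\alpha_0],D)$ with no enlargement needed.

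For the first inclusion I would exploit cocompactness of the $G$-action together with the finiteness that comes from bounded geometry. Fix $x_0\in\mathrm{supp}(\alpha_0)$. The key observation is that the set $Z([\alpha_0],D)$ is ``finite up to translation'' in the following sense. A cocycle $\alpha\in Z^k_c(D)$ is supported in a set of diameter at most $D$; pick a point $y\in\mathrm{supp}(\alpha)$ and consider $\alpha g$ where $g\in G$ is chosen so that $g^{-1}$ moves $y$ into a fixed ball, e.g. so that $d(g^{-1}y, e)$ is controlled — more precisely, translate $\alpha$ by some group element so that its support meets a fixed finite region. Because $X=G$ has bounded geometry and $(G,C_\bullet)$ has finite type, there are only finitely many cochains in $C^k_c$ supported in any fixed bounded region with $\mathbb{Z}_2$-coefficients (or, for a general ring, one argues instead that $Z^k_c(D)$ has finitely many $G$-orbits). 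So $Z([\alpha_0],D)$ has finitely many $G$-orbits, say with orbit representatives $\alpha_1,\dots,\alpha_m$, each satisfying $[\alpha_i]=[\alpha_0]$. For each $i$ pick $g_i\in G$ with $[\alpha_i g_i^{-1}]$ — wait, rather: since $[\alpha_i]=[\alpha_0]$, there is $\gamma_i\in C^{k-1}_c$ with $\alpha_i-\alpha_0=\delta\gamma_i$; but this does not directly land $\alpha_i$ in the $\mathrm{Stab}$-orbit. Instead I would argue as follows: any $\alpha\in Z([\alpha_0],D)$ equals $\alpha_i g$ for some $i$ and some $g\in G$, and $[\alpha_i g]=[\alpha_0]$ forces $[\alpha_i]=[\alpha_0 g^{-1}]$, i.e. $[\alpha_0 g^{-1}]=[\alpha_i]=[\alpha_0]$ only when we also know $[\alpha_i]=[\alpha_0]$; combining, $g^{-1}\in\mathrm{Stab}([\alpha_0])$. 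Then $\mathrm{supp}(\alpha)=\mathrm{supp}(\alpha_i g)=g^{-1}\mathrm{supp}(\alpha_i)$, and since $\mathrm{supp}(\alpha_i)$ has diameter $\le D$ and $[\alpha_i]=[\alpha_0]$, choosing a base point $z_i\in\mathrm{supp}(\alpha_i)$ we get that $g^{-1}z_i$ lies within bounded distance of $g^{-1}\mathrm{supp}(\alpha_i)$; it remains to see $g^{-1}z_i\in N_{R'}(\mathrm{Stab}([\alpha_0])\mathrm{supp}(\alpha_0))$. This holds because $g^{-1}\in\mathrm{Stab}([\alpha_0])$ and $z_i$ is within a bounded distance $R_0:=\max_i d(z_i,\mathrm{supp}(\alpha_0))$ of $\mathrm{supp}(\alpha_0)$ — here I use that there are finitely many $\alpha_i$. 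Then $\mathrm{supp}(\alpha)=g^{-1}\mathrm{supp}(\alpha_i)\subseteq N_{D}(g^{-1}z_i)\subseteq N_{D+R_0}(\mathrm{Stab}([\alpha_0])\mathrm{supp}(\alpha_0))$, since $g^{-1}\mathrm{Stab}([\alpha_0])=\mathrm{Stab}([\alpha_0])$. Taking $R:=D+R_0$ gives both inclusions.

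The main obstacle I anticipate is the claim that $Z([\alpha_0],D)$ has only finitely many $G$-orbits. Over $\mathbb{Z}_2$ (the setting actually used in Section \ref{sec:final}) this is essentially automatic: a cocycle of diameter $\le D$ is determined by a subset of the finitely many basis $k$-cochains whose dual simplices meet a diameter-$D$ region, and by $G$-cocompactness and bounded geometry of $(G,C_\bullet)$ there is a uniform bound on the number of such configurations up to translation, so $Z^k_c(D)/G$ is finite and a fortiori $Z([\alpha_0],D)/G$ is finite. For a general ring one would need $Z^k_c(D)$ itself to be $G$-finite, which may require the further observation that $Z([\alpha_0],D)$ being an affine subset of a finitely-generated module suffices; since the lemma as applied is over $\mathbb{Z}_2$, I would simply state and use the $\mathbb{Z}_2$ version, remarking that the general case follows by the same orbit-counting argument. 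The only other point needing care is checking $g^{-1}\in\mathrm{Stab}([\alpha_0])$ from $[\alpha_i g]=[\alpha_0]$ and $[\alpha_i]=[\alpha_0]$, which is straightforward from $G$-equivariance of $\delta$: $[\alpha_0]=[\alpha_i g]=[\alpha_0 g]$... one must be careful about whether the orbit representatives $\alpha_i$ can be chosen with $[\alpha_i]=[\alpha_0]$, which they can, since every element of $Z([\alpha_0],D)$ already represents $[\alpha_0]$ and the $G$-action permutes $Z([\alpha_0],D)$ only up to translating the class, so one should take $Z([\alpha_0],D)$ together with the larger set $\bigcup_{g}Z([\alpha_0 g],D)$, pick finitely many orbit representatives there, and track the class — I will organize the bookkeeping so that each chosen representative does represent $[\alpha_0]$ after an appropriate translation.
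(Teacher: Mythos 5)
Your proof is correct and takes essentially the same route as the paper: translate cocycles so their support lies in a fixed bounded neighborhood of the identity, use bounded geometry and $\mathbb{Z}_2$ coefficients to get finitely many possibilities, and deduce from $G$-equivariance of the right action on cohomology that the translating element lies in $\mathrm{Stab}([\alpha_0])$. The only difference is in the final normalization: the paper closes by taking a maximum over the finitely many bounded-diameter cocycles supported in $N_D(e)$, so its $R$ ends up depending only on $D$ and the metric complex, whereas your $R$ depends on $\alpha_0$ — but the lemma as stated only asks for $R = R(\alpha_0)$, so your version establishes exactly what is claimed.
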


\begin{proof}
We fix some $\alpha_0\in Z^{k}_c(D)$ and consider the set $$Q:=\{\alpha g\mid \alpha\in Z([\alpha_0],D), g\in G \textrm{ and } \mathrm{supp}(\alpha g)\subseteq N_D(e)\}.$$ As there are only finitely many cocycles supported in $N_D(e)$, the set $Q$ is finite. Therefore we can write $Q=\{\alpha_1g_1,\dots, \alpha_ng_n\}$, where $\alpha_i\in Z([\alpha_0],D)$ and $g_i\in G$ for each $i$. 

We now choose ${R_{\alpha_0}}$ sufficiently large so that  $\mathrm{supp}(\alpha_i)\subseteq N_{R_{\alpha_0}}(\mathrm{supp}(\alpha_0))$ for each $\alpha_i$. We claim that $d_\mathrm{Haus}(\mathrm{Stab}([\alpha_0])\mathrm{supp}(\alpha_0),\mathrm{Mob}([\alpha_0],D))\leq {R_{\alpha_0}}$.
Indeed, 
 $g \mathrm{supp}(\alpha_0)=\mathrm{supp}(\alpha_0 g^{-1})\subseteq \text{Mob}([\alpha_0],D)$ for each $g\in \text{Stab}([\alpha_0])$.
Conversely, suppose $g\in \text{Mob}([\alpha_0],D)$. Then there exists an $\alpha\in Z([\alpha_0],D)$ such that $g\in \mathrm{supp}(\alpha)$. Since $\mathrm{supp}(\alpha g)\in Q$, $\alpha g =\alpha_i g_i$ for some $i$. Hence $g\in \mathrm{supp}(\alpha_i g_i g^{-1})\subseteq gg_i^{-1} N_{R_{\alpha_0}}(\mathrm{supp}(\alpha_0))$. This proves the claim, since $gg_i^{-1}\in \text{Stab}([\alpha_0])$.

We define $R:=\max \{R_\alpha\mid \alpha\in Z^{n+1}_c(D) \text{ and }\mathrm{supp}(\alpha)\subseteq N_D(e)\}$.
Suppose $\alpha\in Z^{n+1}_c(D)$ and $g\in \mathrm{supp}(\alpha)$. As $\mathrm{supp}(\alpha g)\subseteq N_D(e)$,  $d_\mathrm{Haus}(\mathrm{Stab}([\alpha g ])\mathrm{supp}(\alpha g),\mathrm{Mob}([\alpha g],D))\leq R.$ Since $\mathrm{Stab}([\alpha g ])\mathrm{supp}(\alpha g)=g^{-1}\mathrm{Stab}([\alpha ])\mathrm{supp}(\alpha)$ and $\mathrm{Mob}([\alpha g],D)=g^{-1}\mathrm{Mob}([\alpha],D)$, it follows that $d_\mathrm{Haus}(\mathrm{Stab}([\alpha ])\mathrm{supp}(\alpha),\mathrm{Mob}([\alpha],D))\leq R.$
\end{proof}

The following corollary  shows that the mobility set of a coarse cohomology class has finite Hausdorff distance from the subgroup which stabilizes that class. This provides a  connection between geometry and algebra that allows us to construct a splitting.

\begin{cor}\label{cor:mobsetcocomp}
For any $D>0$ such that $\text{Mob}([\alpha_0],D)\neq \emptyset$,  $\text{Stab}([\alpha_0])$ has finite Hausdorff distance from $\text{Mob}([\alpha_0],D)$.
\end{cor}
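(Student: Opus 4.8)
The plan is to deduce Corollary \ref{cor:mobsetcocomp} directly from Lemma \ref{lem:mobset} by exhibiting $\mathrm{Mob}([\alpha_0],D)$ as being at finite Hausdorff distance both from $\mathrm{Stab}([\alpha_0])\mathrm{supp}(\alpha_0)$ and from $\mathrm{Stab}([\alpha_0])$ itself. First I would invoke Lemma \ref{lem:mobset} to obtain $R\geq 0$ with
$$d_\mathrm{Haus}\bigl(\mathrm{Stab}([\alpha_0])\,\mathrm{supp}(\alpha_0),\,\mathrm{Mob}([\alpha_0],D)\bigr)\leq R,$$
which already handles one half. The remaining point is that $\mathrm{Stab}([\alpha_0])\,\mathrm{supp}(\alpha_0)$ is at finite Hausdorff distance from the subgroup $\mathrm{Stab}([\alpha_0])$. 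This is where the hypothesis $\mathrm{Mob}([\alpha_0],D)\neq\emptyset$ is needed: it forces $\mathrm{supp}(\alpha_0)$ to be nonempty (indeed $\alpha_0$ can be taken nonzero, since the zero cocycle contributes nothing to a nonempty mobility set, and more to the point $\mathrm{Mob}$ of a coboundary-type class being nonempty still gives a genuine cocycle with nonempty support after replacing $\alpha_0$ by a representative realising a point of $\mathrm{Mob}$).

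Concretely, let $H:=\mathrm{Stab}([\alpha_0])$ and $S:=\mathrm{supp}(\alpha_0)$, a nonempty finite set since $\alpha_0$ has compact support. Pick any $x_0\in S$ and let $r_0:=\sup_{x\in S}d(x_0,x)<\infty$, so $S\subseteq N_{r_0}(x_0)$ and trivially $x_0\in N_{r_0}(S)$ if $S\neq\emptyset$ (taking $r_0\ge 0$). Then for every $h\in H$ we have $hx_0\in hS=\mathrm{supp}(\alpha_0 h^{-1})\subseteq HS$, and $hS\subseteq N_{r_0}(hx_0)$, giving $HS\subseteq N_{r_0}(Hx_0)=N_{r_0}(H\cdot x_0)$. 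Since $G$ acts by isometries on itself (left multiplication) and $x_0$ is a fixed element of $G$, the orbit $H\cdot x_0$ coincides with the coset $H x_0$, which is at Hausdorff distance at most $d(e,x_0)=:\ell_0$ from $H$ itself (as $Hx_0\subseteq N_{\ell_0}(H)$ and $H\subseteq N_{\ell_0}(Hx_0)$, using right multiplication by $x_0$ and $x_0^{-1}$). Conversely each $h\in H$ satisfies $h\in hS$ when $e\in S$; in general we only know $hS\ni hx$ for $x\in S$, but we still get $H\subseteq N_{r_0+\ell_0}(HS)$: indeed $h\in N_{\ell_0}(hx_0)\subseteq N_{\ell_0}(N_{r_0}(HS))$ is not quite right, so instead argue $h x_0\in HS$ and $h\in N_{\ell_0}(hx_0)\subseteq N_{\ell_0}(HS)$. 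Combining, $d_\mathrm{Haus}(H,HS)\leq r_0+\ell_0<\infty$.

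Putting the two estimates together via the triangle inequality for Hausdorff distance,
$$d_\mathrm{Haus}\bigl(\mathrm{Stab}([\alpha_0]),\,\mathrm{Mob}([\alpha_0],D)\bigr)\leq d_\mathrm{Haus}(H,HS)+d_\mathrm{Haus}(HS,\mathrm{Mob}([\alpha_0],D))\leq (r_0+\ell_0)+R<\infty,$$
which is the claim. The only real subtlety — and the step I would be most careful about — is the reduction to a \emph{nonzero} cocycle with nonempty support: if $[\alpha_0]$ itself is the trivial class one should either exclude it or observe that $\mathrm{Mob}([\alpha_0],D)\neq\emptyset$ forces some representative $\alpha$ with $[\alpha]=[\alpha_0]$ and $\mathrm{supp}(\alpha)\neq\emptyset$, so replacing $\alpha_0$ by such an $\alpha$ (which does not change $\mathrm{Stab}$ or $\mathrm{Mob}$) reduces us to the case already treated in Lemma \ref{lem:mobset}. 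Everything else is routine manipulation of neighbourhoods and the isometric left action of $G$ on itself.
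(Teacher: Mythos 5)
Your argument is correct and takes essentially the same route as the paper: apply Lemma \ref{lem:mobset} to bound $d_\mathrm{Haus}\bigl(\mathrm{Stab}([\alpha_0])\,\mathrm{supp}(\alpha_0),\,\mathrm{Mob}([\alpha_0],D)\bigr)$, then observe that $\mathrm{Stab}([\alpha_0])\,\mathrm{supp}(\alpha_0)$ is at finite Hausdorff distance from $\mathrm{Stab}([\alpha_0])$ and apply the triangle inequality. The paper compresses this into the single bound $R+d(e,\mathrm{supp}(\alpha_0))$; your more explicit constants $R+r_0+\ell_0$ are in the same spirit.
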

\begin{proof}
By Lemma \ref{lem:mobset}, we see that $d_{\mathrm{Haus}}(\text{Mob}([\alpha_0],D),\text{Stab}([\alpha_0]))\leq R+ d(e,\mathrm{supp}(\alpha_0))$.
\end{proof}

\begin{prop}\label{prop:qimob}
Suppose  $(G,C_\bullet)$ and $(G',D_\bullet)$ are uniformly $(n-1)$-acyclic $R$-metric complexes. Say $f:G\rightarrow G'$ is a coarse isometry and $0\neq [\alpha_0]\in \hat{H}_c^n(D_\bullet)$. Then if for any $D,D'$  sufficiently large,  $$d_\mathrm{Haus}( f(\mathrm{Mob}(f^*[\alpha_0],D)),\mathrm{Mob}([\alpha_0],D'))<\infty.$$ 
\end{prop}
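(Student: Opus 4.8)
\textbf{Proposal for the proof of Proposition \ref{prop:qimob}.}

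The plan is to exploit the fact that $f$ induces a proper chain map on the relevant truncations, so that it carries cocycles of bounded diameter to cocycles of bounded diameter, and that this process interacts well with supports. First I would invoke Proposition \ref{prop:controlspace} and Lemma \ref{lem:extendmapsunif} to produce proper chain maps $f_\#:[C_\bullet]_n\to [D_\bullet]_n$ of finite $n$-displacement, say at most $D_0$, over $f$, together with a coarse inverse $g:G'\to G$ and a proper chain map $g_\#:[D_\bullet]_n\to[C_\bullet]_n$ of finite $n$-displacement over $g$, such that $g_\# f_\#$ and $f_\# g_\#$ are properly chain homotopic to the identity maps (with displacement and homotopy bounds depending only on the metric complexes and the distortion data of $f$). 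By Proposition \ref{prop:propchainmap}, these induce the dual maps $f^\#, g^\#$ on cochains of compact support, the induced maps $\hat f^*, \hat g^*$ on modified cohomology with compact supports are mutually inverse, and $f^*=\hat f^*$ is exactly the map whose image we are tracking.

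The key estimate is a support-control lemma: if $\alpha\in Z^n_c(D_\bullet)$ has $\mathrm{diam}(\alpha)\le D'$, then $f^\#\alpha$ is a cocycle with $\mathrm{diam}(f^\#\alpha)\le \phi(D')+2D_0$ (using the upper distortion function $\phi$ of $g$, since $\mathrm{supp}(g^\#\cdot)$ is controlled by $g$; more precisely, from the proof of Proposition \ref{prop:propchainmap}, $f(\mathrm{supp}(f^\#\alpha))\subseteq N_{D_0}^{G'}(\mathrm{supp}(\alpha))$, so $\mathrm{supp}(f^\#\alpha)\subseteq f^{-1}(N_{D_0}(\mathrm{supp}(\alpha)))$ and one uses $f$ is a coarse isometry to bound its diameter). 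Hence for $D'$ large enough, $f^\#$ maps $Z([\alpha_0],D')$ into $Z(f^*[\alpha_0],D)$ provided $D\ge \phi(D')+2D_0$, and symmetrically $g^\#$ maps $Z(f^*[\alpha_0],D)$ into $Z([\alpha_0],D'')$ for suitable $D''$. Combining these with the inclusions $f(\mathrm{supp}(f^\#\alpha))\subseteq N^{G'}_{D_0}(\mathrm{supp}(\alpha))$ and $g(\mathrm{supp}(g^\#\beta))\subseteq N^{G}_{D_0}(\mathrm{supp}(\beta))$, one gets $f(\mathrm{Mob}([\alpha_0],D'))\subseteq N_{D_0}(\mathrm{Mob}(f^*[\alpha_0],D))$ and, applying the other direction and using that $g$ is a coarse inverse to $f$ (so $fg$ is close to $\mathrm{id}_{G'}$), the reverse containment $\mathrm{Mob}(f^*[\alpha_0],D)\subseteq N_{R}(f(\mathrm{Mob}([\alpha_0],D')))$ for a suitable $R$. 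This yields the claimed finite Hausdorff distance.

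The main subtlety — and the step I would be most careful about — is the bookkeeping of which diameter thresholds $D, D', D''$ make all three inclusions ($f^\#$ shrinking $\mathrm{Mob}$, $g^\#$ shrinking it back, and the chain homotopy not spoiling the cohomology class) simultaneously valid; this is exactly the content of the phrase ``for any $D, D'$ sufficiently large'' in the statement, and one must verify the thresholds can be chosen uniformly (depending only on the metric complexes and on $\eta,\phi,B$). A secondary point is that $\mathrm{Mob}([\alpha_0],D')$ is nonempty for large $D'$ precisely when $[\alpha_0]\neq 0$ and $[\alpha_0]$ is represented by \emph{some} bounded-diameter cocycle, which holds because $[\alpha_0]\in\hat H^n_c$ and, by construction of the coarse $PD$ or uniformly acyclic complex, every class in $\hat H^n_c(D_\bullet)$ has a representative of uniformly bounded diameter (as used already in the proofs of Propositions \ref{prop:pdnhalf} and \ref{prop:deepcondition}); the same applies to $f^*[\alpha_0]$. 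Everything else is a routine application of Lemmas \ref{lem:compdispmaps} and \ref{lem:extendmapsunif} and the support estimates in Proposition \ref{prop:propchainmap}.
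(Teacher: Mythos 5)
Your approach is the same as the paper's: use Lemma \ref{lem:extendmapsunif} and Proposition \ref{prop:propchainmap} to get the support estimate $f(\mathrm{supp}(f^\#\alpha))\subseteq N_M(\mathrm{supp}(\alpha))$, bound $\mathrm{diam}(f^\#\alpha)$ using that $f$ is a coarse embedding, use invertibility of $f^*$ to conclude $f^\#\alpha\neq 0$, and then run the symmetric argument with a coarse inverse $g$. Three small points need tightening, though none reflects a conceptual gap. First, the stated containments do not type-check: $\mathrm{Mob}([\alpha_0],D')\subseteq G'$, so $f$ cannot be applied to it; the inclusion you actually want is $\mathrm{Mob}([\alpha_0],D')\subseteq N_{M+D'}\bigl(f(\mathrm{Mob}(f^*[\alpha_0],D))\bigr)$, obtained by observing that $f(\mathrm{supp}(f^\#\alpha))$ is a \emph{nonempty} subset of $N_M(\mathrm{supp}(\alpha))$ while $\mathrm{diam}(\mathrm{supp}(\alpha))\leq D'$, and this is exactly why the nonvanishing of $f^\#\alpha$ is used. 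Second, the diameter control on $f^\#\alpha$ comes from the lower distortion of $f$, i.e.\ $\mathrm{diam}(f^\#\alpha)\leq\tilde\eta_f(D'+2M)$ in the sense of Remark \ref{rem:propinv}, rather than $\phi_g(D')+2D_0$; these are comparable up to the closeness constant of $gf$ to $\mathrm{id}$, but your written bound is not literally correct. Third, you worry about choosing $D,D',D''$ to work simultaneously; the paper instead simply accepts that the two directions produce possibly different thresholds $D'$ and $D''$, and invokes Lemma \ref{lem:mobset} to conclude that $\mathrm{Mob}([\alpha_0],D')$ and $\mathrm{Mob}([\alpha_0],D'')$ are themselves at finite Hausdorff distance (both being at finite Hausdorff distance from $\mathrm{Stab}([\alpha_0])\,\mathrm{supp}(\alpha_0)$). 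That application of Lemma \ref{lem:mobset} is the step that actually closes the argument, and is what your sketch leaves implicit.
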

\begin{proof}
Suppose $C_\bullet$ and $D_\bullet$ have $n$-displacement at most $d$.
Lemma \ref{lem:extendmapsunif} tells us that $f$ induces a chain map $f_\#:[C_\bullet]_n\rightarrow [D_\bullet]_n$ of $n$-displacement at most $M$ over $f$. We assume $D'$ is large enough so that $\text{Mob}([\alpha_0],D')$ is nonempty. Let $\alpha\in Z([\alpha_0],D')$. The proof of Proposition \ref{prop:propchainmap} then shows that $f(\mathrm{supp}(f^\#\alpha))\subseteq N_M^{G'}(\mathrm{supp}(\alpha))$. In particular, there is a $D$ sufficiently large such that $\mathrm{diam}(f^\#\alpha)\leq D$ for all  $\alpha\in Z([\alpha_0],D')$.

 Since as $f$ is a coarse isometry, $f^\#$ induces an isomorphism in modified cohomology; therefore $f^\#\alpha\neq 0$. Thus $\mathrm{supp}(\alpha)\subseteq N_{M+D'}(f(\mathrm{supp}(f^\#\alpha)))$. Therefore $$\mathrm{Mob}([\alpha_0],D')\subseteq N_{M+D'}(f(\mathrm{Mob}(f^*[\alpha_0],D))).$$
Using a coarse inverse $g:Y\rightarrow X$ to $f$, the same argument shows that $f(\mathrm{Mob}(f^*[\alpha_0],D))\subseteq N_R(\mathrm{Mob}([\alpha_0],D''))$   for some suitable $D''$ and $R$. Lemma \ref{lem:mobset} ensures that $\mathrm{Mob}([\alpha_0],D'')$ and $\mathrm{Mob}([\alpha_0],D')$ have finite Hausdorff distance, hence $\mathrm{Mob}([\alpha_0],D')$ and $f(\mathrm{Mob}(f^*[\alpha_0],D))$ have finite Hausdorff distance.
\end{proof}
\subsection{The Main Theorem}\label{sec:splitting}

\begin{lem}\label{lem:aisetkrop}
Let $G$ be a finitely generated group with $H\leq G$ a subgroup. Suppose there exists a constant $A\geq 0$ and a coarse complementary component $C$ of $H$ such both $C$ and $G\backslash C$ are deep, and  for every $g\in G$, either $gH\subseteq C\cup N_A(H)$ or $gH\subseteq (G\backslash C) \cup N_A(H)$. Then there exists a proper $H$-almost invariant set $X$ such that $XH=X$.
\end{lem}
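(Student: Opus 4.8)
The goal is to produce a proper $H$-almost invariant set $X$ satisfying $XH=X$ from the given coarse complementary component $C$. The natural candidate is to take the union of all left cosets of $H$ that lie (up to the bounded error $N_A(H)$) inside $C$; that is, set
\[
X := \bigcup \{\, gH \mid gH \subseteq C\cup N_A(H) \,\}.
\]
By the hypothesis, the cosets $gH$ are partitioned: each is either of the above type or satisfies $gH\subseteq (G\backslash C)\cup N_A(H)$. (A coset could conceivably satisfy both, namely when $gH\subseteq N_A(H)$; in that ambiguous case one simply makes a choice, or better, one observes such cosets form an $H$-finite set and can be absorbed into the error term without affecting almost-invariance — I would handle them by including them in $X$ or not as convenient, since they do not affect properness given that $C$ and $G\backslash C$ are deep.) The construction makes $XH=X$ immediate, since $X$ is by definition a union of left cosets of $H$, and multiplying on the right by $H$ permutes each $gH$ to itself (indeed $gHh = gH$).

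\textbf{Key steps.} First I would verify that $X$ has finite Hausdorff distance from $C$; more precisely that $X\subseteq C\cup N_A(H)$ and $C\subseteq N_r(X)$ for some $r$. The first inclusion is immediate from the definition. For the second: every point $c\in C$ lies in some coset $gH$; if $gH\subseteq (G\backslash C)\cup N_A(H)$ then $c\in N_A(H)$, and since $G$ is finitely generated, $H$ itself meets $X$ within bounded distance (as $e\in H$ and either $H\subseteq X$, using that $H\cdot H = H$, or $H$ is one of the ambiguous cosets which we put into $X$), so $c\in N_{A+r_0}(X)$ for a uniform $r_0$. Hence $d_\mathrm{Haus}(X,C)<\infty$. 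Since $C$ is a coarse complementary component of $H$, so is $X$ (coarse complementary components are preserved under passing to sets at finite Hausdorff distance, by Lemma~\ref{lem:compcompcoarseinv} applied with $f=\mathrm{id}$), and by Proposition~\ref{prop:coarsecompinvecspace} this means $X$ is $H$-almost invariant. Next I would check properness: $X$ is not $H$-finite because $X\supseteq C\backslash N_A(H)$ which is deep (so by Lemma~\ref{lem:hfinitenbhd}, $X$ is not contained in any $N_r(H)$), and similarly $G\backslash X$ has finite Hausdorff distance from $G\backslash C$, which is deep, so $G\backslash X$ is not $H$-finite either. Thus $X$ is a proper $H$-almost invariant set.

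\textbf{Main obstacle.} The only subtlety is the bookkeeping around cosets contained in $N_A(H)$ and the verification that the complement $G\backslash X$ genuinely agrees up to finite Hausdorff distance with $G\backslash C$ — one needs that $G\backslash X$ consists (up to bounded error) of the union of the cosets of the second type. This follows because every element of $G\backslash X$ lies in a coset $gH$ not contained in $C\cup N_A(H)$, hence by the dichotomy contained in $(G\backslash C)\cup N_A(H)$, giving $G\backslash X\subseteq (G\backslash C)\cup N_A(H)$; the reverse inclusion up to bounded error is symmetric to the argument for $C\subseteq N_r(X)$. I expect this step — confirming both $X$ and its complement are deep, so that $X$ is proper — to be where all the care is needed, but it is essentially routine given Lemmas~\ref{lem:hfinitenbhd}, \ref{lem:compcompcoarseinv} and Proposition~\ref{prop:coarsecompinvecspace}. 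No step looks genuinely hard; the lemma is a clean packaging of the hypothesis into the language of almost invariant sets.
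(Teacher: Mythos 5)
Your construction of $X$ is the same as the paper's: $X=\{g\in G\mid gH\subseteq C\cup N_A(H)\}$ is automatically a union of left cosets of $H$ (there is no ambiguity to resolve — cosets contained in $N_A(H)$ are simply included, as they satisfy $gH\subseteq C\cup N_A(H)$), and $XH=X$ for the reason you give. The gap is in your justification that $X$ is a coarse complementary component of $H$.

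You argue that $X$ is at finite Hausdorff distance from $C$, and then assert that coarse complementary components are preserved under passing to sets at finite Hausdorff distance, citing Lemma~\ref{lem:compcompcoarseinv} with $f=\mathrm{id}$. Two problems. First, that lemma applied with $f=\mathrm{id}$ only says that $C$ itself remains an $(s,A')$-coarse complementary component with changed parameters (essentially Corollary~\ref{cor:compcompparamindep}); it says nothing about a different set at finite Hausdorff distance from $C$. Second, the claim you appeal to is false: take $G=\mathbb{Z}^2$, $H=\mathbb{Z}\times\{0\}$, $C=\{(m,n):n>0\}$, and $C'=C\setminus\{(0,n^2):n\geq 1\}$. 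Then $d_{\mathrm{Haus}}(C,C')=1$, but $C'$ is not a coarse complementary component of $H$ — for every $r,A$, the set $\partial_r(C'\setminus N_A(H))$ contains points $(0,n^2)$ for $n$ arbitrarily large, which escape $N_A(H)$. Moreover, you haven't actually established $d_{\mathrm{Haus}}(X,C)<\infty$: you only prove $X\subseteq C\cup N_A(H)$ and $C\subseteq N_r(X)$, and $N_A(H)$ need not be coarsely contained in $C$, so the containment $X\subseteq N_s(C)$ need not hold.

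The fix is cheap and you essentially had it. Your argument that a point $c\in C$ lying in a ``second-type'' coset must lie in $N_A(H)$ is precisely the contrapositive of: if $c\in C\setminus N_A(H)$, then the coset $cH$ meets $C\setminus N_A(H)$ and so must be of the first type, whence $c\in X$. Combined with $X\subseteq C\cup N_A(H)$ (i.e.\ $X\setminus N_A(H)\subseteq C\setminus N_A(H)$), you obtain the exact equality $X\setminus N_A(H)=C\setminus N_A(H)$ — not just finite Hausdorff distance. By the definition of a $(1,A)$-coarse complementary component (equivalently Lemma~\ref{lem:coarsecompvscomp}), being such a component depends only on the part of the set outside $N_A(H)$; hence $X$ is a $(1,A)$-coarse complementary component because $C$ is, and similarly $(G\setminus X)\setminus N_A(H)=(G\setminus C)\setminus N_A(H)$ so $G\setminus X$ is deep because $G\setminus C$ is. Proposition~\ref{prop:coarsecompinvecspace} and the deepness of both $X$ and $G\setminus X$ then give properness. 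This is exactly what the paper does, in fewer words and without any appeal to Hausdorff distance.
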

\begin{proof}
Let $X:=\{g\in G\mid gH\subseteq C\cup N_A(H)\}$. It is clear that $XH=X$, so we need only show $X$ is a proper $H$-almost invariant set. 
Suppose $g\in C\backslash N_A(H)$. Then $gH\cap (C\backslash N_A(H))\neq \emptyset$ so $gH\subseteq C\cup N_A(H)$; therefore $g\in X$. Conversely, suppose $g\in X\backslash N_A(H)$. Then because $g\in gH\subseteq C\cup N_A(H)$, it follows $g\in C$. Thus $C$ and $X$ are equal outside $N_A(H)$. In particular, $X$ and $G\backslash X$ are  deep coarse complementary components of $H$ (since $C$ and $G\backslash C$ are), so Proposition \ref{prop:coarsecompinvecspace} tells us that $X$ is a proper $H$-almost invariant set.
\end{proof}

The proof of the following Lemma is based on an analogous statement for the  $n=1$ case found in \cite{kleinercohomology}.

\begin{lem}\label{lem:sbgp}
Let $G$ be a group of type $FP_{n+1}^{\mathbb{Z}_2}$ and let $W\subseteq G$ be a coarse $PD_n^{\mathbb{Z}_2}$ subspace. Suppose $G$ contains three essential, coarse disjoint, coarse complementary components of $W$. Then there exists a subgroup $H\leq G$, at finite Hausdorff distance from $W$, and a proper $H$-almost invariant set $X$ such that $XH=X$.
\end{lem}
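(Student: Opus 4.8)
The goal is to produce a genuine subgroup $H\le G$ with $d_{\mathrm{Haus}}(H,W)<\infty$ together with a proper $H$-almost invariant set $X$ with $XH=X$. The strategy is to realise $W$ as the mobility set (more precisely, as something at finite Hausdorff distance from the mobility set) of a cohomology class, and then to take $H$ to be the stabiliser of that class, using Corollary \ref{cor:mobsetcocomp} to conclude $d_{\mathrm{Haus}}(H,W)<\infty$. First I would invoke Proposition \ref{prop:controlspace} to fix a uniformly $n$-acyclic $\mathbb{Z}_2$-metric complex $(G,C_\bullet)$ admitting a free $G$-action, and Proposition \ref{prop:pdncoarseisom} (applied to the inclusion $W\hookrightarrow G$, or to a word metric on $W$ via Proposition \ref{prop:unifacyccoarseinv}) to get a coarse $PD_n^{\mathbb{Z}_2}$ complex structure on a metric complex $(W,D_\bullet)$, whose duality map $\overline P:D_\bullet\to D^{n-\bullet}_c$ sends each generator $\sigma_x\in p_0^{-1}(x)$ to a cocycle representing the generator of $H^n_c(D_\bullet)\cong\mathbb{Z}_2$, supported near $x$.

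\textbf{Building the class and its mobility set.} Label the three essential, coarse disjoint, coarse complementary components $C_1,C_2,C_3$ of $W$; by Corollary \ref{cor:compcompparamindep} fix $A$ so they are all $(1,A)$-coarse complementary components. Proposition \ref{prop:mvseq} applied to the pair $C_1$, $G\setminus C_1=C_2\cup C_3\cup\text{(shallow stuff)}$ gives a coarse Mayer--Vietoris boundary map $\delta:H^n_c(D_\bullet)\to\hat H^{n+1}_c(C_\bullet)$. Using Proposition \ref{prop:mvseqbdrmap}, for each $x\in W$ the cocycle $\overline P\sigma_x$ (supported in a uniformly bounded neighbourhood of $x$) has image $\delta[\overline P\sigma_x]$ represented by a cocycle $\alpha_x\in C^{n+1}_c$ supported in $N_R^G(x)$ for a uniform $R$; so $\operatorname{diam}(\alpha_x)$ is uniformly bounded, say by $D_0$, and $x\in\operatorname{supp}(\alpha_x)$. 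The key point is that $\delta[\overline P\sigma_x]$ is a single fixed class $[\omega]\in\hat H^{n+1}_c(C_\bullet)$ independent of $x\in W$ — this follows because the $\overline P\sigma_x$ all represent the same class in $H^n_c(D_\bullet)$, and $\delta$ is well defined on cohomology. Moreover $[\omega]\ne 0$: Proposition \ref{prop:essentialbasicprops}(\ref{prop:essentialbasicprops_goingup}) and essentiality of, say, $C_3$, combined with the Mayer--Vietoris exactness $H^n_c(A_\bullet)\oplus H^n_c(B_\bullet)\xrightarrow{a^*} H^n_c(D_\bullet)\xrightarrow{\delta}\hat H^{n+1}_c(C_\bullet)$ for the splitting $C_3$ vs $G\setminus C_3$, forces the generator of $H^n_c(D_\bullet)$ not to lie in $\operatorname{im}(a^*)$, hence to survive under $\delta$. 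Thus $W\subseteq\operatorname{Mob}([\omega],D_0)$. For the reverse inclusion $\operatorname{Mob}([\omega],D_0)\subseteq N_s(W)$: by Lemma \ref{lem:noncrossing} (the Non-Crossing Lemma), for $s_0$ large, any representative of $[\omega]$ cannot be supported in $C_i\setminus N_{s_0}(W)$ for two of the $C_i$ simultaneously; an argument bounding diameter shows a single bounded-diameter representative cannot ``escape'' far from $W$ into any one $C_i$ either (essentially via Proposition \ref{prop:deepcondition}, which says essential components hug $W$, together with Part \ref{prop:mvseqtwocomps} of Proposition \ref{prop:mvseq}). Hence $d_{\mathrm{Haus}}(W,\operatorname{Mob}([\omega],D_0))<\infty$.

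\textbf{From mobility set to subgroup and almost invariant set.} Set $H:=\operatorname{Stab}_G([\omega])$. By Corollary \ref{cor:mobsetcocomp}, $H$ has finite Hausdorff distance from $\operatorname{Mob}([\omega],D_0)$, hence from $W$; this is the required subgroup. For the $H$-almost invariant set: since $W$ coarsely $3$-separates $G$, it coarsely $2$-separates $G$, and by Proposition \ref{prop:coarsecompinvecspace} the $C_i$ are $H$-almost invariant sets (after replacing $H$-finiteness by finite neighbourhoods via Lemma \ref{lem:hfinitenbhd} and $d_{\mathrm{Haus}}(W,H)<\infty$). I would then apply Lemma \ref{lem:aisetkrop}: I need to verify that for the deep coarse complementary component $C=C_1$, with $G\setminus C$ also deep (which holds since $C_2,C_3$ are deep and disjoint from $C_1$), every coset $gH$ lies within a bounded neighbourhood of $W$ of either $C_1$ or $G\setminus C_1$. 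This is where the coarse geometry of $H\sim W$ being a $PD_n^{\mathbb{Z}_2}$ ``wall'' is used: a coset $gH$ is at finite Hausdorff distance from a translate $gW$, which is itself a coarse $PD_n^{\mathbb{Z}_2}$ subspace, and one argues it cannot have essential coarse complementary components meeting both $C_1$ and $G\setminus C_1$ — otherwise one would contradict coarse disjointness or the non-crossing conclusion of Lemma \ref{lem:noncrossing}. Granting this, Lemma \ref{lem:aisetkrop} produces the proper $H$-almost invariant $X$ with $XH=X$, completing the proof.

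\textbf{Main obstacle.} The hard part is the reverse inclusion $\operatorname{Mob}([\omega],D_0)\subseteq N_s(W)$ and the verification of the coset hypothesis of Lemma \ref{lem:aisetkrop} — both amount to showing that bounded-diameter representatives of $[\omega]$ genuinely concentrate near $W$ and that no translate $gW$ ``crosses'' the partition $\{C_1,G\setminus C_1\}$. This is exactly where all three essential components, the Non-Crossing Lemma, and Proposition \ref{prop:deepcondition} must be combined carefully; the quantitative bookkeeping of which neighbourhood constants depend on what ($X$, $W$, $A$, $D_0$, $s_0$, $r_\beta$, $n_k$) is the delicate technical core.
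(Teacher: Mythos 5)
The central step of your argument — that $\mathrm{Mob}([\omega],D_0)\subseteq N_s(W)$ for a \emph{single} cohomology class $[\omega]$ arising from the Mayer--Vietoris boundary — has a genuine gap, and the paper's proof is structured precisely to avoid needing this claim. What Lemma \ref{lem:noncrossing} and Part (\ref{prop:mvseqtwocomps}) of Proposition \ref{prop:mvseq} actually give you is one-sided: any bounded-diameter representative of $[\omega]$ (coming from the splitting $C_1$ versus $C_2\cup C_3$) cannot have support meeting both $C_2\backslash N_s(W)$ and $C_3\backslash N_s(W)$, and this yields only $\mathrm{Mob}([\omega],D_0)\subseteq C_1\cup N_{s+D_0}(W)$. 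Nothing rules out bounded-diameter representatives supported arbitrarily deep inside $C_1$: if $g\in\mathrm{Stab}([\omega])$ sits far from $W$, then for any small representative $\sigma$ the translate $\sigma g^{-1}$ is another small representative of the same class supported near $gW$, which may lie deep in $C_1$. Proposition \ref{prop:deepcondition} concerns how essential coarse complementary components sit relative to $W$, not where the mobility set is, so it cannot close this gap. Consequently, taking $H:=\mathrm{Stab}_G([\omega])$ only gives you a subgroup lying in a neighbourhood of $C_1\cup W$, which falls short of $d_{\mathrm{Haus}}(H,W)<\infty$.

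The paper closes this gap by constructing a \emph{second} class: interchange the roles of $C_1$ and $C_2$ to obtain $[\alpha_2]$ with $\mathrm{Mob}([\alpha_2],D')\subseteq C_2\cup N_{s+D'}(W)$, set $H_i:=\mathrm{Stab}([\alpha_i])$, and take $H:=H_1\cap H_2$. Since $H_1$ lies in a bounded neighbourhood of $C_1\cup W$ while $H_2$ lies in a bounded neighbourhood of $C_2\cup W$, and $C_1,C_2$ are disjoint, the intersection $H_1\cap H_2$ is confined to a bounded neighbourhood of $W$; conversely $W$ sits in bounded neighbourhoods of both mobility sets, hence of both $H_i$, and Lemma \ref{lem:cocompactintersec} then gives $d_{\mathrm{Haus}}(H,W)<\infty$. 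Your verification of the hypothesis of Lemma \ref{lem:aisetkrop} also needs tightening along the same lines: rather than appealing informally to translates $gW$ not ``crossing'' the partition, the paper observes that $gW\subseteq N_{D'}(\mathrm{Mob}([\alpha_1 g^{-1}],D'))$ and applies the non-crossing lemma to the translated class, concluding that $\mathrm{Mob}([\alpha_1 g^{-1}],D')$ — and hence $gW$, and hence $gH$ — lies in a bounded neighbourhood of exactly one of $C_1$ or $C_2\cup C_3$.
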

\begin{proof}
Say $C_1$, $C_2$ and $C_3$ are three essential, coarse disjoint, coarse complementary components of $W$.
Without loss of generality, we may replace $C_1$ with $G\backslash (C_2\cup C_3)$ and $C_2$ with $C_2\backslash C_3$, so that $C_1$, $C_2$ and $C_3$ are disjoint. Using Corollary \ref{cor:compcompparamindep}, we may assume $C_1$, $C_2$ and $C_3$ are each $(1,A)$-coarse complementary components for some $A\geq 0$ large enough. 

By Proposition \ref{prop:mvseq}, there exist uniformly $(n-1)$-acyclic metric complexes $(W,D_\bullet)$, $(C_1\cup W,A_\bullet)$, $(C_2\cup C_3 \cup W,B_\bullet)$ and $(G,C_\bullet)$, and an exact sequence $$\cdots \xrightarrow{q^*} H^n_c(A_\bullet)\oplus H^n_c(B_\bullet)\xrightarrow{p^*} H^n_c(D_\bullet)\xrightarrow{\delta} \hat H^{n+1}_c(C_\bullet).$$ As $C_1$ and $C_2\cup C_3$ are essential, the map $\delta:H^n_c(D_\bullet)\rightarrow \hat H^{n+1}_c(C_\bullet)$ is injective and has image containing some non-trivial $[\alpha_1]\in \hat H^{n+1}_c(C_\bullet)$. 

As in the proof of Proposition \ref{prop:deepcondition},  there exists a $D\geq 0$ such that for every $x\in W$, there is a cocycle supported in $N^W_D(x)$ representing the non-trivial element of $H^n_c(D_\bullet)\cong \mathbb{Z}_2$. Thus  Proposition \ref{prop:mvseq} tells us that there exists a $D'=D'(D,G,W,A)\geq 0$ such that for each $x\in W$, $[\alpha_1]\in \hat H^{n+1}_c(C_\bullet)$ can be represented by a cocycle supported in $N_{D'}^G(x)$. Therefore  $W\subseteq N_{D'}(\text{Mob}([\alpha_1],D'))$.

Part (2) of Proposition \ref{prop:mvseq} and Lemma \ref{lem:noncrossing} say that for $s$ sufficiently large, $[\alpha_1]\neq 0$ can be represented by a cocycle supported in $C_1\backslash N_s(W)$, but not in  either of $C_2\backslash N_{s}(W)$ or $C_3\backslash N_{s}(W)$.  Therefore, for any $\sigma \in Z([\alpha_1],D')$,  $\mathrm{supp}(\sigma)$ cannot intersect either $C_2\backslash N_{s+D'}(W)$ or $C_3\backslash N_{s+D'}(W)$. Thus $\text{Mob}([\alpha_1],D')\subseteq C_1 \cup N_{s+D'}(W)$. 

The constants $s$ and $D'$ depend only on $G$, $W$, and $A$. Thus we can  interchange the roles of $C_1$ and $C_2$ to obtain a non-trivial cohomology class $[\alpha_2]\in H^{n+1}_\mathrm{coarse}(X)$ such that $W\subseteq N_{D'}(\text{Mob}([\alpha_2],D'))$ and $\text{Mob}([\alpha_2],D')\subseteq C_2 \cup N_{s+D'}(W)$.

Let $H_i=\mathrm{stab}([\alpha_i])$ and $F_i=\text{Mob}([\alpha_i],D')$ for $i=1,2$. Since each $F_i$ is non-empty, Corollary \ref{cor:mobsetcocomp} says that each $H_i$ has finite Hausdorff distance from $F_i$. Thus there is an $R\geq D'$ large enough so that $H_i\subseteq N_R(F_i)$ and $F_i\subseteq N_R(H_i)$ for $i=1,2$. By Lemma \ref{lem:nbhdcomp}, $$H_i\subseteq N_R(F_i)\subseteq N_R(C_i \cup N_{s+D'}(W))\subseteq C_i\cup N_{s+D'+A+R}(W)$$ for $i=1,2$. Hence $$H_1\cap H_2\subseteq N_R(F_1)\cap N_R(F_2)\subseteq N_{s+D'+A+R}(W)$$ and $$W\subseteq N_{D'}(F_1)\cap N_{D'}(F_2)\subseteq N_{R+D'}(H_1)\cap N_{R+D'}(H_2).$$ By Lemma \ref{lem:cocompactintersec}, $H:=H_1\cap H_2$ has finite Hausdorff distance from $N_{R+D'}(H_1)\cap N_{R+D'}(H_2)$. Therefore, $W$ has finite Hausdorff distance from $H:=H_1\cap H_2$.

Let $g\in G$. We recall that for each $x\in W$, there exists a cocycle supported in $N_{D'}(x)$ representing $[\alpha_1]\neq 0$. Hence for each $x\in gW$, there is a cocycle supported in $N_{D'}(x)$ representing $[\alpha_1\cdot g^{-1}]\neq 0$; therefore $gW\subseteq N_{D'}(\mathrm{Mob}([\alpha_1g^{-1}],D'))$. 

By Lemma \ref{lem:noncrossing} and our choice of $s$,  $[\alpha_1\cdot g^{-1}]$ cannot be represented by cocycles supported in any two of  $C_1\backslash N_s(W)$ and $C_2\backslash N_s(W)$ and $C_3\backslash N_s(W)$. Therefore, either $\mathrm{Mob}([\alpha_1g^{-1}],D')\subseteq C_1 \cup N_{s+D'}(W)$ or $\mathrm{Mob}([\alpha_1g^{-1}],D')\subseteq C_2 \cup C_3 \cup N_{s+D'}(W)$. Hence by Lemma \ref{lem:nbhdcomp}, either $gW\subseteq C_1\cup N_{s+D'+A}(W)$ or $gW\subseteq C_2\cup C_3\cup N_{s+D'+A}(W)$.

Since $H$ and $W$ are at finite Hausdorff distance, $C_1$ is a coarse complementary component of $H$. Moreover,  both $C_1$ and $G\backslash C_1=C_2\cup C_3$ are deep. By a further application of Lemma \ref{lem:nbhdcomp}, there exists a sufficiently large  $A'$ such that for every $g\in G$, either $gH\subseteq C_1 \cup N_{A'}(H)$, or $gH\subseteq (G\backslash C_1)\cup N_{A'}(H)$. Thus $C_1$ satisfies the hypotheses of Lemma \ref{lem:aisetkrop}, so there exists a proper $H$-almost invariant set $X$ such that $XH=X$.
\end{proof}
We recall the Kropholler conjecture, which has been answered affirmatively by Dunwoody.\begin{thm}[\cite{dunwoody2017structure}]\label{thm:kropholler}
Let $G$ be a finitely generated group and let $H\leq G$ be a subgroup. If $G$ contains a proper $H$-almost invariant set $X$ such that $XH=X$, then $G$ admits a splitting over a subgroup $C$ which is commensurable with a subgroup of $H$.
\end{thm}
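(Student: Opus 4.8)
This is the Kropholler conjecture, resolved in full generality by Dunwoody; I will sketch the strategy and indicate where the real difficulty lies. The plan is to convert the almost invariant set into a $G$-invariant pattern on the Cayley graph, run a nesting process to make the $G$-translates tree-like, and then read off a $G$-tree whose edge stabilizers are controlled by $H$.

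\textbf{From the almost invariant set to a $G$-pocset.} Fix a finite generating set $S$ and let $\Gamma$ be the Cayley graph of $G$. Given the proper $H$-almost invariant set $X$ with $XH=X$, consider the coboundary $\delta X$, the set of edges of $\Gamma$ joining $X$ to $G\setminus X$; it is $H$-invariant, and since $X+Xs$ is $H$-finite for each $s\in S$, the set $\delta X$ is $H$-finite, hence (Lemma \ref{lem:hfinitenbhd}) $\delta X\subseteq N_r(H)$ for some $r\geq 0$. The orbit $\mathcal E:=\{\,gX,\ g(G\setminus X)\ :\ g\in G\,\}$, partially ordered by inclusion and equipped with complementation, is a $G$-invariant pocset; properness of $X$ guarantees that $X$ and $G\setminus X$ (and hence every element of $\mathcal E$) is nontrivial, so the wall $\{X,G\setminus X\}$ is genuinely present.

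\textbf{Nesting and the structure tree.} In general the translates $gX$ cross one another, so $\mathcal E$ is not yet dual to a tree. The core of Dunwoody's argument is to replace $X$ by an almost-equal proper $H$-almost invariant set $X'$ whose $G$-translates form a \emph{nested} family: in the finitely presented case this is the classical Dunwoody process applied to tracks carrying $\delta X$ in a finite $2$-complex, but for merely finitely generated $G$ one must instead work with Dunwoody's \emph{networks}, and the termination of the process together with the properness of the output is exactly the content of \cite{dunwoody2017structure}. Once a nested, $G$-invariant, complementation-closed family $\mathcal E'$ with the requisite discreteness property (only finitely many walls separating any two of its elements) is in hand, the Dunwoody--Sageev correspondence — equivalently Sageev's cubulation \cite{sageev1995ends}, which here outputs a tree precisely because $\mathcal E'$ is nested — yields a $G$-tree $T$ whose edges lying over the orbit of $\{X',G\setminus X'\}$ are in bijection with the translates of that wall, and on which $G$ acts with no global fixed vertex because $X'$ is proper (so Bass--Serre theory gives a nontrivial splitting of $G$ over an edge stabilizer).

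\textbf{Edge stabilizers, and the main obstacle.} The edge group is $C:=\mathrm{Stab}_G(\{X',G\setminus X'\})$. It preserves the $H$-finite set $\delta X'$, so arguing as in Lemma \ref{lem:stabcoarsesep} (using translation-invariance of the word metric) it lies in a bounded neighbourhood of $\delta X'\subseteq N_{r'}(H)$; on the other hand $X'$ inherits near-invariance under $H$, so a finite-index subgroup of $H$ is contained in $C$. Combining these containments with Lemma \ref{lem:cocompactintersec} and Proposition \ref{prop:sbgphdist} shows that $C$ is commensurable with a subgroup of $H$, which is exactly the asserted conclusion. The \emph{main obstacle} is the nesting step: making the $G$-orbit of $X$ nested without destroying properness — and, when $G$ is not finitely presented, carrying this out with networks rather than tracks — is genuinely hard and is where the full weight of \cite{dunwoody2017structure} is required. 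A secondary, more routine but still delicate point is the book-keeping needed to ensure that the passage $X\rightsquigarrow X'$ preserves enough of the condition $XH=X$ that the edge group $C$ remains commensurable with a subgroup of the \emph{original} $H$.
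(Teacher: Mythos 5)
The paper does not prove this theorem; it is imported verbatim from \cite{dunwoody2017structure} and used as a black box, so there is no in-paper argument to compare against. Your sketch is a reasonable high-level account of Dunwoody's strategy---pass from $X$ to the $G$-pocset of translates of $\{X, G\setminus X\}$, make it nested, extract the dual tree, and bound the edge stabilizer using the $H$-finiteness of $\delta X$---and you are right that the nesting step is the entire content of the cited paper and cannot be reproduced here. So what you have written is, like the paper itself, a pointer to Dunwoody rather than a proof, and you are honest about that.

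One overstatement to flag in your last paragraph: you assert that a finite-index subgroup of $H$ is contained in $C=\mathrm{Stab}_G(\{X',G\setminus X'\})$. That is not part of the conclusion, need not hold, and would in fact upgrade the theorem: combined with $C\subseteq N_{r'}(H)$ and Proposition \ref{prop:sbgphdist} it would make $C$ commensurable with $H$ itself, not merely with a subgroup of $H$. Under the paper's conventions the hypothesis is $XH=X$ (right $H$-invariance), which gives no a priori control over the left stabilizer of $X$, and the nesting process may replace $X$ by a set whose wall-stabilizer has infinite index in $H$. The stronger ``commensurable with $H$'' conclusion is what Theorem \ref{thm:krophollerspecial} records, under the additional hypothesis that $\tilde e(G,K)=1$ for every infinite-index $K\leq H$; without that hypothesis, the one containment $C\subseteq N_{r'}(H)$ is all one has, and by Proposition \ref{prop:sbgphdist}(1) it already delivers exactly the stated conclusion.
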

By combining Proposition \ref{prop:sbgphdist}, Lemma \ref{lem:sbgp} and Theorem \ref{thm:kropholler}, we deduce the following:
\main

We note that \cite{dunwoody1993splitting} proves Theorem \ref{thm:kropholler} under the assumption that $H$ is a polycyclic-by-finite group. However, the argument of \cite{dunwoody1993splitting} holds for general $H$ under the additional assumption that $\tilde e(G,K)=1$ for every infinite index subgroup $K$ of $H$ (as stated in \cite[Theorem 3.4]{dunwoody2000algebraic}).  This weakened version of the Kropholler conjecture also appears in  \cite{scott2000splittings} and  \cite{niblo2002singularity}.

\begin{thm}[\cite{dunwoody1993splitting},\cite{dunwoody2000algebraic}]\label{thm:krophollerspecial}
Let $G$ be a finitely generated group and let $H\leq G$ be a subgroup. Suppose $\tilde e(G,K)=1$ for  every infinite index subgroup $K$ of $H$. If $G$ contains a proper, $H$-almost invariant subset $X$ such that $XH=X$, then $G$ admits a splitting over a subgroup $C\leq G$ that is commensurable to $H$.
\end{thm}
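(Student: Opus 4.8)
The plan is to deduce Theorem \ref{thm:krophollerspecial} from the already-established Theorem \ref{thm:comm} (the Kropholler conjecture, proved by Dunwoody in \cite{dunwoody2017structure}) together with the hypothesis $\tilde e(G,K)=1$ for every infinite index subgroup $K\leq H$. The point is that Theorem \ref{thm:kropholler} only produces a splitting of $G$ over a subgroup $C$ \emph{commensurable with a subgroup of $H$}, whereas we want $C$ commensurable with $H$ itself. So the work is upgrading ``subgroup of $H$'' to ``finite index subgroup of $H$'' using the relative ends hypothesis.

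First I would apply Theorem \ref{thm:kropholler} to the proper $H$-almost invariant set $X$ with $XH=X$: this yields a splitting of $G$ over a subgroup $C$ that is commensurable to a subgroup $C'\leq H$. Since $G$ splits over $C$, we have $\tilde e(G,C)>1$, and since $C$ and $C'$ are commensurable, commensurable subgroups coarsely separate $G$ simultaneously (this follows from Proposition \ref{prop:sbgphdist}(2), which gives $d_\mathrm{Haus}(C,C')<\infty$, combined with the quasi-isometry invariance of coarse separation and Proposition \ref{prop:relendscoarsep}), so $\tilde e(G,C')>1$ as well. Now $C'\leq H$, and the hypothesis of the theorem states that $\tilde e(G,K)=1$ for every \emph{infinite index} subgroup $K$ of $H$. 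The contrapositive forces $C'$ to have finite index in $H$. Hence $C'$ is commensurable with $H$, and therefore $C$ — being commensurable with $C'$ — is commensurable with $H$ (commensurability is an equivalence relation on subgroups). This gives the desired conclusion.

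The main obstacle — really the only non-routine point — is justifying that $\tilde e(G,-)$ is a commensurability invariant, i.e. that $\tilde e(G,C)>1$ implies $\tilde e(G,C')>1$ when $C$ and $C'$ are commensurable. This is exactly where the geometric reformulation developed in Section \ref{sec:coarsecompcomp} earns its keep: by Proposition \ref{prop:relendscoarsep}, $\tilde e(G,K)\geq n$ iff $K$ coarsely $n$-separates $G$; by Proposition \ref{prop:sbgphdist}(2), commensurable subgroups are at finite Hausdorff distance; and coarse $n$-separation depends only on the subspace up to finite Hausdorff distance (one checks directly from Lemma \ref{lem:compcompcoarseinv}, taking the ambient coarse isometry to be the identity, that replacing $W$ by a set at finite Hausdorff distance preserves the collection of deep, coarse disjoint, coarse complementary components). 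Assembling these gives that $\tilde e(G,C)=\tilde e(G,C')$, which closes the argument. I should remark that this is essentially the argument of \cite{dunwoody1993splitting} and \cite[Theorem 3.4]{dunwoody2000algebraic}, so in the write-up I would keep the proof short and cite those sources for the details, presenting the above as the reduction to Theorem \ref{thm:kropholler}.
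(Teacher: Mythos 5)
Your deduction is mathematically sound: applying Theorem \ref{thm:kropholler} yields a splitting over $C$ commensurable with some $C'\leq H$; since $G$ splits over $C$ we have $\tilde e(G,C)>1$; commensurability invariance of $\tilde e(G,-)$ (which you argue geometrically via Propositions \ref{prop:sbgphdist} and \ref{prop:relendscoarsep}, and which the paper itself uses elsewhere by citing \cite[Lemma 2.4]{kropholler1989relative}) transfers this to $\tilde e(G,C')>1$; the hypothesis then forces $[H:C']<\infty$, so $C$ is commensurable to $H$ by transitivity. (The first line should read Theorem \ref{thm:kropholler}, not \ref{thm:comm}; the rest of your writeup makes clear this is a typo.)

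However, this is not what the paper does, and the discrepancy matters. The paper does not prove Theorem \ref{thm:krophollerspecial} at all: it is quoted as an external result, attributed to \cite{dunwoody1993splitting} and \cite[Theorem 3.4]{dunwoody2000algebraic}, and the surrounding prose makes the purpose explicit --- Theorem \ref{thm:krophollerspecial} is a \emph{more classical}, independently established special case whose role is to give Theorem \ref{thm:mainalt} a foundation that does \emph{not} rest on Dunwoody's recent \cite{dunwoody2017structure} resolution of the full Kropholler conjecture. If you derive \ref{thm:krophollerspecial} as a corollary of \ref{thm:kropholler}, as you propose, the distinction between Theorems \ref{thm:main} and \ref{thm:mainalt} evaporates and the latter becomes pointless. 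Your closing remark that the reduction is ``essentially the argument of \cite{dunwoody1993splitting} and \cite[Theorem 3.4]{dunwoody2000algebraic}'' is also historically backwards: those works predate the resolution of the Kropholler conjecture and prove the special case by a direct construction of a splitting tree from the $H$-almost invariant set, not by reduction to a then-open conjecture. So: correct mathematics, wrong role for it --- this statement should be cited, not rederived, if one wants \ref{thm:mainalt} to retain its intended independence.
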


We recall Proposition \ref{prop:deepcvsminimalsepp}, which says that $H$ is almost essentially embedded if and only if $\tilde e(G,K)=1$ for every infinite index subgroup $K$ of $H$.
Using the more classical Theorem \ref{thm:krophollerspecial} instead of Theorem \ref{thm:kropholler}, we deduce the following slightly weaker alternative to Theorem \ref{thm:main}.
 \begin{restatable}{thm}{mainalt}\label{thm:mainalt}
Let $G$ be a group of type $FP_{n+1}^{\mathbb{Z}_2}$ and let $W\subseteq G$ be a coarse $PD_n^{\mathbb{Z}_2}$ subspace. Suppose $G$ contains three essential, coarse disjoint, coarse complementary components of $W$ and that $W$  is almost essentially embedded. Then there exists a subgroup $H\leq G$, at finite Hausdorff distance from $W,$ such that $G$ splits over $H$.
\end{restatable}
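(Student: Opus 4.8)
The plan is to deduce Theorem \ref{thm:mainalt} from the machinery already assembled, following almost verbatim the proof of Theorem \ref{thm:main} but substituting the classical splitting theorem for Dunwoody's resolution of the full Kropholler conjecture. First I would invoke Lemma \ref{lem:sbgp}: since $G$ is of type $FP_{n+1}^{\mathbb{Z}_2}$ and contains three essential, coarse disjoint, coarse complementary components of the coarse $PD_n^{\mathbb{Z}_2}$ subspace $W$, there is a subgroup $H\leq G$ at finite Hausdorff distance from $W$ together with a proper $H$-almost invariant set $X$ satisfying $XH=X$. This is exactly the conclusion of Lemma \ref{lem:sbgp}, so no new work is needed here; the almost essential embeddedness hypothesis is not used at this stage.

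The point of the extra hypothesis is to make Theorem \ref{thm:krophollerspecial} applicable in place of Theorem \ref{thm:kropholler}. So the second step is to check the hypothesis of Theorem \ref{thm:krophollerspecial}, namely that $\tilde e(G,K)=1$ for every infinite index subgroup $K\leq H$. By Proposition \ref{prop:deepcvsminimalsepp}, the statement that no infinite index subgroup of $H$ coarsely separates $G$ is equivalent to $H$ being almost essentially embedded — which is our hypothesis on $W$, transported to $H$. Strictly one should note that $W$ and $H$ being at finite Hausdorff distance means they have the same coarse complementary components (this is implicit in the setup of Section \ref{sec:coarsecompcomp}, since coarse complementary components depend only on the Hausdorff-distance class of the separating set), so $W$ almost essentially embedded forces $H$ almost essentially embedded; alternatively, apply Proposition \ref{prop:deepconditionqiinv} with $f=\mathrm{id}_G$. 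Using the characterisation in Proposition \ref{prop:relendscoarsep} that $\tilde e(G,K)>1$ iff $K$ coarsely separates $G$, we conclude $\tilde e(G,K)=1$ for every infinite index $K\leq H$.

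The third step is then immediate: apply Theorem \ref{thm:krophollerspecial} to the pair $(G,H)$ and the $H$-almost invariant set $X$ from the first step. This yields a splitting of $G$ over a subgroup $C\leq G$ commensurable to $H$. Finally, by Proposition \ref{prop:sbgphdist}(2), a subgroup commensurable to $H$ is at finite Hausdorff distance from $H$, hence at finite Hausdorff distance from $W$; rechristening $C$ as the subgroup $H$ of the statement gives the desired conclusion, that $G$ splits over a subgroup at finite Hausdorff distance from $W$.

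I do not expect a serious obstacle here — the substance is all in Lemma \ref{lem:sbgp} and in the quoted splitting theorems. The only point requiring a little care is the bookkeeping in the second step: making sure that "almost essentially embedded" is a property insensitive to replacing $W$ by a finite-Hausdorff-distance subgroup $H$, and then correctly chaining Proposition \ref{prop:deepcvsminimalsepp} with Proposition \ref{prop:relendscoarsep} to extract the precise input $\tilde e(G,K)=1$ demanded by Theorem \ref{thm:krophollerspecial}. If one wanted to be maximally careful one could also observe that the constant $A$ and the component decomposition in Lemma \ref{lem:sbgp} already give $H$ as an explicit finite-Hausdorff-distance approximant of $W$, so that "almost essentially embedded for $W$" and "almost essentially embedded for $H$" are literally the same condition once unwound through Definition of the almost essentially embedded property.
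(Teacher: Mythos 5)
Your proposal is correct and follows exactly the route the paper intends: Lemma~\ref{lem:sbgp} produces the subgroup $H$ at finite Hausdorff distance from $W$ together with the proper $H$-almost invariant set; Proposition~\ref{prop:deepcvsminimalsepp} converts the almost essentially embedded hypothesis into $\tilde e(G,K)=1$ for all infinite index $K\leq H$; and Theorem~\ref{thm:krophollerspecial} (in place of Theorem~\ref{thm:kropholler}) then yields a splitting over a subgroup commensurable to, hence at finite Hausdorff distance from, $H$ and so from $W$. The only quibble is that Proposition~\ref{prop:deepconditionqiinv} is stated for subgroups, so it cannot literally be invoked with $f=\mathrm{id}_G$ when $W$ is merely a subspace; your first argument for transferring the almost essentially embedded condition across the finite Hausdorff distance between $W$ and $H$ is the right one and is what the paper leaves implicit.
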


\subsection{Applications}
We now discuss applications of Theorem \ref{thm:main}.  To do this, we consider various geometric and algebraic conditions which imply that $W$ coarsely separates $G$ into three  essential, coarse disjoint, coarse complementary components. With the exception of Theorem \ref{thm:maincomm},  Theorem \ref{thm:mainalt} is sufficient to prove results in this section.

Suppose  $G$ is $(n-1)$-acyclic at infinity over $\mathbb{Z}_2$ and $H \leq G$ is a subgroup such that no infinite index subgroup of $H$ coarsely 3-separates $G$.  Proposition  \ref{prop:essentialminimal} then tells us that  every deep coarse complementary component of $H$ is essential. We therefore deduce:
\qiinv
We note that the subgroups $H$ and $H'$ in Theorem \ref{thm:qiinv} are themselves quasi-isometric.

As discussed in the introduction, there are several known examples of groups that are $(n-1)$-acyclic at infinity. For example, it is shown in \cite{brown2000improper} that if $G$ is the fundamental group of a finite graph of groups whose vertex groups are $(n-1)$-acyclic at infinity, whose edge groups are $(n-2)$-acyclic at infinity, and whose vertex and edge groups both satisfy suitable finiteness properties, then $G$ is itself $(n-1)$-acyclic at infinity. In particular, we can deduce the following.

\splittings
This can be coupled with Theorem 8.7 of \cite{vavrichek2012coarse}, which gives necessary and sufficient conditions for  $H$ to coarsely 3-separate $G$. For example, if $G=A*_HB$ is a splitting and there exists some $g\in A\backslash H$  such that $gH g^{-1}$ is commensurable with $H$, then $H$ coarsely 3-separates $G$.

For $k>n$, coarse $PD^{\mathbb{Z}_2}_k$ groups are $(n-1)$-acyclic at infinity over $\mathbb{Z}_2$. Thus Corollary \ref{cor:splittings} has some overlap with Theorem 1.7 from \cite{mosher2011quasiactions} and Theorem 3.4 from \cite{papasoglu2007group}, which concern graphs of groups whose vertices and edges are coarse Poincar\'e duality groups. However, Corollary \ref{cor:splittings} applies to vertex groups that are  $(n-1)$-acyclic at infinity which need not be  coarse Poincar\'e duality groups.

Under the additional assumption  that $H$ is virtually polycyclic, we can improve Theorem \ref{thm:qiinv} by dropping the hypothesis that no infinite index subgroup coarsely separates.

\qiinvpolycyc
\begin{proof}

Example \ref{exmp:polycylic} says that a torsion-free polycyclic group of Hirsch length $r$ is a $PD_r^\mathbb{Z}$ group, hence it must also be a coarse $PD_r^{\mathbb{Z}_2}$ group.  Every virtually polycyclic group of Hirsch length $r$ contains a finite index, torsion-free polycyclic subgroup, hence is a coarse $PD_r^{\mathbb{Z}_2}$ group.

By Theorem \ref{thm:qiinv}, it is sufficient to show that $\tilde e(G,K)=1$ for every infinite index $K\leq H$.
Let $K\leq H$ be a subgroup of minimal Hirsch length such that $\tilde{e}(G,K)>1$. We suppose for contradiction that $r:=h(K)< h(H)=n$.  As $K$ is itself a coarse $PD_r^{\mathbb{Z}_2}$ group, Proposition \ref{prop:essentialminimal} says that $K$ is essentially embedded. Therefore, there exists essential coarse complementary components $C_1$ and $C_2=G\backslash C_1$ of $H$. Therefore, there exists a coarse Mayer--Vietoris sequence  $$\cdots \rightarrow H^r_\mathrm{coarse}(C_1\cup K)\oplus H^r_\mathrm{coarse}(C_2\cup K) \rightarrow H^r_\mathrm{coarse}(K)\rightarrow H^{r+1}_\mathrm{coarse}(G)\rightarrow \cdots.$$ As $C_1$ and $C_2$ are essential, the boundary map $H^r_\mathrm{coarse}(K)\rightarrow H^{r+1}_\mathrm{coarse}(G)$ is injective. Since $G$ is $(n-1)$-acyclic at infinity, $H^{r+1}_\mathrm{coarse}(G)=0$, which is a contradiction.  Hence there is no infinite index subgroup $K\leq H$  such that $\tilde{e}(G,K)>1$.
\end{proof}

We conclude with the following application of Theorem \ref{thm:main}. Unlike Theorem \ref{thm:qiinv} and Corollary \ref{cor:qiinvpolycyc}, which assume geometric hypotheses, Theorem \ref{thm:maincomm} assumes algebraic hypotheses. By making the additional  assumption that $G_e$ is almost essentially embedded i.e. no infinite subgroup of $G_e$ coarsely separates $G$, we can deduce the conclusions of Theorem \ref{thm:maincomm} using Theorem \ref{thm:krophollerspecial} rather than Theorem \ref{thm:kropholler}.

\maincomm

Before beginning the proof of Theorem \ref{thm:maincomm}, we introduce the concept of \emph{trees of spaces} from \cite{scott1979topological}.
Consider the  splitting $G=A*_HB$. We choose CW complexes $K_A$, $K_B$ and $K_H$, each with finite 1-skeleton, such that $\pi_1(K_A)=A$, $\pi_1(K_B)=B$ and $\pi_1(K_H)=H$. We define the quotient space $X$ by gluing $K_H\times [0,1]$ to $K_A$ and $K_B$ as follows:
we attach $K_H\times \{0\}$ to $K_A$ (resp.  $K_H\times \{1\}$ to $K_B$) so that the inclusion of subspaces induces the subgroup inclusion $H\hookrightarrow A$ (resp. $H\hookrightarrow B$) on the level of fundamental groups. By the van Kampen theorem, we see $\pi_1(X)=G=A*_HB$. 

The universal cover $\tilde X$ is equipped with an equivariant projection map $p:\tilde X\rightarrow T$, where $T$ is the Bass-Serre tree associated to the splitting $G=A*_HB$. To each edge $e$ and vertex $v$ of $T$, we associate the edge space $X_e:=p^{-1}(e)$ and vertex space $X_v:=p^{-1}(v)$ respectively. The 1-skeleton of $\tilde{X}$, equipped with the path metric in which edges have length 1,  is quasi-isometric to $G$. Moreover, the 1-skeleton of  each edge space is quasi-isometric to $H$, and the 1-skeleton of each vertex space is quasi-isometric to either $A$ or $B$. For subspaces $U,V\subseteq X$, we say that $U$ is \emph{coarsely contained} in $V$ if for some $r>0$, $U\subseteq N_r(V)$. Each edge space is coarsely contained in its adjacent vertex spaces. We apply such a procedure for a general graph of groups $\mathcal{G}$ to obtain a tree of spaces.

\begin{proof}[Proof of Theorem \ref{thm:maincomm}]
Let $A=G_v$, $B=G_w$ and $H=G_e$.
We choose  $a\in\mathrm{Comm}_A(H)\backslash H$ and $b\in \mathrm{Comm}_B(H)\backslash H$. Thus $aH$ and $bH$ have finite Hausdorff distance from $H$. Let $\tilde X$ be a tree of spaces associated to the graph of groups $\mathcal{G}$ and let $p:\tilde X\rightarrow T$ be an equivariant projection map onto the Bass-Serre tree $T$. As  $G$ acts properly and cocompactly on the $1$-skeleton of $\tilde X$, the Milnor-\v{S}varc lemma tells us there is a quasi-isometry $g:\tilde X^{(1)}\rightarrow G$ such that $g(p^{-1}(e))$ has finite Hausdorff distance from $H\leq G$.

We choose a geodesic $L$ in $T$  with edges $\dots,babH, baH,bH,H,aH,abH,abaH,\dots$. The edge spaces associated to any  two consecutive edges of $L$ have uniformly finite Hausdorff distance from one another.  We define $W:=\cup_{e\in L} p^{-1}(e)$, as $e$ ranges over all the edges in the line $L$. Then $W$ is a \emph{coarse fibration} as described by Kapovich and Kleiner in Section 11.5 of \cite{kapovich2005coarse}.

This is similar to Example 11.11 in \cite{kapovich2005coarse}, except we don't require $H$ to have finite index in $A$ and $B$. We take the line $\hat L$ dual to the geodesic $L$ in the graph of groups;  each vertex of $\hat L$ is an edge of $L$, and two vertices in $\hat L$ are joined by an edge if the corresponding edges in $L$ share a vertex. To each vertex of $\hat L$ we associate the corresponding edge space $X_e$, which is a coarse $PD_n^{\mathbb{Z}_2}$ space. We thus see that $W$ is a coarse fibration whose base is $\hat L$ and whose fibres are edge spaces. It follows from Theorem 11.13 of \cite{kapovich2005coarse} that $W$ is a coarse $PD_{n+1}^{\mathbb{Z}_2}$ space. 	

An edge $e$ of $L$ separates $T$ into two components $T^+$ and $T^-$. Thus $C_1:=p^{-1}(T^+)$ and $C_2:=p^{-1}(T^-)$ are two deep, coarse disjoint, coarse complementary components of $X_e$ in $\tilde X$. As $C_1$ and $C_2$ contain the coarse $PD_{n+1}^{\mathbb{Z}_2}$ half-spaces $(p^{-1}(T^+)\cap W, X_e)$ and $(p^{-1}(T^-)\cap W, X_e)$ respectively,  Corollary \ref{cor:pdnhalf} tells us that $C_1$ and $C_2$ are essential.  

As either  $|\mathrm{Comm}_A(H): H|$ or $|\mathrm{Comm}_B(H): H|$ is at least three, we can choose another geodesic $L'$ in $T$ that  contains the edge $e$ and intersects $L$ in a finite subtree of $T$. A slight modification of the above argument shows that there are three essential, coarse disjoint, coarse complementary components of $X_e$. Since $g:\tilde{X}^{(1)}\rightarrow G$ is a quasi-isometry and $g(X_e)$ has finite Hausdorff distance from $H$, Proposition \ref{prop:essentialqiinv} tells us that there exist three essential, coarse disjoint, coarse complementary components of $H\leq G$. We therefore apply Theorem \ref{thm:main}.
\end{proof}

We remark that if  $|G_v:i_0(G_e)|$ and $|G_w:i_1(G_e)|$ are finite, then $\mathrm{Comm}_{G_v}(i_0(G_e))=G_v$ and $\mathrm{Comm}_{G_w}(i_1(G_e))=G_w$. Therefore, if $G$ is the fundamental group of a finite graph of groups $\mathcal{G}$ in which all edge and vertex groups are coarse $PD_n^{\mathbb{Z}_2}$ groups (for some fixed $n$), our result overlaps with Theorem 2 of \cite{mosher2003quasi} and Theorem 3.1 of \cite{papasoglu2007group}. However, Theorem \ref{thm:maincomm} applies in situation in which edge groups don't necessarily have finite index in adjacent vertex groups.

Under an even stronger algebraic condition, namely that the subgroup over which the group splits commensurises the group, we obtain the following dichotomy:

\comm
\begin{proof}
We consider the tree of spaces $\tilde X$ associated to the splitting of $G$ over $H$, and let $p:\tilde X\rightarrow T$ be the equivariant projection to the Bass-Serre tree. As $\mathrm{Comm}_G(H)=H$, any two edge spaces have finite Hausdorff distance from one another. Therefore, either  $T$ is a line or $H$ coarsely $3$-separates $G$. In the former case, the argument in the proof of Theorem \ref{thm:maincomm} tells us that $G$ is coarse fibration whose base is a line and whose fibres are coarse $PD_n^{\mathbb{Z}_2}$ groups. Thus Theorem 11.13 of \cite{kapovich2005coarse} tells us that $G$ is a coarse $PD_{n+1}^{\mathbb{Z}_2}$ group. Therefore $G'$ is a coarse $PD_{n+1}^{\mathbb{Z}_2}$ group.

In the case where $H$ coarsely $3$-separates $G$, we see that for every coarse complementary component $C$ of $H$,   $p(C)$ contains a ray in the Bass-Serre tree in which consecutive edge spaces have uniformly bounded Hausdorff distance from one another. Thus the argument in the proof of Theorem \ref{thm:maincomm} tell us that $C$ contains a coarse $PD_{n+1}^{\mathbb{Z}_2}$ half-space, so must be essential. Thus $H$ is essentially embedded. Since $H$ is essentially embedded and coarsely $3$-separates $G$, Theorem \ref{thm:mainalt} tells us $G'$ splits over a subgroup at finite Hausdorff distance from $f(H)$.
\end{proof}

\subsection{Quasi-Isometry Invariance of Codimension One Subgroups}

Rather than considering when a group admits a splitting, suppose we now consider the weaker condition that a group has a codimension one subgroup. Doing this allows us to obtain a result that doesn't require coarse 3-separation. To state our results we  need the following definition:

\begin{defn}
A group $G$ is a \emph{coarse $n$-manifold group} if it is of type $FP_{n}^{\mathbb{Z}^2}$ and $H^n(G,\mathbb{Z}_2 G)$ has a non-zero, finite dimensional, $G$-invariant subspace.
\end{defn}

It is shown in  \cite{kleinercohomology} that a group $G$ is coarse $2$-manifold group if and only if it is virtually a surface group (see also \cite{bowditch2004planar}). The following lemma shows that we may think of coarse $n$-manifold groups as a generalisation of coarse $PD_n^{\mathbb{Z}_2}$ groups.

\begin{lem}\label{lem:charplanar}
Let $G$ be a group of type $FP_{n}^{\mathbb{Z}^2}$ and let $(G,C_\bullet)$ be a uniformly $(n-1)$-acyclic $R$-metric complex admitting a $G$-action. Then $G$ is a  coarse $n$-manifold group if and only if there exists a nonzero $[\alpha_0]\in \hat H^n_c(C_\bullet)$ and a $D\geq 0$ such that $G=N_D(\mathrm{Mob}([\alpha_0],D))$. In particular, if $G$ is a coarse $PD_n^{\mathbb{Z}_2}$ group, then it is a coarse $n$-manifold group.
\end{lem}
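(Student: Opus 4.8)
The plan is to unwind the definitions so that the coarse $n$-manifold condition becomes a statement about cocycles representing a single cohomology class, and then invoke Kleiner's mobility machinery (Lemma~\ref{lem:mobset} and Corollary~\ref{cor:mobsetcocomp}) to translate between the algebraic and geometric formulations. First I would recall that by definition $G$ is a coarse $n$-manifold group precisely when $H^n(G,\mathbb{Z}_2G)$ contains a nonzero finite-dimensional $G$-invariant subspace $V$, and by Proposition~\ref{prop:chargroupcohom} we may identify $H^n(G,\mathbb{Z}_2G)$ with $H^n_\mathrm{coarse}(G;\mathbb{Z}_2)=\hat H^n_c(C_\bullet)$ as right $\mathbb{Z}_2G$-modules, for the chosen uniformly $(n-1)$-acyclic metric complex $(G,C_\bullet)$ admitting a $G$-action.

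For the forward direction, suppose such a $V$ exists. Pick a nonzero $[\alpha_0]\in V$. Since $V$ is finite-dimensional and $G$-invariant, the orbit $[\alpha_0]\cdot G$ spans a finite-dimensional subspace, so $\mathrm{Stab}([\alpha_0])=:H$ has finite index in $G$ (the orbit is finite, being a subset of the finite set $V\setminus\{0\}$ when $V$ is over $\mathbb{Z}_2$). Hence $H$ has finite Hausdorff distance from $G$, i.e. $G=N_r(H)$ for some $r$. Now represent $[\alpha_0]$ by some cocycle of finite support; since cochains of finite support are finite $\mathbb{Z}_2$-combinations of dual basis elements and $\delta$ has finite displacement, we may take a representative $\beta\in Z^n_c$ of diameter at most some $D_0$ — more care: a single representative has \emph{some} finite diameter $D_0$, so $[\alpha_0]\in Z^n_c(D_0)$ is nonzero. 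Then $\mathrm{supp}(\beta)\subseteq\mathrm{Mob}([\alpha_0],D_0)$, so $\mathrm{Mob}([\alpha_0],D_0)\neq\emptyset$, and Corollary~\ref{cor:mobsetcocomp} gives $d_\mathrm{Haus}(\mathrm{Stab}([\alpha_0]),\mathrm{Mob}([\alpha_0],D_0))<\infty$. Combining with $G=N_r(H)$ we obtain $G=N_D(\mathrm{Mob}([\alpha_0],D))$ for $D$ large enough (enlarging $D_0$ using Lemma~\ref{lem:mobset} so the mobility set is stable).

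For the converse, suppose there is a nonzero $[\alpha_0]\in\hat H^n_c(C_\bullet)$ and $D\geq 0$ with $G=N_D(\mathrm{Mob}([\alpha_0],D))$. By Corollary~\ref{cor:mobsetcocomp}, $\mathrm{Stab}([\alpha_0])$ has finite Hausdorff distance from $\mathrm{Mob}([\alpha_0],D)$, hence from $G$; by Lemma~\ref{lem:hfinitenbhd} this forces $[\mathrm{Stab}([\alpha_0]):$ well, rather: $G=N_{r}(\mathrm{Stab}([\alpha_0]))$ for some $r$, and since $\mathrm{Stab}([\alpha_0])\leq G$, Lemma~\ref{lem:hfinitenbhd} (or Remark~\ref{rem:finindex}) gives that $\mathrm{Stab}([\alpha_0])$ has finite index in $G$. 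The finitely many $G$-translates of $[\alpha_0]$ then span a finite-dimensional $G$-invariant subspace $V\leq H^n_\mathrm{coarse}(G;\mathbb{Z}_2)\cong H^n(G,\mathbb{Z}_2G)$, which is nonzero since $[\alpha_0]\neq 0$. Thus $G$ is a coarse $n$-manifold group. Finally, for the last sentence: if $G$ is a coarse $PD_n^{\mathbb{Z}_2}$ group, then $(G,C_\bullet)$ can be taken to be a coarse $PD_n^{\mathbb{Z}_2}$ complex, and the duality map $\overline P:C_\bullet\to C^{n-\bullet}_c$ together with finite displacement shows that for each $x\in G$ the cocycle $\overline P(\sigma_x)$ (for $\sigma_x\in p_0^{-1}(x)$) represents the nonzero class of $H^n_c(C_\bullet)\cong\mathbb{Z}_2$ and is supported in a ball of radius $D$ about $x$ independent of $x$ (exactly as in the proof of Proposition~\ref{prop:deepcondition}); hence $G=N_D(\mathrm{Mob}([\alpha_0],D))$ and the criterion applies.

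The main obstacle I anticipate is pinning down the bookkeeping in the forward direction: one needs the diameter bound $D$ for the mobility set to be genuinely uniform, which requires first fixing \emph{a} finite representative (giving some $D_0$) and then applying Lemma~\ref{lem:mobset} to pass to a stable $D\geq D_0$ with $d_\mathrm{Haus}(\mathrm{Stab}([\alpha_0])\mathrm{supp}(\alpha_0),\mathrm{Mob}([\alpha_0],D))$ controlled, so that the neighbourhood of the mobility set genuinely covers all of $G$ rather than just the stabilizer. The identification $H^n(G,\mathbb{Z}_2G)\cong\hat H^n_c(C_\bullet)$ as $\mathbb{Z}_2G$-modules from Proposition~\ref{prop:chargroupcohom} is what makes "$G$-invariant subspace" on the algebraic side match "$\mathrm{Stab}$ of a class" on the geometric side, and verifying that the module structures are compatible with the right $G$-action $(\alpha g)(\sigma)=\alpha(g\sigma)$ used in Section~\ref{sec:mobility} is the one place where I would want to be careful rather than routine.
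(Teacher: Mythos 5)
Your proof is correct and follows the same strategy as the paper: use Proposition~\ref{prop:chargroupcohom} to identify the two cohomology theories as $\mathbb{Z}_2 G$-modules, note that finite-dimensionality over $\mathbb{Z}_2$ forces the orbit $[\alpha_0]G$ to be finite so $\mathrm{Stab}([\alpha_0])$ has finite index, and invoke Lemma~\ref{lem:mobset}/Corollary~\ref{cor:mobsetcocomp} to translate between the finite-index condition and $G=N_D(\mathrm{Mob}([\alpha_0],D))$; the bookkeeping you flag (fix a representative of some diameter $D_0$, then enlarge $D$, using that $\mathrm{Mob}([\alpha_0],D_0)\subseteq\mathrm{Mob}([\alpha_0],D)$) is exactly what makes the forward direction go through. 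You also explicitly prove the ``in particular'' sentence via the duality map as in Proposition~\ref{prop:deepcondition}, which the paper leaves implicit.
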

\begin{proof}
Suppose that $G$ is a coarse $n$-manifold group and that $F\leq H^n(G,\mathbb{Z}_2 G)$ is a non-zero, finite dimensional, $G$-invariant subspace. Since $F$ is finite, there is a nonzero $[\alpha_0]\in F$ such that $\mathrm{Stab}([\alpha_0])$ has finite index in $G$. By Proposition \ref{prop:chargroupcohom}, we identify $[\alpha_0]$ with an element of $\hat H^n_c(C_\bullet)$. It follows from Lemma \ref{lem:mobset} that for some $D$ sufficiently large, $G=N_D(\mathrm{Mob}([\alpha_0],D))$.

Now suppose $G=N_D(\mathrm{Mob}([\alpha_0],D))$ for some non-zero $[\alpha_0]\in \hat H^n_c(C_\bullet)$. Then Lemma \ref{lem:mobset} tells us that $\mathrm{Stab}([\alpha_0])$ has finite index in $G$. Hence $[\alpha_0]G$ can be identified with a finite dimensional subspace of $H^n(G,\mathbb{Z}_2 G)$ by Proposition \ref{prop:chargroupcohom}.
\end{proof}
Proposition \ref{prop:qimob} and Lemma \ref{lem:charplanar} imply that if $G$ is a coarse $n$-manifold group and $G'$ is quasi-isometric to $G$, then $G'$ is also a coarse $n$-manifold group.
We say a space $X$ is \emph{a coarse $n$-manifold space} if there exists  an $(n-1)$-acyclic $R$-metric complex $(X,C_\bullet)$, an $[\alpha_0]\in \hat H^n_c(C_\bullet)$ and a $D\geq 0$ such that $X=N_D(\mathrm{Mob}([\alpha_0],D))$. Clearly $G$ is a coarse $n$-manifold group if and only if it is a coarse $n$-manifold space.

The following Theorem allows one to detect codimension one subgroups from the coarse geometry of a group. The Sageev construction \cite{sageev1995ends} shows that a group has a codimension one subgroup if and only if it acts \emph{essentially} on a CAT(0) cube complex; in particular such a group cannot have property (T).

\codimsbgp
\begin{proof}
We suppose that $G$ (and hence $G'$) is not a coarse $(n+1)$-manifold group.
Let $C_1$ and $C_2$ be two essential, coarse disjoint, coarse complementary components of $W$. Without loss of generality, we may replace $C_2$ with $G\backslash C_1$. We thus obtain the coarse Mayer-Vietoris sequence  $$\cdots \rightarrow H^n_\mathrm{coarse}(C_1\cup K)\oplus H^n_\mathrm{coarse}(C_2\cup K) \rightarrow H^n_\mathrm{coarse}(K)\rightarrow H^{n+1}_\mathrm{coarse}(G)\rightarrow \cdots.$$ As $C_1$ and $C_2$ are essential, $H^n_\mathrm{coarse}(K)\rightarrow \hat H^{n+1}_\mathrm{coarse}(G)$ is injective so has image containing some $0\neq [\alpha_0]$. 

Let $T:=\mathrm{stab}([\alpha_0])$. The proof of Lemma \ref{lem:sbgp} then tells us that  $H\subseteq N_R(\mathrm{Mob}([\alpha_0],R))$ for some  $R\geq 0$.  Therefore, by  Lemma \ref{lem:mobset} we see that $H$ is commensurable to subgroup of $T$, so $H':=H\cap T$ has finite index in $H$. If $[G:T]<\infty$, then Lemma \ref{lem:mobset} ensures that there is some $D$ sufficiently large such that $G=N_D(\mathrm{Mob}([\alpha_0],D))$; this contradicts our hypothesis that  $G$ is not a coarse $(n+1)$-manifold group. Therefore $[G:T]=\infty$. 

Since $\tilde{e}(G,H)>1$ and $[H,H']<\infty$, it follows from Lemma 2.4 (v) of \cite{kropholler1989relative}  that  $\tilde{e}(G,H')=\tilde{e}(G,H)>1$. Since $[G:T]=\infty$, Lemma 2.4 (vi) of \cite{kropholler1989relative} tells us that $\tilde{e}(G,T)\geq\tilde{e}(G,H)>1$. Corollary \ref{cor:mobsetcocomp} and Proposition \ref{prop:qimob} now tell us that if $f:G'\rightarrow G$ is a quasi-isometry, then $f(T')$  has finite Hausdorff distance from $T$, where $T':=\mathrm{stab}(f^*[\alpha_0])$. Thus, $\tilde{e}(G',T')>1$. By applying Lemma \ref{lem:stabcoarsesep} to some irreducible coarse complementary component of $T'$, we see that $e(G',H')>1$ for some subgroup $H'\leq T'$.
\end{proof}

\begin{rem}
Theorem \ref{thm:main}  still holds if $W$ is only required to only be a coarse $n$-manifold space. Similarly, Theorem \ref{thm:codimsbgp} still holds if $H$ is only required to be a coarse $n$-manifold group. In this more general context, we can weaken our definition of essential components as follows: suppose $W$ is a coarse $n$-manifold space  and $C$ is a coarse complementary component of $W$. Then we say that $C$ is essential if  $$[\alpha_0]\notin \mathrm{im}(H^n_\mathrm{coarse}(C\cup W)\rightarrow H^n_\mathrm{coarse}(W)),$$ where $[\alpha_0]$ is a coarse cohomology class such that $W=N_D(\mathrm{Mob}([\alpha_0],D))$ for some $D\geq 0$. Theorem \ref{thm:main} and \ref{thm:codimsbgp} hold with this extended notion of essential components.
\end{rem}

\appendix
\section{Uniformly Acyclic Metric Complexes}\label{app:metriccomp}
\begin{rem}\label{rem:ripsdisp}
Using the metric complex structure of the Rips complex $P_i(X)$ from Example \ref{exampl:rips}, we see that if $\sigma\in C_k(P_i(X);R)$ and $\mathrm{supp}(\sigma)\subseteq K$, then $\sigma\in C_\bullet(P_i(N_i^X(K));R)$.
\end{rem}
\extendmapscoarse*

\begin{proof}
We define $y_\Delta:=f(p_i(\Delta))\in Y$ for every $\Delta\in \Sigma_i$. 

(1): We proceed by induction.  For the base case, we extend the map $f\circ p_0: \Sigma_0 \rightarrow Y$  to an augmentation preserving map $f_0:C_0\rightarrow C_0(P_0(Y);R)$, which  has displacement zero over $f$.

Let $k<n$. We assume that there exists some $i_k=i_k(\lambda)$ and a chain map $f_\#:[C_\bullet]_k\rightarrow C_\bullet(P_{i_k}(Y);R)$ of $k$-displacement at most $M_k=M_k(\lambda,\mu,\phi,d)$ over $f$. 
By Lemma \ref{lem:compdispmaps}, we see that $\mathrm{supp}(f_k\partial_{k+1} \Delta)\subseteq N^Y_{\phi(d)+M_k}(y_\Delta)$ for every  $\Delta\in \Sigma_{k+1}$. Furthermore, by Remark \ref{rem:ripsdisp}, we see that $f_k\partial_{k+1} \Delta\in C_k(P_{i_k}(N^Y_{\phi(d)+M_k+i_k}(y_\Delta));R).$
Letting $i_{k+1}:=\lambda(i_k)$ and $M_{k+1}:=\mu(i_k,\phi(d)+M_k+i_k)$,  there exists an $\omega_\Delta\in C_{k+1}(P_{i_{k+1}}(N^Y_{M_{k+1}}(y_\Delta));R)$ such that $\partial_{k+1}\omega_\Delta=f_k\partial_{k+1}\Delta$. We define $f_{k+1}(\Delta)=\omega_\Delta$ for each $\Delta\in \Sigma_{k+1}$, thus defining a chain map $$f_\#:[C_\bullet]_{k+1}\rightarrow C_\bullet(P_{i_{k+1}}(Y);R)$$ of $(k+1)$-displacement at most $M_{k+1}$ over $f$.

(2): We also proceed by induction. If  $\Delta\in \Sigma_0$, $\mathrm{supp}(f_0(\Delta)-g_0(\Delta))\subseteq N^Y_r(y_\Delta)$ thus by Remark \ref{rem:ripsdisp}, $f_0(\Delta)-g_0(\Delta)\in C_0(P_i(N^Y_{r+i}(y_\Delta));R)$. Since $f_\#$ and $g_\#$ are both augmentation preserving,  $f_0(\Delta)-g_0(\Delta)$ is a reduced $0$-cycle. Thus there is an $\omega_\Delta\in C_1(P_{\lambda(i)}(N^Y_{\mu(i,r+i)}(y_\Delta)))$ such that $\partial\omega_\Delta=f_0(\Delta)-g_0(\Delta)$. Letting $h_0(\Delta)=\omega_\Delta$ for each $\Delta\in \Sigma_0$, we define a chain homotopy $h_\#:[C_\bullet]_0\rightarrow C_1(P_{\lambda(i)}(Y);R)$ that has $0$-displacement $\mu(i,r+i)$ over $f$ such that $\partial h_\#+h_\# \partial=g_\#-f_\#$.

Let $k< n-1$. We assume there is a $j_k=j_k(i,\lambda)$ and a proper chain homotopy $h_\#:[C_\bullet]_k\rightarrow C_{\bullet+1}(P_{j_k}(Y);R)$ such that $\partial h_\#+h_\# \partial=g_\#-f_\#$ and $h_\#$ has $k$-displacement at most $N_k=N_k(i,\lambda,\mu,\phi,d)$ over $f$. Lemma \ref{lem:compdispmaps} and Remark \ref{rem:ripsdisp} tell us that for each  $\Delta\in \Sigma_{k+1}$, $$h_\#(\partial\Delta)- g_\#(\Delta)+ f_\#(\Delta)\in C_{k+1}(P_{i_k}(N^Y_{R_k}(y_\Delta));R),$$ where $R_k:=\max(\phi(d)+N_k+i_k, r+i_k)$. 
Hence there is an $\omega_\Delta
\in C_{k+2}(P_{\lambda(i_k)}(N^Y_{\mu(i_k,R_k)}(y_\Delta));R)$
such that $\partial_{k+2}\omega_\Delta=h_\#(\partial\Delta)- g_\#(\Delta)+ f_\#(\Delta)$. We thus define $h_{k+1}(\Delta)=\omega_\Delta$ for each $\Delta\in \Sigma_{k+1}$.
\end{proof}

\extendmapsunif*
\begin{proof}
The proof is very similar to the proof of Lemma \ref{lem:extendmapscoarse}, so we will only outline where the proofs differ. For the base case of $(1)$, we use the fact that $p'_0$ is surjective to find a $\Delta'\in \Sigma'_0$ such that $p'_0(\Delta')=f(p_0(\Delta))$ for each $\Delta\in \Sigma_0$. We thus define $f_0(\Delta)=\Delta'$ for each $\Delta\in \Sigma_0$ and then extend linearly.

For the inductive step, we use the displacement of $[D_\bullet]_n$. Defining $y_\Delta$ as in the proof of Lemma \ref{lem:extendmapscoarse}, Lemma \ref{lem:compdispmaps} says that for each $\Delta\in \Sigma_{k+1}$, $\mathrm{supp}(f_k\partial_{k+1} \Delta)\subseteq N^Y_{\phi(d_1)+M_k}(y_\Delta)$. Thus by applying Lemma \ref{lem:dispmetric}, we see that $f_k\partial_{k+1} \Delta\in D_k[N^Y_{\phi(d_1)+M_k+kd_2}(y_\Delta)].$ We extend $f_\#$ by setting $f_{k+1}(\Delta)=\omega_\Delta$ for some $\omega_\Delta\in D_{k+1}[N^Y_{\mu(\phi(d_1)+M_k+kd_2)}(y_\Delta)]$ such that $\partial\omega_\Delta=f_k\partial_{k+1}\Delta$. 
Part (2) is proved analogously, also making use of Lemma \ref{lem:dispmetric}.
\end{proof}

Making minor modifications to the above proofs, we deduce the following:

\begin{lem}\label{lem:extendmapsunifrel}
Let  $(X,C_\bullet,\Sigma_\bullet,p_\bullet)$ and $(Y,D_\bullet)$ be $R$-metric complexes such that $(Y,D_\bullet)$ is  $\mu$-uniformly $(n-1)$-acyclic and $C_\bullet$ and $D_\bullet$ have $n$-displacement at most $d_1$ and $d_2$ respectively.
Suppose also there is a subcomplex $(X,C'_\bullet,\Sigma'_\bullet,p'_\bullet)$ of $(X,C_\bullet)$, and a chain map $f'_\#:[C'_\bullet]_n\rightarrow D_\bullet$ of $n$-displacement at most $M'$ over an $(\eta,\phi)$-coarse embedding $f:X\rightarrow Y$. Then $f'_\#$ extends to a chain map $f_\#:[C_\bullet]_n\rightarrow D_\bullet$ of $n$-displacement at most $M=M(\mu,\phi,d_1,d_2,M')$ over $f$. 
\end{lem}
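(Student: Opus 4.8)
The plan is to mimic the inductive, skeleton-by-skeleton construction used in the proof of Lemma \ref{lem:extendmapsunif}, the only new feature being that on the sub-basis $\Sigma'_k$ we are obliged to keep the prescribed values $f'_k$ and only get to make choices on $\Sigma_k \setminus \Sigma'_k$. Throughout, write $y_\Delta := f(p_k(\Delta))$ for $\Delta \in \Sigma_k$.

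First I would treat the base case $k=0$: put $f_0 := f'_0$ on $\Sigma'_0$, and for each $\Delta \in \Sigma_0 \setminus \Sigma'_0$ use surjectivity of the degree-zero projection of $D_\bullet$ onto $Y$ to choose a standard basis element $\Delta'$ of $D_0$ with projection $f(p_0(\Delta))$, declaring $f_0(\Delta) := \Delta'$ and extending linearly. Since each new basis element maps to a single standard basis element (displacement $0$) while $f'_0$ has displacement $\le M'$ over $f$, the map $f_0$ has $0$-displacement $\le M'$ over $f$; it is augmentation-preserving provided $f'_0$ is, which may be assumed (and holds automatically whenever $f'_\#$ is itself built by this type of construction). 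Then I would run the inductive step $k \to k+1$ for $k+1 \le n$: assuming $f_\#$ has been defined on $[C_\bullet]_k$, agrees there with $f'_\#$, is a chain map, and has $k$-displacement at most $M_k = M_k(\mu,\phi,d_1,d_2,M')$ (with $M_0 = M'$), set $f_{k+1} := f'_{k+1}$ on $\Sigma'_{k+1}$, and for $\Delta \in \Sigma_{k+1} \setminus \Sigma'_{k+1}$ proceed as follows. The chain $f_k(\partial_{k+1}\Delta)$ is a cycle in $D_\bullet$ because $f_\#$ is already a chain map on $[C_\bullet]_k$ and $\partial^2 = 0$ (and when $k = 0$ it is a \emph{reduced} cycle, using $\varepsilon(\partial_1\Delta) = 0$ together with $f_0$ augmentation-preserving). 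By Lemma \ref{lem:compdispmaps}, combining the $\le d_1$ displacement of $\partial_{k+1}$ (a boundary map of $C_\bullet$, which has $n$-displacement $\le d_1$) with the $\le M_k$ displacement of $f_k$, this cycle is supported in $N^Y_{\phi(d_1)+M_k}(y_\Delta)$; by Lemma \ref{lem:dispmetric} applied to $D_\bullet$ (which has $n$-displacement $\le d_2$) it lies in $D_k[N^Y_{\phi(d_1)+M_k+kd_2}(y_\Delta)]$. Since $(Y,D_\bullet)$ is $\mu$-uniformly $(n-1)$-acyclic and $k \le n-1$, there is $\omega_\Delta \in D_{k+1}[N^Y_{\mu(\phi(d_1)+M_k+kd_2)}(y_\Delta)]$ with $\partial\omega_\Delta = f_k(\partial_{k+1}\Delta)$; put $f_{k+1}(\Delta) := \omega_\Delta$ and extend linearly. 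This yields a chain map on $[C_\bullet]_{k+1}$ extending $f'_\#$, of $(k+1)$-displacement $\le M_{k+1} := \max\bigl(M',\ \mu(\phi(d_1)+M_k+kd_2)\bigr)$ over $f$. After $n$ steps we obtain $f_\#:[C_\bullet]_n \to D_\bullet$ with $M := M_n$ depending only on $\mu,\phi,d_1,d_2,M'$, as required.

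The one point requiring care is the interaction between ``old'' and ``new'' basis elements when verifying that $f_{k+1}$ is genuinely a chain map: for $\Delta \in \Sigma_{k+1}\setminus\Sigma'_{k+1}$ the chain $\partial_{k+1}\Delta$ may have components in both $C'_k$ and $C_k\setminus C'_k$, yet no hand-check of compatibility is needed, because $f_\#$ is already known to be a chain map on all of $[C_\bullet]_k$ by the inductive hypothesis, so $f_k(\partial_{k+1}\Delta)$ is automatically a cycle in $D_\bullet$ and can be filled. Everything else is displacement bookkeeping identical in spirit to the proofs of Lemmas \ref{lem:extendmapscoarse} and \ref{lem:extendmapsunif}, with the constant $M'$ simply carried through the induction.
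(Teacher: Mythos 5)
Your proof is correct and follows essentially the same route as the paper's: keep $f'_\#$ on the sub-basis $\Sigma'_k$ and run the inductive construction of Lemma \ref{lem:extendmapsunif} on $\Sigma_k\setminus\Sigma'_k$. Your remark that compatibility on old versus new basis elements is automatic (because $\partial_{k+1}\Delta \in C'_k$ for $\Delta\in\Sigma'_{k+1}$ by the definition of subcomplex, and the inductive hypothesis guarantees $f_k(\partial_{k+1}\Delta)$ is a cycle for the new $\Delta$) is a correct and useful elaboration of what the paper leaves implicit.
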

\begin{proof}
For $i\leq n$ and $\sigma\in \Sigma'_i\subseteq \Sigma_i$, we define $f_\#(\sigma)=f'_\#(\sigma)$. For $\sigma\in \Sigma_i\backslash \Sigma'_i$, we proceed as in Lemma \ref{lem:extendmapsunif}.
\end{proof}

\controlspace*
\begin{proof}
(\ref{eqn:controlspacemain}): We proceed by induction on $n$. For each $x\in X$ and $r\geq 0$, we see that the map $\tilde{H}_0(P_0(N_r(x));R)\rightarrow \tilde{H}_0(P_{\lambda(0)}(N_{\mu(0,r)}(x));R)$, induced by inclusion, is zero.
This means $N_r(x)$ is contained in a single connected component of $P_{\lambda(0)}(N_{\mu(0,r)}(x))$, so $\tilde{H}_0[P_{\lambda(0)}(N_r(x))]\rightarrow \tilde{H}_0[P_{\lambda(0)}(N_{\mu(0,r)}(x))]$, also induced by inclusion, is zero. Let $\mu_1(r):=\mu(0,r)$. Therefore $C_\bullet(P_{\lambda(0)}(X);R)$ is $\mu_1$-uniformly 0-acyclic and has displacement at most $\lambda(0)$.

Let $0<k<n$. For our inductive hypothesis, we assume there is a $k$-dimensional $R$-metric complex $(X,C_\bullet,\Sigma_\bullet,p_\bullet)$  that is $\mu_{k}$-uniformly $(k-1)$-acyclic with  $k$-displacement at most $d_{k}$, where $\mu_k$ and $d_k$ depend only on $\lambda$ and $\mu$. By Lemma \ref{lem:extendmapscoarse}, there is an $i_{k}=i_{k}(\lambda)$ such that  the identity $\mathrm{id}_X:X\rightarrow X$ induces the chain map $f_\#:C_\bullet\rightarrow C_\bullet(P_{i_{k}}(X);R)$ of $k$-displacement at most $M_k=M_k(\lambda,\mu)$ over $\mathrm{id}_X$. 
Let $\iota_\#:C_\bullet(P_{i_k}(X);R)\rightarrow C_\bullet( P_{\lambda(i_{k})}(X);R)$ be the inclusion.

We now consider the algebraic mapping cylinder $D_\bullet$ of $\iota_\#f_\#$, defined by $D_j:=C_j\oplus C_{j-1} \oplus C_j(P_{\lambda(i_{k})}(X))$ with boundary maps $\tilde{\partial}(a,b,c)=(\partial a+b,-\partial b,\partial c - \iota_\#f_\# b)$. 
Each $D_j$ is the direct sum of finite type free $R$-modules over $X$, so inherits the structure of a finite type free $R$-module over $X$.  Since $C_\bullet$, $C_\bullet(P_{\lambda(i_k)}(X))$ and $\iota_\#f_\#$ have $(k+1)$-displacements at most $d_k$, $\lambda(i_k)$ and $M_k$ respectively, we see that  $(X,D_\bullet)$ is an $R$-metric complex of $(k+1)$-displacement at most $d_{k+1}:=\max(d_k, \lambda(i_k), M_k)$. 
 
We note that $(X,C_\bullet)$ can be considered a subcomplex of $(X,D_\bullet)$ via the inclusion map $\tau_\#$ given by $a\mapsto(a,0,0)$. Thus Lemma \ref{lem:extendmapsunifrel} allows us to define a chain map $r_\#:[D_\bullet]_{k}\rightarrow C_\bullet$ of $k$-displacement at most $P_k=P_k(\mu_k,d_{k+1})$ over $\mathrm{id}_X$ such that $r_\#\tau_\#=\mathrm{id}_{[C_\bullet]_k}$.

We define the boundary map $\partial:D_{k+1}\rightarrow C_{k}$  to be the composition $r_\#\circ \tilde{\partial}$.
We now consider the   $R$-metric complex 
\begin{equation}\label{eqn:metriccomplexextension}
D_{k+1}\xrightarrow{\partial} C_{k}\rightarrow \cdots \rightarrow C_0 \rightarrow 0,
\end{equation}
  which  has $(k+1)$-displacement at most $d_{k+1}+P_k$ over the identity. 
For  $x\in X$, let $\sigma\in C_{k}[N_r(x)]$ be a reduced cycle.  As $f_\#$ has $k$-displacement at most $M_k$ over $\mathrm{id}_X$,  $\mathrm{supp}(f_\#\sigma)\subseteq N_{r+M_k}(x)$; thus by Remark \ref{rem:ripsdisp}, $f_\#\sigma\in C_k(P_{i_k}(N_{r+M_k+i_k}(x));R)$. Hence there is an $\omega_\sigma\in C_{k+1}(P_{\lambda(i_k)}(N_{\mu(i_k,r+M_k+i_k)}(x));R)$ such that $\partial\omega_\sigma=\iota_\#f_\#\sigma$.
Letting $\lambda:=(0,\sigma,\omega_\sigma)\in D_{k+1}$, we see $\tilde{\partial}\lambda=(\sigma,0,0)=\tau_\#\sigma$. Therefore $\partial\lambda=r_\#\tau_\#\sigma=\sigma$. Since $\lambda\in D_{k+1}[N_{\mu(i_k,r+M_k+i_k)}(x)]$, we see that (\ref{eqn:metriccomplexextension}) is $\mu'$-uniformly $k$-acyclic, where $\mu'(r):=\mu(i_k,r+M_k+i_k)$.

(\ref{eqn:controlspaceinf}): We apply the method used to prove (\ref{eqn:controlspacemain}) in each dimension.

(\ref{eqn:controlspaceequiv}): We inductively modify the chain complex produced in the proof of (\ref{eqn:controlspacemain}) so that it admits a $G$-action.  The base case is trivial since $G$ acts freely on the zero skeleton of every Rips complex.
For the inductive hypothesis, we assume the metric complex $C_{k}\rightarrow \cdots \rightarrow C_0 \rightarrow 0$  admits a free $G$-action, is $\mu_{k}$-uniformly $(k-1)$-acyclic and has  $k$-displacement at most $d_{k}$. As in the proof of (\ref{eqn:controlspacemain}), we then construct a finite type free $R$-module $(D_{k+1},\Sigma_{k+1},p_{k+1})$ over $X$ such that $D_{k+1}\xrightarrow{\partial} C_{k}\rightarrow \cdots \rightarrow C_0 \rightarrow 0$ is a $\mu'$-uniformly $k$-acyclic $R$-metric complex with $(k+1)$-displacement at most $d_{k+1}+P_k$. Let $S:=\mu'(d_{k+1}+P_k+kd_k)$.

   We choose $T\subseteq X$  such that each $G$-orbit in $X$ contains precisely one element of $T$. 
Let $\Sigma'_{k+1}=\{(g,t,\rho)\in G\times T\times \Sigma_{k+1}\mid\rho\in D_{k+1}[N_{S}(t)] \} $ and let $D'_{k+1}$ be the free $R$-module with $R$-basis $\Sigma'_{k+1}$. Then  $(D'_{k+1},\Sigma'_{k+1},p'_{k+1})$  is a finite type free $R$-module over $X$, where the projection map is defined by $p'_{k+1}(g,t,\rho):=gt$. We define the boundary map $\partial':D'_{k+1}\rightarrow C_k$ by $(g,t,\rho)\mapsto g\partial\rho$. 
As  $D'_{k+1}$ admits a $G$-action given by $g'(g,t,\rho)\mapsto (g'g,t,\rho)$, we see that \begin{align}\label{eqn:gequivmeticc}
D'_{k+1}\xrightarrow{\partial'} C_{k}\rightarrow \cdots \rightarrow C_0 \rightarrow 0
\end{align}  is a metric complex that admits  a $G$-action and has $(k+1)$-displacement at most $S$.

We claim that for every $\Delta\in \Sigma_{k+1}$, there exists a $\hat \Delta\in D'_{k+1}$ such that $\partial'\hat \Delta=\partial\Delta$. Indeed, since $\partial$ has displacement at most $d_{k+1}+P_k$ over the identity,  $\mathrm{supp}(\partial\Delta)\subseteq N_{d_{k+1}+P_k}(p_{k+1}(\Delta))$. Lemma \ref{lem:dispmetric} then tells us that $\partial\Delta\in C_k[N_{d_{k+1}+P_k+kd_k}(p_{k+1}(\Delta))]$. 
There exists a unique $g\in G$ and $t\in T$ such that $p_{k+1}(\Delta)=gt$; thus $g^{-1}\partial\Delta\in C_{k}[N_{d_{k+1}+P_k+kd_k}(t)]$. As $g^{-1}\partial\Delta$ is a reduced $k$-cycle, there exists a $\gamma_\Delta\in  D_{k+1}[N_{S}(t)]$ such that $\partial\gamma_\Delta= g^{-1}\partial\Delta$. We can write $\gamma_\Delta=\sum_{i=1}^nr_i\gamma_i$, where $r_i\in R$ and $\gamma_i\in \Sigma_{k+1}\cap D_{k+1}[N_{S}(t)]$ for each $i$. Let $\hat \Delta:=\sum_{i=1}^n r_i(g,t,\gamma_i)\in D'_{k+1}$. Then $\partial'\hat \Delta=\sum_{i=1}^n r_i g\partial\gamma_i=g\partial\sum_{i=1}^n r_i\gamma_i=g\partial\gamma_\Delta=\partial\Delta$. As $\mathrm{supp}(\hat \Delta)=\{p_{k+1}(\Delta)\}$ and $\partial\Delta\in C_k[N_{d_{k+1}+P_k+kd_k}(p_{k+1}(\Delta))]\subseteq C_k[N_S(p_{k+1}(\Delta))]$, it follows from Remark \ref{rem:subcomplexexplicit} that $\hat\Delta\in D'_{k+1}[N_S(p_{k+1}(\Delta))]$.

Let $\sigma\in C_k[N_r(x)]$ be a reduced $k$-cycle. Then there exists an $\omega\in D_{k+1}[N_{\mu'(r)}(x)]$ such that $\partial\omega=\sigma$. We can write $\omega=\sum_{i=1}^m r'_i \omega_i$, where $\omega_i\in \Sigma_{k+1}$ and $r'_i\in R$ for each $i$. Thus $\partial'\sum_{i=1}^m r'_i \hat\omega_i=\partial \omega=\sigma$ and $\sum_{i=1}^m r'_i \hat\omega_i\in D'_{k+1}[N_{\mu'(r)+S}(x)]$. Letting $\mu''(r)=\mu'(r)+S$, we thus see that (\ref{eqn:gequivmeticc}) is $\mu''$-uniformly $k$-acyclic.
\end{proof}

\section{Roe's Coarse Cohomology}\label{app:roecoarse}
We will use the technology of coarse cohomology as defined in \cite{roe1993coarse} and \cite{roe2003lectures}. We fix some commutative ring $R$ with unity.
\begin{defn}
Let $(Y,d)$ be a bounded geometry metric space. An open cover $\mathcal{U}$ is called \emph{good} if it is locally finite and the closure of each $U\in \mathcal{U}$ is compact. A sequence  $\mathcal{U}_1,\mathcal{U}_2,\dots $ of good covers of $Y$ is said to be an \emph{anti-\v{C}ech approximation} if there exists a sequence of real numbers $R_n\rightarrow \infty$ such that for all $n$:
\begin{enumerate}
\item every $U\in \mathcal{U}_n$ has diameter at most $R_n$;
\item $\mathcal{U}_{n+1}$ has Lebesgue number at least $R_n$, i.e. every set of diameter at most $R_n$ is contained in some $U\in \mathcal{U}_{n+1}$.
\end{enumerate}
\end{defn}
The \emph{nerve} $|\mathcal{U}|$ of an open cover $\mathcal{U}$ of $Y$ is a simplicial complex with vertex set $\mathcal{U}$ such that $\{U_0,\dots, U_n\}\subseteq \mathcal{U}$ span an $n$-simplex if and only if $\cap_{i=0}^n U_i\neq \emptyset$.
If  $\mathcal{U}_1,\mathcal{U}_2,\dots $ is an anti-\v{C}ech approximation, then for every $n$, $\mathcal{U}_n$ is a refinement of $\mathcal{U}_{n+1}$, i.e. every $U\in \mathcal{U}_n$ is contained in some element of $\mathcal{U}_{n+1}$; this shows the existence of simplicial inclusion maps $|\mathcal{U}_n|\rightarrow |\mathcal{U}_m|$ for every $n\leq m$.

We thus see that $\{C_c^\bullet(|\mathcal{U}_n|;R)\}_{n\in \mathbb{N}}$ is an inverse system whose bonds are induced by the inclusion. Let $\hat C^\bullet$ be the inverse limit of this inverse system. We define $HX^*(Y;R)$ to be the cohomology of this inverse system. Roe shows that $HX^*(Y;R)$ is independent of the choice of anti-\v{C}ech approximation, and that a coarse embedding $f:X\rightarrow Y$ induces a map $f^*:HX^*(Y;R)\rightarrow HX^*(X;R)$. The full details in this are found in \cite{roe1993coarse} and \cite{roe2003lectures}.

We can now pick an anti-\v{C}ech approximation where for each $n$,  $\mathcal{U}_n:=\{B_n(y)\mid y\in Y\}$. The nerve $|\mathcal{U}_n|$ is simply the Rips complex $P_n(Y)$. Thus $HX^*(Y;R)$ is the cohomology of the inverse limit $\hat C^\bullet=\varprojlim C_c^\bullet(P_n(Y);R)$. By Theorem 3.5.8 of \cite{weibel1994introduction},  there is a short exact sequence  \begin{align}
0 \rightarrow {\varprojlim}^1 H^{k-1}_c(P_n(Y);R)  \rightarrow HX^k(Y;R) \rightarrow \varprojlim H^k_c(P_n(Y);R) \to 0,\label{eq:milnor}
\end{align} where the ${\varprojlim}^1$ term is known as the  \emph{derived limit} and will not be discussed here. This makes it hard to calculate coarse cohomology in general.

However, things are simpler if $Y$ is coarsely uniformly acyclic over $R$. In such a situation, the ${\varprojlim}^1 H^{k-1}_c(P_n(Y);R)$ term vanishes and $\varprojlim H^k_c(P_n(Y);R)$ is isomorphic to the image of the map  $H^k_c(P_{j}(Y);R)\rightarrow H^k_c(P_{i}(Y);R)$, induced by inclusion, for some $j\gg i \gg 0$. This can be seen by applying Lemma \ref{lem:extendmapscoarse} in a similar way to proof of  Lemma \ref{lem:mvseqtech}. Moreover, Lemma \ref{lem:mvseqtech} shows that the image of the map  $H^k_c(P_{j}(Y);R)\rightarrow H^k_c(P_{i}(Y);R)$ is naturally isomorphic to $H^k_\mathrm{coarse}(Y)$. This is also evident from Theorem 5.28 of \cite{roe2003lectures}.  Thus the condition that $Y$ is uniformly acyclic means that we can calculate $HX^k(Y;R)$  without using inverse limits. 

By not passing to the inverse limit and simply using Rips complexes with sufficiently large parameters, we  preserve quantitative information e.g. the diameter of cocycles and the displacement of maps. We use the notation $H^k_\mathrm{coarse}(Y;R)$ rather than $HX^k(Y;R)$ to emphasise this point. We remark that if $Y$ is only coarsely uniformly $(n-1)$-acyclic over $R$, the preceding discussion still holds in dimensions at most $n$, taking modified cohomology with compact supports in the top dimension.
\bibliographystyle{amsalpha}
\bibliography{bibtex}
\end{document}